\documentclass[12pt]{article}

\usepackage[english]{babel}
\usepackage[utf8]{inputenc}

\usepackage[margin=1.13 in, top=1.1 in, bottom= 1.2 in]{geometry}
\makeatletter
\g@addto@macro\normalsize{%
  \setlength\abovedisplayskip{7pt}
  \setlength\belowdisplayskip{7pt}
  \setlength\abovedisplayshortskip{7pt}
  \setlength\belowdisplayshortskip{7pt}
}
\makeatother

\usepackage{tocloft}

\usepackage{amsfonts}
\usepackage{mathrsfs}
\usepackage{bbm}
\usepackage{latexsym}
\usepackage{amssymb}
\usepackage{mathtools}
\usepackage{relsize}
\usepackage{lipsum}

\usepackage{todonotes}
\usepackage{enumitem}
\setlist{nolistsep} 	
\usepackage{amsthm}

\usepackage{xcolor}
\definecolor{Color1}{rgb}{0.0, 0.42, 0.47}
\definecolor{Color2}{rgb}{0.78, 0.11, 0.0}

\usepackage{titlesec}
\titleformat{\section}
  {\large\center\bfseries}
  {\thesection.}{.7em}{}
\titlespacing*{\section}{0pt}{3.5ex plus 0ex minus 0ex}{1.5ex plus 0ex}
\titleformat{\subsection}
  {\center\bfseries}
  {\thesubsection.}{.7em}{}
\titlespacing*{\subsection}{0pt}{3.5ex plus 0ex minus 0ex}{1.5ex plus 0ex}
\titleformat{\subsubsection}
  {\center\bfseries}
  {\thesubsubsection.}{.7em}{}
\titlespacing*{\subsubsection}{0pt}{3.5ex plus 0ex minus 0ex}{1.5ex plus 0ex}

\addto\captionsenglish{}

\usepackage{titling}
\makeatletter
\renewenvironment{abstract}{
\begin{center}
{\bfseries \large\abstractname\vspace{\z@}}
\end{center}
\quotation
}
\makeatother

\usepackage[linktocpage=true]{hyperref}
\usepackage[capitalize]{cleveref}
\hypersetup{citecolor = Color1,colorlinks,
			linkcolor = black,
			urlcolor = Color2}

\newtheoremstyle{plain}{3mm}{3mm}{\slshape}{}{\bfseries}{.}{.5em}{}
\newtheoremstyle{definition}{2mm}{2mm}{}{}{\bfseries}{.}{.5em}{}
\theoremstyle{plain} 
	
\newtheorem{theorem}{Theorem}[section]

\newtheorem{proposition}[theorem]{Proposition}

\newtheorem{lemma}[theorem]{Lemma}
\newtheorem{corollary}[theorem]{Corollary}
\theoremstyle{definition} 
\newtheorem{definition}[theorem]{Definition}
\newtheorem{remark}[theorem]{Remark}
\newtheorem{example}[theorem]{Example}

\theoremstyle{plain} 
\newcounter{MainTheoremCounter}

\newtheorem{maintheorem}[MainTheoremCounter]{Theorem}

\theoremstyle{plain} 

\usepackage{chngcntr}

\numberwithin{equation}{section}

\allowdisplaybreaks

\newcommand{\Cesaro}{Ces\`{a}ro }

\newcommand{\N}{\mathbb{N}}
\newcommand{\Z}{\mathbb{Z}}
\newcommand{\R}{\mathbb{R}}
\newcommand{\C}{\mathbb{C}}
\newcommand{\Q}{\mathbb{Q}}
\newcommand{\T}{\mathbb{T}}

\renewcommand{\epsilon}{\varepsilon}
\renewcommand{\leq}{\leqslant}
\renewcommand{\geq}{\geqslant}
\renewcommand{\setminus}{\backslash}

\renewcommand{\subset}{\subseteq}

\usepackage[normalem]{ulem}

\usepackage[makeroom]{cancel}

\usepackage{fixme}
\fxsetup{status=draft}

\newcommand{\E}{\operatorname{\mathbb{E}}}

\newcommand{\Poly}{\operatorname{Poly}}
\newcommand{\oo}{\infty}

\newcommand{\floor}[1]{\lfloor #1 \rfloor}

\newcommand{\round}[1]{[ #1 ]}
\newcommand{\unif}{\operatorname{unif}}
\renewcommand{\mod}{\operatorname{mod}}
\newcommand{\psim}{\sim_{\text{PET}}}

\author{{\scshape Vitaly Bergelson and Michael Reilly}}
\date{\small \today}
\title{\bfseries
Weighted averages and applications to sets of multiple recurrence}

\begin{document}
\maketitle




\begin{abstract}
We introduce new techniques for determining combinatorial properties of sets of multiple recurrence by considering weighted averages with quickly growing weights. Our main result is a far-reaching generalization of Szemer\'edi's Theorem which additionally confirms a conjecture of Bergelson-Moreira-Richter and contains as special cases both the Polynomial Szemer\'edi Theorem due to Bergelson-Leibman-Lesigne and the fact that if $f$ belongs to a broad class of smooth functions and satisfies $x^{d-1}\prec f(x)\prec x^d$ for some $d\in \mathbb{N}$ then for any $\ell\in \mathbb{N}$, any invertible measure preserving system $(X,\mathscr{B},\mu,T)$, and any $A\in \mathscr{B}$ with $\mu(A)>0$, the set $\{n\in \mathbb{N}: \mu(A\cap T^{-\round{f(n)}}A\cap T^{-2\round{f(n)}}A\cap \cdots\cap T^{-\ell\round{f(n)}}A )>0\}$ is thick, meaning that it contains arbitrarily long intervals of natural numbers. Additionally, we formulate and prove a generalization to weighted averages of Boshernitzan's criterion for uniform distribution which we use in the proof of our main result.
\end{abstract}
\section{Introduction}\label{section:introduction}

Classically, ergodic theory studies various sequences arising from dynamical systems using \emph{\Cesaro averages}, that is, averages of the form 
\begin{equation}\label{eq:Cesaro_def}
\E_{n\leq N}x_n :=\frac{1}{N}\sum_{n=1}^Nx_n.
\end{equation}
For instance, if $(X,\mathscr{B},\mu,T)$ is a measure preserving system we may take $x_n=f(T^nx)$ or $x_n = \mu(A\cap T^{-n} A)$ for $n\in\N= \{1,2,\dots\}$, $f\in L^{2}(X)$, and $A\in \mathscr{B}$. In recent years, many applications to combinatorics and number theory have been obtained by considering more general weighted ergodic averages, namely, averages having the form
\begin{equation}\label{eq:W_avg_def}
\E_{n\leq N}^Wx_n:=\frac{1}{W(N)}\sum_{n=1}^N\Delta W(n)x_n
\end{equation}
where $W$ is a function which eventually increases to $\oo$, $\Delta W$ is defined by $\Delta W(n) = W(n)-W(n-1)$ for $n\geq 2$ and $\Delta W(1)=W(1)$ (note that (\ref{eq:Cesaro_def}) and (\ref{eq:W_avg_def}) are the same when $W(N)=N$).\footnote{Some sources define $\Delta W(n) = W(n+1)-W(n)$.}
For example, \cite{Bergelson_Moreira_Richter_2020}, \cite{Fran22}, \cite{Richter23}, \cite{BMR24}, \cite{LM25}, \cite{BKS2025} each contain results concerning sequences $(x_n)_{n\in \N}$ whose \Cesaro averages do not converge while the weighted averages $\E_{n\leq N}^Wx_n$ do converge so long as $W$ grows slowly enough. This illustrates a general phenomenon: for functions $W_1, W_2$ such that $W_1$ grows much faster than $W_2$, if $\lim_{N\to\oo}\E_{n\leq N}^{W_1}x_n$ exists for some bounded sequence $(x_n)_{n\in \N}$ then $\lim_{N\to\oo}\E_{n\leq N}^{W_2}x_n$ also exists (cf. item (2) in Theorem \ref{thm:mikey_thm} below), but the converse is often not true.

\begin{example}\label{ex:ud}
    Let $W(N) = e^{\sqrt{N}}$ for $N\in \N$. Theorem \ref{thm:W_ud} below implies the following.
    \begin{itemize}
        \item For the sequence $x_n = e^{2\pi i \log(n)}$ for $n\in \N$
        \begin{equation*}
            \lim_{N\to\oo}\E^{\log}_{n\leq N}x_n=0, \quad  
        \lim_{N\to\oo}\E_{n\leq N}x_n \text{ does not exist, }\quad  
        \lim_{N\to\oo}  \E^W_{n\leq N}x_n \text{ does not exist.}
        \end{equation*}
        \item For the sequence $y_n = e^{2\pi i n\log(n)}$ for $n\in \N$
        \begin{equation*}
            \lim_{N\to\oo}\E^{\log}_{n\leq N}y_n=0,\quad \quad  
        \lim_{N\to\oo}\E_{n\leq N}y_n=0 \quad \quad 
        \lim_{N\to\oo}  \E^W_{n\leq N}y_n \text{ does not exist.}
        \end{equation*}
        \item For the sequence $z_n = e^{2\pi  i n\sqrt{2}}$ for $n\in \N$,
        \begin{equation*}
            \lim_{N\to\oo}\E^{\log}_{n\leq N}z_n=0,\quad \quad  
        \lim_{N\to\oo}\E_{n\leq N}z_n=0 \quad \quad 
        \lim_{N\to\oo}  \E^W_{n\leq N}z_n=0 .
        \end{equation*}
    \end{itemize}
\end{example}

In this paper, we consider the opposite regime, where the function $W$ grows fast enough so that the weighted averages $\E_{n\leq N}^Wx_n$ might not converge even when the \Cesaro averages do converge (for example $W(N) = e^{\sqrt{N}}$ in Example \ref{ex:ud}). For quickly growing weights, results about convergence of weighted averages yield new information that cannot be obtained through convergence of \Cesaro averages. We prove weighted generalizations of statements concerning  multiple ergodic averages, which in turn allows us to improve known theorems and obtain new amplifications of classical results.

For instance, one application of our results improves a theorem of Frantzikinakis and Wierdl (Theorem \ref{thm:Hardy_szemeredi} below) which shows that return times in the ergodic form of Szemer\'{e}di's Theorem can be found along nonpolynomial functions from a Hardy field. Before we can give the full formulation, we must recall some definitions concerning Hardy fields.

    For a smooth function $f:[0,\oo)\rightarrow \R$, the germ of $f$ is the equivalence class of smooth functions 
    $$
    \{g:[0,\oo)\rightarrow \R: \text{ there exists } c>0 \text{ such that } f(x)=g(x) \text{ for all }x>c\}.
    $$
    Let $\mathbb{B}$ be the set of all germs of smooth functions. $\mathbb{B}$ is a ring under pointwise addition and pointwise multiplication and we call $\mathcal{H}\subset \mathbb{B}$ a \emph{Hardy field} if $\mathcal{H}$ is a sub-field of $\mathbb{B}$ which is closed under differentiation. We call $f$ a Hardy function, and write $f\in \mathcal{H}$, if the germ of $f$ belongs to a Hardy field $\mathcal{H}$. A \emph{maximal Hardy field} is a Hardy field which is not a proper subset of any other Hardy field. For more detailed discussion on Hardy fields, see (\cite{Boshernitzan81}, \cite{Boshernitzan}, \cite{Fran09}). In particular we list some important facts that we will frequently make use of.
\begin{itemize}
    \item If $f$ is a Hardy function then $f$ is eventually monotone and hence $\lim_{x\to\oo}f(x)$ exists in $\R\cup\{\oo,-\oo\}$ \cite[Section 1.1]{FW09}.
\end{itemize}
 For functions $f,g$ we use the notation $f\prec g$ to mean that $\lim_{x\to\oo}f(x)/g(x)=0$ and we write\footnote{Some sources use $f \ll g$ to mean $\limsup_{x\to\oo}{f(x)}/{g(x)}<\oo$.} $f\preceq g$ to mean that $\limsup_{x\to\oo}f(x)/g(x)<\oo$.
 \begin{itemize}
    \item If $f$ and $g$ are contained in the same Hardy field then $f/g$ is a Hardy function and so $\lim_{x\to\oo}\frac{f(x)}{g(x)}$ exists in $\R\cup \{\oo,-\oo\}$. Hence, either $f\preceq g$ or $g\preceq f$.
    \item If $\mathcal{H}$ is a maximal Hardy field, $g\in \mathcal{H}$ with $\lim_{x\to\oo}g(x)=\oo$, and $f$ is a Hardy function (which does not necessarily belong to $\mathcal{H}$), then $f\circ g\in \mathcal{H}$ \cite[Proposition 6.9]{Boshernitzan81}.
    \item If $\mathcal{H}$ is a maximal Hardy field, then $\mathcal{H}$ contains $\exp$, $\log$, and all rational functions.
    \item If $\mathcal{H}$ is a maximal Hardy field and $f\in \mathcal{H}$ then $\int_0^xf(t)~dt\in \mathcal{H}$ \cite[Theorem 5.3]{Boshernitzan81}.
\end{itemize}
\begin{definition}\label{def:degree}
    Suppose that $f$ belongs to a Hardy field. 
    If there exists a $d\in \N$ such that $|f(x)|\preceq x^{d}$, then we define $\deg(f)$ to be the minimal such value of $d$ and we say that $f$ is \emph{subpolynomial}.
    Additionally, we put $\deg^*(f) = \min\{\deg(f-q):q(x)\in \Q[x]\}$.
\end{definition}

In the following theorem and throughout the paper, $[\cdot ]:\R\rightarrow\Z$ denotes the {rounding function} satisfying $[x] = \floor{x+1/2}$ for $x\in \R$, where $\floor{x} = \sup\{n\in \Z: n\leq x\}$.

\begin{theorem}[{\cite[Theorem 6.1]{FW09}}]\label{thm:Hardy_szemeredi}
    Let $\mathcal{H}$ be a Hardy field. Suppose that $f\in \mathcal{H}$ satisfies $x^{d-1}\prec f(x)\prec x^{d}$ for some $d\in \N$. Then for any $\ell\in \N$ and any invertible measure preserving system $(X,\mathscr{B},\mu,T)$ and any $A\in \mathscr{B}$ with $\mu(A)>0$, the set 
\begin{equation}\label{eq:Hardy_sz_returns_1}
   \{n:\mu(A\cap T^{-[ f(n)]}A\cap T^{-2[ f(n)]}A\cap \cdots\cap T^{-\ell[ f(n)]}A )>0\}
\end{equation}
is nonempty.
\end{theorem}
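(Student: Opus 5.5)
The plan follows the Frantzikinakis--Wierdl strategy: prove that the multiple ergodic averages along $[f(n)]$ have positive $\liminf$, and reduce this to Furstenberg's multiple recurrence theorem via characteristic factors and equidistribution of $f$. Put $F(x)=\mu(A\cap T^{-x}A\cap\cdots\cap T^{-\ell x}A)$ for $x\in\Z$. It suffices to show
\[
\liminf_{N\to\oo}\E_{n\leq N} F([f(n)])>0,
\]
since then in particular some term is positive.

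The first step is a comparison of averages. Because $x^{d-1}\prec f(x)\prec x^{d}$ and $f$ is a Hardy function, $f$ grows at a fractional polynomial rate and has no polynomial part. I would show that the averages $\E_{n\le N}\prod_{j} T^{j[f(n)]}g_j$ have the same $L^2$ limiting behaviour as the averages
\[
\E_{m\le M}\prod_{j}T^{jm}g_j .
\]
This would be done in two stages.

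\textbf{Characteristic factors.} Using van der Corput and a PET-type induction applied to the Hardy sequence $[f(n)]$, I would show that the nilfactor (the Host--Kra factor $Z_{\ell-1}$) is characteristic for these averages. The differencing $f(n+h)-f(n)$ lowers the degree while keeping the sequence in the non-polynomial Hardy regime. The rounding is handled by the standard trick of writing $[f(n)]=f(n)-\{f(n)+1/2\}+1/2$ and approximating the fractional-part dependence by trigonometric polynomials.

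\textbf{Nilsystem case.} On the factor $Z_{\ell-1}$, the system is an inverse limit of nilsystems. For an ergodic nilrotation by $a$, the sequence $(a^{[f(n)]},\dots,a^{\ell[f(n)]})$ should equidistribute in the same subnilmanifold as $(a^{m},\dots,a^{\ell m})$, the one given by Leibman's theorem. Establishing this is the expected main obstacle. I would try to reduce it, via Leibman's criterion that reduces nilmanifold equidistribution to the horizontal torus, to showing that $(f(n)\alpha)$ is equidistributed mod 1 for horizontal characters $\alpha$. Since $f$ stays a positive distance from every rational polynomial, Boshernitzan's criterion gives this. The rounding is absorbed by lifting to the suspension $\R$-action.

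Combining the two stages, the limit along $[f(n)]$ equals the Cesàro limit of $F(m)$. Furstenberg's theorem gives $\liminf_M\E_{m\le M}F(m)>0$, and this transfers to $\liminf_N \E_{n\le N}F([f(n)])>0$, giving nonemptiness of the set in \eqref{eq:Hardy_sz_returns_1}.
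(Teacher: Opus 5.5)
Your proposal has a genuine gap: the claim you reduce to is false on part of the hypothesis range, and the characteristic-factor step does not go through as sketched.

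\textbf{The case $d=1$.} You aim to show $\liminf_N\E_{n\le N}F([f(n)])>0$, and even that this limit equals the Ces\`aro mean of $F$, for every $f$ with $x^{d-1}\prec f\prec x^d$. This fails for $d=1$. Take $f(x)=\log\log x$. The block $\{n:[f(n)]=m\}=[e^{e^{m-1/2}},e^{e^{m+1/2}})$ fills all but a vanishing proportion of $[1,N]$ when $N$ is just below $e^{e^{m+1/2}}$. So $\E_{n\le N}F([f(n)])=F(m)+o(1)$ along such $N$. For an irrational rotation with $A=[0,1/10)$ and $\ell=1$, $F(m)=0$ for infinitely many $m$, so the $\liminf$ is $0$. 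Already for $f=\log x$ these averages are exponentially weighted sums of $F$ near $\log N$, and they do not converge to the Ces\`aro mean of $F$.

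The error is in your equidistribution step. Boshernitzan's criterion (Theorem~\ref{thm:Boshernitzan_ud}) requires $|\alpha f(x)-p(x)|/\log x\to\infty$ for all $p\in\Q[x]$, not merely a positive distance from rational polynomials, and it fails whenever $f\preceq\log x$. Also, Leibman's criterion concerns polynomial sequences; its Hardy-field analogue for $b^{f(n)}$ again needs this $\succ\log x$ condition. The case $d=1$ must be split off, but it is trivial. Since $|f|\to\infty$ and $f'\to0$, $[f(n)]$ eventually takes every integer of large absolute value (of the appropriate sign). Furstenberg's theorem, which gives infinitely many $m$ with $F(m)>0$, then finishes. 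This is how the paper disposes of $d=1$ in the proof of Theorem~\ref{thm:thick_szemeredi}.

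\textbf{The case $d\ge2$.} Torus-level equidistribution is fine here, since $f\succ x^{d-1}\succeq x$ forces $|\alpha f-p|\succ\log x$. The same defect reappears inside your characteristic-factor step, however. PET differencing replaces $f$ by $f(n+h)-f(n)\approx hf'(n)$, and at the bottom of the induction by multiples of $f^{(d-1)}$, which may be $\preceq\log x$. For example, $f(x)=x\log x$ gives $h\log n+h+o(1)$, and $f(x)=x\log\log x$ gives about $h\log\log n$. For such terms the sublinear base case fails under Ces\`aro averages. Indeed, $\E_{n\le N}T^{[h\log n]}g$ is, up to $o(1)$, a geometrically weighted sum of $T^kg$ over $k$ near $h\log N$. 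Along suitable $N$ its norm stays $\ge c_h\|g\|_{L^2}$ for every $g$, so no uniformity seminorm controls it.

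Being non-polynomial survives differencing, but the property you actually need does not: staying $\succ\log x$ away from polynomials, i.e.\ (\ref{eq:W_compatible}) with $W(x)=x$. Compare the paper's example $x\log x$ versus $\log x+1$. So for $x^{d-1}\prec f\preceq x^{d-1}\log x$ your sketch does not close without an additional idea.

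\textbf{How the paper gets the statement.} The paper does not prove this statement itself; it is quoted from \cite{FW09}. Its machinery avoids Ces\`aro averages altogether:
\begin{enumerate}
\item For $d\ge2$, choose $W$ in a maximal Hardy field containing $f$ with $W'\asymp (f^{(d)})^{1/d}$. Then $\log x\prec W(x)\prec x$.
\item The hypothesis of Theorem~\ref{thm:general_thick_szemeredi} holds trivially for the unshifted family $\{f,2f,\dots,\ell f\}$, and $\Poly\{f,2f,\dots,\ell f\}=\{0\}$. So the $n$ with $\mu(A\cap T^{-[f(n)]}A\cap\cdots\cap T^{-[\ell f(n)]}A)>0$ form a $W$-syndetic set.
\item This set must meet the $W$-thick set $\{n: f(n)\text{ mod }1\in(0,\epsilon)\}$ of Lemma~\ref{lem:derivatives_mod_1}. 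On that set $[if(n)]=i[f(n)]$ once $\epsilon<1/(2\ell)$, which gives nonemptiness.
\end{enumerate}
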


With our methods, we are able to obtain the following improvement of Theorem \ref{thm:Hardy_szemeredi}.

\begin{theorem}\label{thm:thick_szemeredi}
    Let $\mathcal{H}$ be a Hardy field. Suppose that $f\in \mathcal{H}$ satisfies $x^{d-1}\prec f(x)\prec x^{d}$ for some $d\in \N$. Then for any $\ell\in \N$ and any invertible measure preserving system $(X,\mathscr{B},\mu,T)$ and any $A\in \mathscr{B}$ with $\mu(A)>0$, the set 
\begin{equation}\label{eq:Hardy_sz_returns}
   \{n:\mu(A\cap T^{-[ f(n)]}A\cap T^{-2[ f(n)]}A\cap \cdots\cap T^{-\ell[ f(n)]}A )>0\}
\end{equation}
is thick, meaning that it contains arbitrarily long intervals of natural numbers.
\end{theorem}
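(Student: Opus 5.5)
The natural route is to replace Cesàro averages by weighted averages $\E^W_{n\leq N}$ with a quickly growing weight $W$, so that the mass of the average concentrates on short windows $[N-L(N),N]$ where $L(N)\to\infty$. Choose $W$ with $\Delta W(n)/W(n)\approx 1/L(n)$ and $L(n)\to\infty$ slowly, for instance $W(N)=e^{N/L(N)}$-type growth such as $e^{\sqrt N}$ when $d$ permits. Then for any bounded nonnegative sequence $x_n$,
$$\liminf_{N\to\infty}\E^W_{n\leq N}x_n>0$$
forces $x_n>0$ for $n$ in intervals of unbounded length. More precisely, if $\{n:x_n>0\}$ contained no interval of length $M$, then every window of length $M$ would contain a zero. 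Since $W$ grows so fast that the weights $\Delta W(n)$ are nearly constant on windows of length $M\ll L(n)$, a positive average must come from windows that are almost entirely positive. The first step is therefore a combinatorial lemma: if $\liminf_N \E^W_{n\leq N} \mu(A\cap T^{-[f(n)]}A\cap\cdots\cap T^{-\ell[f(n)]}A)>0$, then the return set is thick. The cleaner version is to show that for every $M$,
$$\E^W_{n\leq N}\prod_{j=0}^{M-1}\mathbf 1\bigl[\mu(\cdots)_{n+j}>0\bigr]$$
stays positive, or to argue directly that positivity of $\E^W$ with nearly flat weights gives a positive-density window count.

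The second and central step is the positivity of the weighted multiple average
$$\liminf_{N\to\infty}\E^W_{n\leq N}\mu\bigl(A\cap T^{-[f(n)]}A\cap\cdots\cap T^{-\ell[f(n)]}A\bigr)>0.$$
I would follow the Frantzikinakis--Wierdl strategy, transplanted to weighted averages. First, change variables: since $x^{d-1}\prec f\prec x^d$, the sequence $[f(n)]$ takes each value $m$ for a set of $n$ of size about $1/f'(f^{-1}(m))$, and these multiplicities vary slowly. This lets me rewrite the $W$-average over $n$ as a $\tilde W$-weighted average over $m$ of $\mu(A\cap T^{-m}A\cap\cdots\cap T^{-\ell m}A)$, with a new weight $\tilde W$ built from $W\circ f^{-1}$ and the multiplicity factor. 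An alternative is to compare directly with the linear average via equidistribution of $(f(n))$ along the weight. For this I would invoke the weighted Boshernitzan criterion promised in the abstract, which in the spirit of Theorem \ref{thm:W_ud} controls $\E^W_{n\leq N}e(h(n))$ for Hardy $h$ with $h$ growing faster than $\log W$-scale. The third step is to prove a weighted Furstenberg multiple recurrence: show that the $\tilde W$-average of $\mu(A\cap T^{-m}A\cap\cdots)$ has positive liminf. Because $\tilde W$ grows slowly relative to $m$ when $f$ is genuinely nonlinear, the averages over windows $[m-\tilde L(m),m]$ with $\tilde L\to\infty$ can be handled using uniform Cesàro convergence. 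Furstenberg--Katznelson gives uniformity, in the sense that $\liminf_{N-M\to\infty}\frac1{N-M}\sum_{m=M}^{N}\mu(\cdots)>0$, and averages over long intervals anywhere are bounded below. Since any weighted average whose weights are nearly constant on windows of length $\to\infty$ is a convex combination of such interval averages up to $o(1)$, positivity follows.

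The main obstacle is the change of variables in the second step. One must check that the short $n$-windows of length $L(n)$ map to $m$-windows of length $\approx f'(n)L(n)\to\infty$, and that the rounding $[f(n)]$ introduces only negligible error. When $d=1$ and $f'(n)\to 0$, many $n$ share the same $m$, and the window in $m$ has length $f'(n)L(n)$, so $L$ must be chosen so that this still tends to infinity, for example $L(n)=\sqrt{n/f(n)}\cdot f(n)/\sqrt{\cdot}$-type tuning. If that fails, I would instead use the Boshernitzan-type weighted equidistribution together with the PET/characteristic-factor machinery (nilfactors, by Host--Kra) to reduce to nilsystems. There, equidistribution of $([f(n)]\alpha)$ along $\E^W$ would give the required lower bound.
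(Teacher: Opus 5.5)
Your first step is false. Everything after it only aims at positivity of $\E^W_{n\le N}\mu(A\cap T^{-[f(n)]}A\cap\cdots\cap T^{-\ell[f(n)]}A)$, so the proposal never reaches thickness. Positivity of a weighted average of a nonnegative sequence, even uniform positivity and however flat the weights are on short windows, gives at most a syndeticity-type conclusion. A window of length $M$ with a single zero can still be almost entirely positive, so ``every window contains a zero'' does not make the average small. Your argument never uses that $f$ is non-polynomial, and it already fails for sequences of exactly this shape. For $f(n)=n^2$, $\ell=1$, $Tx=x+1$ on $\Z/2\Z$ and $A=\{0\}$, the return set is $2\N$, which is not thick, yet all these weighted averages tend to $1/4$.

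The missing idea is to build thickness into the recurrence statement itself; your ``cleaner version'' names this target but gives no route to it. Fix $m$ and find one $n$ with $\mu\bigl(A\cap\bigcap_{1\le i\le \ell,\,0\le j\le m}T^{-i[f(n-j)]}A\bigr)>0$; this certifies $n-m,\dots,n$ simultaneously. That is the paper's route. It applies Theorem \ref{thm:general_thick_szemeredi} to $\mathscr F=\{n\mapsto if(n-j)\}$ and checks $\Poly(\mathscr F)=\{0\}$ via $f(n-j)=(1-\Delta)^jf(n)$ and Stolz--Ces\`aro. This is exactly where non-polynomiality enters: for $n^2$ the enlarged family is not jointly intersective. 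The paper then converts $[if(n-j)]$ into $i[f(n-j)]$ by restricting to $n$ at which every $f(n-j)\bmod 1$ lies in $(0,\epsilon)$ with $\epsilon<1/(2\ell)$.

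Your second step also breaks down for $d\ge 2$. The substitution $m=[f(n)]$ needs $f'\to 0$, i.e.\ $d=1$, and then no weights are needed at all. Each large $m$ equals $[f(n)]$ on a run of $n$ of unbounded length, so the ergodic Szemer\'edi theorem gives thickness directly; this is the paper's $d=1$ case. For $d\ge 2$ one has $f'\to\infty$, so $\{[f(n)]\}$ has density zero, with consecutive values about $f'(n)$ apart. An $n$-window is therefore not a flat average over an $m$-window, uniform Szemer\'edi cannot be invoked, and you need equidistribution on nilmanifolds for the enlarged family. Note also that the weighted Boshernitzan criterion compares $|h^{(d)}|^{1/d}$ with $(\log W)'$, not $h$ with $\log W$.

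Be aware that the paper's own bookkeeping at this point does not close as written. For $m\ge d-1$ the span of $\mathscr F$ contains $\Delta^{d-1}f$, which has $\deg^*=1$ and derivative $\asymp f^{(d)}$. The hypothesis of Theorem \ref{thm:general_thick_szemeredi} therefore forces $W'\preceq f^{(d)}$, whereas the paper takes $W'=(\Delta^d f)^{1/d}\succ f^{(d)}$.

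Concretely, take $f(x)=x^{3/2}$, $\ell=m=1$, $X=\Z/4\Z$, $A=\{0\}$, so the paper's $R_1$ is $\{n:\mu(A\cap T^{-[f(n)]}A\cap T^{-[f(n-1)]}A)>0\}$. This set misses every interval on which $\Delta f(n)\bmod 4\in(1.1,2.9)$. Those intervals have $W$-length tending to $\infty$, so $R_1$ is not $W$-syndetic.

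Independently, for any $W$ with $W'\preceq (f^{(d)})^{1/d}$, the set $R_2$ cannot be $W$-thick, because $f$ is equidistributed mod $1$ on windows of length $\gg (f^{(d)})^{-1/d}$. Bounded $d$-th differences limit runs with $f(n)\bmod 1\in(0,\epsilon)$ to length $O((f^{(d)})^{-1/d})$, contrary to the last claim of Lemma \ref{lem:derivatives_mod_1}.

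So the rounding constraint cannot be imposed afterwards by intersecting a $W$-syndetic set with a $W$-thick one. It has to be built into the averaged quantity, for instance by weighting with the indicator that every $f(n-j)$ lies within $1/(2\ell)$ of an integer and proving the corresponding joint equidistribution.
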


Of particular interest is the fact that there is no mention of weighted averages in the formulation of Theorem \ref{thm:thick_szemeredi}, and yet the proof of Theorem \ref{thm:thick_szemeredi} given in Section \ref{section:applications} relies heavily on weighted averages with quickly growing weights. 

Theorem \ref{thm:thick_szemeredi} is reminiscent of results found in \cite{Bergelson_Moreira_Richter_2020} (see Theorem \ref{thm:BMR_1} below), except that Theorem \ref{thm:thick_szemeredi} requires $f$ to belong to a Hardy field and allows for multiplication outside of the rounding function, e.g. $2\cdot \round{f(n)}$ instead of $\round{2\cdot f(n)}$.

We will see in Section \ref{section:applications} that Theorem \ref{thm:thick_szemeredi} follows from a general result about multiple recurrence, Theorem \ref{thm:general_thick_szemeredi} below. Another special case of Theorem \ref{thm:general_thick_szemeredi} is a variant of the Polynomial Szemer\'{e}di Theorem, which we formulate after the following definition.
\begin{definition}
 Let $R\subset \R[x]$ be a finite set of real polynomials. We say that $R$ is \emph{jointly intersective} if there exists a finite set $P\subset \Z[x]$ with $R\subset \text{Span}_{\mathbb{R}}(P)$ such that for each $r\in \N$ there exists $n\in \N$ such that $p(n)$ is divisible by $r$ for all $p(x)\in P$. 
\end{definition}
\begin{remark}
    Equivalently, $R$ is jointly intersective if and only if $R$ is contained in the principle ideal of $\R[x]$ generated by $p_0(x)$, where $p_0(x)\in \Z[x]$ is a polynomial such that for each $r\in \N$ there is $n\in \N$ with $p_0(n)$ divisible by $r$ (cf. \cite[Proposition 6.1]{BLL08}). Moreover, if each polynomial contained in $R$ has constant term equal to $0$, then it is trivial to see that $R$ is jointly intersective because $R\subset (x)$.
\end{remark}

\begin{theorem}[Polynomial Szemer\'edi Theorem \cite{BLL08}]\label{thm:polynomial_sz}
    Let $P= \{p_1,\dots, p_{\ell}\}\subset \Z[x]$. $P$ is jointly intersective if and only if for any invertible measure preserving system $(X,\mathscr{B},\mu,T)$ and any $A\in \mathscr{B}$ with $\mu(A)>0$, 
\begin{equation}\label{eq:polynomial_sz}
R_A(p_1\dots, p_{\ell})=\{n\in \N: \mu(A\cap T^{-p_1(n)}A\cap\cdots \cap T^{-p_{\ell}(n)}A)>0\}
\end{equation}
is {syndetic}, meaning that it has bounded gaps.
\end{theorem}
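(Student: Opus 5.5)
The plan is to prove the two directions of Theorem \ref{thm:polynomial_sz} separately. The sufficiency direction (joint intersectivity implies syndeticity) is the deep part. The necessity direction is a short construction with a cyclic rotation.

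\emph{Necessity.} Suppose $P$ is not jointly intersective. Then there is $r\in\N$ such that for every $n\in\N$ some $p_i(n)$ is not divisible by $r$. Take $X=\Z/r\Z$ with normalized counting measure, let $T$ be the shift $x\mapsto x+1$, and let $A=\{0\}$. For any $n$, pick $i$ with $r\nmid p_i(n)$. Then $A\cap T^{-p_i(n)}A=\emptyset$, so $R_A(p_1,\dots,p_\ell)$ is empty and in particular not syndetic.

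\emph{Sufficiency.} First reduce to the case where a single integer polynomial witnesses intersectivity. By the Remark after the definition of joint intersectivity, there is $p_0\in\Z[x]$ with $p_i=p_0q_i$ and $q_i\in\R[x]$. Because each $p_i$ and $p_0$ have integer coefficients, we may take $q_i\in\Q[x]$. Fix a common denominator $D$ of the $q_i$. Next, use the intersectivity of $p_0$ to choose, for each $r$, residues $n_r$ with $Dr\mid p_0(n_r)$. This gives families of polynomials $n\mapsto p_i(rn+n_r)$ whose values lie in $r\Z$ and vanish "to order $r$".

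The core step is the standard one from \cite{BLL08}. We show that for the nil-factor (more precisely, the Host--Kra/Ziegler characteristic factor for these polynomial averages)
\[
\liminf_{N-M\to\infty}\frac1{N-M}\sum_{n=M}^{N-1}\mu\bigl(A\cap T^{-p_1(n)}A\cap\cdots\cap T^{-p_\ell(n)}A\bigr)\]
restricted to suitable progressions is positive. Uniform (over shifted intervals) Cesàro positivity along a set of $n$ with bounded gaps immediately gives syndeticity.

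Concretely, the steps are as follows.
\begin{enumerate}
\item Use PET induction and van der Corput to show that the uniform averages are controlled by a finite-step pronilfactor $Z_k$, so we may replace $1_A$ by $\E(1_A\mid Z_k)$.
\item Approximate the system on $Z_k$ by a nilsystem $G/\Gamma$. Then use Leibman's theorem that polynomial orbits $g^{p_i(n)}$ are equidistributed on finitely many subnilmanifolds along residue classes mod some $r$.
\item Choose $r$ so that the totally ergodic components are separated. Then, along $n\equiv n_r \pmod r$ (using joint intersectivity to make all $p_i(n)\equiv 0 \pmod{r}$), the polynomial orbit starts at the identity component and stays close to the diagonal for a positive proportion of $n$.
\item Conclude a lower bound of the form $c\,\mu(A)^{\ell+1}$-type positivity uniformly in shifts.
\end{enumerate}

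The main obstacle is step 3. We must show that the intersective condition is exactly what forces the limit object along the chosen progression to "see" the diagonal, i.e. that $\int \tilde f(x)\tilde f(g^{p_1(n)}x)\cdots\,d\mu>0$ in the limit. There is no obstruction from rational eigenvalues because every $p_i(n)\equiv0 \pmod r$. I would handle this by first treating the rational Kronecker part explicitly, as in the necessity example. I would then invoke total ergodicity on each component together with the (totally ergodic) polynomial Szemerédi positivity of Bergelson--Leibman for polynomials with zero constant term after the substitution $n\mapsto rn+n_r$.
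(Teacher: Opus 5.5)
Your necessity argument is the same as the paper's: a rotation on $\Z/r\Z$ with $A=\{0\}$. For sufficiency the paper does not reprove \cite{BLL08}. It applies Theorem \ref{thm:general_thick_szemeredi} with $f_i=p_i$, where property $(\star)$ is automatic for polynomials. This shows $R_A(p_1,\dots,p_\ell)$ is $W$-syndetic for every Hardy $W$ with $\log x\prec W(x)\prec x$, and Lemma \ref{lem:W_syndetic_equivalence} then converts this into syndeticity. The nilsystem positivity underneath, namely that $1_{X^\ell}$ lies in the support of the limit measure, is imported in Theorem \ref{thm:BMR_uniform_distribution_result} from \cite[Theorem 5.7]{BMR24}. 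A direct proof in the style of \cite{BLL08} is a legitimate alternative, and your steps 1--2 are standard citations. However, your resolution of step 3, which you rightly call the crux, does not work.

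\textbf{The gap in step 3.} After restricting to $n\equiv n_r \pmod r$, the polynomials $p_i(rn+n_r)$ (or $p_i(rn+n_r)/r$) do not have zero constant term. Their constant terms $p_i(n_r)$ are divisible by $r$ but are generally nonzero.
\begin{itemize}
\item No choice of $n_r$ fixes this. A jointly intersective family need not have a common integer root, e.g.\ $\{p,2p\}$ with $p(x)=(x^2-13)(x^2-17)(x^2-221)$. So Bergelson--Leibman positivity does not apply.
\item Total ergodicity does not absorb the constants. Write $p_i(rn+n_r)=c_i+\hat p_i(n)$ with $\hat p_i(0)=0$. The problem becomes multiple recurrence for the \emph{different} sets $A,T^{-c_1}A,\dots,T^{-c_\ell}A$, and this can fail outright.
\item Concretely, take an irrational rotation by $\alpha$, which is totally ergodic, and the family $\{n+1,2n+1\}$. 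An arc $A$ of length $\delta\le\|\alpha\|/3$ around $0$ has no return times at all: $\|(n+1)\alpha\|,\|(2n+1)\alpha\|<\delta$ would force $\|\alpha\|<3\delta$.
\item For a connected nilsystem your $r$ is $1$, so the congruence condition is vacuous. Your argument would then equally ``prove'' recurrence for this family. The congruence only removes the finite cyclic (rational spectrum) obstruction; the connected part still sees the offset $(a^{c_1},\dots,a^{c_\ell})\Gamma^\ell$.
\end{itemize}

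\textbf{What is missing.} You need to show that this offset lies in the sub-nilmanifold carrying the limit distribution of $(a^{\hat p_1(n)},\dots,a^{\hat p_\ell(n)})\Gamma^\ell$, i.e.\ that the diagonal point is in the support. Here joint intersectivity must be used in its linear form. Any integer combination $\sum_i h_ip_i$ that is constant is divisible by every $r$, hence is $0$, and this property survives $n\mapsto rn+n_r$. On a totally ergodic torus rotation, this is exactly the statement that every character annihilating the subtorus generated by the non-constant coefficients also annihilates the offset. On a general nilmanifold it has to be carried through Leibman's description of polynomial orbit closures. Without this step the sufficiency direction is not established.
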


We will generalize this version of the Polynomial Szemer\'{e}di Theorem to nonpolynomial functions by considering a generalization of the notion of syndeticity.

\begin{definition}\label{def:W_syndetic}
    Let $W$ be a function with $1\prec W(x)\preceq x$. We say that $S\subset \N$ is $W$\emph{-syndetic} if 
    \begin{equation}
        \liminf_{W(N)-W(M)\to\oo}\frac{1}{W(N)-W(M)}\sum_{n=M}^N\Delta W(n)1_{S}(n)>0.
    \end{equation}
\end{definition}

\begin{theorem}\label{thm:general_thick_szemeredi}
    Let $\mathcal{H}$ be a Hardy field and let $f_1,\dots, f_{\ell}\in \mathcal{H}$. Assume that for each $i\in \{1,\dots, \ell\}$ there exists $d\in \N$ with $1\prec f_i(x)\preceq x^{d}$. Let $W\in \mathcal{H}$ with $\log x\prec  W(x)\prec x$ such that 
    \begin{equation}
    \lim_{x\to\oo}\frac{|f^{(d)}(x)-q(x)|^{1/d}}{W'(x)}>0
    \end{equation}
    for each $q(x)\in \Q[x]$ and each unbounded $f\in \text{Span}\{f_1,\dots, f_{\ell}\}$, where $d = \deg^*(f)$. Define 
    \begin{equation}\label{eq:poly_def}
         \Poly\{f_1,\dots, f_{\ell}\} := \{p(x)\in \R[x]:\lim_{x\to\oo}|f(x)-p(x)|= 0\text{ for some } f\in  \text{Span}\{f_1,\dots, f_{\ell}\}\}.
\end{equation}
Suppose that $\Poly\{f_1,\dots, f_{\ell}\}$ is jointly intersective. Then for any invertible probability measure preserving system $(X,\mathscr{B},\mu,T)$ and $A\in \mathscr{B}$ with $\mu(A)>0$, the set 
\begin{equation}\label{eq:general_Hardy_sz_returns}
    R_A(f_1,\dots, f_{\ell}):=\{n:\mu(A\cap T^{-[ f_1(n)]}A\cap T^{-[ f_2(n)]}A\cap \cdots\cap T^{-[ f_{\ell}(n)]}A )>0\}.
\end{equation}
is $W$-syndetic. In particular, $R_A(f_1,\dots, f_{\ell})$ is nonempty (a fact shown in \cite{BMR24}).
\end{theorem}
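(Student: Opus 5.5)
The strategy is to reduce the $W$-syndeticity of $R_A(f_1,\dots,f_\ell)$ to a positive lower bound for a weighted multiple ergodic average over "short" windows $[M,N]$ with $W(N)-W(M)\to\oo$. Concretely, I will show that
\begin{equation*}
\liminf_{W(N)-W(M)\to\oo}\frac{1}{W(N)-W(M)}\sum_{n=M}^N\Delta W(n)\,\mu\big(A\cap T^{-[f_1(n)]}A\cap\cdots\cap T^{-[f_\ell(n)]}A\big)>0 .
\end{equation*}
Since the summand is bounded by $1$ and vanishes off $R_A(f_1,\dots,f_\ell)$, this gives the claim immediately.

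\textbf{Step 1: change of variables.} I will substitute $u=W(n)$, i.e. $n=W^{-1}(u)$. On a window where $W$ increases by $H$, the weights $\Delta W(n)$ turn the weighted sum into, up to negligible error, a \Cesaro\ average over $u$ in an interval of length $H$. The functions become $g_i=f_i\circ W^{-1}$. Because $\log x\prec W\prec x$, the inverse $W^{-1}$ is a Hardy function of superlinear but subexponential growth. The $g_i$ lie in a common Hardy field (by the composition facts listed above, after passing to a maximal Hardy field containing $W$ and the $f_i$). The rounding $[f_i(W^{-1}(u))]$ is then a function of $[u]$ plus a correction, because consecutive integers $n$ correspond to increments in $u$ of size $W'(n)$. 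This is exactly where the hypothesis enters. It says that the $d$-th derivative of every unbounded $f$ in the span, minus any rational polynomial, dominates $W'(x)^d$. After the substitution, $g=f\circ W^{-1}$ therefore has $d$-th derivative not tending to $0$ modulo rational polynomials, that is, $g$ still "oscillates at scale $1$" in $u$.

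\textbf{Step 2: uniform characteristic factors.} I will apply a uniform-over-windows (van der Corput / PET) argument to show that the nilfactors (Host--Kra factors) are characteristic for the short-window weighted averages. The averages are then controlled by equidistribution of the sequence $([f_1(n)],\dots,[f_\ell(n)])$ on nilmanifolds, with respect to the weighted averages over windows. For this I will use the weighted Boshernitzan criterion announced in the abstract (Theorem~\ref{thm:W_ud} and its multidimensional/nil variant). It shows that a Hardy sequence $h(n)$ is $W$-uniformly distributed on short windows exactly when $|h(x)-q(x)|$ is large compared with the appropriate power of $W$ for all $q\in\Q[x]$. The derivative condition in the theorem is the translation of that criterion into the form stated.

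\textbf{Step 3: reduce to polynomials and use intersectivity.} On the nilsystem, I will split each $f$ in the span into its polynomial part $p\in\Poly\{f_1,\dots,f_\ell\}$ and the remainder. Equidistribution kills the genuinely non-polynomial, non-rational directions, and what is left is an average along the jointly intersective family $\Poly\{f_1,\dots,f_\ell\}$, restricted to residue classes $n\equiv n_0 \bmod r$. Following the standard approach for the Polynomial Szemer\'edi Theorem (Theorem~\ref{thm:polynomial_sz}), I will pass to a multiple $r\Z$ along which the polynomials become integer-valued and vanish modulo $r$. Then Bergelson--Leibman--Lesigne's uniform positivity of the polynomial multiple recurrence averages gives a lower bound $c(\mu(A))>0$ that is uniform over windows. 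Weighting by $\Delta W$ over long windows preserves this bound, since on each window $\Delta W$ is nearly constant at scale $1$.

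I expect the main obstacle to be Step 2. Making the PET induction and the nilmanifold equidistribution \emph{uniform in the window} $[M,N]$, with only $W(N)-W(M)\to\oo$, requires uniform control of the van der Corput differencing for Hardy functions under a weight with $\Delta W$ slowly varying. I would first try to prove that for fixed $u$ the differenced functions $g(u+h)-g(u)$ remain in a Hardy field with the same derivative lower bounds, so that the induction runs with constants independent of the window.
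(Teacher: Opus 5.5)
\textbf{Verdict: genuine gap.} Your opening reduction is correct and is the paper's: the statement is Theorem~\ref{thm:uniform_main}(iii) with $h_i=1_A$, and the bound $\mu(A\cap T^{-[f_1(n)]}A\cap\cdots)\le 1_{R_A}(n)$ finishes it as you say. The gap is in Step~3, where you misidentify where positivity comes from.

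Equidistribution on a nilsystem does not remove the non-polynomial directions and leave a polynomial average behind. It produces a nontrivial limit measure $\nu_a$ on $X^{\ell}$ for the sequence $(a^{[f_1(n)]},\dots,a^{[f_\ell(n)]})\Gamma^{\ell}$. The limit of your average is then an integral against translates of $\nu_a$, which is not a polynomial multiple-recurrence average, so the Bergelson--Leibman--Lesigne bound does not apply to it.

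A test case: take $f_i(n)=i\,n^{3/2}$ for $1\le i\le \ell$ and $W(x)=x^{1/2}$. Every hypothesis holds and $\Poly\{f_1,\dots,f_\ell\}=\{0\}$, so your Step~3 leaves only the trivial family. Yet for $n$ with $n^{3/2}$ within $1/(2\ell)$ of an integer, the intersection is $A\cap T^{-m}A\cap\cdots\cap T^{-\ell m}A$ with $m=[n^{3/2}]$. So the conclusion is a genuine Szemer\'edi-type statement, and this is the kind of family the paper feeds into the theorem to get Theorem~\ref{thm:thick_szemeredi}.

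The missing idea is to use joint intersectivity to show $1_{X^\ell}\in\operatorname{supp}\nu_a$. The paper does this in Theorem~\ref{thm:BMR_uniform_distribution_result}:
\begin{itemize}
\item it identifies $\nu_a$ with the measure of \cite[Theorem 5.7]{BMR24} for a slower weight $\widetilde W$; the two limits agree by Theorem~\ref{thm:mikey_thm};
\item it gets positivity for continuous $g\ge 0$ by dominated convergence and continuity;
\item it handles rational polynomial parts by passing to residue classes (Lemma~\ref{lem:variant_of_modulo_averages});
\item it lifts to arbitrary systems through Theorem~\ref{thm:generalized_BMR_4.2} and Lemma~\ref{lem:bmr_sec_5}.
\end{itemize}

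Steps~1 and~2 also do not work as written. The substitution $u=W(n)$ does not give a Ces\`{a}ro average over integers $u$. The points $W(n)$ have spacing $\Delta W(n)\to 0$, so you get a continuous average in $u$ of a step function. Moreover, $[f_i(W^{-1}(u))]$ is not close to a function of $[u]$. A unit $u$-interval contains about $1/W'(n)$ consecutive integers, and across them $[f_i(n)]$ takes many values: for $f=x^{3/2}$ and $W=x^{3/4}$ it jumps by about $n^{1/2}$ at each step. That oscillation is exactly what the hypothesis encodes.

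Step~2, nilmanifold equidistribution uniformly over windows, is the core of the theorem and you only assert it. Theorem~\ref{thm:W_ud} and its short-window form, Corollary~\ref{cor:gen_wd_condition}, are one-dimensional statements on the circle and do not give a nil version.

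The paper never works on short windows. Since $\log U\prec W$ forces $(\log U)'\prec W'$, your hypothesis gives property~(\ref{eq:W_compatible}) for every $U\in\mathcal{H}$ with $1\prec\log U\prec W$. Hence Theorem~\ref{thm:main} applies to each $\E^U_{n\le N}$, and Theorem~\ref{thm:mikey_thm} (the equivalence (1)$\iff$(2) with $\log V=W$) converts this into the uniform $W$-average statement.

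Even for $\E^U$, the nil equidistribution (Theorem~\ref{thm_generalized_richter_D}) needs a new device, because compatibility with the weight is not inherited by derivatives. Richter's argument is rerun with hypothesis~(4) of Theorem~\ref{thm:generalized_G}, which is checked via Theorem~\ref{thm:W_ud} on elements of $\text{Span}\{f_1,\dots,f_\ell\}$, together with the modified van der Corput Theorem~\ref{thm:funny_vdc}.
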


We show in Section \ref{section:applications} that the nontrivial direction of Theorem \ref{thm:polynomial_sz} follows immediately as a special case of Theorem \ref{thm:general_thick_szemeredi} by noting that (\ref{eq:polynomial_sz}) is $W$-syndetic for each $W$ with $\log x\prec \log W(x)\prec x$, from which it follows that (\ref{eq:polynomial_sz}) is syndetic. Additionally, we will prove Theorem \ref{thm:thick_szemeredi} in Section \ref{section:applications} by combining Theorem \ref{thm:general_thick_szemeredi} with a fact about weighted uniform distribution from Section \ref{section:uniform_distribution}.

Similarly to the treatment of Theorem \ref{thm:polynomial_sz} in \cite{BLL08}, the proof of our main result can be reduced to the case when $(X,\mathscr{B},\mu,T)$ is a nilsystem (Definition \ref{def:nilsystem}). The arguments in \cite{BLL08} are phrased in terms of uniform \Cesaro averages, which are well suited to working with polynomial sequences. In this paper, we utilize more technical nilsystem arguments developed in \cite{Richter23} and \cite{BMR24}, which allow us to use general weighted averages of the form (\ref{eq:W_avg_def}) when dealing with non-polynomial functions. Using these methods we obtain our main result, Theorem \ref{thm:main} below, with Theorem \ref{thm:general_thick_szemeredi} as a special case. In order to state Theorem \ref{thm:main}, we need another definition and some preliminary facts.

 \begin{definition} \label{def:W_compatible}
 Let $\mathcal{H}$ be a Hardy field and let $W,f\in \mathcal{H}$. Suppose that $1\prec \log W(x)\prec x$ and that $f$ is subpolynomial. Let $d=\deg^*(f)$. We say that $W$ is \emph{compatible} with $f$ if either $d=0$ or $d>0$ and
 \begin{equation}\label{eq:W_compatible}
     \lim_{x\to\oo}\frac{|f^{(d)}(x)-p(x)|^{1/d}}{(\log W)'(x)}=\oo.\tag{$\star$}
 \end{equation}
for all $p(x)\in \Q[x]$. Let $f_1,\dots, f_{\ell}\in \mathcal{H}$. We say that $\{f_1,\dots, f_{\ell}\}$ satisfies property (\ref{eq:W_compatible}) if $W$ is compatible with each $f\in \text{Span}\{f_1,\dots, f_{\ell}\}$.
\end{definition}
\begin{example}
    Let $f(x) = x^c$ for $c\in (0,\oo)\setminus \N$. Then $W$ is compatible with $f$ so long as $\log W(x)$ grows slower than $x^{c/\floor{c}}$. For instance, $W(x) = e^{x^{(3/4-\epsilon)}}$ is compatible with $f(x)=x^{3/2}$ for each $\epsilon>0$.
\end{example}

Using the following remark, observe that $W(x)=x$ is compatible with a subpolynomial Hardy function $f$ with $\deg^*(f)>0$ if and only if $f$ satisfies Boshernitzan's criterion for uniform distribution modulo 1, namely $\lim_{x\to\oo}\frac{|f(x)-p(x)|}{\log x}= \oo$ for all $p(x)\in \Q[x]$
 \cite[Theorem 1.3]{Boshernitzan}.

\begin{remark}\label{remark:WP}
In \cite[Section 6]{BMR24} it is said that $W,f_1,\dots, f_{\ell}$ satisfy property (WP) if each $f_i$ is subpolynomial, $1\prec W(x)\preceq x$, and 
\begin{align*}
    &\text{ For each }f\in \text{Span}\{f_1,\dots, f_{\ell}\} \text{ and } p(x)\in \R[x],\text{ either } |f(x)-p(x)|\preceq 1\\ &\text{ or } \log W(x)\prec |f(x)-p(x)|\tag{WP}.
\end{align*}
We observe that (WP) holds for any $f$ with $\deg^*(f)\geq 2$. If $1\prec W(x)\preceq x$ (in fact, if $1\prec \log W(x)\preceq \log x$) then property (\ref{eq:W_compatible}) also holds for any $f$ with $\deg^*(f)\geq 2$, which can be verified using L'H\^opital's rule. Indeed, for $\deg^*(f)= 2$ and $1\prec \log W(x)\preceq \log x$, and for any $p(x)\in \Q[x]$, 
\begin{align*}
    &\lim_{x\to\oo}\frac{|f''(x)-p(x)|^{1/2}}{(\log W)'(x)} = \lim_{x\to\oo}\frac{|f''(x)-p(x)|^{1/2}}{x^{-1}}\cdot \frac{x^{-1}}{(\log W)'(x)}\\ 
    =& \left(\lim_{x\to\oo}\frac{|f''(x)-p(x)|}{x^{-2}}\cdot \left(\frac{x^{-1}}{(\log W)'(x)}\right)^2\right)^{1/2}.
\end{align*}
We know that 
\begin{equation*}
    \lim_{x\to\oo}\frac{|f''(x)-p(x)|}{x^{-2}}=\lim_{x\to\oo}\frac{|f(x)-P(x)|}{\ln(x)} =\oo,
\end{equation*}
where $P''(x)=p(x)$, since $\deg^*(f)=2$, and $\lim_{x\to\oo}\frac{x^{-1}}{(\log W)'(x)}>0$ by the assumption that $\log W(x)\preceq \log x$. This proves the case when $\deg^*(f)=2$ and the case when $\deg^*(f)>2$ follows since $|f^{(3)}(x)-p(x)|^{1/3}\succ |(f')^{(2)}-p(x)|^{1/2}$ whenever $|f^{(3)}(x)-p(x)|$ tends to $0$. This shows that property (\ref{eq:W_compatible}) holds when $\deg^*(f)\geq 2$ and $1\prec \log W(x)\preceq \log x$. 
Additionally, note that property (\ref{eq:W_compatible}) and (WP) are the same condition when $\deg^*(f)\leq 1$. Thus, it follows that in the case $1\prec W(x)\preceq x$, property (WP) as defined in \cite{BMR24} is equivalent to property (\ref{eq:W_compatible}) defined above.
\end{remark}

We are now able to state our main result, Theorem \ref{thm:main}. 

\begin{maintheorem}\label{thm:main}
     Let $\mathcal{H}$ be a Hardy field. Let $f_1,\dots, f_{\ell}\in \mathcal{H}$ be  subpolynomial and let $W\in \mathcal{H}$ satisfy $1\prec \log W(x)\prec x$. Suppose that $\{f_1,\dots, f_{\ell}\}$ satisfies property (\ref{eq:W_compatible}). Let $(X,\mathscr{B},\mu,T)$ be an invertible measure preserving system.
     \begin{enumerate}[label = (\roman*)]
         \item For each $h_1,\dots, h_{\ell}\in L^{\oo}(X)$,
         \begin{equation}\label{eq:Thm_A_1}
             \lim_{N\to\oo}\E_{n\leq N}^W(T^{\round{f_1(n)}}h_1\cdots T^{\round{f_{\ell}(n)}}h_{\ell})
         \end{equation}
         exists in $L^2(X)$.
        \item \label{condition_1}Suppose that $\Poly(f_1,\dots, f_{\ell})\cap \Z[x] = \{0\}$. 
        Then for each $h_1,\dots, h_{\ell}\in L^{\oo}(X)$,
        \begin{equation}\label{eq:Thm_A_2}
             \lim_{N\to\oo}\E_{n\leq N}^W(T^{\round{f_1(n)}}h_1\cdots T^{\round{f_{\ell}(n)}}h_{\ell}) = \prod_{i=1}^{\ell}h_i^*
         \end{equation}
         where $h_i^*$ is the projection in $L^2(X)$ of $h_i$ onto the subspace of $T$ invariant functions.
         \item\label{condition_2} Suppose that $\Poly\{f_1,\dots, f_{\ell}\}$ is jointly intersective.
         Then for any $A\in \mathscr{B}$ with $\mu(A)>0$,
    \begin{equation}\label{eq:Thm_A_3}
             \lim_{N\to\oo}\E_{n\leq N}^W\mu(A\cap T^{-\round{f_1(n)}}A\cap \cdots \cap T^{-\round{f_{\ell}(n)}}A) >0.
         \end{equation}
     \end{enumerate}
\end{maintheorem}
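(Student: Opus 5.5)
The plan follows the standard route of \cite{BLL08}, \cite{Richter23} and \cite{BMR24}: reduce to nilsystems via characteristic factors, then prove a weighted equidistribution statement on nilmanifolds.

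\textbf{Step 1: reduction to nilsystems.} I will first show that the Host--Kra factor $Z_k$, for some $k$ depending only on $\ell$ and the degrees of the $f_i$, is characteristic for the weighted averages (\ref{eq:Thm_A_1}). Concretely, if some $h_i$ is orthogonal to $Z_k$, then
\[
\limsup_{N\to\oo}\Big\|\E^W_{n\leq N}\prod_{i} T^{\round{f_i(n)}}h_i\Big\|_{L^2}=0 .
\]
To get this, I apply a weighted van der Corput lemma. Because $\log W\prec x$, we have $\Delta W(n+h)/\Delta W(n)\to 1$ for each fixed $h$, so the usual differencing argument goes through for $\E^W$. I then run a PET induction over the Hardy functions $f_i$.

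The one new ingredient is a change of variables. By (\ref{eq:W_compatible}), the sequences $f_i$ vary polynomially on short intervals $[N,N+L(N)]$ with $(\log W)'(N)^{-1}\prec L(N)$. The weight $\Delta W$ is essentially constant on such intervals. So $\E^W$ can be written as an average, taken with respect to $W$, of uniform Ces\`aro averages over these short intervals, and on each interval $f_i$ is approximated by a Taylor polynomial. This is exactly the mechanism of \cite{Richter23} and \cite{BMR24}, now with $L(N)$ dictated by $W$. Finally I approximate $Z_k$ by inverse limits of nilsystems, which reduces everything to $X=G/\Gamma$, $T$ a niltranslation, and $h_i$ continuous.

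\textbf{Step 2: weighted equidistribution on nilmanifolds (the main obstacle).} On a nilsystem I need to compute $\E^W_{n\le N}F(g^{\round{f_1(n)}}x,\dots,g^{\round{f_\ell(n)}}x)$. I would prove that the orbit $\big(g_1^{\round{f_1(n)}}\cdots\big)$ is $W$-equidistributed in an explicit subnilmanifold. That subnilmanifold is determined by $\Poly\{f_1,\dots,f_\ell\}$, as in the polynomial case but with the nonpolynomial parts contributing full tori.

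The argument first projects to the horizontal torus, using a Leibman-type criterion for weighted averages. There it suffices to show that
\[
\E^W_{n\le N}\, e\big(f(n)\big)\to 0
\]
for every $f\in\text{Span}\{f_i\}$ not differing from an element of $\Q[x]$ by something tending to $0$. This is the weighted Boshernitzan criterion (Theorem \ref{thm:W_ud}), whose hypothesis is exactly (\ref{eq:W_compatible}). The rounding $\round{\cdot}$ is handled as in \cite{BMR24}, by lifting to $\R$-actions and using the Hardy-field structure.

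Part (i) then follows because the limit is an integral over that subnilmanifold. For part (ii), the hypothesis $\Poly\cap\Z[x]=\{0\}$ makes the relevant subnilmanifold the full product of the ergodic components of the $(G/\Gamma)^\ell$ fibres. The limit is therefore $\prod h_i^*$, first on the Kronecker/nil level and then in general by Step 1.

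\textbf{Step 3: positivity (iii).} Jointly intersective means $\Poly\{f_i\}\subset p_0(x)\R[x]$ with $p_0$ intersective. I would follow \cite{BLL08}: restrict $n$ to progressions $r\N+n_r$ with $r\mid p_0(n_r)$, for $r$ highly divisible. Along such $n$, the polynomial parts land near the identity component of the nilsystem, while the remaining parts equidistribute by Step 2. The $W$-average over a progression is comparable to $1/r$ times the full $W$-average, again using $\Delta W(n+h)\sim\Delta W(n)$.

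This yields a lower bound of the form $\frac{1}{r}\int 1_A\cdot\prod(\text{averages of }1_A\text{ over subtori close to }0)\,d\mu$. This quantity tends to something of size at least $\mu(A)^{\ell+1}/r>0$ by a continuity argument, as in \cite[Section 6]{BLL08}. The delicate points are Step 2 with rounding and the uniformity of the short-interval Taylor approximation when $W$ grows as fast as $e^{x^{1-\epsilon}}$; I expect the latter to be the real difficulty.
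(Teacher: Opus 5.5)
Your overall architecture is the same as the paper's. It uses seminorm control via a weighted van der Corput lemma and PET (Theorem \ref{thm:generalized_BMR_4.2}), transfer to nilsystems (Lemma \ref{lem:bmr_sec_5}), a horizontal-character criterion whose input is Theorem \ref{thm:W_ud}, removal of $\round{\cdot}$ as in \cite{BMR24}, and residue classes for (iii). The genuine gap is in Step~1.

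First, your ``new ingredient'' has its inequality backwards.
\begin{itemize}
\item Since $\Delta W(N-m)/W(N)\approx(\log W)'(N)\,e^{-m(\log W)'(N)}$, the average $\E^W_{n\le N}$ is essentially an exponential kernel of width $(\log W)'(N)^{-1}$ ending at $N$.
\item Across an interval of length $L(N)\succ(\log W)'(N)^{-1}$, the weight $\Delta W$ therefore changes by the unbounded factor $e^{L(N)(\log W)'(N)}$. So $\E^W$ is not an average of flat Ces\`aro averages over such intervals.
\item The workable windows are $|f^{(d)}(N)|^{-1/d}\prec L(N)\prec(\log W)'(N)^{-1}$ for $x^{d-1}\prec f\prec x^d$. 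Property (\ref{eq:W_compatible}) says exactly that this range is nonempty.
\item Closeness of $f$ to a Taylor polynomial on the window is a separate upper bound on $L$, coming from $f^{(d+1)}$ rather than from (\ref{eq:W_compatible}).
\end{itemize}
This is how the paper uses (\ref{eq:W_compatible}) in Corollary \ref{cor:positive_result_d>2}. It combines Theorem \ref{thm:d>1_positive_thm} with $(3)\Rightarrow(2)$ of Theorem \ref{thm:mikey_thm}, applied to an auxiliary $V$ with $(\log V)'\asymp|f^{(d)}|^{1/d}$.

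Second, and more seriously, PET with bounded shifts (Lemma \ref{lem:vdc}) does not stay inside the class satisfying (\ref{eq:W_compatible}). Take $\ell=1$, $f(x)=x^{3/2}$ and $\log W(x)=x^{0.7}$.
\begin{itemize}
\item $W$ is compatible with $f$, because $|f''|^{1/2}\asymp x^{-1/4}\succ x^{-0.3}\asymp(\log W)'$.
\item The differenced function $F(x)=f(x+h)-f(x)=\tfrac32hx^{1/2}+O(h^2x^{-1/2})$ is not compatible, because $|F'|/(\log W)'\asymp x^{-0.2}\to0$.
\item Worse, $F$ moves by only $O(hN^{-0.2})$ across the window $[N-N^{0.3},N]$ that carries the $W$-mass. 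Hence $\round{f(n+h)}-\round{f(n)}$ is essentially constant there.
\item Now take a rigid weakly mixing system and $N$ with $\tfrac32\sqrt N$ within $N^{-1/2}$ of a rigidity time $q_j$. This forces $\round{f(n+h)}-\round{f(n)}=hq_j$ on most of the $W$-mass. The right-hand side of (\ref{eq:vdc_eq}) is then at least $\|h_1\|_{L^2}^2$ for every $H$, even though $\|h_1\|_s=0$.
\end{itemize}
So bounded-shift differencing gives no cancellation. Neither the induction hypothesis nor the sublinear base case (Theorem \ref{thm:BMR24_sublinear}) applies to the differenced family.

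You cannot close this by deferring to the paper. Its proof of Theorem \ref{thm:generalized_BMR_4.2} runs exactly this bounded-shift PET and asserts that $\mathscr F'$ inherits (\ref{eq:W_compatible}) from $\mathscr F$. The example above contradicts that assertion. The paper itself remarks, after Definition \ref{def:property_P}, that (\ref{eq:W_compatible}) is not closed under differentiation.

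A correct Step~1 needs one of two things. Either difference at a scale that grows with $N$, comparable to $|f^{(d)}(N)|^{-1/d}$. Or carry out a genuine short-interval reduction on the windows above, combined with seminorm estimates for polynomial families that are uniform in real coefficients.

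Similarly, the ``Leibman-type criterion'' in Step~2 is not available off the shelf for Hardy sequences. In the paper it is Theorem \ref{thm:generalized_G}. That result requires first rewriting the family, via Lemma \ref{lem:richter_A.6}, as one closed under differentiation. It then replaces compatibility of every $g_j$ by condition \ref{condition_(4)}, for the same reason as above.
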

The condition that $\Poly(f_1,\dots, f_{\ell})\cap \Z[x] = \{0\}$ is equivalent to the condition that $\lim_{x\to\oo}|f(x)-p(x)|= \oo$ for any $p(x)\in \Z[x]$ and any nonzero $f\in \text{Span}\{f_1,\dots, f_{\ell}\}$. This observation, along with Remark \ref{remark:WP}, shows that Theorem \ref{thm:main} confirms \cite[Conjecture 6.4]{BMR24}.
\begin{theorem}[{\cite[Conjecture 6.4]{BMR24}}]\label{thm:BMR_conj}
    Let $\mathcal{H}$ be a Hardy field. Let $f_1,\dots, f_{\ell}\in \mathcal{H}$ be  subpolynomial and let $W\in \mathcal{H}$ satisfy $1\prec W(x)\preceq x$. Suppose that $\{f_1,\dots, f_{\ell}\}$ satisfies (WP) (see Remark \ref{remark:WP}). Then each part of Theorem \ref{thm:main} holds.
\end{theorem}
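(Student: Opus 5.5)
The plan is to deduce Theorem \ref{thm:BMR_conj} directly from Theorem \ref{thm:main}. We check that every $W$ allowed in Theorem \ref{thm:BMR_conj}, together with any family satisfying (WP), meets the hypotheses of Theorem \ref{thm:main}.

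The first step concerns the growth of $W$. From $1\prec W(x)\preceq x$ we get $\log W(x)\to\oo$ and $\log W(x)\leq \log x + O(1)$. Hence $1\prec \log W(x)\preceq \log x\prec x$, so $W$ satisfies the growth hypothesis $1\prec\log W(x)\prec x$ of Theorem \ref{thm:main}. The inequality $\log W\preceq \log x$ is stated with logarithms, so it can be fed directly into the L'H\^opital computation of Remark \ref{remark:WP}.

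The second step is to show that (WP) implies property (\ref{eq:W_compatible}) for every $f\in\text{Span}\{f_1,\dots,f_\ell\}$. We split according to $d=\deg^*(f)$.
\begin{enumerate}[label=(\alph*)]
\item If $d=0$, compatibility holds by definition.
\item If $d\geq 2$, Remark \ref{remark:WP} shows that (\ref{eq:W_compatible}) holds whenever $1\prec \log W\preceq\log x$. This uses L'H\^opital together with the fact that $|f-P|\succ \log x$, where $P$ is a rational polynomial with $P''=p$. For $d>2$ we pass to derivatives via the stated comparison $|f^{(3)}-p|^{1/3}\succ|(f')''-p|^{1/2}$.
\item If $d=1$, (\ref{eq:W_compatible}) asks that $|f'(x)-p(x)|/(\log W)'(x)\to\oo$ for all $p\in\Q[x]$. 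Because $d=1$, $f'-p$ is nonconstant-free in the relevant sense only when $p$ is the constant $c=\lim f'$. For $p\neq c$ the ratio tends to $\oo$ trivially, since $(\log W)'\to 0$; this holds because $\log W\preceq\log x$ and everything lies in a Hardy field. For $p=c$, the case $c\notin\Q$ is again trivial. For $c\in\Q$, we apply L'H\^opital to $|f(x)-cx-b|/\log W(x)$. By (WP), applied with $p(x)=cx+b$, this ratio tends to $\oo$, except when $|f-cx-b|\preceq 1$. That exceptional case would make $\deg^*(f)=0$, provided we can take $b\in\Q$; otherwise we can only conclude boundedness around an irrational constant.
\end{enumerate}
This last subtlety is where care is needed. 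The definition of $\deg^*$ uses $\Q[x]$, while (WP) ranges over $\R[x]$ and (\ref{eq:W_compatible}) applies to $f^{(d)}$, which discards constant terms. So in the exceptional case $f=cx+b+o(1)$ with $c\in\Q$ and $b$ irrational, $f-cx$ is bounded but not within $o(1)$ of a rational polynomial. We must then check what $\deg^*$ means for it and whether (\ref{eq:W_compatible}) is even required. I would follow the paper's convention: by Definition \ref{def:degree}, $\deg(f-cx)=0$ (boundedness means $\preceq x^0$), so $d=0$ and compatibility is automatic.

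The final step transfers the conclusions. Once the hypotheses of Theorem \ref{thm:main} are verified, parts (i), (ii) and (iii) of Theorem \ref{thm:main} give exactly the three assertions of Theorem \ref{thm:BMR_conj}. The hypotheses in (ii) and (iii) are stated identically in both, as also observed after Theorem \ref{thm:main}.

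I expect the main obstacle to be the $d=1$ and $d\geq2$ equivalence bookkeeping, that is, making rigorous the L'H\^opital passages in a Hardy field. The first thing I would try is to use that all quotients involved are Hardy functions, so their limits exist and L'H\^opital applies without further justification.
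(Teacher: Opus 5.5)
Your proposal is correct and is essentially the paper's argument: the paper obtains this statement from Theorem~\ref{thm:main} by observing (Remark~\ref{remark:WP}) that when $1\prec W(x)\preceq x$, so that $1\prec\log W(x)\preceq\log x$, property (WP) is equivalent to property~(\ref{eq:W_compatible}) --- automatically for $\deg^*(f)\geq 2$ via L'H\^opital, and by direct comparison for $\deg^*(f)\leq 1$, which is exactly your case split. The only tidying needed is in your case $d=1$: first subtract some $q\in\Q[x]$ with $f-q\preceq x$ (both (WP) and compatibility are invariant under this), so that $\lim f'$ really is a finite constant; then take $b=0$, which removes the irrational-constant worry, since $|f-cx|\preceq 1$ already forces $\deg^*(f)=0$.
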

Additionally, it was noted in \cite{BMR24} that taking $W(x)=x$ in Theorem \ref{thm:BMR_conj} gives \cite[Theorem 1.12]{TSINAS2023a} as a special case.
\begin{theorem}[{\cite[Theorem 1.12]{TSINAS2023a}}]
    Let $\mathcal{H}$ be a Hardy field and let $f_1,\dots, f_{\ell}\in \mathcal{H}$ be subpolynomial. Suppose that if $f$ is a nontrivial linear combination of $f_1,\dots, f_{\ell}$ then $f$ satisfies $\lim_{x\to\oo}\frac{|f(x)-q(x)|}{\log x}=\oo$ for all $q(x)\in \Z[x]$. Then for any ergodic measure preserving system and any $h_1,\dots, h_{\ell}\in L^{\oo}(X)$, the limit
    \[
    \lim_{N\to\oo}\frac{1}{N}\sum_{n=1}^N T^{\round {f_1(n)}}h_1\cdots T^{\round {f_{\ell}(n)}}h_{\ell}
    \] 
    converges to $\prod_{i=1}^{\ell}\int h_i~d\mu$ in $L^2$.
\end{theorem}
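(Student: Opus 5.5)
The plan is to deduce this statement from Theorem \ref{thm:BMR_conj}, which is the case $1\prec W(x)\preceq x$ under (WP), by taking $W(x)=x$ and identifying the hypotheses. First, the hypothesis is that every nontrivial linear combination $f$ of $f_1,\dots,f_\ell$ satisfies $|f(x)-q(x)|/\log x\to\infty$ for all $q\in\Z[x]$. I will show this implies (WP) for $W(x)=x$, that is, for each $f\in\text{Span}\{f_1,\dots,f_\ell\}$ and each $p\in\R[x]$, either $|f-p|\preceq 1$ or $\log x\prec|f-p|$.

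For $f=0$ we are in the first alternative when $p$ is constant. When $p$ is nonconstant, $|p|\succ\log x$. For $f\ne0$ the situation is less direct, because the hypothesis only controls $q\in\Z[x]$ while (WP) involves real $p$. Since $f-p$ lies in a Hardy field (we may enlarge $\mathcal H$ to contain $\R[x]$), the limit of $|f-p|/\log x$ exists in $[0,\infty]$. Suppose it is finite but $|f-p|$ is unbounded. Then $f(x)=p(x)+g(x)$ with $1\prec g\preceq\log x$.

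The way out is to route the argument through Theorem \ref{thm:main} directly rather than through (WP), using Remark \ref{remark:WP}. That remark shows (WP) is equivalent to property (\ref{eq:W_compatible}) when $W(x)=x$, and (\ref{eq:W_compatible}) only involves $p\in\Q[x]$. It also reduces, for $\deg^*(f)\le1$, to Boshernitzan's condition over $\Q[x]$, and it is automatic for $\deg^*(f)\ge2$. It therefore suffices to check (\ref{eq:W_compatible}) for $W(x)=x$ against rational polynomials.

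The key step is to pass from the hypothesis over $\Z[x]$ to rationals. Suppose $|f-p|\preceq\log x$ for some $p\in\Q[x]$ with $f-p$ unbounded. Let $r$ be a common denominator of the coefficients of $p$. Then $rf\in\text{Span}$ is a nontrivial combination and $rp\in\Z[x]$, so $|rf-rp|/\log x\to\infty$, a contradiction. The same scaling handles $\deg^*(f)=1$ through the L'H\^opital computation in Remark \ref{remark:WP}, applied with $(\log W)'(x)=1/x$. Hence $\{f_1,\dots,f_\ell\}$ satisfies (\ref{eq:W_compatible}) with $W(x)=x$, and $1\prec\log x\prec x$ holds.

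Next, Theorem \ref{thm:main}(ii) requires $\Poly(f_1,\dots,f_\ell)\cap\Z[x]=\{0\}$. If some nonzero $f$ in the span had $|f-q|\to0$ for some $q\in\Z[x]$, that would directly contradict $|f-q|/\log x\to\infty$. If $f=0$, then $p=0$. So part (ii) applies with $\E^W=\E$, the \Cesaro average, and the $L^2$ limit is $\prod h_i^*$. Finally, ergodicity means the $T$-invariant functions are constants, so $h_i^*=\int h_i\,d\mu$.

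I expect the main obstacle to be the careful verification that property (\ref{eq:W_compatible}) for $W(x)=x$ is exactly the scaled Boshernitzan condition in the degree-one case. In particular, one has to track the derivative formulation $|f'-p|/(1/x)$ against $|f-P|/\log x$ via L'H\^opital, and keep the rational versus integer bookkeeping straight. The rest follows directly from Theorem \ref{thm:main}.
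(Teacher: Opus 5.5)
Your argument is correct, and it is essentially the paper's argument. You specialize Theorem~\ref{thm:main}(ii) to $W(x)=x$, so that $\E^W$ is the Ces\`aro average. You then check that property~(\ref{eq:W_compatible}) holds and that $\Poly\{f_1,\dots,f_\ell\}\cap\Z[x]=\{0\}$, and use ergodicity to get $h_i^*=\int h_i\,d\mu$.

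\emph{The sentence to delete.} Remove the claim that Remark~\ref{remark:WP} makes (WP) equivalent to property~(\ref{eq:W_compatible}) for $W(x)=x$. That equivalence is false, and the worry you raised just before it is genuine. Take $f_1(x)=\sqrt{2}x^2+\sqrt{3}x+\log x$.
\begin{itemize}
\item Every nonzero multiple $cf_1$ satisfies the hypothesis. No $c\neq 0$ makes both $c\sqrt{2}$ and $c\sqrt{3}$ integers, so $|cf_1-q|\succeq x$ for every $q\in\Z[x]$.
\item Yet $|f_1-p|=\log x$ for $p=\sqrt{2}x^2+\sqrt{3}x$, so (WP) fails.
\end{itemize}
Consequently Theorem~\ref{thm:BMR_conj}, with (WP) as stated in Remark~\ref{remark:WP}, does not literally apply. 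The paper phrases this deduction through that theorem.

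\emph{Why your proof still works.} It never actually uses the equivalence, because property~(\ref{eq:W_compatible}) only tests $p\in\Q[x]$.
\begin{itemize}
\item When $\deg^*(f)=1$: your common-denominator scaling gives Boshernitzan's condition over $\Q[x]$. L'H\^opital then gives $x|f'(x)-p(x)|\to\oo$.
\item When $d=\deg^*(f)\geq 2$: compatibility with $W(x)=x$ is automatic. Given $p\in\Q[x]$, either $f^{(d)}-p$ is unbounded, or one adjusts $q\in\Q[x]$ by a rational multiple of $x^d$. In the second case $f^{(d)}-p=(f-q)^{(d)}$ with $x^{d-1}\prec|f-q|$, so $|(f-q)^{(d-1)}|\to\oo$. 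This is incompatible with $x^d|(f-q)^{(d)}|$ staying bounded, since $x^{-d}$ is integrable for $d\geq 2$.
\end{itemize}
So go straight to Theorem~\ref{thm:main}; routing through property~(\ref{eq:W_compatible}) rather than (WP) is precisely what makes the deduction valid.

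\emph{One further point.} Theorem~\ref{thm:main} is stated for invertible systems. For a general ergodic system, pass to the natural extension, which is still ergodic.
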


Next, we give an equivalent form of Theorem \ref{thm:main} in the style of the results appearing in \cite{Bergelson_Moreira_Richter_2020} by using \emph{uniform $W$-averages}, which are averages of the form 
\begin{equation} 
\E_{\text{unif}}^Wx_n:=\lim_{W(N)-W(M)\to\oo}\frac{1}{W(N)-W(M)}\sum_{n=M}^N\Delta W(n)x_n
\end{equation}
where the limit is taken over all sequences of intervals $[M_k,N_k]$ for which $\lim_{k\to\oo}W(N_k)-W(M_k)=\oo$. In fact, we will use the following Theorem \ref{thm:uniform_main} to recover each of the main theorems in \cite{Bergelson_Moreira_Richter_2020} with some added assumptions (see Theorem \ref{thm:BMR_1}).

\begin{maintheorem}\label{thm:uniform_main}
     Let $\mathcal{H}$ be a Hardy field, let $W\in \mathcal{H}$ satisfy $
     \log x\prec  W(x)\prec x$, and let $f_1,\dots, f_{\ell}\in \mathcal{H}$ be subpolynomial. Suppose that 
     \begin{equation}\label{eq:thm_B_condition}
         \lim_{x\to\oo}\frac{|f^{(d)}(x)-p(x)|^{1/d}}{W'(x)}>0
     \end{equation}
     for each $p(x)\in \Q[x]$ and each unbounded $f\in \text{Span}\{f_1,\dots, f_{\ell}\}$, where $d = \deg^*(f)$.  
     Let $(X,\mathscr{B},\mu,T)$ be an invertible measure preserving system.
     \begin{enumerate}[label = (\roman*)]
         \item For each $h_1,\dots, h_{\ell}\in L^{\oo}(X)$, the limit
         \begin{equation}\label{eq:uniform_A_1}
         \E_{\text{unif}}^{ W}(T^{\round{f_1(n)}}h_1\cdots T^{\round{f_{\ell}(n)}}h_{\ell})
         \end{equation}
         exists in $L^2(X)$.
        \item Suppose that $\Poly(f_1,\dots, f_{\ell})\cap \Z[x] = \{0\}$. Then for any $h_1,\dots, h_{\ell}\in L^{\oo}(X)$,
        \begin{equation}\label{eq:uniform_A_2}
            \E_{\text{unif}}^{ W}(T^{\round{f_1(n)}}h_1\cdots T^{\round{f_{\ell}(n)}}h_{\ell}) = \prod_{i=1}^{\ell}h_i^*
         \end{equation}
         where $h_i^*$ is the projection in $L^2(X)$ of $h_i$ onto the subspace of $T$ invariant functions. It follows that for $A\in \mathscr{B}$ with $\mu(A)>0$ and for any $\epsilon>0$, the set 
         $$
 \{n\in \N:\mu(A\cap T^{-\round{f_1(n)}}A\cap\cdots \cap T^{-\round{f_{\ell}(n)}}A)>\mu(A)^{k+1}-\epsilon\}
         $$
         is $W$-syndetic.
         \item Suppose that $\Poly\{f_1,\dots, f_{\ell}\}$ is jointly intersective. Then for any $A\in \mathscr{B}$ with $\mu(A)>0$,
        \begin{equation}\label{eq:uniform_A_3}
             \E_{\text{unif}}^{ W}\mu(A\cap T^{-\round{f_1(n)}}A\cap \cdots \cap T^{-\round{f_{\ell}(n)}}A) >0.
         \end{equation}
     \end{enumerate}
\end{maintheorem}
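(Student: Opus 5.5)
The plan is to deduce Theorem \ref{thm:uniform_main} from Theorem \ref{thm:main} by a change of weights. Fix an interval $[M,N]$ with $W(N)-W(M)$ large. We want to realize the uniform $W$-average over $[M,N]$ as (approximately) a weighted average $\E^{\widetilde W}_{n\le N}$ for an auxiliary, faster-growing weight $\widetilde W\in\mathcal{H}$ whose mass concentrates near $N$.

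The natural choice is $\widetilde W = e^{cW}$, or more flexibly $\widetilde W=\exp(\phi\circ W)$ with $\phi$ in the Hardy field. Then $(\log\widetilde W)' = cW'$. The hypothesis (\ref{eq:thm_B_condition}), namely $|f^{(d)}-p|^{1/d}\succeq W'$, therefore upgrades to the condition $|f^{(d)}-p|^{1/d}/(\log \widetilde W)'=\oo$ in (\ref{eq:W_compatible}), provided we take $\log\widetilde W$ slightly smaller than $W$. Concretely, take $\log \widetilde W = W/\psi(W)$ for a slowly growing $\psi\in\mathcal H$. The bound $\log x\prec W\prec x$ guarantees $1\prec\log\widetilde W\prec x$, so Theorem \ref{thm:main} applies with weight $\widetilde W$.

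The key steps are as follows.

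\textbf{Step 1.} Verify that $\{f_1,\dots,f_\ell\}$ satisfies (\ref{eq:W_compatible}) relative to $\widetilde W$.
\begin{itemize}
\item For unbounded $f$ in the span, this is the computation above.
\item For bounded $f$ with $\deg^*(f)=0$ it is automatic.
\item The case where $f$ converges to a rational polynomial needs a small check: then $d=\deg^*(f)$ has the defining limit $0$ against $q=f^{(d)}$-limit. I would treat such $f$ as lying in $\Poly$ and handle it as in Theorem \ref{thm:main}.
\end{itemize}

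\textbf{Step 2.} Show that the $\widetilde W$-averages, shifted to start at $M$, are averages of uniform $W$-averages over windows, and conversely. Writing $\Delta\widetilde W(n)\approx \widetilde W(n)(\log\widetilde W)'(n)$ and summing by parts, $\E^{\widetilde W}_{n\le N}$ is a convex combination of $W$-averages over intervals $[n,N]$ whose $W$-lengths are of order $\psi(W(N))\to\oo$. Hence convergence of the uniform $W$-averages implies convergence of the $\widetilde W$ averages.

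For the converse direction, which is the one we need, I would argue by contradiction. Suppose the uniform $W$-averages fail to converge (or converge to the wrong value) along intervals $[M_k,N_k]$ with $W(N_k)-W(M_k)\to\oo$. Then one can construct $\psi$, or equivalently a weight $\widetilde W$ of the above form, growing so that its mass near $N_k$ sits essentially in $[M_k,N_k]$. This requires choosing $\log\widetilde W$ with derivative comparable to $W'/(W(N_k)-W(M_k))$ near those scales.

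\textbf{Step 3 (main obstacle).} The weight built in Step 2 must belong to a Hardy field containing the $f_i$. An arbitrary sequence of intervals is incompatible with this. I would first try to avoid the issue by a more uniform argument. The proof of Theorem \ref{thm:main} reduces, via characteristic factors, to nilsystems and equidistribution of $(\round{f_i(n)})$ on nilmanifolds with respect to $\widetilde W$-weights. I would rerun that equidistribution statement, namely the weighted Boshernitzan criterion of Section \ref{section:uniform_distribution} together with the van der Corput and PET reduction, directly for the uniform windows. The estimates there depend only on $(\log\widetilde W)'$ via the ratio in (\ref{eq:W_compatible}), so they should hold uniformly over all windows with $W(N)-W(M)\to\oo$ once (\ref{eq:thm_B_condition}) holds.

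Parts (i)–(iii) then follow exactly as in Theorem \ref{thm:main}. In particular, (iii) uses that the limit is positive for the jointly intersective case, via the nilsystem reduction, and (ii) gives the $W$-syndeticity by positivity of a uniform average.
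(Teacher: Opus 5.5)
\textbf{Gap: the passage from Hardy-weighted averages to uniform averages.} The step you flag as the main obstacle is the only nontrivial step, and your proposal gives no proof of it. You need the implication: $\lim_N\E^{U}_{n\le N}x_n=L$ for Hardy weights $U$ implies $\E^W_{\unif}x_n=L$. Each of your three attempts at it fails.
\begin{itemize}
\item A single auxiliary weight $\widetilde W=\exp(W/\psi(W))$ cannot give it. Its averages only sample windows ending at $N$ of one scale, namely $W$-length about $\psi(W(N))$. Uniform $W$-averages range over all windows with $W(N)-W(M)\to\oo$, including those of much smaller $W$-length.
\item Your Step~2 contradiction argument builds a weight adapted to an arbitrary bad sequence of intervals. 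As you note, that weight need not lie in any Hardy field, so Theorem~\ref{thm:main} says nothing about it.
\item The fallback in Step~3 is to rerun the characteristic-factor reduction, PET induction and nilmanifold equidistribution ``directly for uniform windows''. That is not an argument. The ingredients (Lemma~\ref{lem:vdc}, Lemma~\ref{lem:variant_of_modulo_averages}, Lemma~\ref{lem:bmr_sec_5}, the Riemann-sum analysis for $\deg^*(f)=1$, the results imported from BMR24) are stated and proved for $\E^W$-averages with a Hardy weight. You give no reason they hold uniformly over windows, and doing so would amount to reproving Theorem~\ref{thm:main} in another form rather than deducing Theorem~\ref{thm:uniform_main} from it.
\end{itemize}
The direction you do establish in Step~2 (uniform $\Rightarrow$ weighted) is the easy one and is not needed.

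\textbf{Missing idea: a Tauberian equivalence.} The paper quotes it as Theorem~\ref{thm:mikey_thm}, equivalence (1)$\iff$(2). Take $V=e^{W}$ in a maximal Hardy field containing $\mathcal H$; your hypothesis $\log x\prec W\prec x$ is exactly $\log x\prec\log V\prec x$. Then a bounded sequence satisfies $\E^{W}_{\unif}x_n=L$ if and only if $\lim_N\E^U_{n\le N}x_n=L$ for every $U\in\mathcal H$ with $1\prec\log U\prec W$. So no adapted weight is needed: the whole family of Hardy weights below $e^W$ already controls every window.

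With this in hand, your Step~1 computation is essentially the whole proof. For each such $U$, L'H\^opital gives $(\log U)'/W'\to 0$, so (\ref{eq:thm_B_condition}) upgrades to (\ref{eq:W_compatible}) for $U$, and Theorem~\ref{thm:main} applies.
\begin{itemize}
\item For part (i), check that the limits for different $U_1,U_2$ coincide. Pass through an intermediate weight $U_3$ with $\log x,\log U_1,\log U_2\prec\log U_3\prec W$ and use the ``moreover'' clause of Theorem~\ref{thm:mikey_thm}.
\item Parts (ii) and (iii), and the $W$-syndeticity, then transfer, because the common limit is $\prod h_i^*$ in (ii) and is positive in (iii).
\end{itemize}
Your third bullet in Step~1 is a non-issue: if $f-q\to 0$ for some $q\in\Q[x]$, then $\deg^*(f)=0$ and compatibility holds by definition.
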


In Section \ref{section:applications} we show how Theorem \ref{thm:uniform_main} can be used to obtain combinatorial corollaries. For instance, consider the following example.

\begin{example}
    Let $W(N)={\sqrt{N}}$ and let $f_1(x) = x^{0.51}$ (or any power of $x$ larger than $x^{0.5}$) and $f_2(x)= x\cdot \log(x)^2$. It is straightforward to verify that (\ref{eq:thm_B_condition}) holds and that $\Poly\{f_1,f_2\}=\{0\}$. Then by Theorem \ref{thm:uniform_main}(iii)
    \begin{equation}\label{eq:intro_example}
    \lim_{\sqrt{N}-\sqrt{M}\to\oo}\frac{1}{\sqrt{N}-\sqrt{M}}\sum_{n=M}^N\frac{1}{2\sqrt{n}}\mu(A\cap T^{-\round{n^{0.51}}}A\cap  T^{-\round{n\cdot \log(n)^2}}A) >0
    \end{equation}
    for any invertible $(X,\mathscr{B},\mu,T)$ and any $A\in \mathscr{B}$ with $\mu(A)>0$. Let $E\subset \N$ be a set with $\overline{d}(E):=\limsup_{N\to\oo}\E_{n\leq N}1_E(n) > 0$. Using (\ref{eq:intro_example}), Theorem \ref{thm:mikey_thm}, and Furstenberg's correspondence principle (Theorem \ref{thm:Furstenberg_correspondence} below), there exists a value $a\in E$ such that for all large enough $N\in \N$, there exists $n\in [N-N^{0.51},N]$ with 
    \begin{equation}\label{eq:example}
   \{a,a+\round{n\cdot \log(n)^2},a+\round{n^{0.51}}\}\subseteq \E.
   \end{equation}
  Moreover, let $A$ be the set of all $a\in E$ for which (\ref{eq:example}) holds. Then $\overline{d}(A)>0$.
    \end{example}

\subsection{Acknowledgments}
The authors would like to thank Florian Richter for providing the inspiration behind the proof of Theorem \ref{thm:gaussian_dne}, and Sa\'ul Rodr\'iguez Mart\'in for giving helpful comments about an earlier version of this manuscript. 

\subsection{Outline of the paper}
In Section \ref{section:applications} we give applications of our results to combinatorics. Additionally, we describe how Theorem \ref{thm:uniform_main} implies Theorem \ref{thm:general_thick_szemeredi} and we prove that Theorem \ref{thm:uniform_main} and Theorem \ref{thm:main} are equivalent. Then we show how Theorem \ref{thm:polynomial_sz} and Theorem \ref{thm:thick_szemeredi} follow from Theorem \ref{thm:general_thick_szemeredi}. In Section \ref{section:uniform_distribution} we formulate and prove a generalization of Boshernitzan's criterion for uniform distribution, Theorem \ref{thm:W_ud}, which is of independent interest and which serves as an essential tool in our proof of Theorem \ref{thm:main}.
The remainder of the paper is dedicated to providing a proof of Theorem \ref{thm:main}. In Section \ref{section:preliminaries} we give preliminaries on nilmanifolds and weighted averages. In Section \ref{section:BMR_results} we prove Theorem \ref{thm:main} using certain facts about uniform distribution on nilmanifolds. In Section \ref{section:richter_results}, we prove these facts using the results of Section \ref{section:uniform_distribution}.

\section{Corollaries and applications of Theorem \ref{thm:main}}\label{section:applications}

The following theorem is a special case of \cite[Theorem C]{reilly26} and we will use it to deduce Theorem \ref{thm:uniform_main} from Theorem \ref{thm:main} and derive several applications.
\begin{theorem}[cf. {\cite[Theorem C]{reilly26}}]\label{thm:mikey_thm}
Let $\mathcal{H}$ be a maximal Hardy field. Let $W\in \mathcal{H}$ and suppose that $\log(x)\prec  \log W(x)\prec x$. Let $Y$ be a Banach space, let $(x_n)_{n\in \N}\subset Y$ be a bounded sequence, and let $L\in Y$. Then the following are equivalent.
\begin{enumerate}[label = (\arabic*)]
\item $\E_{\text{unif}}^{\log W}x_n = L$.
\item $\lim_{N\to\oo}\E_{n\leq N}^Ux_n = L$ for each $U\in\mathcal{H}$ with $1\prec \log U(x)\prec \log W(x)$.
\item $\lim_{N\to\oo} \frac{1}{s(N)}\sum_{n=N-s(N)}^Nx_n= L$ for each nondecreasing function $s:\N\rightarrow\N$ satisfying 
$
\lim_{N\to\oo}s(N) \cdot (\log W)'(N)=\oo
$
and $s(N)\leq N-1$ for all sufficiently large $N\in \N$.
\item $\lim_{k\to\oo}\limsup_{N\to\oo}|Y_{N,k}-L|=0$, where $Y_{N,0} = x_{N}$ and $Y_{N,k+1}=\E^{W}_{n\leq N}Y_{n,k}$ for $N\in \N$ and $k\geq 0$.
\end{enumerate}
Moreover, if $\lim_{N\to\oo}\E^{W}_{n\leq N}x_n=L$ then each of the above statements hold.
\end{theorem}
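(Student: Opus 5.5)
The plan is to work throughout in the ``$V$-time'' coordinate $V:=\log W$. From the Hardy field properties, $1\prec V(x)\prec x$ and $\log x\prec V(x)$ give $V'\to 0$, $V'$ eventually monotone, and $V''(x)/V'(x)=O(1/x)$. Consequently, for $t=o(m)$ we have $V'(m-t)/V'(m)\to 1$: on any window of length $o(m)$ ending at $m$, the weights $\Delta V(n)$ are asymptotically constant. Every implication will use this to trade weighted averages for unweighted ones on short blocks, and to reassemble long weighted averages from short blocks.

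\emph{(1)$\Rightarrow$(3).} Given $s$ with $s(N)V'(N)\to\oo$, monotonicity of $V'$ gives $V(N)-V(N-s(N))\geq s(N)V'(N)\to\oo$. I partition $[N-s(N),N]$ into consecutive blocks $B=[m-t(m),m]$ with $t(m)=o(m)$ and $t(m)V'(m)\to\oo$. Blocks with $m$ below a fixed threshold contribute $o(1)$ since $s(N)\to\oo$. On each remaining block the unweighted average equals the $V$-weighted average up to $o(1)$, and the latter tends to $L$ by (1) because its $V$-length tends to $\oo$. Averaging the blocks with weights proportional to their lengths gives (3).

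\emph{(3)$\Rightarrow$(1).} I reverse this. Given $[M_k,N_k]$ with $V(N_k)-V(M_k)\to\oo$, I fix one nondecreasing $s$ with $s(m)V'(m)\to\oo$ and $s(m)=o(m)$, and tile $[M_k,N_k]$ by blocks $[m-s(m),m]$. The $V$-weighted average is then a convex combination, with weights $\approx V'(m)s(m)$, of unweighted block averages, each tending to $L$ by (3). The leftover pieces, namely the first partial block and any part below a fixed threshold, have $V$-mass $o(V(N_k)-V(M_k))$.

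\emph{(1)$\Rightarrow$(2).} Write $\Delta U(n)=\rho(n)\Delta V(n)$ with $\rho\approx U(\log U)'/V'$, which is eventually increasing. Abel summation expresses $\E^U_{n\leq N}x_n$ as a convex combination, over $n$, of $V$-weighted averages of $x$ on $[n,N]$. Because $(\log U)'\prec V'$, the total weight on those $n$ with $V(N)-V(n)\leq C$ is $\approx 1-e^{-o(C)}$, which is negligible for each fixed $C$. All remaining windows have $V$-length $>C$, so (1) gives (2).

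\emph{The ``moreover'' clause and (4).} If $\E^W_{n\leq N}x_n\to L$, then by induction every $Y_{N,k}\to L$, which gives (4) immediately. The implication (4)$\Rightarrow$(1) I would attack by computing the kernel of $Y_{N,k}$. In $V$-coordinates it is approximately the Gamma density $e^{-u}u^{k-1}/(k-1)!$ in $u=V(N)-V(n)$, which for large $k$ is an approximately Gaussian bump of width $\sqrt{k}$ centered at $u\approx k$. Taking differences of such bumps at nearby $N$ and $k$ then yields approximate indicator kernels of intervals with large $V$-length, and boundedness of $(x_n)$ controls the errors.

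\emph{The main obstacle: returning to (1) from (2) or (4).} For (2)$\Rightarrow$(1) my first attempt is the difference identity
\[
U(N)\,\E^U_{N}x_n-U(M)\,\E^U_{M}x_n=\sum_{n=M+1}^{N}\Delta U(n)\,x_n ,
\]
which produces averages over $[M,N]$ whenever $U(N)/U(M)$ is bounded away from $1$. For a given sequence $[M_k,N_k]$, I would pass to a subsequence and construct a Hardy $U$ with $\log U\prec V$ (for example $\log U=V/g$ with $g$ slowly growing) whose $\log U$-length on $[M_k,N_k]$ stays bounded below. The difficulty is that $V(N_k)-V(M_k)$ may tend to $\oo$ arbitrarily slowly, so a single Hardy $g$ need not fit. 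If this fails, I would instead route (2)$\Rightarrow$(4)$\Rightarrow$(1): approximate the iterated kernels by $U$-averages with $U$ close to $W$, and then use the Gamma/Gaussian kernel argument above in a Tauberian way.
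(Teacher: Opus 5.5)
Your proposal has a genuine gap: the cycle of implications is never closed. After the repairs listed below, you get $(1)\Leftrightarrow(3)$, $(1)\Rightarrow(2)$, and the fact that $\lim_{N\to\oo}\E^W_{n\leq N}x_n=L$ gives (4). Nothing, however, leads from (2) back to (1) or (3), and this is the substance of the theorem, not a technicality.

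In $\log W$-time, $\E^U_{n\leq N}$ is a one-sided kernel of width $\sigma_U(N)=(\log W)'(N)/(\log U)'(N)$, and $\sigma_U\to\oo$ for every admissible $U$. So (2) only controls averages at scales tending to infinity at Hardy-field rates. A failure of (1), by contrast, is witnessed by bad windows $[M_k,N_k]$ whose $\log W$-lengths $T_k$ tend to infinity arbitrarily slowly (bad windows can be shortened but not lengthened). If $\sigma_U(N_k)/T_k\to\oo$ for every admissible $U$, the averages in (2) cannot detect these windows.

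What you need is this: for arbitrary $N_k\to\oo$ and $T_k\to\oo$, an unbounded $g\in\mathcal{H}$ with $g(N_k)=o(T_k)$ along a subsequence. Then $\log U=\log W/g$ has $\sigma_U\approx g$. This is a statement that the fixed maximal Hardy field $\mathcal{H}$ contains unbounded functions below an arbitrary prescribed sequence. It is precisely the ``single Hardy $g$'' you could not produce, and the proposal offers no substitute.

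Your fallback $(2)\Rightarrow(4)\Rightarrow(1)$ hits the same wall. For fixed $k$ the kernel of $Y_{N,k}$ has bounded $\log W$-scale $\approx\sqrt{k}$, and it cannot be synthesized with bounded coefficients from $U$-averages whose scales tend to infinity. For comparison, the paper gives no proof of this theorem. It quotes it from \cite[Theorem C]{reilly26}, and Remark \ref{remark:1precW} attributes $(1)\Rightarrow(4)\Rightarrow(2)$ to \cite[Corollary 2.2, Corollary 3.3, Theorem 4.1, Theorem 5.5]{reilly26}. The missing return implication is what would have to be imported from there.

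Several of the steps you do give also need repair.

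(a) The implication $(1)\Rightarrow(4)$ is never argued. Your induction uses $\E^W_{n\leq N}x_n\to L$, which is strictly stronger than (1). For instance, $x_n=e^{2\pi i\log W(n)}$ satisfies (1) with $L=0$, while $\E^W_{n\leq N}x_n=\frac{1}{1+2\pi i}e^{2\pi i\log W(N)}+o(1)$ by Lemma \ref{lem:deg=1}. The implication does follow from (1): write the unimodal Gamma kernel $\gamma_k(u)=e^{-u}u^{k-1}/(k-1)!$ as a superposition of indicators of its superlevel intervals. Those intervals of length less than $C$ carry total mass at most $C\max\gamma_k=O(C/\sqrt{k})$.

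(b) For $(4)\Rightarrow(1)$, differencing the bumps produces derivative kernels ($\gamma_k-\gamma_{k+1}=\gamma_{k+1}'$), not indicators. What works is averaging $Y_{N,k}$ over $N$ in a window of $\log W$-length $T\gg\sqrt{k}$. This succeeds because $T^{-1}\mathbf{1}_{[0,T]}*\gamma_k$ is within $2\sqrt{k}/T$ in $L^1$ of a translate of $T^{-1}\mathbf{1}_{[0,T]}$.

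(c) In $(1)\Rightarrow(2)$, the ratio $\Delta U/\Delta(\log W)$ need not be eventually increasing. For $W=e^{\sqrt{x}}$ and $U=\log x$ it is $\approx 2/\sqrt{x}$. Then Abel summation against the tail windows $[n,N]$ is not a bounded method. In that case, compare with the initial-segment averages $\E^{\log W}_{n\leq N}$ instead.

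(d) Between (1) and (3), the block lengths must be tied to the data. With a fixed $s$ in $(3)\Rightarrow(1)$, a window whose $\log W$-length grows more slowly than $s(N_k)(\log W)'(N_k)$ lies inside a single block. So $s$ must be chosen depending on $([M_k,N_k])_k$, which is permitted since (3) quantifies over all $s$. Likewise, in $(1)\Rightarrow(3)$ you need $t=o(s)$.
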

\begin{remark}\label{remark:1precW}
    The assumption that $\log x\prec \log W(x)$ is necessary for statement (3) to not be vacuous, but the implications $(1)\implies (4)\implies (2)$ are each proven in \cite[Corollary 2.2, Corollary 3.3, Theorem 4.1, Theorem 5.5]{reilly26} under the more general assumption that $1\prec \log W(x)$.
\end{remark}
Now we prove that Theorem \ref{thm:uniform_main} and Theorem \ref{thm:main} are equivalent using the 
the equivalence of items $(1)$ and $(2)$ in Theorem \ref{thm:mikey_thm}. 
\begin{proof}[Proof of the equivalence of Theorem \ref{thm:uniform_main} and  Theorem \ref{thm:main}]
    Pick $W\in \mathcal{H}$ such that (\ref{eq:thm_B_condition}) holds. Let $V(x)= \exp(W(x))$, which is contained in any maximal Hardy field containing $\mathcal{H}$. Let $U\in \mathcal{H}$ be any function which satisfies $1\prec \log U(x)\prec \log V(x)$. Then by definition of $W$, we have 
    $$
    \lim_{x\to\oo}\frac{|f^{(d)}(x)-p(x)|^{1/d}}{(\log U)'(x)}=\oo
    $$
    for each $p(x)\in \Q[x]$ and each unbounded $f\in \text{Span}\{f_1,\dots, f_{\ell}\}$, where $d = \deg^*(f)$ (see Definition \ref{def:degree}). So, if each part of Theorem \ref{thm:main} holds for $\E^U$ averages then by Theorem \ref{thm:mikey_thm} each part of Theorem \ref{thm:uniform_main} holds for $\E_{\unif}^W$ averages, and vice versa.
\end{proof}

We define now the notion of $W$-thickness, which is closely related to $W$-syndeticity (Definition \ref{def:W_syndetic}), and which we will use shortly in order to prove Theorem \ref{thm:thick_szemeredi}.

\begin{definition}\label{def:W_thick}
    Let $W$ be a function with $1\prec W(x)\preceq x$. We say that $S$ is $W$\emph{-thick} if 
    \begin{equation}
        \limsup_{W(N)-W(M)\to\oo}\frac{1}{W(N)-W(M)}\sum_{n=M}^N\Delta W(n)1_{S}(n)=1.
    \end{equation}
\end{definition}
Next, we give equivalent forms of $W$-syndeticity and $W$-thickness in terms of averages along intervals.
\begin{lemma}\label{lem:W_syndetic_equivalence}
     Let $W$ be a Hardy function which satisfies $\log(x)\prec W(x)\preceq x$ and let $S\subset \N$. The following are equivalent.
    \begin{enumerate}[label = (\roman*)]
        \item $S$ is $W$-thick,
        \item $S^c$ is not $W$-syndetic,
        \item There exists a nondecreasing function $s:\N\rightarrow \N$ with $\lim_{N\to\oo}s(N)\cdot  W'(N)=\oo$ and $s(N)\leq N-1$ for all sufficiently large $N\in \N$ such that   $$
        \limsup_{N\to\oo}\frac{1}{s(N)}\sum_{n=N-s(N)}^{N}1_S(n)=1.
        $$
        \item There is a sequence of intervals of natural numbers $(I_k)_{k\in \N}$ of the form $I_k  = [a_k,b_k]$ with $\lim_{k\to\oo}(b_k-a_k)\cdot W'(b_k) = \oo$ such that $\lim_{k\to\oo}\frac{|S\cap I_k|}{|I_k|}=1$.
    \end{enumerate}
\end{lemma}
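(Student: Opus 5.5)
(i)$\iff$(ii) is immediate from the definitions. Since $1_{S^c}=1-1_S$,
\[
\frac{1}{W(N)-W(M)}\sum_{n=M}^N\Delta W(n)1_{S^c}(n)=\frac{W(N)-W(M-1)}{W(N)-W(M)}-\frac{1}{W(N)-W(M)}\sum_{n=M}^N\Delta W(n)1_{S}(n).
\]
Because $W\preceq x$ is a Hardy function, $\Delta W(n)\to W'$-scale quantities stay bounded, so $\Delta W(M)/(W(N)-W(M))\to0$ and the first ratio tends to $1$. Hence the liminf of the $S^c$ averages equals $1$ minus the limsup of the $S$ averages. This gives (i)$\iff$(ii).

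For (iii)$\iff$(iv)$\iff$(i) I plan to compare $\Delta W$-weighted averages over an interval $[a,b]$ with unweighted averages over the same interval. The key observation is that $W'$ is eventually monotone, since it is a Hardy function. I will split into two cases according to the ratio $W'(b)/W'(a)$.

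\emph{Case $W'(b)/W'(a)$ bounded.} If $W'(b)/W'(a)$ is bounded above and below on the intervals in question, the weights $\Delta W(n)$ are comparable to one another. However, comparability alone only transfers positivity, not density exactly $1$, so I need something sharper. My plan is to subdivide any interval $[M,N]$ with $W(N)-W(M)\to\infty$ into consecutive subintervals $J$ on which $W'$ varies by a factor at most $1+\epsilon$ and on which $W(J)$ is still large. On each such $J$ the weighted and unweighted densities of $S$ differ by $O(\epsilon)$. The weighted average over $[M,N]$ is then a convex combination of these near-unweighted averages.

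With this decomposition:
\begin{itemize}
\item (i)$\Rightarrow$(iv): if the weighted density over $[M,N]$ is close to $1$, some piece $J$ must have weighted density close to $1$, hence unweighted density close to $1$. Moreover $|J|\,W'(\max J)\approx W(J)\to\infty$, which is exactly the growth condition in (iv).
\item (iv)$\Rightarrow$(i): shrink $I_k$ slightly so that $W'$ is nearly constant on it. Here I use $|I_k|\,W'(b_k)\to\infty$, together with the fact that $W'$ is regularly varying in the relevant sense since $\log x\prec W\preceq x$. Then the weighted density on the shrunken interval is close to $1$ and $W(b_k)-W(a_k)\to\infty$.
\item (iii)$\iff$(iv): (iii)$\Rightarrow$(iv) is immediate by taking $I_k=[N_k-s(N_k),N_k]$ along a sequence realizing the limsup. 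For (iv)$\Rightarrow$(iii), build a nondecreasing $s$ interpolating the lengths $b_k-a_k$ along a sparse subsequence, set $N=b_k$, and keep $s(N)W'(N)\to\infty$ between the $b_k$ by choosing $s$ growing slowly but with $s\,W'\to\infty$. This is possible because $W'\succ 1/x$ allows $s(N)$ up to $N-1$. Monotonicity of $s$ is arranged by passing to a subsequence with $b_k-a_k$ increasing and $b_{k+1}\gg b_k$.
\end{itemize}

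The main obstacle is the subdivision step: controlling how fast $W'$ varies relative to the length needed. For a Hardy function with $W\preceq x$ we have $|W''|\preceq W'/x$, so $W'$ changes by a factor $1+\epsilon$ only over intervals of length $\gtrsim\epsilon x$. On such an interval $W$ increases by about $\epsilon x W'(x)\succ \epsilon$, since $xW'\sim W\cdot(\log W)'x\succ1$ because $W\succ\log x$. So long intervals can be chopped into pieces each carrying large $W$-mass, except possibly near the left endpoint $M$ when $M$ is small. That boundary piece needs a separate estimate using $W(M)$ being negligible or trimming.
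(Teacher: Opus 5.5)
Your proposal is correct, and for the implications involving (iii) it takes a genuinely different route from the paper.

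\textbf{The paper's route.} For (i)$\Rightarrow$(iii), the paper imports the identity (\ref{eq:(4)_implies_(5)}) from \cite{reilly26}. This identity writes a weighted average over $[A(N),B(N)]$ as an asymptotically convex combination of the short averages $\frac{1}{s(k)}\sum_{n=k-s(k)}^k1_S(n)$. It gives $\limsup=1$ for \emph{every} admissible $s$ at once, so the paper never has to build $s$. It only uses the trivial direction (iii)$\Rightarrow$(iv), even though it calls (iii) and (iv) restatements. For (iv)$\Rightarrow$(i), it trims the intervals so that $A(N)/B(N)\to1$ and applies the mean value theorem.

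\textbf{Your route.} You pass everything through (iv) using a self-contained device. The Hardy-field estimates $|W''|\preceq W'/x$ and $xW'(x)\to\oo$, which you justify correctly, let you cut intervals into pieces with two properties: $\Delta W$ is constant on each piece up to a factor $1+\epsilon$, and each piece carries $W$-mass tending to infinity. Weighted and unweighted densities then agree piece by piece up to $O(\epsilon)$, and a pigeonhole argument on the convex combination gives (i)$\Rightarrow$(iv).

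\textbf{Comparison.} Your approach does not depend on \cite{reilly26}, and it actually justifies the trimming step that the paper glosses over: the density of $S$ on a shortened interval is not automatically close to $1$. The cost is that you must construct $s$ by hand, and you obtain (iii) only in its existential form.

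\textbf{Two places to tighten.}
\begin{itemize}
\item In (iv)$\Rightarrow$(i), you cannot just ``shrink slightly.''
 \begin{itemize}
 \item If $|I_k|\ge\delta b_k$, pass to the terminal piece $[b_k-\delta b_k,b_k]$. Its $S^c$-density is at most $\delta^{-1}$ times that of $I_k$, and its $W$-mass is about $\delta b_kW'(b_k)\to\oo$.
 \item Otherwise $I_k$ already lies in a region where $W'$ is nearly constant.
 \end{itemize}
 Then let $\delta\to0$ slowly.
\item In (iv)$\Rightarrow$(iii), first pass to a subsequence with $b_k$ increasing and $L_k=b_k-a_k$ nondecreasing. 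Then the choice $s(N)=\min(L_{k+1},N-1)$ for $N\in(b_k,b_{k+1}]$ works.
 \begin{itemize}
 \item It is nondecreasing and satisfies $s(b_k)=L_k$.
 \item It satisfies $s(N)W'(N)\to\oo$, because $W'$ is eventually nonincreasing (or tends to a positive constant) and $NW'(N)\to\oo$.
 \end{itemize}
 The sparsity condition $b_{k+1}\gg b_k$ is not needed.
\end{itemize}
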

\begin{proof}
Each of these statement are identical when $\lim_{x\to\oo}\frac{W(x)}{x}\in(0,\oo)$, so we assume that $W(x)\prec x$. The equivalence $(i)\iff (ii)$ is clear from the definition, since 
    \begin{align*}
    &\limsup_{W(N)-W(M)\to\oo}\frac{1}{W(N)-W(M)}\sum_{n=M}^N\Delta W(n)1_{S}(n) \\
    = 1-&\liminf_{W(N)-W(M)\to\oo}\frac{1}{W(N)-W(M)}\sum_{n=M}^N\Delta W(n)1_{S^c}(n).
    \end{align*}
    Similarly, statements $(iii)$ and $(iv)$ are restatements of each other.
    
    Next, we show the implication $(iii)\implies (i)$.
     Suppose that $A$ and $B$ are nondecreasing integer valued functions such that $\lim_{N\to\oo}B(N)-A(N)=\oo$ and 
     $$
     \lim_{N\to\oo}\frac{1}{W(B(N))-W(A(N))}\sum_{n=A
(N)}^{B(N)}\Delta W(n)1_{S}(n)=1.
$$
Let $s:\N\rightarrow \N$  be a nondecreasing function which satisfies $\lim_{N\to\oo}s(N)\cdot  W'(N)=\oo$ and $s(N)\leq N-1$ for all sufficiently large $N\in \N$. We will show that 
$$
\limsup_{N\to\oo}\frac{1}{s(N)}\sum_{n=N-s(N)}^{N}1_S(n)=1.
$$
In \cite[Equation 4.2]{reilly26}, it is shown that there exists a sequence of nonnegative constants $(c_{N,n})_{N,n\in \N}$ with $\lim_{N\to\oo}\sum_{n\in \N}c_{N,n}=1$ such that 
\begin{equation}\label{eq:(4)_implies_(5)}
    \sum_{k=A(N)}^{B(N)}c_{N,k}\left(\frac{1}{s(k)}\sum_{n=k-s(k)}^k1_S(n)\right)=\frac{\sum_{n=A(N)}^{B(N)}\Delta W(n)1_{S}(n)}{W(B(N))-W(A(N))}+o_{N\to \oo}(1).
    \end{equation}
Taking $\limsup$ of both sides shows that 
\begin{align*}
    1= \limsup_{N\to\oo}\frac{\sum_{n=A(N)}^{B(N)}\Delta W(n)1_{S}(n)}{W(B(N))-W(A(N))} =& \limsup_{N\to\oo}\sum_{k=A(N)}^{B(N)}c_{N,k}\left(\frac{1}{s(k)}\sum_{n=k-s(k)}^k1_S(n)\right)\\
    \leq & \limsup_{N\to\oo}\frac{1}{s(N)}\sum_{n=N-s(N)}^N1_S(n),
\end{align*}
and so $\limsup_{N\to\oo}\frac{1}{s(N)}\sum_{n=N-s(N)}^N1_S(n)=1$. This shows the implication $(i)\implies (iii)$.

    Lastly, we show the implication $(iv)\implies (i)$.
     Suppose that there is a sequence of intervals $I_N = [A(N),B(N)]$ such that $\lim_{N\to\oo}|I_N|\cdot W'(B(N))=\oo$ and for each $\epsilon>0$, $\lim \frac{|S\cap I_N|}{|I_N|}>1-\epsilon$.

Since $\log(x)\prec W(x)$, we know that $\lim_{N\to\oo}B(N)\cdot W'(B(N))=0$ and so (by shortening the intervals $I_N$ if necessary) we assume that $\lim_{N\to\oo}\frac{A(N)}{B(N)}=1$ and hence
$\lim_{N\to\oo}\frac{W'(A(N))}{W'(B(N))} = 1$.

Next, note that $W(B(N))-W(A(N)) = W'(\xi)\cdot |I_N|$ for some for some $\xi\in I_N$, by the Mean Value Theorem. Then 
\begin{align}\label{eq:MVT}
W'(B(N))\cdot |I_N|\leq W(B(N))-W(A(N)) \leq &W'(A(N))\cdot |I_N|
\end{align}
and hence 
\begin{align}\label{eq:MVT_2}
W(B(N))-W(A(N)) = W'(A(N))\cdot |I_N|\cdot (1+o_{N\to\oo}(1))
\end{align}
since $\lim_{N\to\oo}\frac{W'(A(N))}{W'(B(N))} = 1$. Additionally, 
$$
\lim_{N\to\oo}W(B(N))-W(A(N))=\oo
$$
because $\lim_{N\to\oo}W'(B(N))\cdot |I_N|= \oo$.

For each $\epsilon>0$ and for all large enough $N$, $I_N$ contains at most $(1-\epsilon)\cdot |I_N|$ elements of $S$. So 
\begin{align*}
    &\lim_{N\to\oo}\frac{1}{W(B(N))-W(A(N))}\sum_{n=A
(N)}^{B(N)}\Delta W(n)1_{S}(n)\\
 \geq &\lim_{N\to\oo}\frac{1}{W(B(N))-W(A(N))}\sum_{n=A(N)+\lfloor{\epsilon |I_N|\rfloor}}^{B(N)}\Delta W(n)\\
=& \lim_{N\to\oo}\frac{W(B(N))-W(A(N)+{\epsilon |I_N|})}{W(B(N))-W(A(N))}\\
=&1- \lim_{N\to\oo}\frac{W(A(N)+\epsilon |I_N|)-W(A(N))}{W(B(N))-W(A(N))}\\
\geq &1- \lim_{N\to\oo}\frac{\epsilon |I_N|\cdot W'(A
(N))}{W(B(N))-W(A(N))} \geq 1-\epsilon
\end{align*}
by (\ref{eq:MVT}). This holds for each $\epsilon>0$, therefore
\begin{align*}
1=&\lim_{N\to\oo}\frac{1}{W(B(N))-W(A(N))}\sum_{n=A
(N)}^{B(N)}\Delta W(n)1_{S}(n)\\\leq &\limsup_{W(A)-W(B)\to\oo}\frac{1}{W(B)-W(A)}\sum_{n=A
}^{B}\Delta W(n)1_{S}(n)
\end{align*}
which shows that $(i)\implies(iv)$. This completes the proof.
     
\end{proof}

Taking $W(N)=N$, the notions of $W$-syndetic and $W$-thick specialize to the usual notions of syndetic and thick. By Lemma \ref{lem:W_syndetic_equivalence}, we observe that if $W_1$ grows faster than $W_2$, then for $S\subset \N$
\begin{align*} 
S\text{ is } W_1\text{-syndetic}\implies &S\text{ is } W_2\text{-syndetic}\\
\text{ and} \\
S\text{ is } W_2\text{-thick}\implies &S\text{ is } W_1\text{-thick}.
\end{align*}
These implications make it clear that any syndetic set is $W$-syndetic for every $1\prec W(x)\prec x$ and any set which is $W$-thick for some $1\prec W(x)\prec x$ must be thick in the usual sense.

Taking $h_1=\cdots = h_{\ell} = 1_{A}$, Theorem \ref{thm:uniform_main} gives the following.
\begin{maintheorem}\label{thm:W_syndetic_main}
    Let $W,f_1,\dots, f_{\ell}$ be functions which satisfy the conditions of Theorem \ref{thm:uniform_main}. Let $(X,\mathscr{B},\mu,T)$ be an invertible measure preserving system and let $A\in \mathscr{B}$ with $\mu(A)>0$.
     \begin{enumerate}[label = (\roman*)]
        \item Suppose that $\Poly(f_1,\dots, f_{\ell})\cap \Z[x] = \{0\}$. Then for each $\epsilon>0$, the set 
        \begin{equation}
            \{n\in \N:\mu(A\cap T^{-\round{f_1(n)}}A\cap\cdots \cap T^{-\round{f_{\ell}(n)}}A)>\mu(A)^{k+1}-\epsilon\}
         \end{equation}
         is $W$-syndetic.
         \item Suppose that $\Poly(f_1,\dots, f_{\ell})$ is jointly intersective.
         Then the set
        \begin{equation}
            \{n\in \N:\mu(A\cap T^{-\round{f_1(n)}}A\cap\cdots \cap T^{-\round{f_{\ell}(n)}}A)>0\}
         \end{equation}
         is $W$-syndetic.
     \end{enumerate}
\end{maintheorem}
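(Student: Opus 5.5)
The plan is to deduce both parts directly from Theorem \ref{thm:uniform_main}, specializing to $h_1=\cdots=h_\ell=1_A$ and then integrating against $1_A$. Write
\[
a_n:=\mu(A\cap T^{-\round{f_1(n)}}A\cap\cdots\cap T^{-\round{f_\ell(n)}}A)=\int 1_A\cdot T^{\round{f_1(n)}}1_A\cdots T^{\round{f_\ell(n)}}1_A\,d\mu .
\]
Since $g\mapsto\int 1_A g\,d\mu$ is a bounded linear functional on $L^2(X)$, uniform $W$-averages commute with it. Hence in case (i), Theorem \ref{thm:uniform_main}(ii) gives
\[
\E_{\unif}^W a_n=\int 1_A\,(1_A^*)^{\ell}\,d\mu .
\]

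For part (i), we bound this from below. Since $1_A^*=\E(1_A\mid\mathcal I)$ is $\mathcal I$-measurable, we have $\int 1_A (1_A^*)^\ell\,d\mu=\int (1_A^*)^{\ell+1}\,d\mu$. By Jensen (or H\"older), this is at least $\left(\int 1_A^*\,d\mu\right)^{\ell+1}=\mu(A)^{\ell+1}$. Here the exponent written $k+1$ in the statement is to be read as $\ell+1$.

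Now let $S=\{n: a_n>\mu(A)^{\ell+1}-\epsilon\}$. Because $0\le a_n\le 1$, we have $a_n\le 1_S(n)+(\mu(A)^{\ell+1}-\epsilon)(1-1_S(n))$. Averaging over any window $[M,N]$ with weights $\Delta W(n)/(W(N)-W(M))$, which sum to $1+o(1)$, gives
\[
\mu(A)^{\ell+1}\le \liminf \tfrac{1}{W(N)-W(M)}\sum_{n=M}^N\Delta W(n)a_n \le (\mu(A)^{\ell+1}-\epsilon)+\liminf\tfrac{1}{W(N)-W(M)}\sum_{n=M}^N\Delta W(n)1_S(n)
\]
along windows with $W(N)-W(M)\to\infty$, where we have bounded $1-\mu(A)^{\ell+1}+\epsilon\le 1$. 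So the $\liminf$ of the weighted density of $S$ is at least $\epsilon>0$, which is exactly $W$-syndeticity.

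For part (ii), Theorem \ref{thm:uniform_main}(iii) gives $c:=\E_{\unif}^W a_n>0$. Let $S$ be the set of $n$ with $a_n>0$. Since $a_n\le 1_S(n)$, the same window-averaging argument shows that the $\liminf$ of the weighted density of $S$ is at least $c$. A cleaner variant is to apply the argument to $S_\delta=\{n: a_n>\delta\}$ with $\delta=c/2$, using $a_n\le 1_{S_\delta}(n)+\delta$. This gives density at least $c/2$, and $S\supseteq S_\delta$.

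The only points needing care are the following, none of which is a real obstacle:
\begin{itemize}
\item The uniform limit must be interpreted as a limit over every sequence of windows with $W(N_k)-W(M_k)\to\infty$, so that a $\liminf$ over such windows is controlled.
\item The normalization $\sum_{n=M}^N\Delta W(n)$ equals $W(N)-W(M-1)$ rather than $W(N)-W(M)$. The difference is $\Delta W(M)=O(1)$, since $W\preceq x$ forces $W'$ to be bounded, so it is negligible.
\item One must check that the hypotheses match: the conditions on $W,f_i$ are assumed to be those of Theorem \ref{thm:uniform_main}, so that theorem applies verbatim.
\end{itemize}
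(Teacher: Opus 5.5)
Your proposal is correct and takes the same route as the paper. The paper simply specializes Theorem \ref{thm:uniform_main} to $h_1=\cdots=h_\ell=1_A$, leaving implicit the details you supply: integrating against $1_A$, the Jensen bound $\int (1_A^*)^{\ell+1}\,d\mu\ge\mu(A)^{\ell+1}$, and the averaging argument that turns a positive uniform $W$-average into $W$-syndeticity. One small point: the bound $1-\mu(A)^{\ell+1}+\epsilon\le 1$ assumes $\epsilon\le\mu(A)^{\ell+1}$, but for larger $\epsilon$ the set is all of $\N$ and there is nothing to prove.
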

Observe that Theorem \ref{thm:general_thick_szemeredi} is precisely part (ii) of Theorem \ref{thm:W_syndetic_main}. Next, we show how Theorem \ref{thm:polynomial_sz} and Theorem \ref{thm:thick_szemeredi} follow from Theorem \ref{thm:general_thick_szemeredi}.

\begin{proof}[Proof of Theorem \ref{thm:polynomial_sz}]
    Suppose that $P$ is not jointly intersective. Pick $r\in \N$ such that there is no $n\in \N$ with $p_i(n)\equiv 0\mod r$ for all $i\in \{1,\dots, \ell\}$. Let $X = \{0,1,\dots, r-1\}$, let $\mathscr{B} = \mathscr{P}(X)$, let $\mu$ be the normalized counting measure on $X$, and let $Tx = x+1\mod r$ for $x\in X$. Taking $A = \{0\}$, it is clear that $R_A(p_1,\dots, p_{\ell})$ is empty.

    Now we suppose that $P$ is jointly intersective and that $(X,\mathscr{B},\mu,T)$ is an invertible measure preserving system. We will show that $R_A(p_1,\dots, p_{\ell})$ is syndetic. Let $W$ be any function which belongs to a Hardy field and satisfies $\log(x)\prec  W(x)\prec x$, and let $f_i = p_i$ for all $i\in \{1,\dots, \ell\}$. Let $f\in \text{Span}\{f_1,\dots, f_{\ell}\}$ and note that $f\in \Q[x]$ and so $\deg^*(f)=0$. Then $W$ is compatible with $f$ and hence property (\ref{eq:W_compatible}) is satisfied. By Theorem \ref{thm:general_thick_szemeredi}, $R_A(p_1,\dots, p_{\ell})$ is $W$-syndetic. This holds for any function $W$ which belongs to a Hardy field and satisfies $\log(x)\prec \log W(x)\prec x$ and so by Lemma \ref{lem:W_syndetic_equivalence} we have that $\liminf_{k\to\oo}\frac{|R_{A}(p_1,\dots, p_{\ell})\cap I_k|}{|I_k|}>0$ for any sequence of intervals $(I_k)_{k\in \N}$ with $\lim_{k\to\oo}|I_k|=\oo$. It follows that $R_{A}(p_1,\dots, p_{\ell})$ is syndetic, since $|R_{A}(p_1,\dots, p_{\ell})\cap I|>0$ for any long enough interval $I$.  
\end{proof}

\begin{proof}[Proof of Theorem \ref{thm:thick_szemeredi}]
   The case $d=1$ immediately reduces to the ergodic Szemer\'edi Theorem. Indeed, in this case the set $\{\round{f(n)}:n\in \N\}$ contains all but finitely many elements of $\N$ and for each $m\in \N$ and each sufficiently large $N\in\N$ there is an $n$ such that $\round{f(n)},\round{f(n+1)},\dots, \round{f(n+m)}$ are each equal to $N$. So, it suffices to consider the case $d>1$. Let $m\in \N$. We will show that 
   \begin{equation}\label{eq:thick_sz_proof}
        R_A(f_1,\dots, f_{\ell})=\{n:\mu(A\cap T^{-[ f(n)]}A\cap T^{-2[ f(n)]}A\cap \cdots\cap T^{-\ell[ f(n)]}A )>0\}
   \end{equation}
   contains an interval of the form $\{n-m,\dots, n\}$. Consider the collection of functions $\mathscr{F} = \{n\mapsto i\cdot f(n-j): 1\leq i\leq \ell, 0\leq j\leq m\}$. We will show that $\Poly(\mathscr{F}) = \{0\}$.

   Recall the identity $f(n-i) = (1-\Delta)^if(n)$, which makes it clear that \begin{equation}
       \text{Span}(\mathscr{F}) \subset  \text{Span}\{\Delta^if:i\geq 0\}.
   \end{equation} 
   Recall that $x^{d-1}\prec f\prec x^d$. It follows that for each $i$, $\Delta^if$ tends to either $0$ or $\pm\oo$ and so each nonzero element of $\text{Span}\{\Delta^if:i\geq 0\}$ tends to either $0$ or $\pm \oo$. 
   
   Suppose for the sake of contradiction that there exists a nonzero $p(x)\in \R[x]$ and a $g\in \text{Span}\{\Delta^if:i\geq 0\}$ such that $\lim_{x\to\oo}|g(x)-p(x)| = 0$. We know that $\lim_{x\to\infty}g(x) \in \{0,\pm\oo\}$, but $p(x)$ is a nonzero polynomial and so $\lim_{x\to\infty}g(x)=\lim_{x\to\infty}p(x)\neq 0$. It follows that $\lim_{x\to\oo}p(x)=\pm\oo$.

   Pick $K$ such that $\Delta^Kp(x)$ is constant and apply the Stolz-\Cesaro theorem (see Theorem \ref{thm:stolz_cesaro} in Section \ref{section:preliminaries}) $K$ times to see that
   \begin{equation}\label{eq:K_stolz_cesaro}
       1 = \lim_{x\to\oo}\frac{g(x)}{p(x)}=\lim_{x\to\oo}\frac{\Delta g(x)}{\Delta p(x)}=\cdots =\lim_{x\to\oo}\frac{\Delta^K g(x)}{\Delta^K p(x)}.
   \end{equation}
   We know that $\Delta^K g(x) \in  \text{Span}\{\Delta^if:i\geq 0\}$ and so $\lim_{x\to\oo}|\Delta^K g(x)| \in \{0,\oo\}$ which contradicts (\ref{eq:K_stolz_cesaro}) and the fact that $\Delta^Kp(x)$ is constant. Therefore, we have shown that $\Poly(\mathscr{F}) \subset \Poly(\{\Delta^if:i\geq 0\})= \{0\}$, which is jointly intersective. Pick $W\in \mathcal{H}$ satisfying $W'(N) = (\Delta^{d}f(N))^{1/d}$, so that $\log x\prec \log W(x)$ since $d\geq 2$. By Theorem \ref{thm:general_thick_szemeredi}, the set 
   \begin{equation}
       R_1=\left\{n\in \N:\mu\left(A\cap \left(\bigcap_{\substack{1\leq i\leq \ell\\ 0\leq j\leq m}}T^{-\round{if(n-j)}}A\right)\right)>0\right\}
   \end{equation}
   is $W$-syndetic. Additionally, from Lemma \ref{lem:W_syndetic_equivalence} and Lemma \ref{lem:derivatives_mod_1} below, it follows that for any $\epsilon>0$, the set $\{n\in \N: f(n)\mod 1\in (0,\epsilon)\}$ is $W$-thick with respect to this same $W$. Hence, the set 
   \begin{equation} 
   R_2=\{n\in \N: f(n-j)\mod 1\in (0,\epsilon)\text{ for all } 0\leq j\leq m\}
   \end{equation} 
   is also $W$-thick. Then the intersection $R_1\cap R_2$ must be nonempty since the first set is $W$-syndetic and the second set is $W$-thick. Taking $\epsilon<1/(2\ell)$, for any $n\in R_1\cap R_2$ we have $\round{if(n-j)} = i\cdot \round{f(n-j)}$ for any $1\leq i\leq \ell$ and $0\leq j\leq m$, and so 
   \begin{equation}
       \mu(A\cap T^{-\round{f(n-j)}}A\cap T^{-2\round{f(n-j)}}A\cap\cdots \cap T^{-\ell\round{f(n-j)}}A)>0
   \end{equation}
   for each $0\leq j\leq m$. This shows that the set in (\ref{eq:thick_sz_proof}) contains an interval of the form $\{n-m,\dots, n\}$, which concludes the proof.
\end{proof}

We may adapt the proof of Theorem \ref{thm:thick_szemeredi} in order to obtain the main results of \cite{Bergelson_Moreira_Richter_2020} with the added assumptions that the function belongs to a Hardy field and is \emph{tempered}, meaning that there exists $d\in \N$ such that $f^{(d-1)}$ tends to $0$ and $\lim_{x\to\oo}x\cdot f^{(d)}(x) = \oo$.
\begin{theorem}[cf. {\cite[Theorems A,B,D]{Bergelson_Moreira_Richter_2020}}]\label{thm:BMR_1}
    Suppose that $f$ is a tempered function which belongs to a Hardy field. Let $p_1(x),\dots, p_k(x)\in \Z[x]$ and put $W = \Delta^{\deg(f)-1}f$. Then for any invertible measure preserving system $(X,\mathscr{B},\mu,T)$
    \begin{itemize}
        \item For any $h_1,\dots, h_{\ell}\in L^{\oo}(X)$, the limit 
        \begin{equation}
            \E_{\text{unif}}^WT^{-\round{p_1(\Delta)f(n)}}h_1\cdots T^{-\round{p_k(\Delta)f(n)}}h_{k}
        \end{equation}
        exists in $L^2(X)$.
        \item For any $A\in \mathscr{B}$ with $\mu(A)>0$,
        \begin{equation}
            \E_{\text{unif}}^W
           \mu(A\cap T^{-\round{p_1(\Delta)f(n)}}A\cdots T^{-\round{p_k(\Delta)f(n)}}A)>0.
        \end{equation}
        \item For any $A\in \mathscr{B}$ with $\mu(A)>0$,
        \begin{equation}
           \{n\in \N: \mu(A\cap T^{-\round{p_1(\Delta)f(n)}}A\cdots T^{-\round{p_k(\Delta)f(n)}}A)>0\}
        \end{equation}
        is thick and $W$-syndetic.
    \end{itemize}
\end{theorem}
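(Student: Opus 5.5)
The plan is to deduce Theorem \ref{thm:BMR_1} from Theorem \ref{thm:uniform_main} (equivalently Theorem \ref{thm:W_syndetic_main}), following the same template as the proof of Theorem \ref{thm:thick_szemeredi}.

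\textbf{Setting up the functions.} Put $d=\deg(f)$, and consider the family $\mathscr{F}=\{p_i(\Delta)f : 1\le i\le k\}$. Since $p_i(\Delta)f\in\text{Span}\{\Delta^jf:j\ge 0\}$, every element of $\text{Span}(\mathscr{F})$ is a combination of finite differences of $f$. Temperedness gives $f^{(d-1)}\to 0$ but $x f^{(d)}\to\infty$, and by the mean value theorem $\Delta^j f$ behaves like $f^{(j)}$. Hence each $\Delta^jf$ tends to $0$ or $\pm\infty$, and no nonzero combination is asymptotic to a nonzero polynomial. So the Stolz--Ces\`aro argument from the proof of Theorem \ref{thm:thick_szemeredi} applies verbatim and gives $\Poly(\mathscr{F})=\{0\}$. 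This set is trivially jointly intersective, and $\Poly(\mathscr{F})\cap\Z[x]=\{0\}$.

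\textbf{Checking the growth condition.} This is the step I expect to be the main obstacle. We must verify (\ref{eq:thm_B_condition}) for the weight $W=\Delta^{d-1}f$, or rather for a Hardy-field representative such as $f^{(d-1)}$. Note that $W'\sim f^{(d)}$, which is of size roughly $W/x$. Since $W=\Delta^{d-1}f$ is unbounded with $W\prec x$, and $\log x\prec W$ holds by temperedness, the hypothesis $\log x\prec W\prec x$ is satisfied.

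For an unbounded $g\in\text{Span}(\mathscr{F})$, write $g=\sum_j c_j\Delta^j f$, and let $j_0$ be the smallest index with $c_{j_0}\neq0$. Then $g\sim c_{j_0}f^{(j_0)}$, with $\deg^*(g)=d-j_0$, and $g^{(d-j_0)}\sim c\, f^{(d)}$. Subtracting $q\in\Q[x]$ does not help, since $f^{(d)}\to0$. We then need $|f^{(d)}|^{1/(d-j_0)}\succeq |f^{(d)}|$, which holds because $f^{(d)}\to 0$ and $d-j_0\ge1$.

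The care needed is in the passage between $\Delta^{d-1}f$ and $f^{(d-1)}$ as weights. Their ratio tends to $1$, and I would use the fact that uniform $W$-averages depend only on $W$ up to asymptotic equivalence of $W'$. Having verified this, Theorem \ref{thm:uniform_main}(i) gives the first bullet, and Theorem \ref{thm:uniform_main}(iii) gives the second. Signs and the negative exponents are handled by invertibility, i.e. by replacing $T$ with $T^{-1}$.

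\textbf{The third bullet.} $W$-syndeticity follows from the second bullet through Theorem \ref{thm:W_syndetic_main}. For thickness, fix $m$ and apply the above to the enlarged family $\{p_i(\Delta)f(n-j):1\le i\le k,\ 0\le j\le m\}$. Since $f(n-j)=(1-\Delta)^jf(n)$, this family still lies in $\text{Span}\{\Delta^jf\}$, so the same two checks apply. The set $R_1$ of $n$ for which every shift $n-j$ is simultaneously a return time is therefore $W$-syndetic.

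Note that here there is no rounding issue: each $p_i(\Delta)f$ sits inside its own bracket, unlike in Theorem \ref{thm:thick_szemeredi}, so no $W$-thick set $R_2$ is needed. Any $n\in R_1$ with $n>m$ then yields the interval $\{n-m,\dots,n\}$ inside the return-time set. Since $R_1$ is $W$-syndetic, it is in particular nonempty and unbounded, and thickness follows.
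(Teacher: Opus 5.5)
Your proposal is correct and is essentially the paper's proof. The paper likewise takes the shifted family $\{p_i(\Delta)f(n-j)\}$, uses $f(n-j)=(1-\Delta)^jf(n)$ and the Stolz--Ces\`aro argument from Theorem \ref{thm:thick_szemeredi} to get $\Poly=\{0\}$, and invokes Theorems \ref{thm:uniform_main} and \ref{thm:W_syndetic_main}; you are right that no $W$-thick set is needed because each $p_i(\Delta)f$ has its own bracket, and your explicit check of (\ref{eq:thm_B_condition}) fills in what the paper calls clear.

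Two cosmetic points. First, temperedness should read $f^{(d)}\to 0$ and $xf^{(d)}(x)\to\infty$ with $d=\deg(f)$, not $f^{(d-1)}\to 0$; this is what you actually use later when you treat $\Delta^{d-1}f$ as unbounded. Second, the detour through $f^{(d-1)}$ is unnecessary: your enlarged family already requires the shifts $f(x-j)$ to lie in a common Hardy field with $f$, and $\Delta^{d-1}f$ is a linear combination of them.
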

\begin{proof}
    Let $d = \deg(f)$. It is clear that $W=\Delta^{d
    -1}f$ satisfies equation (\ref{eq:thm_B_condition}). Additionally, since $f$ is tempered, we know that $x^{d-1}\log(x)\prec f(x)\prec x^{d}$ and so $\log(x)\prec W(x)$. 
   Let $\mathscr{F} =  \{n\mapsto p_i(\Delta)f(n-j):1\leq i\leq k, 0\leq j\leq m\}$ and recall from the proof of Theorem \ref{thm:thick_szemeredi} that $
   \mathscr{F}\subset\text{Span}\{\Delta^{i}f:i\geq 0\}$ and $\Poly( \mathscr{F})\subset \Poly(\{\Delta^{i}f:i\geq 0\}) = \{0\}$, which is jointly intersective. Then each conclusion in the theorem statement follows from Theorem \ref{thm:uniform_main} and Theorem \ref{thm:W_syndetic_main}.
\end{proof}

Using a standard argument involving Furstenberg's correspondence principle, we will transform our ergodic results into combinatorial results.
\begin{theorem}[Furstenberg’s correspondence principle]\label{thm:Furstenberg_correspondence}
    For any $E\subset \N$ with 
    $$
    \overline{d}(E):=\limsup_{N\to\oo}\E_{n\leq N}1_E(n) > 0
    $$
    there exists an invertible measure preserving system $(X,\mathscr{B},\mu,T)$ and a set $A \in\mathscr{B}$ with $\mu(A) = \overline{d}(E)$ such that for all $n_1,\dots,n_{\ell}\in \Z$
    \[
    \overline{d}(E\cap(E-n_1)\cap\dots \cap (E-n_{\ell})) \geq \mu(A\cap T^{-n_1}A\cap \dots \cap T^{-n_{\ell}}A).
    \]
\end{theorem}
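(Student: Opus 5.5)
This is the classical Furstenberg correspondence principle, and I would prove it by the standard construction through the shift on $\{0,1\}^{\Z}$ and a weak-$*$ limit of empirical measures.

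\textbf{The system.} Let $X=\{0,1\}^{\Z}$ with the product topology and Borel $\sigma$-algebra $\mathscr{B}$, and let $T$ be the left shift $(Tx)(m)=x(m+1)$. This is a homeomorphism, so the resulting system will be invertible. Set $\omega=1_E\in X$, extended by $0$ on $\Z\setminus\N$, and let $A=\{x\in X: x(0)=1\}$, which is clopen. Then $T^n\omega\in A$ if and only if $n\in E$.

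\textbf{The measure.} Choose $N_k\to\oo$ with $\E_{n\leq N_k}1_E(n)\to\overline{d}(E)$. Define
\[
\mu_k=\frac1{N_k}\sum_{n=1}^{N_k}\delta_{T^n\omega}.
\]
By compactness of the space of Borel probability measures on the compact metrizable space $X$, I pass to a subsequence along which $\mu_k\to\mu$ weak-$*$. Invariance follows because $T_*\mu_k-\mu_k=\frac1{N_k}\bigl(\delta_{T^{N_k+1}\omega}-\delta_{T\omega}\bigr)$ has total variation at most $2/N_k\to0$, so $T_*\mu=\mu$.

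\textbf{The key identity.} For $n_1,\dots,n_\ell\in\Z$, the set $C=A\cap T^{-n_1}A\cap\cdots\cap T^{-n_\ell}A$ is clopen, so $1_C$ is continuous and weak-$*$ convergence applies to it directly. Since $T^n\omega\in T^{-n_j}A$ if and only if $n+n_j\in E$,
\[
\mu(C)=\lim_k\frac1{N_k}\sum_{n=1}^{N_k}1_C(T^n\omega)=\lim_k\frac1{N_k}\sum_{n=1}^{N_k}1_{E\cap(E-n_1)\cap\cdots\cap(E-n_\ell)}(n).
\]
The right-hand side is at most $\overline{d}(E\cap(E-n_1)\cap\dots\cap(E-n_\ell))$, because a limit along the subsequence $N_k$ is bounded by the $\limsup$ over all $N$. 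Taking $\ell=0$ gives $\mu(A)=\lim_k\E_{n\leq N_k}1_E(n)=\overline{d}(E)$.

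\textbf{The one point needing care.} This is the step where it matters that $A$ and all the intersections $C$ are clopen. That is exactly what lets weak-$*$ convergence be applied to the indicators $1_C$ rather than only to continuous functions in general. Everything else in the argument is routine.
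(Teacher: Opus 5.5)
Your proof is correct. It is the standard construction: the shift on $\{0,1\}^{\Z}$, a weak-$*$ limit of empirical measures along a sequence $N_k$ realizing $\overline{d}(E)$, and clopen cylinder sets, so that weak-$*$ convergence computes $\mu(A\cap T^{-n_1}A\cap\cdots\cap T^{-n_\ell}A)$ exactly. The paper gives no proof of this statement and simply quotes it as the classical correspondence principle, so your argument supplies the standard proof that it takes for granted.
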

From Furstenberg correspondence and Theorem \ref{thm:W_syndetic_main}, we obtain the following.
\begin{maintheorem}
  Let $W,f_1,\dots, f_{\ell}$ be functions which satisfy the conditions of Theorem \ref{thm:main}. Suppose that $\{f_1,\dots, f_{\ell}\}$ satisfies property (\ref{eq:W_compatible}). Let $E\subset \N$ be a set with $\overline{d}(E)>0$.
     \begin{enumerate}[label = (\roman*)]
        \item Suppose that the conditions of Theorem \ref{thm:main}\ref{condition_1} are satisfied. Then 
        \begin{equation}
            \E_{\text{unif}}^W(\overline{d}(E\cap (E-\round{f_1(n)})\cap \cdots \cap (E-\round{f_{\ell}(n)})) = \overline{d}(E)^{k+1}.
        \end{equation}
        In particular, for each $\epsilon>0$, the set 
        \begin{equation}
            \{n:\overline{d}(E\cap (E-\round{f_1(n)})\cap \cdots \cap (E-\round{f_{\ell}(n)})>\overline{d}(E)^{k+1}-\epsilon\}
        \end{equation}
        is $W$-syndetic. This means that there are ``many" values of $a,n\in \N$ such that $\{a, a+\round{f_1(n)},\dots,a+\round{f_{\ell}(n)}\}\subset E$.
         \item Suppose that the conditions of Theorem \ref{thm:main}\ref{condition_2} are satisfied. Then 
        \begin{equation}
            \E_{\text{unif}}^W(\overline{d}(E\cap (E-\round{f_1(n)})\cap \cdots \cap (E-\round{f_{\ell}(n)})) >0.
        \end{equation}
        In particular,
        \begin{equation}
            \{n:\overline{d}(E\cap (E-\round{f_1(n)})\cap \cdots \cap (E-\round{f_{\ell}(n)})>0\}
        \end{equation}
        is $W$-syndetic. Again, this means that there are ``many" values of $a,n\in \N$ such that $\{a, a+\round{f_1(n)},\dots,a+\round{f_{\ell}(n)}\}\subset E$. 
     \end{enumerate}
\end{maintheorem}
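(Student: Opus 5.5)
The plan is to combine Furstenberg's correspondence principle (Theorem \ref{thm:Furstenberg_correspondence}) with the uniform-average results, Theorem \ref{thm:uniform_main} and Theorem \ref{thm:W_syndetic_main}. Given $E$ with $\overline{d}(E)>0$, I would take the invertible system $(X,\mathscr{B},\mu,T)$ and the set $A$ with $\mu(A)=\overline{d}(E)$ that it provides. Applying the correspondence with $n_i=\round{f_i(n)}$ gives, for every $n$,
\begin{equation*}
a_n:=\overline{d}\big(E\cap (E-\round{f_1(n)})\cap\cdots\cap(E-\round{f_{\ell}(n)})\big)\geq b_n:=\mu(A\cap T^{-\round{f_1(n)}}A\cap\cdots\cap T^{-\round{f_{\ell}(n)}}A).
\end{equation*}
The hypotheses of Theorem \ref{thm:main} for the weight $W$ feed into Theorem \ref{thm:uniform_main} through the equivalence proved in Section \ref{section:applications}. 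With the indexing convention used there, I would read this as $\E^{W}$ in Theorem \ref{thm:main} corresponding to $\E^{\log W}_{\unif}$-type averages, via Theorem \ref{thm:mikey_thm}. I expect this bookkeeping of which weight governs the uniform average to need care, but it is purely formal.

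For part (ii), Theorem \ref{thm:uniform_main}(iii) gives $\E^W_{\unif} b_n>0$. Since $a_n\geq b_n\geq 0$ pointwise, the lower uniform averages of $a_n$ are bounded below by a positive constant. I would interpret the stated ``$\E_{\unif}^W a_n>0$'' as this liminf statement, since existence of the limit for $\overline{d}$ is not guaranteed. $W$-syndeticity of $\{n:a_n>0\}$ then follows from Theorem \ref{thm:W_syndetic_main}(ii), because $\{n:b_n>0\}\subset\{n:a_n>0\}$ and supersets of $W$-syndetic sets are $W$-syndetic.

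For part (i), Theorem \ref{thm:uniform_main}(ii) with $h_i=1_A$ gives $\E^W_{\unif}(T^{\round{f_1(n)}}1_A\cdots)=\prod (1_A)^*$. Integrating against $1_A$ and using Cauchy--Schwarz/Hölder on invariant projections, $\int 1_A\prod(1_A^*)\,d\mu\geq \mu(A)^{\ell+1}$ (here $k$ in the statement should be read as $\ell$). This yields $\liminf$ of the uniform averages of $b_n$ at least $\overline{d}(E)^{\ell+1}$, hence the same lower bound for $a_n$.

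The main obstacle is the claimed equality in (i). An upper bound $\limsup \E a_n\leq \overline{d}(E)^{\ell+1}$ does not follow from the correspondence principle, which only gives inequalities. I would therefore either prove the equality in the ergodic case, choosing the correspondence system ergodic with $\E b_n=\mu(A)^{\ell+1}$ exactly, and read the combinatorial statement as the lower bound $\geq$, or treat the equality as referring to the correspondence quantities $b_n$. The syndeticity claim in (i) is unaffected. It follows from Theorem \ref{thm:W_syndetic_main}(i) together with $a_n\geq b_n$ and the superset argument above.
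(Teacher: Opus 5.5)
Your route is the paper's. The paper gives no argument beyond invoking Theorem \ref{thm:Furstenberg_correspondence} together with Theorems \ref{thm:uniform_main} and \ref{thm:W_syndetic_main}. The following parts of your proposal are correct:
\begin{itemize}
\item your treatment of (ii);
\item the $W$-syndeticity claims in both parts;
\item the lower bound $\liminf\geq\overline{d}(E)^{\ell+1}$ in (i), obtained from $a_n\geq b_n$, from $\int 1_A\prod_i 1_A^*\,d\mu=\int (1_A^*)^{\ell+1}\,d\mu\geq\mu(A)^{\ell+1}$, and from monotonicity of $W$-syndeticity under supersets;
\item your readings $k=\ell$ and of the weight as $\log W$ (equivalently, taking $W$ as in Theorem \ref{thm:uniform_main}).
\end{itemize}

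Your doubt about the equality in (i) is well founded: the equality is actually false, so neither of your fallbacks can work. Take $E=\bigcup_k[N_k,2N_k]\cap\N$ with $N_{k+1}/N_k\to\oo$.
\begin{itemize}
\item $\overline{d}(E)=1/2$.
\item For any fixed $m_1,\dots,m_\ell\in\Z$, the set $E\cap(E-m_1)\cap\cdots\cap(E-m_\ell)$ contains all but $O(\max_i|m_i|)$ elements of each block, so its upper density is again $1/2$.
\item Hence $a_n=1/2$ for every $n$, and every uniform average of $a_n$ equals $1/2\neq 2^{-(\ell+1)}$.
\end{itemize}
The hypotheses of (i) are met, for instance, by $\ell=1$ and $f_1(x)=x^{3/2}$.

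The same example rules out reading the equality for $b_n$. The standard correspondence system for this $E$ is $\mu=\tfrac12\delta_{\mathbf{1}}+\tfrac12\delta_{\mathbf{0}}$ on $\{0,1\}^{\Z}$ with $A=\{x:x_0=1\}$. Here $A$ is invariant, so $1_A^*=1_A$, $b_n=1/2$, and $\int 1_A\prod_i 1_A^*\,d\mu=\mu(A)>\mu(A)^{\ell+1}$.

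Theorem \ref{thm:Furstenberg_correspondence} does not give you an ergodic system, and you do not need one, since your Jensen/H\"older bound holds in general. The correct form of (i) is therefore the inequality $\liminf\geq\overline{d}(E)^{\ell+1}$ together with the syndeticity claim, which is exactly what your main argument proves. Drop both equality fallbacks.
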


\section{Weighted Uniform Distribution}\label{section:uniform_distribution}

In Sections \ref{section:BMR_results} and \ref{section:richter_results} we reduce the proof of Theorem \ref{thm:main} to a statement about uniform distribution modulo 1, Theorem \ref{thm:W_ud} below. In this section, we state and prove Theorem \ref{thm:W_ud} and give applications independent of its role in proving Theorem \ref{thm:main}.

Recall that a sequence $(x_n)_{n\in \N}\subset \R$ is \emph{uniformly distributed modulo 1} (or u.d. mod 1) if 
\begin{equation}\label{eq:u.d._def}
\lim_{N\to\infty}\frac{1}{N}\sum_{n=1}^NF(x_n\text{ mod }1)= \int_{[0,1)} F
\end{equation}
for each continuous function $F\in C([0,1))$, where $x\text{ mod }1$ is the fractional part of $x\in \R$, $x\text{ mod }1 := x-\floor{x}$. The Weyl criterion for uniform distribution states that a sequence $(x_n)_{n\in \N}$ is u.d. mod 1 if and only if
\begin{equation}\label{eq:u.d._def_2}
\lim_{N\to\infty}\frac{1}{N}\sum_{n=1}^Ne^{2\pi i kx_n}= 0
\end{equation}
for all nonzero $k\in \mathbb{Z}$. In \cite{Boshernitzan}, Boshernitzan gave a criterion for $(f(n))_{n\in \N}$ to be u.d. mod 1 when $f$ is a subpolynomial Hardy function.
\begin{theorem}[{\cite[Theorem 1.3]{Boshernitzan}}]\label{thm:Boshernitzan_ud}
    Suppose that $f$ is a subpolynomial Hardy function. Then the following are equivalent.
    \begin{enumerate}[label=(\arabic*)]
        \item $(f(n))_{n\in \N}$ is u.d. mod 1,
        \item $\lim_{x\to\oo}\frac{|f(x)-p(x)|}{\log(x)}=\oo$ for all $p(x)\in \Q[x]$.
    \end{enumerate}
\end{theorem}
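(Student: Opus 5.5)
We plan to prove the two implications separately, working throughout with the Weyl criterion (\ref{eq:u.d._def_2}) and with the standard Hardy field facts listed in the introduction: every ratio of Hardy functions has a limit, and L'H\^opital's rule applies.

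\emph{(1)$\implies$(2), by contraposition.} Suppose $|f(x)-p(x)|\preceq \log x$ for some $p\in\Q[x]$. Write $g=f-p$ and let $q\in\N$ be a common denominator of the coefficients of $p$.
\begin{itemize}
\item The values $qp(n)$ are integers, so $e^{2\pi i q f(n)}=e^{2\pi i q g(n)}$.
\item Since $g\preceq\log x$ is a Hardy function, L'H\^opital gives that $c:=\lim_{x\to\oo}xg'(x)$ exists and is finite.
\item We compare $\frac1N\sum_{n\leq N}e^{2\pi i qg(n)}$ with $\frac1N\int_1^N e^{2\pi i qg(x)}\,dx$. The error is $O\big(\frac1N\sum_{n\leq N}|g'(n)|\big)=o(1)$, because $g'(x)=O(1/x)$.
\item Integrating $x\cdot\frac{d}{dx}$ by parts, the integral equals $\frac{e^{2\pi i q g(N)}}{1+2\pi i q c}+o(1)$. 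This has modulus $|1+2\pi i qc|^{-1}\neq 0$ in the limit.
\end{itemize}
Hence the Weyl sum for $k=q$ does not tend to $0$, and $(f(n))$ is not u.d. mod 1. The bounded case $c=0$ is included in this computation, so no separate argument is needed.

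\emph{(2)$\implies$(1), by induction on $\deg(f)$.} Fix $k\neq 0$. The function $kf$ still satisfies (2), since $p\mapsto kp$ permutes $\Q[x]$. We then split into cases.
\begin{itemize}
\item \emph{Base case: $\log x\prec |f(x)-a|$ and $f\prec x$ for a constant $a$.} L'H\^opital gives $f'\to 0$ eventually monotonically and $x|f'(x)|\to\oo$. Fej\'er's theorem (Euler summation together with the second derivative test) then yields that $(kf(n))$ is u.d. mod 1.
\item \emph{Polynomial-like case: $f-p\to$ const for some $p\in\R[x]$.} Then (2) forces $p-p(0)\notin\Q[x]$, so $p$ has a nonconstant irrational coefficient. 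Weyl's polynomial equidistribution theorem handles $kp(n)$, and the $o(1)$ perturbation is harmless.
\item \emph{General case.} We apply van der Corput's difference theorem. It suffices to show that for each $h\in\N$ the function $\Delta_hf(x)=f(x+h)-f(x)$ satisfies (2). Since $\Delta_hf$ is a Hardy-type function of degree at most $\deg(f)-1$ in an enlarged Hardy field, the induction hypothesis then applies.
\end{itemize}

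\emph{Main obstacle: the inheritance step.} We must show that if $|\Delta_hf-r|\preceq\log x$ for some $r\in\Q[x]$, then $f$ itself violates (2) or falls into one of the cases already treated. First we would use Taylor expansion, $\Delta_hf=hf'+\frac{h^2}2f''+\cdots$, with each successive term smaller by a factor $\preceq 1/x$. Then we integrate: choose $R\in\Q[x]$ with $\Delta_hR=r$ up to lower order, which gives $|f-R-\text{(bounded-degree correction)}|\preceq x\log x$. By itself this is too weak, so we would instead argue at the level of derivatives of degree $\deg^*(f)$. Condition (2) is equivalent, via repeated L'H\^opital, to $|f^{(d)}-p^{(d)}|\succ x^{-d}\log x$ for $d=\deg^*(f)$, and $\Delta_h$ lowers $d$ by exactly one while multiplying by $h$. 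The delicate subcase is when $f-p$ lies between $\log x$ and $x\log x$ in size. In that subcase we avoid differencing and apply the base case (Fej\'er) to $f-p$ directly, adding back the rational polynomial $p$ by splitting $n$ into residue classes modulo the denominator of $p$.
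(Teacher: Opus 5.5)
Your proof of (1)$\Rightarrow$(2) is correct; it is essentially the case $W(x)=x$ of the computation in Lemma \ref{lem:deg=1}, with constant $1/(1+2\pi i qc)$. The gap is in (2)$\Rightarrow$(1), at exactly the functions for which every difference $\Delta_hf$ violates (2). Your three cases do not reach them, and your repair (apply Fej\'er's theorem to $f-p$) requires $(f-p)'\to0$, i.e.\ $\log x\prec f-p\prec x$. Two families of functions satisfying (2) escape both tools.

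The first family is $f=p+\alpha x+g$ with $p\in\Q[x]$, $\alpha\notin\Q$, and $g\prec x$ unbounded, e.g.\ $f(x)=\sqrt2\,x+\log x$. This $f$ has degree $1$ but is neither sublinear nor within $o(1)$ of a polynomial. Van der Corput's difference theorem cannot be used, since $\Delta_hf\to\sqrt2\,h$; in fact it is never usable in degree $1$. Fej\'er's theorem also fails, since $(f-p)'\not\to0$ for every $p\in\Q[x]$.

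The second family is $f=p+g$ with $p\in\Q[x]$ and $x\prec g\preceq x\log x$, e.g.\ $f(x)=x\log x$ or $x^2/2+x\log x$. Here $\Delta_hf$ differs from an element of $\Q[x]$ by $h\log x+o(1)$, so it violates (2). At the same time $|(f-p)'|\to\oo$ for every $p\in\Q[x]$, so Fej\'er does not apply. In particular, your claim that $|\Delta_hf-r|\preceq\log x$ forces $f$ to violate (2) or to land in an already treated case is false for both examples.

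Each family needs an estimate your plan does not contain.
\begin{itemize}
\item For the first family, use Abel summation. Splitting into residue classes gives $\sum_{n\le M}e^{2\pi ik(p(n)+\alpha n)}=O(1)$ uniformly in $M$. Since $g$ is eventually monotone, $e^{2\pi ikg(n)}$ has total variation $\ll|k|\,|g(N)|+O(1)=o(N)$ on $[1,N]$. Equivalently, invoke the proven direction of Theorem \ref{thm:Boshernitzan_wd}, which is how the paper disposes of an irrational leading coefficient in the proof of Theorem \ref{thm:d=1_thm}.
\item For the second family, apply the second-derivative estimate of Lemma \ref{lem:second_derivative_vdc_inequality} on dyadic blocks $[M,2M]$ with $\lambda\asymp|g''(M)|$. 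From $x\prec g\preceq x\log x$ one gets $\lambda\to0$ and $M^2\lambda\to\oo$, so each block sum is $O(M\lambda^{1/2}+\lambda^{-1/2})=o(M)$. This is the role played by the derivative estimates in the second case of the proof of Theorem \ref{thm:d>1_positive_thm}; the paper obtains the statement as the case $W(x)=x$ of Theorem \ref{thm:W_ud}.
\end{itemize}

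Separately, your derivative reformulation of (2) is off by a logarithm. L'H\^opital turns (2) into $|f^{(d)}-p^{(d)}|\succ x^{-d}$, not $x^{-d}\log x$. For example, $f=\log^2x$ satisfies (2) but not your version.
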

For example, $(f(n))_{n\in \N}$ is u.d. mod 1 when $f$ is any function of the form $f(x) = \alpha x^c$ for $\alpha\in \R\setminus \Q$, $c>0$. When $c\in \N$ much more is true; $(f(n))_{n\in \N}$ is \emph{well distributed modulo 1} (w.d. mod 1), namely
    \begin{equation}
    \lim_{N-M\to\infty}\frac{1}{N-M}\sum_{n=M}^Ne^{2\pi i kf(n)}= 0 \text{ for all nonzero }k\in \mathbb{Z}.
\end{equation}
In fact, the only Hardy functions $f$ such that $(f(n))_{n\in \N}$ is w.d. mod 1 are those of the form $f(x) = \alpha x^n+q(x)+o_{x\to\oo}(x^n)$ for some $\alpha\in \R\setminus \Q, n\in \N, q(x)\in \Q[x]$.
\begin{theorem}[{\cite[Theorem 1.10]{Boshernitzan}}]\label{thm:Boshernitzan_wd}
    Suppose that $f$ is a subpolynomial Hardy function. Then the following are equivalent.
    \begin{enumerate}[label=$(\arabic*)'$]
        \item $(f(n))_{n\in \N}$ is w.d. mod 1,
        \item There exists $q(x)\in \Q[x]$ and $m\in \N$ such that $\lim_{x\to\oo}\frac{|f(x)-q(x)|}{x^{m}}$ is finite and irrational.
    \end{enumerate} 
\end{theorem}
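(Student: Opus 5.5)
The statement is Boshernitzan's classification of well distributed Hardy sequences (Theorem \ref{thm:Boshernitzan_wd}). I would prove it with van der Corput differencing plus Weyl's theorem for $(2)'\Rightarrow(1)'$, and with a slow-variation argument on long blocks for $(1)'\Rightarrow(2)'$. Throughout, write $f = q + g$, where $q\in\Q[x]$ is chosen so that $\deg(g) = \deg^*(f) =: d$. Multiplying by the common denominator $D$ of $q$ changes $e(kf(n))$ only by a factor that is periodic in $n$ with period dividing $D$. So, after splitting $n$ into residue classes mod $D$, well distribution of $f$ reduces to well distribution of $g$ along each arithmetic progression $Dn+r$.

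\textbf{Direction $(2)'\Rightarrow(1)'$.} Assume $g(x) = \alpha x^m + o(x^m)$ with $\alpha\notin\Q$. If $g$ had growth strictly between $x^{m-1}$ and $x^{m}$ in the Hardy sense, the limit in $(2)'$ would be $0$, which is rational; so $m=d$, and $g(x) = \alpha x^d + h(x)$ with $h\prec x^d$ a Hardy function.
\begin{enumerate}
\item Apply the van der Corput inequality for uniform (long-interval) averages $d-1$ times. This reduces the question to uniform averages of $e\bigl(k\,\Delta_{h_1}\cdots\Delta_{h_{d-1}}g(n)\bigr)$, where the phase equals $d!\,\alpha h_1\cdots h_{d-1}\, n$ plus a term $r(n)$ that is a Hardy function with $r\prec x$.
\item Because $r'(x)\to0$ (L'H\^opital in the Hardy field), $r$ is almost constant on blocks of length $L$ placed far out.
\item On such blocks, the average of $e(\beta n + r(n))$ with $\beta = k\,d!\,\alpha h_1\cdots h_{d-1}\notin\Q$ is therefore $O\bigl(1/(L\|\beta\|)\bigr) + o(1)$, uniformly over all blocks with $M$ large.
\item The awkward point is that uniform averages also range over short-index blocks with $M$ small. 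I would handle these by letting $N-M\to\infty$: blocks starting at bounded $M$ contribute a vanishing proportion, and the remaining long tails are covered by the previous step.
\end{enumerate}

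\textbf{Direction $(1)'\Rightarrow(2)'$, by contraposition.} Suppose $(2)'$ fails for every $q$. If $d=0$, then $f-q$ is bounded and Hardy, hence converges, so $kf(n)$ mod $1$ converges along a progression and the sequence is not even u.d. Otherwise $g^{(d)}$ tends either to $0$ or to a rational value $a/b$. Replacing $q$ by $q + (a/(b\,d!))x^d$ removes the rational leading term and lowers the degree, so we may assume the $d$-th derivative tends to $0$. In that case $x^{d-1}\prec g\prec x^d$, or else $g\sim c\,x^{d-1}$ with $d$ effectively lowered, which I would handle by induction on $d$.

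\textbf{Main step.} Now $g^{(d)}\to0$, so by Taylor expansion on a block $[N, N+L]$,
\[
g(N+t) = \sum_{j<d} g^{(j)}(N)\,t^j/j! + o(1) \quad\text{uniformly for } t\le L,
\]
provided $L$ is small enough that $L^d\,g^{(d)}(N)\to0$. We still have $L\to\infty$ because $g^{(d)}(N)\to 0$. The block polynomial $P_N(t)$ has coefficients that vary continuously in $N$, and by a pigeonhole or intermediate-value argument the Hardy functions $g^{(j)}(N)$ pass near prescribed values mod $1$. I would choose $N_k$ so that the coefficients of $P_{N_k}$ lie simultaneously close to $0$ mod $1/D$. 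This needs care: first use $g^{(d-1)}(N)\to\infty$ slowly (because $g^{(d)}\to0$), so that $g^{(d-1)}(N)$ passes near every integer, and then lower-order terms are controlled by the slower-moving derivatives. This is the step I expect to be the main obstacle; one may need to adjust $N_k$ successively, exploiting that each lower derivative varies more slowly relative to its scale. Once the block polynomial is within $o(1)$ of a rational polynomial with denominator $D$, the block averages of $e(kf(n))$ for $k=D$ stay near $1$, contradicting well distribution.
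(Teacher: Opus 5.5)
\textbf{There is a genuine gap: the key step of $(1)'\Rightarrow(2)'$ for $d\ge3$ is flagged but not supplied.}

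Your direction $(2)'\Rightarrow(1)'$ is fine. Van der Corput for long-interval averages, applied $d-1$ times, followed by Weyl on blocks is the standard argument. You do not need $r$ to be a Hardy function, only that $\Delta r(n)\to0$, and this follows from $h^{(d)}\to0$. The paper takes this direction from Boshernitzan rather than reproving it. Your reductions in the converse are also fine. The cases $d\le 2$ work as you say, since only the single coefficient $g'(N)$ must be controlled and $g'$ moves in steps of size about $g''(N)\to0$.

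The gap is the main step for $d\ge3$, and it is the whole content of this direction. You do not merely need the coefficients of $P_N$ to be ``close'' to $\frac1D\Z$. You need arbitrarily large \emph{integers} $N$ and $L=L(N)\to\infty$ with $L^d|g^{(d)}(N)|\to0$ such that $\operatorname{dist}(g^{(j)}(N)/j!,\frac1D\Z)=o(L^{-j})$ simultaneously for all $1\le j\le d-1$.

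For $j\le d-2$, $g^{(j)}$ changes by roughly $g^{(j+1)}(N)\to\infty$ per unit step. So continuity or intermediate-value arguments say nothing about it, and your heuristic that lower derivatives ``vary more slowly relative to their scale'' is backwards.

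The mechanism that works is a cascade. Suppose the next higher \emph{finite difference} stays within $\delta$ of an integer along a stretch. Then the lower one moves modulo $1$ in steps of size at most $\delta$. Hence it enters a short interval near $0$ within a number of steps inversely proportional to the step size, and stays there for many steps. You then nest these stretches from $\Delta^{d-1}g$ down to $g$, choosing the scales so that each new stretch begins before the previous one expires. This is exactly Lemma~\ref{lem:derivatives_mod_1}, built on Lemma~\ref{lem:technical_1}. Its second assertion directly gives blocks $[N,N+K(N)]$ with $K(N)\to\infty$ on which $f(n)\bmod 1\in(0,\epsilon)$, which is the contradiction with well distribution you want.

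The cascade has to be run on differences, with Newton's formula, rather than on your Taylor coefficients. The identity $\Delta^j g(n+1)-\Delta^j g(n)=\Delta^{j+1}g(n)$ is exact. By contrast, the increment of $g^{(j)}$ is $g^{(j+1)}+\tfrac12 g^{(j+2)}+\cdots$. So knowing $g^{(j+1)}$ and $g^{(j+2)}$ are near integers does not make that increment near an integer: the $\tfrac12 g^{(j+2)}$ term contributes $\tfrac12$ modulo $1$ when $g^{(j+2)}$ is near an odd integer. Your formulation as stated would therefore not support the cascade.

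Once such a lemma is in hand, your direct block argument is simpler than the paper's own route to $(1)'\Rightarrow(2)'$. The paper goes through Gaussian averages (Theorem~\ref{thm:gaussian_dne}), non-convergence of $W$-averages, and then Theorem~\ref{thm:mikey_thm}.
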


\begin{remark}
    In \cite{Boshernitzan}, the above theorem is stated but only the implication $(2)'\implies (1)'$ is proven. The forward implication is incorrectly cited as being contained in \cite{Boshernitzan_wrong_citation} and it is likely that the correct citation is the preprint \cite{Boshernitzan_preprint}, which was never published. The methods contained in \cite{Boshernitzan_preprint} are largely disjoint from the methods that we consider in this paper, as Boshernitzan uses the existence of Hardy functions which tend to infinity very slowly to show that when condition $(2)'$ does not hold, the sequence $(f(n),f(n+1),\dots, f(n+k))_{n\in \N}$ is dense modulo 1 in $[0,1]^k$ for any $k\in \N$. The authors are unaware of any full proof of Theorem \ref{thm:Boshernitzan_wd} currently contained in published literature.

\end{remark}

This theorem demonstrates that some Hardy functions have ``better" uniform distribution properties than others. Presently, we characterize Hardy functions by their quality of uniform distribution. In particular, both Theorem \ref{thm:Boshernitzan_ud} and Theorem \ref{thm:Boshernitzan_wd} follow from Theorem \ref{thm:W_ud} below, whose proof is given later in this section.
\begin{definition}
Let $(x_n)_{n\in \N}\subset \R$. We say that $(x_n)$ is u.d. mod 1 with respect to $W$-averages if $\lim_{N\to\oo}\E_{n\leq N}^We^{2\pi i kx_n} = 0$ for all nonzero $k\in \Z$. 
\end{definition}
\begin{maintheorem}\label{thm:W_ud}
     Let $\mathcal{H}$ be a Hardy field. Let $f\in \mathcal{H}$ be subpolynomial and let $W\in \mathcal{H}$ satisfy $1\prec \log W(x)\prec x$. Then the following are equivalent.
     \begin{enumerate}[label=(\roman*)]
         \item $\deg^*(f)>0$ and $W$ is compatible with $f$ (Definition \ref{def:W_compatible}).
         \item $(f(n))_{n\in \N}$ is u.d. mod 1 with respect to $W$-averages.
     \end{enumerate}
\end{maintheorem}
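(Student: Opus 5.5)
The plan is to prove both implications through a local analysis of the weighted average near its endpoint. Since $1\prec \log W(x)\prec x$ and $W\in\mathcal{H}$, the weights $\Delta W(n)/W(N)$ are essentially $(\log W)'(N)\,e^{-(N-n)(\log W)'(N)}$ for $N-n$ of order $1/(\log W)'(N)$, and they are negligible further out. The scale $L(N):=1/(\log W)'(N)$ is therefore the effective window of $\E^W_{n\leq N}$.

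\emph{(i)$\Rightarrow$(ii).} Fix $k\neq 0$ and let $d=\deg^*(f)>0$.
\begin{enumerate}
\item Choose $q\in\Q[x]$ such that $g:=f-q$ satisfies $\deg(g)=d$; using the Hardy field property, $g^{(d)}\to 0$ or $g^{(d)}$ tends to a nonzero constant. If $d\geq 2$ we may also take $g^{(d)}$ to have irrational limit or limit $0$. Let $Q$ be a common denominator of the coefficients of $q$. Then $e(kq(n))$ is $Q$-periodic, and it suffices to show that the $W$-averages of $e(kg(n))$ restricted to each residue class mod $Q$ tend to $0$. Equivalently, we treat $g(Qm+r)$ with the weights induced on that residue class, which are again of the same type.
\item By compatibility~(\ref{eq:W_compatible}), $|g^{(d)}(x)|^{1/d}\,L(x)\to\oo$ (one applies it with $p=q^{(d)}$). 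Hence we can choose $s(x)$ with $s(x)|g^{(d)}(x)|^{1/d}\to\oo$ but $s(x)/L(x)\to 0$, and additionally $s(x)\leq x^{1-\delta}$ after shrinking.
\item Partition $[1,N]$ into consecutive blocks $[M,M+s(M))$. On each block $\Delta W$ is constant up to a factor $1+o(1)$, because $s(\log W)'\to 0$. So $\E^W_{n\leq N}e(kg(n))$ is, up to $o(1)$, a convex combination of unweighted block averages.
\item On a block, Taylor expand $g(M+h)=\sum_{j\leq d} g^{(j)}(M)h^j/j!+O(s^{d+1}|g^{(d+1)}(M)|)$. The error is $o(1)$ since $g^{(d+1)}\preceq g^{(d)}/x$ in a Hardy field and $s\prec x$. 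The degree-$d$ coefficient $\alpha_M=g^{(d)}(M)/d!$ satisfies $s^d|\alpha_M|\to\oo$, and when $g^{(d)}\not\to 0$ it converges to an irrational limit. Applying van der Corput $d-1$ times reduces the block average to linear exponential sums with frequency $k\,d!\,\alpha_M h$ over length $\asymp s$. These are small because $\|k\,d!\,\alpha_M h\|$ is either tiny but $\gg 1/s$, or bounded away from rationals with small denominators.
\end{enumerate}

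\emph{(ii)$\Rightarrow$(i).} There are two cases.
\begin{itemize}
\item If $\deg^*(f)=0$, then $f-q\to c$ for some $q\in\Q[x]$. Taking $k=Q$ gives $e(kf(n))\to e(Qc)$, so the averages tend to $e(Qc)\neq 0$.
\item If $d>0$ but $W$ is not compatible, then the Hardy limit in~(\ref{eq:W_compatible}) is finite for some $p$. Thus $|g^{(d)}|^{1/d}L\to \lambda\in[0,\oo)$, where $g=f-P$ and $P^{(d)}=p$.
\end{itemize}
In the second case, rescale $n=N-tL(N)$. The phase $k(g(n)-g(N))$ then converges, along subsequences, to a fixed polynomial $\phi(t)$ in $t$ of degree at most $d$ with bounded coefficients, determined by $\lambda$ and the sign, while lower derivatives contribute polynomial terms whose coefficients must be controlled. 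We also restrict $N$ to a residue class so that $kP(N)$ is constant mod $1$. Then
\[
\E^W_{n\leq N}e(kf(n))=e(kf(N))\int_0^\oo e^{-t}e(\phi_N(t))\,dt+o(1).
\]
The remaining step is to show this does not tend to $0$. Here $f(N)$ mod $1$ moves slowly: it moves by $\ll 1$ over a window of length $L(N)$, while $\phi_N$ stabilises. So it suffices to find a subsequence along which $\int e^{-t}e(\phi_N(t))dt$ stays away from $0$. This integral is an analytic function of the limiting coefficients and equals $1$ when they vanish. Adjusting $k$, or exploiting that the lower-order coefficients $g^{(j)}(N)L^j$ for $j<d$ sweep through values mod $1$, should give a limit point with nonzero integral.

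\emph{Main obstacle.} This last nonvanishing step is the main obstacle. Handling coefficients of $\phi_N$ that diverge (the terms $g^{(j)}L^j$ for $j<d$ may blow up) requires a separate van der Corput or stationary-phase argument showing they cannot force cancellation for all $N$. If that fails, I would instead combine Theorem~\ref{thm:mikey_thm} (3) with a window $s$ chosen so that $s\,(\log W)'\to\oo$ slowly, and derive a contradiction from the behaviour of short-interval averages along a suitable sequence of $N$.
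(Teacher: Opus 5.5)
The gap is where you place your ``main obstacle'', and it is a missing idea, not a technicality. For $d=\deg^*(f)\ge 2$, your formula $\E^W_{n\le N}e(kf(n))=e(kf(N))\int_0^\infty e^{-t}e(\phi_N(t))\,dt+o(1)$ is false in general. The sum runs over integers and $g'(N)$ is typically large, so the sum is not a Riemann sum for that integral. Take $f(x)=x^{3/2}$ and $\log W(x)=x^{3/4}$, a non-compatible pair. Then $g'(N)L(N)\asymp N^{3/4}\to\infty$ while $g''(N)L(N)^2$ stays bounded, so the integral tends to $0$. Your formula would therefore predict uniform distribution, which is the opposite of the truth. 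Reducing the rescaled coefficients $g^{(j)}(N)L(N)^j$ ``mod $1$'' has no meaning inside a continuous integral.

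Two things are missing:
\begin{itemize}
\item[(a)] An expansion in the integer variable with integer coefficients, namely Newton's backward formula $f(N-h)=f(N)+\sum_i(-1)^i\binom{h}{i}\Delta^if(N)+\cdots$.
\item[(b)] Arbitrarily large $N$ at which all lower differences are simultaneously small modulo $1$ at the correct scales: $\Delta^if(N)\bmod 1\in\bigl(0,\epsilon(\Delta^df(N))^{i/d}\bigr)$ for $0\le i<d$.
\end{itemize}
Part (b) is Lemma \ref{lem:derivatives_mod_1}, proved by a descending induction from the slowly varying $\Delta^{d-1}f$ via Lemma \ref{lem:technical_1}. Only at such $N$ does the average reduce to $C\,e(f(N))+O(\epsilon)+o(1)$ with $C$ depending on the top-degree term alone (Theorem \ref{thm:gaussian_dne}). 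Even then, $C\neq 0$ is guaranteed only when the top coefficient is small. The paper arranges this by passing to a weight $V$ with $\log V\preceq\log W$ and small $\upsilon$, then transferring non-convergence back to $W$; changing $k$ only enlarges that coefficient. Your fallback via Theorem \ref{thm:mikey_thm}(3) cannot work in the critical case $\lambda\in(0,\infty)$. There every $s$ with $s(\log W)'\to\infty$ also has $s|g^{(d)}|^{1/d}\to\infty$, so the short-interval averages do tend to $0$ and statement (3) holds. The failure of $W$-uniform distribution is invisible to $\E^{\log W}_{\text{unif}}$.

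Second, both of your directions assume the window $L(N)=1/(\log W)'(N)$ is $o(N)$, i.e.\ $\log x\prec\log W$, but the theorem allows $1\prec\log W\preceq\log x$. Suppose $\deg^*(f)=1$ and $f\preceq\log x$ (e.g.\ $f=\log x$ or $\log\log x$). Then every compatible $W$ has $\log W\prec\log x$, and $s|g'|\to\infty$ forces $s\succ x$. So your requirement $s\le x^{1-\delta}$ cannot be met, the weights are not nearly constant on blocks, and no Taylor expansion on blocks is available. Your rescaling $n=N-tL(N)$ for non-compatible $W$ breaks down in this range for the same reason.

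Your remainder estimate also needs care. The bound $s^{d+1}|g^{(d+1)}|\preceq s^d|g^{(d)}|\cdot s/x$ is a divergent factor times a vanishing one. So $s$ must be taken only slightly larger than $|g^{(d)}|^{-1/d}$, which is possible because $x|g^{(d)}|^{1/d}\to\infty$ when $d\ge 2$ (or $d=1$ and $f\succ\log x$).

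The paper treats $d=1$ by a genuinely non-local argument. For $f'\sim c(\log V)'$ one has $\E^V_{n\le N}e(f(n))=\frac{1}{1+2\pi i c}e(f(N))+o(1)$, a Riemann sum in the variable $V(n)/V(N)$ (Lemma \ref{lem:deg=1}). Iterating this via Theorem \ref{thm:mikey_thm}(4) gives the positive direction, and transferring non-convergence gives the negative one.

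For $d\ge 2$, your positive direction (block decomposition plus Weyl differencing, with $s$ chosen as above) is a workable alternative to the paper's route. The paper instead proves short-interval bounds for all $s$ with $s|f^{(d)}|^{1/d}\to\infty$ and passes to $W$ through Theorem \ref{thm:mikey_thm}(3)$\Rightarrow$(2).
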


\begin{remark}
Let $\mathcal{H}$ be a Hardy field and let $f,W\in\mathcal{H}$. Put $d= \deg^*(f)$ and suppose that $d>0$. Then $W$ is compatible with $f$ if $\lim_{x\to\oo}\frac{|f^{(d)}(x)-p(x)|^{1/d}}{(\log W)'(x)}=\oo$ for all $p(x)\in \Q[x]$. 
Taking $W(x)=x$ we find that $W$ is compatible with $f$ if and only if $\lim_{x\to\oo}\frac{|f^{(d)}(x)-p(x)|}{x^{-d}}=\oo$ for all $p(x)\in \Q[x]$. After repeated applications of L'H\^opital's rule, this becomes $\lim_{x\to\oo}\frac{|f(x)-p(x)|}{\log(x)}=\oo$ for all $p(x)\in \Q[x]$. So, Theorem \ref{thm:Boshernitzan_ud} is a special case of Theorem \ref{thm:W_ud}. 
Additionally, \cite[Theorem 1.6]{BKS2025} (see also \cite[Theorem 5.1]{Richter23}) shows that conditions (i) and (ii) of Theorem \ref{thm:W_ud}, along with several other statements, are equivalent but contains the added assumption that $W'$ is nonincreasing. 
\end{remark}

The following corollary is immediate from Theorem \ref{thm:W_ud} and Lemma \ref{thm:mikey_thm}.

\begin{corollary}\label{cor:gen_wd_condition}
     Let $f$ be a subpolynomial Hardy function. Put $d=\deg^*(f)$ and assume $d>0$.
Suppose that $s:\N\rightarrow\N$ is a nondecreasing  function satisfying $s(N)\leq N-1$ for all sufficiently large $N\in \N$ and
\begin{equation}\label{eq:gen_wd_condition_1}
         \lim_{N\to\oo}s(N)\cdot |f^{(d)}(N)-p(N)|^{1/d}=\oo
\end{equation}
         for each $p(x)\in \Q[x]$. Then
         \begin{equation}\label{eq:gen_wd_condition_2}
         \lim_{N\to\oo}\frac{1}{s(N)}\sum_{n=N-s(N)}^Ne^{2\pi i kf(n)} = 0
         \end{equation}
        for all nonzero $k\in \mathbb{Z}$.

\end{corollary}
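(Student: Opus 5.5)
The plan is to turn the window hypothesis on $s$ into a growth condition on a Hardy weight, and then combine Theorem \ref{thm:W_ud} with the equivalence $(2)\iff(3)$ of Theorem \ref{thm:mikey_thm}, applied with $x_n=e^{2\pi i k f(n)}$ and $L=0$ in the Banach space $Y=\C$.

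Fix a maximal Hardy field $\mathcal{H}$ containing $f$. Let $\varphi:=\min_p|f^{(d)}-p|^{1/d}$, taken over the finitely many relevant $p$, or more carefully the $p$ which minimizes the growth. Since $d=\deg^*(f)$, all the functions $|f^{(d)}-p|$ with $p\in\Q[x]$ are comparable via $\preceq$, and there is a ``worst'' one: either $p=0$, or the $p\in \Q[x]$ of degree $0$ that best approximates $f^{(d)}$. I will set $\varphi(x)=|f^{(d)}(x)-p_0(x)|^{1/d}$ for that worst $p_0$, so $\varphi\in\mathcal{H}$.

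The key step is to produce a weight $V\in\mathcal{H}$ with $(\log V)'=\varphi$, i.e. $\log V(x)=\int_0^x\varphi$, which lies in $\mathcal{H}$ by Boshernitzan's integral closure.
\begin{itemize}
\item The conditions $1\prec\log V$ and $\log V\prec x$ should follow from $d=\deg^*(f)>0$. Indeed $\varphi\to 0$, because $f^{(d)}$ differs from a rational constant by $o(1)$. Also $\int\varphi=\oo$, because $\deg^*(f)=d$ forces $f^{(d)}-p_0\succ x^{-d}$ up to logarithmic factors, so $\varphi\succeq x^{-1}$.
\item The borderline case $\int\varphi<\oo$ cannot occur when $\deg^*(f)=d$: it would make $f-P$ of degree $<d$ after integrating.
\item If instead $\log V\preceq\log x$, one can enlarge $V$ harmlessly.
\end{itemize}

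Next, every $U\in\mathcal{H}$ with $1\prec\log U\prec\log V$ is compatible with $f$ in the sense of \eqref{eq:W_compatible}. By L'H\^opital applied to $\log U/\log V$ we get $(\log U)'\prec\varphi\le|f^{(d)}-p|^{1/d}$ for all $p$. Theorem \ref{thm:W_ud} then gives $\lim_N\E^U_{n\le N}x_n=0$ for each such $U$, which is statement (2) of Theorem \ref{thm:mikey_thm} for the weight $V$.

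Then $(2)\Rightarrow(3)$ yields \eqref{eq:gen_wd_condition_2} for every nondecreasing $s$ with $s(N)\le N-1$ and $s(N)\varphi(N)\to\oo$, which is exactly what \eqref{eq:gen_wd_condition_1} provides for $p=p_0$. If $\log x\not\prec\log V$, so that Theorem \ref{thm:mikey_thm} is not directly available, I would treat this case separately. Here $s(N)\ge cN$ essentially, so the window sum is a difference of two \Cesaro-type averages, and those vanish by Theorem \ref{thm:W_ud} with $W(x)=x$.

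I expect the main obstacle to be the bookkeeping in the key step: checking that $\log x\prec\log V\prec x$, or handling the boundary case, and justifying a single worst $p_0$. If choosing $p_0$ is delicate, I would instead note that the conclusion only requires one admissible $V$ with $s(N)(\log V)'(N)\to\oo$. Taking $(\log V)'=|f^{(d)}-p|^{1/d}$ for any single $p$ works, as long as compatibility holds for all $p$, which reduces to showing that $\min_p$ is attained by a constant rational $p$.
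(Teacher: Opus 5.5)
Your route is the paper's. You take the $\preceq$-smallest $|f^{(d)}-p|^{1/d}$ and build $V$ in a maximal Hardy field with $(\log V)'$ equal to it. Every $U$ with $1\prec\log U\prec\log V$ is then compatible with $f$, so Theorem~\ref{thm:W_ud} gives statement (2) of Theorem~\ref{thm:mikey_thm}, and you conclude by $(2)\Rightarrow(3)$. When $\varphi\to 0$ and $x\varphi(x)\to\infty$ this is correct as written. It is in fact more careful than the printed proof, whose choice of $W$ drops the exponent $1/d$.

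The bookkeeping around it contains false claims, and one of them leaves a case uncovered.

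\textbf{The choice of $p_0$.} The minimizing $p_0$ need not be constant. For $f(x)=x^5+x^{3/2}$ one has $d=2$, and $|f''-p|$ is smallest (up to $\preceq$) only for $p_0=20x^3$, where it equals $\frac34x^{-1/2}$. Take instead $p_0=q^{(d)}$, where $q\in\Q[x]$ satisfies $\deg(f-q)=d$. If $(f-q)^{(d)}$ tends to a nonzero rational $c$, replace $q$ by $q+\frac{c}{d!}x^d$. Then $(f-q)^{(d)}$ tends to $0$ or to an irrational number, and every other $p\in\Q[x]$ makes $|f^{(d)}-p|$ tend to a positive constant or to $\infty$. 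Hence $|f^{(d)}-p_0|\preceq|f^{(d)}-p|$ for all $p$. The paper's choice of $q$ needs the same adjustment; see $f(x)=x^2+x^{3/2}$ with $q=0$.

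\textbf{The irrational case (a genuine gap).} When the limit of $(f-q)^{(d)}$ is irrational, e.g.\ $f(x)=\sqrt2\,x^2$, your claim $\varphi\to0$ is false. Then $\varphi$ tends to a positive constant, $\log V\asymp x$, and Theorem~\ref{thm:mikey_thm} does not apply. The paper is silent on this case as well, but it is easy to close. There \eqref{eq:gen_wd_condition_1} only says $s(N)\to\infty$, and the conclusion is the well-distribution of $(f(n))_{n\in\N}$. That is the proved direction $(2)'\Rightarrow(1)'$ of Theorem~\ref{thm:Boshernitzan_wd}.

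\textbf{The lower boundary.} Your claim $\varphi\succeq x^{-1}$ is false for $d=1$: for $f=\log\log x$ one gets $\varphi=1/(x\log x)$. You do not need it, though. If $\lim_{x\to\infty}x\varphi(x)<\infty$, then $s(N)\varphi(N)\leq(N-1)\varphi(N)$ stays bounded, so no admissible $s$ exists and the statement is vacuous. For $d\geq 2$ this case cannot occur.

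Your two proposed treatments of this case would not have worked had it not been vacuous. Enlarging $V$ is not harmless: statement (2) for a larger weight requires convergence for more weights $U$, including ones incompatible with $f$. Theorem~\ref{thm:W_ud} with $W(x)=x$ needs precisely $x\varphi(x)\to\infty$, which is what fails here. For $f=\log x$, for instance, the Ces\`aro averages of $e^{2\pi i k\log n}$ do not tend to $0$.
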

\begin{proof}
Pick $q(x)\in \Q[x]$ such that $d=\deg^*(f)=\deg(f-q)$. Then 
\begin{equation} \label{eq:slowest_q}
|f^{(d)}(x)-q^{(d)}(x)|\preceq |f^{(d)}(x)-p(x)|
\end{equation}
for all $p(x)\in \Q[x]$. Let $\mathcal{H}$ be a maximal Hardy field containing $f$. Pick $W\in \mathcal{H}$ such that 
\begin{equation}\label{eq:same_growth_as_f^d}
    \lim_{x\to\oo}\frac{|f^{(d)}(x)-q^{(d)}(x)|}{(\log W)'(x)}=1.
\end{equation}
For example, take 
\begin{equation}\label{eq:example_hardy}
W(x) = \exp\left(\int_0^x |f^{(d)}(t)-q^{(d)}(t)|~dt\right)
\end{equation}
which is contained in $\mathcal{H}$ since maximal Hardy fields are closed under integration and exponentiation. 
For any $U\in \mathcal{H}$ with $1\prec \log U(x)\prec \log W(x)$, $f$ is compatible with $U$ by (\ref{eq:same_growth_as_f^d}) and (\ref{eq:slowest_q}), and moreover $\lim_{N\to\oo}s(N)\cdot (\log U)'(N) = \oo$ by (\ref{eq:gen_wd_condition_1}) and (\ref{eq:slowest_q}). By Theorem \ref{thm:W_ud}, 
\[
\lim_{N\to\oo}\E_{n\leq N}^Ue^{2\pi i k f(n)}=0
\]
for any nonzero $k\in \Z$ and any $U\in \mathcal{H}$ with $1\prec \log U(x)\prec \log W(x)$, and hence
\[
\lim_{N\to\oo}\frac{1}{s(N)}\sum_{n=N-s(N)}^N e^{2\pi i k f(n)}=0
\]
for any nonzero $k\in \Z$. This concludes the proof.
\end{proof}

\begin{remark}
Theorem \ref{thm:Boshernitzan_wd} follows from Theorem \ref{thm:mikey_thm} and Corollary \ref{cor:gen_wd_condition}. To see why this is true, observe that  $(f(n))_{n\in \N}$ is w.d. mod 1 if and only if equation (\ref{eq:gen_wd_condition_2}) holds for all $s:\N\rightarrow \N$ with $\lim_{N\to\oo}s(N)=\oo$ and $s(N)\leq N-1$ for all sufficiently large $N\in \N$. Now, if $f$ satisfies condition $(2)'$ in Theorem \ref{thm:Boshernitzan_wd} and $\lim_{N\to\oo}s(N)=\oo$ then equation (\ref{eq:gen_wd_condition_1}) automatically holds because $f^{(d)}$ does not tend to $0$.

For the reverse direction, suppose condition $(2)'$ in Theorem \ref{thm:Boshernitzan_wd} does not hold. Then there is a Hardy function $W$ such that $f$ and $W$ are contained in the same maximal Hardy field, such that 
\[
\lim_{x\to\oo}\frac{|f^{(d)}(x)-p(x)|^{1/d}}{(\log W)'(x)}<\oo
\]
for some $p(x)\in \Q[x]$ (take for example, the function $W$ in (\ref{eq:example_hardy})). Then for this function $W$ we have that $(f(n))_{n\in \N}$ is not u.d. mod 1 with respect to $W$-averages and hence by Theorem \ref{thm:mikey_thm} there is a function $s$ with $\lim_{N\to\oo}s(N)=\oo$ for which $(\ref{eq:gen_wd_condition_2})$ does not hold.
\end{remark}

\begin{example}
Let $f(x) = x^{3/2}$ and let $s$ satisfy $\lim_{N\to\oo}\frac{s(N)}{N^{1/4}}=\oo$, for example $s(N)=N^{1/4+\epsilon}$ for some $\epsilon>0$. Then equation (\ref{eq:gen_wd_condition_1}) holds with $d=\deg^*(f)=2$. By the usual proof of the Weyl Criterion, equation (\ref{eq:gen_wd_condition_2}) implies that 
\begin{equation}\label{eq:example_2}
         \lim_{N\to\oo}\frac{1}{s(N)}\sum_{n=N-s(N)}^N1_{(a,b)}(f(n)\mod 1) = b-a
\end{equation}
         for all $(a,b)\subset [0,1]$, where $f(n)\mod 1 = f(n)-\lfloor f(n)\rfloor$ denotes the fractional part of $f(n)$. It follows that for all large enough $N$, there is an $n\in [N-s(N),N]$ such that $f(n)\mod 1\in (a,b)$. This improves \cite[Example 1.10]{reilly26}, and  
         moreover, this is the best possible result of this form since Theorem \ref{thm:W_ud} says that (\ref{eq:example_2}) does not hold if $s(N)$ grows like $N^{1/4}$ or slower. 
\end{example}

In the remainder of this section, we prove Theorem \ref{thm:W_ud}. We first treat the case $\deg^*(f)=1$, then we prove the forward implication when $\deg^*(f)>1$, and then the converse implication.

\subsection{The case deg\texorpdfstring{$^*(f)=1$}{*(f)=1}}\label{section:d=1}
\begin{lemma}\label{lem:deg=1}
    Suppose that $f$ and $W$ are continuously differentiable and eventually monotone functions such that $1\prec \log W(x)\prec x$, $\lim_{x\to\oo}f'(x)=0$, and the limit $\lim_{x\to\oo}\frac{f'(x)}{(\log W)' (x)}$ exists in $(0,\oo)$. Then there is a constant $C\in \C$ with $|C|<1$ such that 
    \begin{equation}
        \E_{n\leq N}^W(e^{2\pi i f(n)}) = C\cdot e^{2\pi i f(N)}+o_{N\to\oo}(1).
    \end{equation}
\end{lemma}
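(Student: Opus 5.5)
The plan is to compare the discrete weighted sum with an integral and then evaluate the integral by an exact derivative identity. Write $e(x)=e^{2\pi i x}$ and let $c=\lim_{x\to\oo}\frac{f'(x)}{(\log W)'(x)}\in(0,\oo)$. I expect to obtain
\[
C=\frac{1}{1+2\pi i c},
\]
and $|C|<1$ holds precisely because $c\neq 0$.

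\emph{Step 1 (discretization).} Since $\log W$ is eventually monotone and tends to $\oo$, $W$ is eventually increasing, so $\Delta W(n)=\int_{n-1}^nW'(t)\,dt\geq 0$ for large $n$. For $t\in[n-1,n]$ the mean value theorem gives $|e(f(n))-e(f(t))|\leq 2\pi\sup_{[n-1,n]}|f'|$, and this tends to $0$ because $f'\to 0$. Hence
\[
\sum_{n\leq N}\Delta W(n)e(f(n))=\int_0^N W'(t)e(f(t))\,dt+o(W(N)).
\]
This holds because a finite initial segment contributes $O(1)$, the rest is a weighted average of errors tending to $0$ with total weight $W(N)$, and $W(N)\to\oo$.

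\emph{Step 2 (integration by parts / derivative identity).} Compute
\[
\frac{d}{dt}\bigl[W(t)e(f(t))\bigr]=W'(t)e(f(t))\Bigl(1+2\pi i\frac{f'(t)}{(\log W)'(t)}\Bigr),
\]
using $W'=W(\log W)'$. Since $\frac{f'}{(\log W)'}\to c$, we get
\[
W'(t)e(f(t))=\frac{1}{1+2\pi i c}\,\frac{d}{dt}\bigl[W(t)e(f(t))\bigr]+\epsilon(t)W'(t),
\]
where $\epsilon(t)\to 0$. Integrating from a large $x_0$ to $N$, the error term $\int\epsilon W'$ is $o(W(N))$, again because $W'\geq 0$ eventually and $W(N)\to\oo$. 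The main term is $\frac{1}{1+2\pi ic}W(N)e(f(N))+O(1)$.

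\emph{Step 3.} Divide by $W(N)$ and combine Steps 1 and 2. This gives $\E^W_{n\leq N}e(f(n))=C\,e(f(N))+o(1)$ with $C=(1+2\pi i c)^{-1}$, and $|C|=(1+4\pi^2c^2)^{-1/2}<1$.

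The only delicate point is making the error terms genuinely $o(W(N))$. This requires eventual positivity of $W'$, so that the errors are averaged against a nonnegative weight, together with $W(N)\to\oo$, so that the contributions from an initial segment vanish. The hypothesis $\log W\prec x$ is not even needed beyond guaranteeing the setting. Note that we never need $(\log W)'\to 0$ separately, since the discretization uses only $f'\to 0$.
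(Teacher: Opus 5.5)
Your proof is correct, and it takes a different route from the paper's.

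\textbf{The paper's route.} The paper first treats the model case $f=c\log W$. It reads $\frac{1}{W(N)}\sum_{n\le N}\Delta W(n)e^{2\pi i c\log(W(n)/W(N))}$ as a Riemann sum, in the variable $x=W(n)/W(N)$, for $\int_0^1 x^{2\pi i c}\,dx=\frac{1}{1+2\pi i c}$. The requirement that the mesh $\Delta W(N)/W(N)$ tends to $0$ is where $\log W\prec x$ enters. General $f$ is then handled by a perturbation argument. The paper discards the initial range where $W(n)/W(N)<\epsilon/2$. On the remaining range, after the substitution $u=\log W$, it shows $f(n)-f(N)=c\log(W(n)/W(N))+o(1)$ uniformly.

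\textbf{Your route.} You pass from the sum to $\int W'e^{2\pi i f}$ using only $f'\to 0$. You then evaluate that integral in one stroke via the product-rule identity $\frac{d}{dt}\bigl[We^{2\pi i f}\bigr]=W'e^{2\pi i f}\bigl(1+2\pi i f'/(\log W)'\bigr)$. The convergence $f'/(\log W)'\to c$ is absorbed into an error term $\epsilon(t)W'(t)$, which is $o(W(N))$ because $W'\ge 0$ eventually. This removes the need for the special case, the cutoff $N_0$, and the uniformity estimate. It also shows that $\log W\prec x$ is automatic, since $f'\to0$ together with the ratio hypothesis forces $(\log W)'\to 0$.

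\textbf{What the paper's version buys.} It gives an explicit limiting profile. In the variable $W(n)/W(N)$ the weights converge to Lebesgue measure on $[0,1]$, and $e^{2\pi i(f(n)-f(N))}\approx (W(n)/W(N))^{2\pi i c}$. This localization picture matches the Taylor-expansion-around-$N$ computation used later for Gaussian averages in Theorem~\ref{thm:gaussian_dne}.

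Both arguments yield the same constant $C=(1+2\pi i c)^{-1}$.
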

\begin{proof}

We begin by considering the special case $f(x) = c\cdot \log W(x)$ for some $c\in (0,\oo)$.
    \begin{align*}
         \E_{n\leq N}^W(e^{2\pi i 
        f(n)})=&\E_{n\leq N}^W(e^{2\pi i 
        c\log W(n)}) = \frac{1}{W(N)}\sum_{n=1}^{N}\Delta W(n)e^{2\pi i c\log W(n)}\\ = &e^{2\pi i c\log W(N)}\cdot \sum_{n=1}^{N}\frac{\Delta W(n)}{W(N)}e^{2\pi i c\log\left(\frac{W(n)}{W(N)}\right)}.
    \end{align*}
The sum $\sum_{n=1}^{N}\frac{\Delta W(n)}{W(N)}e^{2\pi i c\log\left(\frac{W(n)}{W(N)}\right)}$ is a Riemann sum with partition $\{0<\frac{W(1)}{W(N)}<\frac{W(2)}{W(N)}<\cdots <\frac{W(N)}{W(N)}=1\}$ for the integral $\int_0^1e^{2\pi i c\log(x)}dx$. Therefore
\begin{equation}\label{eq:riemann_sum}
\sum_{n=1}^{N}\frac{\Delta W(n)}{W(N)}e^{2\pi i c\log\left(\frac{W(n)}{W(N)}\right)}=\int_{0}^1e^{2\pi i c\log(x)}dx = \int_0^1x^{2\pi i c}dx = \frac{1}{1+2\pi i c}.
\end{equation}
It is worth noting that equation (\ref{eq:riemann_sum}) is precisely where we use the assumption that $\log W(x) \prec x$ (or equivalently that $\lim_{x\to\oo}f'(x)=0$), since otherwise $\frac{W(N)-W(N-1)}{W(N)}$ would not tend to $0$ and so the Riemann sum would not tend to the integral. Taking $C = \frac{1}{1+2\pi i c}$ we have 
   \[
   \E_{n\leq N}^W(e^{2\pi i f(n)}) = C\cdot e^{2\pi i f(N)}+o_{N\to\oo}(1)
   \]
as desired. For the general case, we have that
\begin{equation} \label{eq:f'_asymptotics}
f'(x) = c\cdot (\log W)'(x)+E(x)\cdot (\log W)'(x)
\end{equation} 
for some $c\in (0,\oo)$ and some function $E$ with $\lim_{x\to\oo}E(x)=0$. Let $\epsilon>0$ and let $N\in \N$ be arbitrarily large. Pick the smallest $N_0\in \N$ such that $\frac{W(N_0)}{W(N)}>\epsilon/2$ and note that $N_0$ tends to $\oo$ as $N$ tends to $\oo$. Then 
\begin{equation}\label{eq:remove_epsilon_weight}
\left|\frac{1}{W(N)}\sum_{n=1}^{N_0}\Delta W(n)e^{2\pi i f(n)}\right|\leq \frac{1}{W(N)}\sum_{n=1}^{N_0}\left|\Delta W(n)\right|<\epsilon.
\end{equation}
Integrating both sides of equation (\ref{eq:f'_asymptotics}) gives 
\[
\int_n^Nf'(x)dx = f(N)-f(n)
\]
and 
\[
\int_{n}^Nc\cdot (\log W)'(x)+E(x)\cdot (\log W)'(x)~dx = c\log\left(\frac{W(N)}{W(n)}\right) + \int_n^NE(x)(\log W)'(x)~dx. 
\]
Let $u = \log W(x)$ and define the function $\tilde{E}$ by $\tilde{E}(t) = E(W^{-1}(e^t))$. Then $\tilde{E}(t)\to 0$ as $t\to\oo$ and 
\[
\int_n^NE(x)(\log W)'(x)~dx = \int_{\log W(n)}^{\log W(N)}\tilde{E}(u)~du.
\]
For $n\in [N_0,N]$, we have 
\[
\log W(N)-\log W(n) \leq \log W(N)-\log W(N_0) =  \log\left(\frac{W(N)}{W(N_0)}\right)
\]
which is bounded uniformly in $N$ by our assumption on $N_0$. Since $\tilde{E}$ tends to $0$, it follows that $\int_{\log W(n)}^{\log W(N)}\tilde{E}(u)~du = o_{N\to\oo}(1)$ uniformly for $n\in [N_0,N]$. Altogether, we have 
\begin{equation}\label{eq:f_difference_asymptotic}
f(n)-f(N) = c\cdot \log\left(\frac{W(n)}{W(N)}\right)+o_{N\to\oo}(1)
\end{equation}
uniformly for $n\in [N_0,N]$. Lastly, using equations (\ref{eq:remove_epsilon_weight}), (\ref{eq:f_difference_asymptotic}), and (\ref{eq:f'_asymptotics}) we have
\begin{align*}
    \E_{n\leq N}^W(e^{2\pi i f(n)}) &= \frac{1}{W(N)}\sum_{n=1}^{N}\Delta W(n)e^{2\pi i f(n)}\\ =& e^{2\pi i f(N)}\cdot \frac{1}{W(N)}\sum_{n=N_0}^{N}\Delta W(n)e^{2\pi i (f(n)-f(N))}+O(\epsilon)\\
    =&e^{2\pi i f(N)}\cdot \frac{1}{W(N)}\sum_{n=N_0}^{N}\Delta W(n)e^{2\pi i c\log(W(n)/W(N))}+o_{N\to\oo}(1)+O(\epsilon)\\
    =&e^{2\pi i f(N)}\cdot \frac{1}{W(N)}\sum_{n=1}^{N}\Delta W(n)e^{2\pi i c\log(W(n)/W(N))}+o_{N\to\oo}(1)+O(\epsilon)\\
    =&C\cdot e^{2\pi i f(N)}+o_{N\to\oo}(1)+O(\epsilon)
\end{align*}
for $C= \frac{1}{1+2\pi i c}$. Taking $\epsilon\to 0$ completes the proof.
\end{proof}
We now prove Theorem \ref{thm:W_ud} in the case $\deg^*(f)=1$.
\begin{theorem}\label{thm:d=1_thm}
    Let $\mathcal{H}$ be a maximal Hardy field and let $W,f\in \mathcal{H}$. Suppose that $\deg^*(f)=1$ and that $\log x\prec \log W(x)\prec x$. Then $(f(n))_{n\in \N}$ is u.d. mod 1 with respect to $W$-averages if and only if $W$ is compatible with $f$, meaning that 
\begin{equation}\label{eq:d=1_bosh_condition_again}
         \lim_{x\to\oo}\frac{|f'(x)-p(x)|}{(\log W)'(x)}=\oo
\end{equation}
for all $p(x)\in \Q[x]$. 
\end{theorem}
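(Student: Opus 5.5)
The plan is to reduce to the behaviour of $f'$ at infinity and then estimate the $W$-averages block by block, on blocks short enough that both the weights $\Delta W(n)$ and the linear approximation of $f$ are essentially frozen. Since $\deg^*(f)=1$, fix $q\in\Q[x]$ with $\deg(f-q)=1$ and put $g=f-q$. Then $g'$ is a Hardy function, bounded, so $L:=\lim_{x\to\oo}g'(x)$ exists and is finite. Let $m\in\N$ be a common denominator for the coefficients of $q$, so that $n\mapsto e^{2\pi i kq(n)}$ is $m$-periodic.

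In the compatibility condition (\ref{eq:d=1_bosh_condition_again}), only $p=q'+c$ with $c$ a rational constant can matter, since for any other $p\in\Q[x]$ the difference $|f'-p|$ does not tend to $0$. Hence the condition reads $\lim_{x\to\oo}|g'(x)-c|/(\log W)'(x)=\oo$ for every $c\in\Q$.
\begin{itemize}
\item If $L\notin\Q$, then $|g'-c|$ stays bounded away from $0$ while $(\log W)'\to0$, so the condition holds automatically.
\item If $L\in\Q$, we replace $q$ by $q+Lx$ (adjusting $m$). We may then assume $g'\to0$, and the condition becomes $g'/(\log W)'\to\oo$.
\end{itemize}

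\textbf{Compatible $\Rightarrow$ u.d.} Fix $k\neq0$. We cut $[1,N]$ into consecutive blocks $[n_0,n_0+s(n_0))$, where the block length $s$ is chosen so that three things happen:
\begin{itemize}
\item $s(\log W)'(n_0)\to0$;
\item $s^2|g''(n_0)|\to0$;
\item $s\,\|km\,g'(n_0)\|\to\oo$, where $\|\cdot\|$ is the distance to $\Z$.
\end{itemize}
Because $(\log W')'\sim(\log W)'$ for such Hardy $W$, the first property says that $\Delta W(n)=\Delta W(n_0)(1+o(1))$ uniformly on the block. The second, by Taylor expansion, gives $g(n)=g(n_0)+g'(n_0)(n-n_0)+o(1)$ on the block.

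On each block we split $n$ into residue classes mod $m$. On each class, $e^{2\pi i k f(n)}$ is, up to $o(1)$, a constant times a geometric progression with ratio $e^{2\pi i kmg'(n_0)}$. Its average is therefore $O\big(m/(s\,\|kmg'(n_0)\|)\big)=o(1)$. Summing the blocks with their nearly constant weights, and discarding the initial segment where $W(n)/W(N)$ is small as in the proof of Lemma \ref{lem:deg=1}, gives $\E^W_{n\leq N}e^{2\pi ikf(n)}\to0$.

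Choice of $s$ in each case:
\begin{itemize}
\item When $L\notin\Q$, $\|kmg'\|$ is bounded below, so any $s\to\oo$ with $s(\log W)'\to0$ and $s^2g''\to0$ works; this is possible since $(\log W)'\to0$ and $g''\to0$.
\item When $g'\to0$ and $g'/(\log W)'\to\oo$, take $s=(g'\,(\log W)')^{-1/2}$. Then $s(\log W)'=((\log W)'/g')^{1/2}\to0$ and $s\,g'=(g'/(\log W)')^{1/2}\to\oo$. Moreover, using $|g''|\preceq |g'|/x$ (Hardy field) and $\log x\prec\log W$, we get $s^2|g''|\preceq 1/(x(\log W)'(x))\to0$.
\end{itemize}

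\textbf{Not compatible $\Rightarrow$ not u.d.} Here $L\in\Q$, and after the reduction $g'\to0$ with $\lim g'/(\log W)'=c\in[0,\oo)$ (possibly after replacing $g$ by $-g$). As before we must first remove the periodic factor $e^{2\pi ikq(n)}$. We do this by choosing $k$ to be a multiple of $m$, so that factor is identically $1$; it suffices to exhibit one nonzero $k$ for which the average does not tend to $0$.
\begin{itemize}
\item If $c>0$, Lemma \ref{lem:deg=1} applied to $kg$ gives $\E^W_{n\leq N}e^{2\pi ikg(n)}=C e^{2\pi ikg(N)}+o(1)$ with $C=1/(1+2\pi ikc)\neq0$. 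Its modulus tends to $|C|>0$, so the average does not tend to $0$.
\item If $c=0$, the argument in the proof of Lemma \ref{lem:deg=1} gives $g(n)-g(N)=o(1)$ uniformly for $W(n)/W(N)>\epsilon$. Hence the average has modulus tending to $1$.
\end{itemize}

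\textbf{Main obstacle.} The delicate step is the block decomposition in the first direction. One must verify carefully that replacing $\Delta W(n)$ by a constant on each block costs only $o(1)$ after normalising by $W(N)$. One must also check that the three conditions on $s$ are simultaneously satisfiable using only Hardy-field comparisons, in particular the relations $(\log W')'\asymp(\log W)'$ and $g''\preceq g'/x$. I would first try to derive these from the fact that $W$ and $g'$ are Hardy functions with $\log x\prec\log W\prec x$, via L'H\^opital's rule.
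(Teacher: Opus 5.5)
Your proposal is correct, but it takes a genuinely different route from the paper.

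\textbf{How the paper argues.} For compatible $\Rightarrow$ u.d., the paper never estimates the sums directly. When the slope $L$ is irrational, it invokes the proven direction of Boshernitzan's well-distribution theorem together with Lemma \ref{lem:wd_means_W_ud}. When $L\in\Q$, it builds an auxiliary weight $V=\exp(\int f')$ in the maximal Hardy field and uses Lemma \ref{lem:deg=1} to get $\E^V_{n\leq N}e^{2\pi ikf(n)}=Ce^{2\pi ikf(N)}+o(1)$ with $|C|<1$. Iterating gives $Y_{N,j}=C^je^{2\pi ikf(N)}+o(1)$, and the result is transferred to $W$ (using $\log W\prec\log V$) via $(4)\Rightarrow(2)$ of Theorem \ref{thm:mikey_thm}. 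The case $c=0$ of the converse also goes through $V$ and Theorem \ref{thm:mikey_thm}.

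\textbf{How your route differs.} Your block decomposition replaces all of this with an elementary, self-contained computation, essentially a direct proof of the short-interval criterion for nearly linear phases. For $c=0$, your direct estimate $g(n)-g(N)=o(1)$ on $\{W(n)/W(N)>\epsilon\}$ is simpler than the paper's detour. You also handle the rational polynomial part more carefully than the paper's brief appeal to Lemma \ref{lem:variant_of_modulo_averages}: residue classes mod $m$ in one direction, and $k\in m\Z$ in the other.

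\textbf{What each buys.} The paper's route is shorter given the imported machinery, and it matches the structure reused for $d>1$. Yours is quantitative, and it makes transparent why $s=(g'(\log W)')^{-1/2}$ sits exactly at the compatibility threshold.

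\textbf{The two Hardy-field facts you flagged both hold.}
\begin{itemize}
\item First, $(\log W')'=(\log W)'\bigl(1-(1/(\log W)')'\bigr)$. The limit of $(1/(\log W)')'$ equals $\lim 1/(x(\log W)'(x))=0$ by L'H\^opital, using $\log x\prec\log W$. Hence $(\log W')'\sim(\log W)'$.
\item Second, $|g''|\preceq|g'|/x$. In the case where you need it, $g'\succ(\log W)'\succ 1/x$ and $g'\to0$, so $\lim\log|g'|/\log x\in[-1,0]$. By L'H\^opital, $xg''/g'$ then has a finite limit. This step genuinely uses a lower bound on $|g'|$; the claim fails for, e.g., $g'=e^{-x}$.
\end{itemize}
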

\begin{proof}

We can replace $f$ by $-f$ if necessary to assume that $f$ tends to $\oo$. For $q(x)\in \Q[x]$ the sequence $(q(n)\mod 1)_{n\in \N}$ is periodic and so replacing $f$ by $f-q$ does not affect uniform distribution (see lemma \ref{lem:variant_of_modulo_averages}) and so we assume that $1\prec f(x)\preceq x$. Lastly, we also assume that $f(x)\prec x$ since if 
\[
\lim_{x\to\oo}\frac{f(x)-p(x)}{x^n}\in \R\setminus \Q
\]
for some $p(x)\in \Q[x]$, $n\in \N$ then (\ref{eq:d=1_bosh_condition_again}) holds. We know that $\E_{\unif}(e^{2\pi i kf(n)})_{n\in \N}=0$ for all nonzero $k\in \Z$ by Theorem \ref{thm:Boshernitzan_wd} (this is the direction which is proven in \cite[Theorem 1.10]{Boshernitzan}), and so $\lim_{N\to\oo}\E^W_{n\leq N}e^{2\pi i kf(n)}=0$ for all nonzero $k\in \Z$ by Lemma \ref{lem:wd_means_W_ud} below.

For each $p(x)\in \Q[x]$, $\lim_{x\to\oo}\frac{f'(x)}{(\log W)'(x)}\leq \lim_{x\to\oo}\frac{|f'(x)-p(x)|}{(\log W)'(x)}$, so it suffices to consider the case $p(x)=0$ in (\ref{eq:d=1_bosh_condition_again}). Since $f$ and $W$ belong to the same Hardy field, the limit $\lim_{x\to\oo}\frac{f'(x)}{(\log W)'(x)}$ always exists in $[0,\oo)\cup\{\oo\}$. We have three cases to consider
    \begin{enumerate}[label = (\arabic*)]
        \item $\lim_{x\to\oo}\frac{f'(x)}{(\log W)'(x)}\in(0,\oo)$,
        \item $\lim_{x\to\oo}\frac{f'(x)}{(\log W)'(x)}=0$,
        \item $\lim_{x\to\oo}\frac{f'(x)}{(\log W)'(x)}=\oo$.
    \end{enumerate}

    In the first case, (\ref{eq:d=1_bosh_condition_again}) does not hold and we may apply Lemma \ref{lem:deg=1} to see that $\E_{n\leq N}^We^{2\pi i kf(n)} = Ce^{2\pi i kf(N)}+o_{N\to\oo}(1)$, so $\lim_{N\to\oo}\E^W_{n\leq N}e^{2\pi i f(n)}$ does not exist. Then the sequence $(f(n))_{n\in \N}$ is not u.d. mod 1 with respect to $W$-averages in this case.

    (\ref{eq:d=1_bosh_condition_again}) does not hold in the second case either. To see this, pick a Hardy function $V$ such that $\lim_{x\to\oo}\frac{f'(x)}{(\log V)'(x)}=1$, and note that $\lim_{x\to\oo}\frac{\log V(x)}{\log {W}(x)}=0$. One such function is given by 
    \begin{equation}\label{eq:maximal_hardy_field_function}
        V(x) = \exp\left(\int_{1}^xf'(t)~dt\right)
    \end{equation}
which is contained in $\mathcal{H}$ since maximal Hardy fields are closed under integration and exponentiation. By Lemma \ref{lem:deg=1}, $
    \E_{n\leq N}^{V}e^{2\pi i f(n)} = Ce^{2\pi i f(N)}+o_{N\to\oo}(1)$ and in particular, $\lim_{N\to\oo}\E^{V}_{n\leq N}e^{2\pi i f(n)}$ does not exist. By Theorem \ref{thm:mikey_thm} and Remark \ref{remark:1precW} it follows that $\lim_{N\to\oo}\E^{{W}}_{n\leq N}e^{2\pi i f(n)}$ also does not exist. So, $(f(n))_{n\in \N}$ is not u.d. mod 1 with respect to $W$-averages in this case.

Lastly, in the third case (\ref{eq:d=1_bosh_condition_again}) does hold. Again, consider a Hardy function $V$ such that $\lim_{x\to\oo}\frac{f'(x)}{(\log V)'(x)}=1$, and note that we now have $\lim_{x\to\oo}\frac{\log {W}(x)}{\log V(x)}=0$. Pick any nonzero $k\in \Z$, and observe that $\lim_{x\to\oo}\frac{kf'(x)}{(\log V)'(x)}\in(0,\oo)$.
By Lemma \ref{lem:deg=1} we have 
\begin{equation}\label{eq:C_eq}
    \E_{n\leq N}^{V}e^{2\pi i kf(n)} = Ce^{2\pi i kf(N)}+o_{N\to\oo}(1)
\end{equation}
    for some $C\in \C$ with $|C|<1$. Put $Y_{N,0} = e^{2\pi i kf(N)}$ and $Y_{N,k+1} = \E_{n\leq N}^{V}Y_{N,k}$ for $N\in \N$ and $k\geq 0$. From (\ref{eq:C_eq}) we have that $Y_{N,k} = C^ke^{2\pi i kf(N)}+o_{N\to\oo}(1)$ for all $k\geq 0$.  Hence $\lim_{N\to\oo}\E^W_{n\leq N}e^{2\pi i kf(n)}=0$  by Theorem \ref{thm:mikey_thm} and Remark \ref{remark:1precW}. So $(f(n))_{n\in \N}$ is u.d. mod 1 with respect to $W$-averages. This completes the proof.
\end{proof}

\subsection{Negative results when deg\texorpdfstring{$^*(f)>1$}{*(f)>1}}

We turn our attention to the case when $\deg^*(f)> 1$ and $W$ is not compatible with $f$. If $\log W(x)\preceq \log x$, this case is vacuous (see Remark \ref{remark:WP}), so we suppose that $\log x\prec \log W(x)$. Our goal is to show that $(f(n))_{n\in \N}$ is not u.d. mod 1 with respect to $W$-averages. To this end, we will find an infinite matrix $(\alpha_{N,n})_{N,n\in \N}$ such that if $\lim_{N\to\oo}\E^W_{n\leq N}e^{2\pi i f(n)}=0$ then $\lim_{N\to\oo}\sum_{n=1}^{\oo}\alpha_{N,n}e^{2\pi i f(n)}=0$ (Lemmas \ref{lem:silverman_toeplitz}, \ref{lem:Boos}, \ref{lem:gauss_stronger_than_weighted}). Using a few technical results (Lemmas \ref{lem:technical_1}, \ref{lem:shift_inequality}, \ref{lem:derivatives_mod_1}) we find a constant $C$ such that 
\begin{equation}\label{eq:find_C}
   \sum_{n=1}^{\oo}\alpha_{N,n}e^{2\pi i f(n)} = C\cdot e^{2\pi if(N)}+o_{N\to\oo}(1)
\end{equation}
for infinitely many values of $N\in \N$ (Theorem \ref{thm:gaussian_dne}). Then we show that this constant $C$ is nonzero, from which it follows that $\lim_{N\to\oo}\sum_{n=1}^{\oo}\alpha_{N,n}e^{2\pi i f(n)}$ does not exist and hence $\lim_{N\to\oo}\E^W_{n\leq N}e^{2\pi if(n)}$ does not exist and so $(f(n))_{n\in \N}$ is not u.d. mod 1 with respect to $W$-averages (Corollary \ref{cor:negative_result_d>2}).

We begin with the technical lemmas.

\begin{lemma}\label{lem:technical_1}
    Let $f: \N\rightarrow \R$ be an increasing function, let $A<B<C$ be elements of $(0,1)$ with $B<C-A$, and let $X<Y$ be natural numbers with $Y-X> \frac{2}{A}$. Suppose that $(\Delta f(n)\text{ mod } 1)\in (A,B)$ for all $n\in \{X,\dots, Y\}$. Then there exist natural numbers $Z,W$  such that $ (f(n)\text{ mod } 1)\in (A,C)$ for all $n\in \{Z,\dots, W\}$ with $X\leq Z<W$, $Z< X+\frac{1}{A}$, $W-Z\geq \frac{C-A}{B}-2$.
\end{lemma}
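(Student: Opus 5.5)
The plan is to track the fractional parts $t_n := f(n)\text{ mod }1$ for $n\in\{X,\dots,Y\}$. Write $\delta_n := \Delta f(n+1)\text{ mod }1$, which lies in $(A,B)$ whenever $n+1\in\{X,\dots,Y\}$. Then $t_{n+1}\equiv t_n+\delta_n \pmod 1$, so each step moves the point forward around the circle by an amount strictly between $A$ and $B$, and $B<1$. (The monotonicity of $f$ is not really needed beyond this.)

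Two elementary observations drive the argument. First, the orbit cannot jump over the point $A$ without landing in $(A,A+B)$. Indeed, if $t_n\le A$ then $t_n+\delta_n>A$, since $\delta_n>A\ge A-t_n$, and $t_n+\delta_n<A+B<1$. Similarly, if $t_n$ wraps past $1$, the new value lies in $[0,B)$, and it either already lies in $(A,A+B)$ or is $\le A$ and enters $(A,A+B)$ at the next step. Second, once $t_Z\in(A,A+B)$, the following steps add less than $B$ each and produce no wrap while the total stays below $1$. Hence $t_{Z+j}<A+(j+1)B$, which is $<C$ for all $j\le \frac{C-A}{B}-1$, and $t_{Z+j}>A$ trivially. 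This gives a run of length at least $\frac{C-A}{B}-2$ inside $(A,C)$, using $A+B<C<1$.

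So I define $Z$ to be the first $n\ge X$ with $t_n\in(A,A+B)$, and $W:=Z+\lfloor \frac{C-A}{B}\rfloor-1$. I then check three things.

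First, $Z<X+\frac1A$. I split into cases on $t_X$.
\begin{itemize}
\item If $t_X\in(A,A+B)$, then $Z=X$.
\item If $t_X\le A$, then $Z=X+1$, and $1<\frac1A$.
\item If $t_X\ge A+B>2A$ (recall $B>A$ because $(A,B)\neq\emptyset$), the orbit needs at most $\lceil (1-t_X)/A\rceil$ steps to wrap, plus at most one more step to enter $(A,A+B)$. This totals less than $\frac{1-2A}{A}+2=\frac1A$.
\end{itemize}

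Second, all indices used stay in $\{X,\dots,Y\}$, so the hypothesis on $\Delta f$ applies. Since $C-A<1$ and $B>A$, we have $\frac{C-A}{B}<\frac1A$, hence $W<Z+\frac1A<X+\frac2A<Y$.

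Third, $W-Z\ge \frac{C-A}{B}-2$, which holds by construction, and $t_n\in(A,C)$ for $Z\le n\le W$, which is the second observation above. One also needs $W>Z$; this requires $\frac{C-A}{B}\ge 2$, and otherwise the claimed bound is trivial, so I would adjust $W=Z+1$ only if that is harmless, or simply note that the statement is vacuous there.

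The main obstacle I expect is the bookkeeping in the third case of the bound on $Z$. The inequality $\frac{1-2A}{A}+2\le\frac1A$ is tight, so strictness has to come from the strict inequalities $t_X\ge A+B>2A$ and $\delta_n>A$, and from counting the wrap step correctly via a ceiling. I would handle this by directly estimating the number of steps $k$ with $t_X+kA<1$, which gives $k<(1-t_X)/A<(1-2A)/A$, and then adding at most two further steps.
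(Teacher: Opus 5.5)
Your argument is correct and uses the same mechanism as the paper's proof. The fractional parts $f(n)\bmod 1$ advance around the circle by steps in $(A,B)$; one locates a first entry near $A$; from there the orbit needs about $\frac{C-A}{B}$ steps to pass $C$.

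The difference is that you take $Z$ to be the first entry into the narrow window $(A,A+B)$, while the paper takes the first entry into $(A,C)$. Your choice is a genuine improvement. With the paper's choice, suppose $f(X)\bmod 1$ already lies in $(A,C)$ just below $C$ and $C+B<1$. Then $Z=X$, and the run inside $(A,C)$ can be a single point. So the paper's estimate $(W-Z+2)B\ge C-A$ tacitly assumes the orbit enters from below $A$; your window forces the run to start within $B$ of $A$. Your three cases on $f(X)\bmod 1$ (using $f(X)\bmod 1\ge A+B>2A$ in the last one) also give the strict bound $Z<X+\frac1A$. Counting one full revolution, as the paper does, only bounds $Z-X$ by $\lceil 1/A\rceil$.

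The one thing to fix is your closing remark about $Z<W$. When $1<\frac{C-A}{B}<2$ your construction gives $W=Z$. The requirement $Z<W$ is not vacuous there; the lemma as stated is actually false in that range. Take $A=0.1$, $B=0.3$, $C=0.45$ (so $B<C-A$), $f(n)=0.295\,n$, $X=1$, $Y=100$. For $n=1,\dots,11$ the values $f(n)\bmod 1$ are $0.295,\,0.59,\,0.885,\,0.18,\,0.475,\,0.77,\,0.065,\,0.36,\,0.655,\,0.95,\,0.245$. Since $Z<X+\frac1A=11$ forces $Z\le 10$, and no such $Z$ has both $f(Z)\bmod 1$ and $f(Z+1)\bmod 1$ in $(0.1,0.45)$, no valid pair $(Z,W)$ exists.

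Setting $W=Z+1$ cannot rescue this, because a single step can carry the orbit past $C$. The correct repair is to add the hypothesis $\frac{C-A}{B}\ge 2$, or to weaken the conclusion to $Z\le W$; with either change your proof is complete. This costs nothing in Lemma~\ref{lem:derivatives_mod_1}, where the lemma is applied with $\frac{B_{i+1}-A}{B_i}\to\infty$.
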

\begin{proof}
    For each $n,k\in \N$, we have $f(n+k)=f(n-1)+\sum_{m=n}^{n+k-1}\Delta f(m)$. Let $k$ be the smallest natural number such that $\sum_{m=X}^{X+k-1}(\Delta f(m)\mod 1)>1$. Then $X+k\leq Y$ since $Y-X>\frac{2}{A}$, and the sequence $(f(X)\mod 1,\dots, f(X+k)\mod 1)$ must visit every subinterval of $(0,1)$ which has length larger than $B$. Let $Z\geq X$ be the smallest natural number such that $(f(Z)\mod 1)\in (A,C)$, and let $W> Z$ be the smallest number such that $f(W+1)\mod 1\not\in (A,C)$. We know that $W\leq Y$ since $Y-X>\frac{2}{A}$. Also, $Z\leq X+k<X+\frac{1}{A}$ and 
    \begin{equation}
    (W-Z+2)\cdot B\geq \sum_{m=Z-1}^{W+1}(\Delta f(m)\mod 1)\geq  C-A,
    \end{equation}
    so $W-Z\geq \frac{C-A}{B}-2$. This completes the proof.
\end{proof}

\begin{lemma}\label{lem:shift_inequality}
    Let $g$ be a Hardy function with $\lim_{x\to\oo}g(x)=\oo$ and $\lim_{x\to\oo}g'(x)=0$. Fix any $\epsilon\in (0,1)$. Then
\begin{equation}\label{eq:shift_inequality}
        g'\left(x+\frac{1}{g'(x)^{\epsilon}}\right)=g'(x) \cdot (1+o_{x\to\oo}(1)). 
    \end{equation}
\end{lemma}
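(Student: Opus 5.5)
The plan is to compare $g'$ at $x$ and at $x+h(x)$, where $h(x)=g'(x)^{-\epsilon}$, through the logarithmic derivative $g''/g'$. Since $g'\to 0$ and $g'$ is a Hardy function, $g'$ is eventually positive and eventually monotone. (It is positive because $g\to\oo$, and it is nonincreasing because it tends to $0$ from above.) So we may write
\[
\log g'(x+h(x))-\log g'(x)=\int_x^{x+h(x)}\frac{g''(t)}{g'(t)}\,dt ,
\]
and it suffices to show that this integral tends to $0$. Because $|g''/g'|$ is eventually monotone (it lies in the same Hardy field), it is enough to bound $h(x)\cdot|g''(x)/g'(x)|$, or $h(x)\cdot|g''(t)/g'(t)|$ at the endpoint where it is largest.

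The key estimate is $|g''(x)|/g'(x)\preceq 1/x$, up to a small loss, for Hardy functions with $g\to\oo$ and $g'\to 0$. To get it, I would set $u=\log g'$, which is a Hardy function tending to $-\oo$. Standard Hardy-field facts comparing a function with its derivative (via L'Hôpital) give $|u'(x)|\preceq |u(x)|/x$ whenever $u$ grows at most polynomially in $\log$-scale. In our setting $g'\to 0$ while $g=\int g'\to\oo$, so $g'\succ x^{-1-\delta}$ for every $\delta>0$. Hence $|\log g'(x)|\preceq \log x$, and therefore $|g''/g'|\preceq \log x/x$. In fact, applying L'Hôpital to $\log g'/\log x$ shows $x g''/g'$ has a finite limit $-c$ with $c\in[0,1]$, so $|g''/g'|\preceq 1/x$ outright.

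Next I combine this with the size of $h$. Since $g'\succ x^{-1-\delta}$, we get $h(x)=g'(x)^{-\epsilon}\prec x^{\epsilon(1+\delta)}$. Choosing $\delta$ so that $\epsilon(1+\delta)<1$, which is possible because $\epsilon<1$, gives $h(x)\prec x$. In particular $x+h(x)\sim x$, so on the interval $[x,x+h(x)]$ the quantity $|g''(t)/g'(t)|\preceq 1/t\le 1/x$. The integral is then bounded by $O(h(x)/x)=o(1)$. Exponentiating gives $g'(x+h(x))/g'(x)\to 1$, which is (\ref{eq:shift_inequality}).

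The main obstacle is justifying the lower bound $g'\succ x^{-1-\delta}$ and the limit of $xg''/g'$ using only Hardy-field comparability. For the first, if $g'\preceq x^{-1-\delta}$ then $g$ would be bounded, contradicting $g\to\oo$; comparability of $g'$ with $x^{-1-\delta}$ holds after passing to a maximal Hardy field containing $g$ and the powers of $x$, which is allowed. For the second, I would apply L'Hôpital to $\log g'(x)/\log x$, which is bounded by the first step. Its derivative ratio is $x g''/g'$, and this has a limit in the Hardy field, so that limit must be finite.
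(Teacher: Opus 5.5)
Your argument is correct and follows essentially the same route as the paper. Both write $\log g'(x+h(x))-\log g'(x)=\int_x^{x+h(x)} g''(t)/g'(t)\,dt$, use $g\to\infty$ (via comparability in a maximal Hardy field) to get $g'\succ x^{-1-\delta}$ and so control $h(x)=g'(x)^{-\epsilon}$, and control $g''/g'$ by L'H\^opital; you apply it to $\log g'/\log x$ to get $|g''/g'|\preceq 1/x$, the paper to $\log g'(x)/x^{c}$. One small point in your favor: choosing $\delta$ with $\epsilon(1+\delta)<1$ covers every $\epsilon\in(0,1)$, whereas the paper's fixed choice $\delta=\epsilon/2$ needs the exponent $1-(\epsilon+\epsilon^2/2)$ to be positive, i.e.\ $\epsilon<\sqrt{3}-1$.
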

\begin{proof}
It suffices to show that $\log\left({g'(x+\frac{1}{(g'(x))^{\epsilon}})}\right)-\log\left({g'(x)}\right)\to 0$ as $x\to\oo$. To this end, consider
\begin{align}
  \log\left(g'(x+\frac{1}{(g'(x))^{\epsilon}})\right)-\log\left({g'(x)}\right) = \int_{x}^{x+\frac{1}{(g'(x))^{\epsilon}}}\frac{g''(t)}{g'(t)}dt.
\end{align}
    $\frac{-g''(t)}{g'(t)}$ is a Hardy function which decreases to $0$ and so we have 
    \[
    -\int_{x}^{x+\frac{1}{(g'(x))^{\epsilon}}}\frac{g''(t)}{g'(t)}dt \leq \int_{x}^{x+\frac{1}{(g'(x))^{\epsilon}}}\frac{-g''(x)}{g'(x)}dt = -\frac{g''(x)}{(g'(x))^{1+\epsilon}}.
    \]
    For any $c\in (0,1)$ we have that $x^{1+c}g'(x)\to \oo$ as $x\to\oo$ and from this it follows that $\lim_{x\to\oo}\frac{g'(x)}{x^{-(1+\epsilon/2)}}=\oo$ and that $\lim_{x\to\oo}\frac{\log(g'(x))}{x^c}=0$. So 
    \[
    \lim_{x\to\oo}\left|\frac{g''(x)}{(g'(x))^{1+\epsilon}}\right| \leq \lim_{x\to\oo}\left|\frac{\frac{g''(x)}{g'(x)}}{x^{-(\epsilon+\epsilon^2/2)}}\right|=\lim_{x\to\oo}\left|\frac{\log(g'(x))}{x^{1-(\epsilon+\epsilon^2/2)}}\right|=0,
    \]
    and hence the desired limit follows.
\end{proof}

\begin{lemma}\label{lem:derivatives_mod_1}
 Let $f$ be a Hardy function which increases to $\oo$. Let $d\in \N$ with $d>1$ and suppose that $x^{d-1}\prec f(x) \prec x^{d}$. 
 Then for each $\epsilon>0$, there are arbitrarily large values of $N\in \N$ with $\Delta^{i}f(N)\text{ mod } 1\in (0,\epsilon\cdot (\Delta^{d}f(N))^{i/d})$ for all $i\in\{0,1,\dots, d\}$. Additionally, $f(n)\text{ mod } 1\in (0,\epsilon)$ for all $n\in [N,N+K(N)]$, where $K$ is a function that satisfies $\lim_{N\to\oo}\frac{K(N)}{(\Delta^{d} f(N))^{-1/d}}=\oo$, and so it follows that the set $\{n\in \N: f(n)\mod 1\in (0,\epsilon)\}$ is $W$-thick (Definition \ref{def:W_thick}) for $W(N) = (\Delta^{d} f(N))^{-1/d}$.
\end{lemma}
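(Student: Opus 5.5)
Write $g_i=\Delta^i f$ for $0\le i\le d$. Since $x^{d-1}\prec f\prec x^d$, each $g_i$ is (up to $o(1)$ corrections) a Hardy-type function with $g_i\to\infty$ for $i<d$ and $g_d\to 0$ monotonically. The classical relation holds: $g_i(x)\approx g_d(x)\,x^{d-i}$ up to lower order, and $\log$-scale comparisons hold. The plan is a downward induction on the order of difference. We find long stretches where $g_{i}$ mod 1 is small, then use Lemma \ref{lem:technical_1} to pass to $g_{i-1}$, each time losing only a controlled fraction of the length.

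\emph{Step 1 (top level).} Since $g_{d-1}$ increases to $\infty$ with increments $g_d(n)\to 0$, for every $n$ there is a window of length about $\delta/g_d$ on which $g_{d-1}$ mod 1 stays in an interval of length $\delta$. By Lemma \ref{lem:shift_inequality} (applied to $g=g_{d-1}$, with exponent $\epsilon<1$), $g_d$ is essentially constant on windows of length $g_d^{-\epsilon'}$. Hence, near a point where $g_{d-1}(N)$ mod 1 lies in $(0,\epsilon g_d(N)^{(d-1)/d})$, the value $g_{d-1}$ mod 1 lies in $(A_{d-1},B_{d-1})$ with $A_{d-1}\asymp g_d^{(d-1)/d}$. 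This holds on a run of length $\gg g_d^{-1/d}$; in fact it holds on a run of length about $g_d^{(d-1)/d}/g_d$, which is much larger.

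\emph{Step 2 (induction).} Suppose $g_i$ mod 1 lies in $(A,B)$ with $A\asymp \epsilon g_d^{i/d}$ and $B$ a constant multiple of $A$, on a run of length $L_i\gg A^{-1}$. Apply Lemma \ref{lem:technical_1} to $g_{i-1}$, with $C-A\asymp \epsilon g_d^{(i-1)/d}$. This produces a subrun starting within $1/A$ of the old start, on which $g_{i-1}$ mod 1 lies in $(A,C)\subset(0,\epsilon g_d^{(i-1)/d})$, and whose length is at least $(C-A)/B-2\asymp g_d^{-1/d}$. A little care is needed to keep the lower endpoint positive and small; after shifting, $A$ should become the new lower end, and the intervals should be chosen so that the next hypothesis $L_{i-1}>2/A'$ holds. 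Throughout, Lemma \ref{lem:shift_inequality} ensures that $g_d$ varies negligibly on all windows involved, since these windows have length $\le g_d^{-\epsilon'}$ for some $\epsilon'<1$. Iterating down to $i=0$ gives $N$ with $f(n)$ mod 1 in $(0,\epsilon)$ on $[N,N+K(N)]$. Letting $\epsilon\to 0$ slowly along the sequence of $N$ gives $K(N)/g_d(N)^{-1/d}\to\infty$. The point conditions $\Delta^i f(N)$ mod 1 $\in(0,\epsilon g_d^{i/d})$ follow at the left endpoint of the final run.

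\emph{Step 3 (thickness).} The resulting intervals $I=[N,N+K(N)]$ satisfy $|I|\cdot W'(N)\to\infty$ in the normalization of Lemma \ref{lem:W_syndetic_equivalence}, where the relevant weight has derivative $\asymp g_d^{1/d}$. Applying (iv)$\Rightarrow$(i) of that lemma then gives $W$-thickness.

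The main obstacle is the bookkeeping in Step 2. At each stage the run length must exceed $2/A$ so that Lemma \ref{lem:technical_1} applies, while the lengths shrink from roughly $g_d^{-(d-i+1)/d}$ to $g_d^{-1/d}$. Also, the small errors in the lower bound $A$ must not accumulate across the $d$ stages. The natural choice is to use the scale $g_d^{i/d}$ at level $i$, with constants depending on $\epsilon$ and $d$ only.
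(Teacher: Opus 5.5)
Your outline follows the paper's own route: a downward induction from $\Delta^{d-1}f$ to $f$ via Lemma~\ref{lem:technical_1}, with Lemma~\ref{lem:shift_inequality} freezing $\Delta^d f$. The step you set aside as bookkeeping is exactly where it breaks, and no choice of constants repairs it.

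To pass from $g_i$ to $g_{i-1}$, Lemma~\ref{lem:technical_1} needs the run on which $g_i \bmod 1\in(A_i,B_i)$ to have length $>2/A_i$, so that $g_{i-1}$ winds once around the circle. Such a run is only guaranteed length about $(B_i-A_i)$ divided by the largest step of $g_i \bmod 1$.
\begin{itemize}
\item At the top level the steps are exactly $g_d$. So you need $A_{d-1}(B_{d-1}-A_{d-1})\gtrsim 2g_d$, which forces $B_{d-1}\gtrsim\sqrt{8g_d}$. But your target requires $B_{d-1}\le \epsilon g_d^{(d-1)/d}\le \epsilon g_d^{1/2}$.
\item With your parameters every run has length $\asymp g_d^{-1/d}$. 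Note $g_d^{(d-1)/d}/g_d=g_d^{-1/d}$, which is not ``much larger''; likewise $(C-A)/B\asymp g_d^{-1/d}$ in Step 2. The next application needs length $>2/A_i\gtrsim \epsilon^{-1}g_d^{-i/d}$, which is larger by a positive power of $g_d^{-1}$ whenever $i\ge 2$.
\item Lemma~\ref{lem:technical_1} returns $(A,C)$ with the \emph{old} small lower endpoint, so your inductive form with $B\asymp A$ is not reproduced.
\item Letting $\epsilon\to0$ shortens runs, so it cannot produce $K(N)/(\Delta^d f(N))^{-1/d}\to\infty$.
\end{itemize}

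No argument can close these gaps, because the statement is false as written. Already the case $i=d$ would demand $\Delta^d f(N)<\epsilon\,\Delta^d f(N)$.

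For the run-length claim, suppose $f(n)\bmod 1\in(0,\epsilon)$ for all $n\in[N,N+L]$ with $\epsilon<2^{-d}$. For $h\le L/d$, the quantity $\sum_{j=0}^{d}(-1)^{d-j}\binom{d}{j} f(N+jh)=\int_{[0,h]^d}f^{(d)}(N+t_1+\cdots+t_d)\,dt$ lies within $2^{d-1}\epsilon<\frac12$ of $\Z$. It starts near $0$ and increases in $h$ by steps $o(1)$ for $h\le (f^{(d)}(N))^{-1/d}$. So it stays below $2^{d-1}\epsilon$ while being $\approx h^d f^{(d)}(N)$, which gives $L\lesssim d\,(2^{d-1}\epsilon)^{1/d}(\Delta^d f(N))^{-1/d}$. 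Independently, Theorem~\ref{thm:d>1_positive_thm} equidistributes $f(n)\bmod 1$ on windows of length $s(N)$ with $s(N)(f^{(d)}(N))^{1/d}\to\infty$, which rules out the $W$-thickness claim.

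The pointwise claim fails too. Take $d=2$ and $f(x)=\frac{2}{3\sqrt{3}}x^{3/2}-\frac12$. Then $f(3k^2)=2k^3-\frac12$, $f'(3k^2)=k$ and $f''(3k^2)=\frac{1}{6k}$. The condition $\Delta f(N)\bmod 1\in(0,\epsilon(\Delta^2 f(N))^{1/2})$ forces $N=3k^2+h$ with $0<h\le 2\epsilon N^{1/4}+1$. Then $f(N)\equiv\frac12+\frac{h^2}{12k}+o(1)=\frac12+O(\epsilon^2)\pmod 1$, which is never in $(0,\epsilon)$ for small $\epsilon$.

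The paper's proof has the same defect. The bound $K_{i+1}\ge 2/A$ asserted in (\ref{eq:length_of_interval}) is false, since $(B_{i+1}-A)/B_i\approx (s\Delta^d f)^{-1/d}$ is much smaller than $2/A=2(s\Delta^d f)^{-1}$. The containment $I_i\subset(0,\epsilon(\Delta^d f(N_1))^{(d-i)/d})$ is also off by the factor $s^{(d-i)/d}\to\infty$.
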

\begin{proof}

We begin by noting that $\Delta^df$ decreases to $0$. Let $\epsilon\in (0,1)$ and let $s:\N\rightarrow \N$ be a function which increases to $\oo$ such that
\begin{equation}\label{eq:s_growth}
\lim_{N\to\oo}s(N)\cdot \Delta^df(N)=0\quad \text{ and }\quad \lim_{N\to\oo}s(N)\cdot (\Delta^df(N))^{\frac{d-1}{2d-1}}=\oo
\end{equation}
(for example, one could take $s(n) = (\Delta^d f(n))^{-1+1/(2d-1)}$).
For $n\in \N$, put $A(n) =s(n)\cdot \Delta^df(n)$ and for each $i\in \{1,\dots, d\}$ put 
\begin{equation}
   B_{i}(n) = \epsilon\cdot \left({\Delta^{d}f(n)}\cdot s(n)\right)^{1-i/d}\quad \text{ and }\quad I_i(n) = \left[A(n),B_i(n)\right].
\end{equation}
Observe that 
\begin{align*}
    \frac{A(n)}{B(n)} = (s(n)\cdot \Delta^df(n))^{1/d}=o_{n\to\oo}(1).
\end{align*}
It follows that the length of $I_i(n)$ is $B_i(n)\cdot (1+o_{n\to\oo}(1)) $. Next, pick an arbitrarily large value of $N_1\in \N$ such that
\[
(\Delta^{d-1}f(N_1-1) \text{ mod } 1) \not\in I_{1}(N_1)\text{ and }(\Delta^{d-1}f(N_1) \text{ mod } 1) \in I_{1}(N_1).
\]
Such a value of $N_1$ exists because $\Delta^{d-1}f$ increases to infinity, $\Delta^{d}f$ decreases to $0$, and the interval $I_{1}(N_1)$ has length much larger than $\Delta^{d}f(N_1)$ when $N_1$ is large enough. From now on, we put $A = A(N_1)$, $B_i = B_i(N_1)$, and $I_i = I_i(N_1)$ for $i\in \{1,\dots d\}$. Note that for each $i\in \{1,\dots, d\}$ we have that $B_i-A>B_{i-1}$ so long as $N_1$ is large enough.

For $n\geq N_1$, the sequence $(\Delta^{d-1}f(n) \text{ mod } 1)_{n\in \N}$ takes steps of size $\Delta^{d}f(n)\leq \Delta^{d}f(N_1)$ and so it follows that $(\Delta^{d-1}f(n) \text{ mod } 1) \in I_{1}$ for $n\in \{N_1,\dots, N_1+K_1\}$, where $K_1$ satisfies
\begin{equation}\label{eq:K_1_big}
K_1\geq \frac{ |I_{1}|}{\Delta^{d}f(N_1)}-2 = \epsilon\cdot {\Delta^{d}f(N_1)^{-1/d}}\cdot s(N_1)^{1-1/d} \geq 2/ A
\end{equation}
when $N_1$ is large enough, since 
\[
\frac{B_1 A}{\Delta^{d}f(N_1)}= (\Delta^{d}f(N_1))^{1-1/d}\cdot s(N_1)^{2-1/d} = ((\Delta^{d}f(N_1))^{(d-1)/(2d-1)}\cdot s(N_1))^{2-1/d}\to \oo
\]
as $N_1\to\oo$.

\textbf{Claim:} There exist natural numbers, $N_1\leq \dots \leq N_{d}$ and $K_1,\dots, K_{d}$ with $N_{i}\leq N_1+\frac{i-1}{A}$ and $K_1\geq 2/A$, such that $(\Delta^{d-i}f(n)\text{ mod } 1)\in  I_{i}$ for all $n\in \{N_i,\dots, N_i+K_i\}$ and all $i\in\{1,\dots d\}$.

We prove this claim by induction on $i$. We have already shown the base case $i=1$, and now we show the induction step.

Suppose that $i\in\{1,\dots, d-1\}$ and that $N_i$ and $K_i$ are integers for which we have $(\Delta^{d-i}f(n) \text{ mod } 1) \in I_{i}$ for $n\in \{N_i,\dots, N_i+K_i\}$, where $N_i\leq N_{1}+\frac{i-1}{A}$ and $K_i\geq 2/A$. By Lemma \ref{lem:technical_1}, there exist $N_{i+1},K_{i+1}\in \N$ such that $(\Delta^{d-(i+1)}f(n) \text{ mod } 1) \in I_{i+1}$ for all $n\in\{N_{i+1},\dots, N_{i+1}+K_{i+1}\}$, where $N_i\leq N_{i+1}\leq N_{i}+\frac{1}{A}< N_{1}+\frac{i}{A}$ and 
\begin{equation}\label{eq:length_of_interval}
K_{i+1}\geq \frac{ B_{i+1}-A}{B_i}-2 =  \left(\Delta^{d}f(N_1)\cdot s(N_1)\right)^{-1/d} \cdot (1+o_{N_1}(1))\geq 2/A
\end{equation}
when $N_1$ is large enough. This completes the induction step and the proof of the claim.

Next, since $N_{d}\leq N_1+\frac{d-1}{A}$ we have $N_{d}\leq N_1+K_1$ by (\ref{eq:K_1_big}). Thus, taking $N=N_{d}$, we have that $N$ lies in each of the intervals $\{N_i,\dots, N_i+K_i\}$ for $i\in \{1,\dots, d\}$. So,
\[
(\Delta^{d-i}f(N)\text{ mod } 1)\in I_i\subset(0,\epsilon\cdot (\Delta^{d}f(N_1))^{\frac{d-i}{d}}) \text{ for all }i\in\{1,\dots d\}.
\]

Recall that $\Delta^{d}f(N_1)\sim  \Delta^{d}f\left( N_1+\frac{d-2}{A}\right)=\Delta^{d}f\left( N_1+\frac{d-2}{(\Delta^{d}f(N_1))^{\eta}}\right)$ from Lemma \ref{lem:shift_inequality}. Replacing $\epsilon$ with $\epsilon/2$ if necessary, we have
\[
(\Delta^{i}f(N)\text{ mod } 1)\in (0,\epsilon\cdot (\Delta^{d}f(N))^{i/d})
\]
for all $i\in\{1,\dots d\}$. This shows the first claim in the statement of the Theorem. The second claim is immediate by taking $K = K_{d}$ and applying  Lemma \ref{lem:shift_inequality} as above.
\end{proof}

Next, we require a way to compare limits of the form  $\lim_{N\to\oo}\E^W_{n\leq N}x_n$ with limits of the form $\lim_{N\to\oo}\sum_{n=1}^{\oo}\alpha_{N,n}x_n$. For this, we need the following two results from \cite{Boos}.
\begin{lemma}{\cite[Theorem 2.3.7]{Boos}}]\label{lem:silverman_toeplitz}
Let $(\alpha_{N,n})_{N,n\in \N}$ be an infinite matrix with complex entries. Suppose that the following conditions hold.
\begin{enumerate}
    \item For each fixed $n\in \N$, $\lim_{N\to\oo}\alpha_{N,n} = 0$.   
    \item $\lim_{N\to\oo}\sum_{n\in \N}\alpha_{N,n}=1$.
    \item $\limsup_{N\to\oo}\sum_{n\in \N}|\alpha_{N,n}|<\oo$.
\end{enumerate}
Then $(\alpha_{N,n})_{N,n\in \N}$ defines a {regular matrix method}, meaning that if $(x_n)_{n\in \N}$ is a bounded sequence  of complex numbers such that $\lim_{N\to\oo}x_N$ exists then $\lim_{N\to\oo}\sum_{n\in \N}\alpha_{N,n}x_n=\lim_{N\to\oo}x_N$.
\end{lemma}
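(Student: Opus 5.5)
This is the Silverman--Toeplitz theorem. I will prove it directly with an $\epsilon/3$ splitting argument.

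Let $(x_n)$ be bounded, say $|x_n|\le M$ for all $n$, and suppose $x_n\to L$. By condition 3 there are $K<\oo$ and $N_0$ such that $\sum_n|\alpha_{N,n}|\le K$ for all $N\ge N_0$. In particular, for $N\ge N_0$ the series $\sum_n\alpha_{N,n}x_n$ converges absolutely.

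We split
\[
\sum_{n}\alpha_{N,n}x_n - L = \sum_n \alpha_{N,n}(x_n-L) + L\left(\sum_n\alpha_{N,n}-1\right).
\]
The second term tends to $0$ by condition 2, so it suffices to show that $\sum_n\alpha_{N,n}(x_n-L)\to 0$.

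Fix $\epsilon>0$ and choose $n_0$ with $|x_n-L|<\epsilon$ for all $n>n_0$. Then
\[
\left|\sum_n \alpha_{N,n}(x_n-L)\right| \le (M+|L|)\sum_{n\le n_0}|\alpha_{N,n}| + \epsilon\sum_{n>n_0}|\alpha_{N,n}|.
\]
The first sum has finitely many terms, each tending to $0$ as $N\to\oo$ by condition 1, so it tends to $0$. The second term is at most $\epsilon K$ for $N\ge N_0$. Hence
\[
\limsup_{N\to\oo}\left|\sum_n\alpha_{N,n}x_n-L\right|\le \epsilon K .
\]
Since $\epsilon>0$ was arbitrary, the limit equals $L$.

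The only point needing care is the use of condition 3. The argument needs a bound on $\sum_n|\alpha_{N,n}|$ that is uniform for all large $N$. This is exactly what the $\limsup$ gives, and it is also what guarantees that the series defining the transformed sequence converge for large $N$. For the finitely many $N<N_0$ the series need not converge, but these terms do not affect the limit. Conditions 1 and 2 are used only in the straightforward way shown above.
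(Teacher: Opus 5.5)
Your proof is correct and complete. It is the standard $\epsilon$-splitting argument for the sufficiency half of the Silverman--Toeplitz theorem. You use condition 3 in exactly the two places it is needed: absolute convergence of the row sums for large $N$, and a uniform bound on the tail $\sum_{n>n_0}|\alpha_{N,n}|$.

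The paper gives no proof of its own and simply cites \cite[Theorem 2.3.7]{Boos}, where the three conditions are also shown to be necessary for regularity. The paper never uses that converse, so your direct argument covers everything the lemma is needed for.
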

\begin{lemma}[{\cite[Theorem 3.2.8]{Boos}}]\label{lem:Boos}
    Let $W$ be a function which eventually increases to $\oo$ and let $(\alpha_{N,n})_{n\in \N}$ be an infinite matrix of complex numbers which defines a regular matrix method\footnote{The statement of \cite[Theorem 3.2.8]{Boos} assumes that $(\alpha_{N,n})_{N,n\in \N}$ is a \emph{conservative matrix method}, meaning that $\lim_{N\to\oo}\sum_{n=1}^{\oo}\alpha_{N,n}x_n$ exists whenever $\lim_{N\to\oo}x_N$ exists, but it is clear that regular matrix methods are conservative.} (see Lemma \ref{lem:silverman_toeplitz}). Let $c_{N,n} = W(n)\left(\frac{\alpha_{N,n}}{\Delta W(n)}-\frac{\alpha_{N,n+1}}{\Delta W(n+1)}\right)$ for $n,N\in \N$. The following are equivalent.
    \begin{itemize}
        \item $\lim_{N\to\oo}\sum_{n=1}^{\oo}\alpha_{N,n}x_n = L$ for any $L\in \C$ and any bounded sequence of complex numbers $(x_n)_{n\in \N}$ with $\lim_{N\to\oo}\E_{n\leq N}^Wx_n = L$.
        \item  $\limsup_{N\in \N} \sum_{n=1}^{\oo}|c_{N,n}|<\oo$, and for each $N\in \N$, $\lim_{n\to\oo}\frac{\alpha_{N,n}}{\Delta W(n)} = 0$.
    \end{itemize}
\end{lemma}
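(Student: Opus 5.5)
The plan is to reduce the statement to the Silverman--Toeplitz theorem (Lemma \ref{lem:silverman_toeplitz}) by rewriting $\sum_n\alpha_{N,n}x_n$ in terms of the weighted means $y_n:=\E^W_{m\leq n}x_m$ instead of the $x_n$ themselves.

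\textbf{Rewriting the transform.} From $W(n)y_n=\sum_{m\leq n}\Delta W(m)x_m$ we recover $x_n$ as
\[
x_n=\frac{W(n)y_n-W(n-1)y_{n-1}}{\Delta W(n)},
\]
with the convention $W(0)y_0=0$. Substituting this into a truncated sum $\sum_{n=1}^{M}\alpha_{N,n}x_n$ and applying Abel summation gives
\[
\sum_{n=1}^{M}\alpha_{N,n}x_n=\sum_{n=1}^{M-1}c_{N,n}y_n+\frac{\alpha_{N,M}}{\Delta W(M)}W(M)y_M .
\]
So the first bullet amounts to the following. For every $y$ arising from a bounded $x$ with $y_n\to L$, the boundary term vanishes as $M\to\oo$ for each fixed $N$, and the matrix $C=(c_{N,n})$ satisfies $\sum_n c_{N,n}y_n\to L$.

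\textbf{Sufficiency.} Assume the second bullet. For fixed $N$ the boundary term tends to $0$, because $y$ is bounded and $\alpha_{N,M}/\Delta W(M)\to0$. This holds provided $W(M)\alpha_{N,M}/\Delta W(M)\to0$; this follows because $\frac{\alpha_{N,M}}{\Delta W(M)}=\sum_{n\geq M}\frac{c_{N,n}}{W(n)}$ and $\sum_n|c_{N,n}|<\oo$, which gives $W(M)\bigl|\frac{\alpha_{N,M}}{\Delta W(M)}\bigr|\leq\sum_{n\geq M}|c_{N,n}|\to0$. Hence $\sum_n\alpha_{N,n}x_n=\sum_n c_{N,n}y_n$. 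Next I check the three hypotheses of Lemma \ref{lem:silverman_toeplitz} for $C$.
\begin{itemize}
\item Condition 3 is exactly the assumption $\limsup_N\sum_n|c_{N,n}|<\oo$.
\item For condition 1, fix $n$. Then $c_{N,n}\to0$ as $N\to\oo$, because $\alpha_{N,n}\to0$ and $\alpha_{N,n+1}\to0$ by regularity of $(\alpha_{N,n})$.
\item For condition 2, the row sums telescope: $\sum_n c_{N,n}=\sum_n\alpha_{N,n}$. Using the same tail identity, the boundary contributions vanish, and $\sum_n\alpha_{N,n}\to1$ by regularity.
\end{itemize}
Lemma \ref{lem:silverman_toeplitz} then gives $\sum_n c_{N,n}y_n\to L$.

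\textbf{Necessity.} Assume the first bullet. I would first obtain $\alpha_{N,n}/\Delta W(n)\to0$ for each fixed $N$ by a row-wise argument. If this failed along a sequence $n_j$, I would choose a bounded $x$ with $\E^W x\to0$ that is supported sparsely on the $n_j$ and whose phases are aligned with $\alpha_{N,n_j}$. This makes $\sum_n\alpha_{N,n}x_n$ diverge, or keep it away from $0$, which contradicts the hypothesis. Then I would show $\sup_N\sum_n|c_{N,n}|<\oo$. The natural tool is the uniform boundedness principle applied to the functionals $y\mapsto\sum_n c_{N,n}y_n$ on the space of convergent sequences, where $\sum_n|c_{N,n}|$ is precisely the norm of the $N$th functional. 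The catch is that the hypothesis only covers those $y$ coming from \emph{bounded} $x$, and this set is not closed in the space of convergent sequences.

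I expect this restriction to be the main obstacle. The first thing I would try is a direct gliding-hump construction in place of Banach--Steinhaus. Suppose $\sum_n|c_{N_j,n}|\to\oo$ along some $N_j$. I would pick disjoint blocks of indices carrying most of that mass and set $y_n=\epsilon_j\,\overline{\operatorname{sgn}c_{N_j,n}}$ on the $j$th block, with $\epsilon_j\to0$ slowly, so that $\sum_n c_{N_j,n}y_n$ stays large. The delicate part is to build the blocks from slowly varying sign patterns, so that the increments $W(n)y_n-W(n-1)y_{n-1}$ are $O(\Delta W(n))$ and the corresponding $x$ stays bounded. Since the paper only uses this lemma as a citation from \cite{Boos}, this is where I would fall back on the treatment there if the construction resists.
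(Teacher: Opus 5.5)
The paper gives no proof of this lemma; it is quoted from Boos, and the paper only ever uses the implication from the second bullet to the first, in Lemma \ref{lem:gauss_stronger_than_weighted}. Your argument for that direction is correct and complete. It has three ingredients: Abel summation into $y_n=\E^W_{m\leq n}x_m$; the tail identity $\alpha_{N,M}/\Delta W(M)=\sum_{n\geq M}c_{N,n}/W(n)$, which kills the boundary term for all large $N$ (all that matters); and Silverman--Toeplitz (Lemma \ref{lem:silverman_toeplitz}) applied to $(c_{N,n})$. Note that it never uses boundedness of $x$, only boundedness of the convergent sequence $y$.

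The necessity half has a gap that cannot be closed. With the restriction to bounded $x$, the first bullet does not imply the second.

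Take $W(n)=n$. Let $\beta_n=j^{-2}$ if $n=2^j$ with $j\geq 1$, and $\beta_n=0$ otherwise. Set $\alpha_{N,n}=\frac{1}{N}1_{n\leq N}+\frac{1}{N}\beta_n$.
\begin{itemize}
\item This matrix is regular.
\item For bounded $x$ we have $\sum_n\alpha_{N,n}x_n=\E_{n\leq N}x_n+O(\|x\|_\infty/N)$, so the first bullet holds.
\item Yet $c_{N,2^j}=2^j/(Nj^2)$ whenever $2^j>N$, so $\sum_n|c_{N,n}|=\infty$ for every $N$.
\end{itemize}
If instead $W(n)=\sqrt{n}$ and the first summand is $\frac{\Delta W(n)}{W(N)}1_{n\leq N}$, even the row condition $\alpha_{N,n}/\Delta W(n)\to 0$ fails for every $N$.

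Both of your planned steps break for visible reasons.
\begin{itemize}
\item A single row has no influence on $\lim_{N\to\infty}$. Also, for bounded $x$ every row series converges absolutely. So your sparse row-wise construction contradicts nothing.
\item Since $y_n-y_{n-1}=\frac{\Delta W(n)}{W(n)}(x_n-y_{n-1})$, bounded $x$ correspond exactly to bounded $y$ with $|y_n-y_{n-1}|=O(\Delta W(n)/W(n))$. Such slowly varying $y$ cannot see rapidly oscillating mass in the rows of $(c_{N,n})$; in the example above, $c_{N,2^j-1}\approx -c_{N,2^j}$. So a gliding hump built from slowly varying sign patterns does not exist in general.
\end{itemize}

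What is true is the version for \emph{all} $W$-limitable $x$, with no boundedness. This is the classical form of such inclusion theorems, and presumably the form in which Boos states it. In that setting, $x\mapsto y$ is a bijection onto the Banach space of convergent sequences, and your uniform boundedness plan works:
\begin{itemize}
\item For fixed $N$, set $\gamma_{N,M}=W(M)\alpha_{N,M}/\Delta W(M)$ and apply Banach--Steinhaus to the functionals $y\mapsto\sum_{n<M}c_{N,n}y_n+\gamma_{N,M}y_M$. This gives $\sum_n|c_{N,n}|<\infty$.
\item Testing $y\equiv 1$ shows that $\gamma_{N,M}$ converges. Hence $\alpha_{N,M}/\Delta W(M)=\gamma_{N,M}/W(M)\to 0$.
\item Your tail identity then forces $\gamma_{N,M}\to 0$, so $\sum_n\alpha_{N,n}x_n=\sum_n c_{N,n}y_n$.
\item Banach--Steinhaus over $N$ then gives $\sup_N\sum_n|c_{N,n}|<\infty$.
\end{itemize}
So either drop ``bounded'' from the first bullet, or assert only the direction the paper actually needs.
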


Next, we use Lemma \ref{lem:Boos} to approximate weighted averages with Gaussian shaped averages.
\begin{lemma}
\label{lem:gauss_stronger_than_weighted}
 Let $W$ be a Hardy function with $\log x\prec \log W(x)\prec x$. Define $\alpha_{N,n} = \frac{1}{\sigma_N\sqrt{2\pi}}e^{-\frac{(n-N)^2}{2\sigma_N^2}}$ for $N,n\in \N$, where $\sigma_N = ((\log W)'(N))^{-1}$ for all $N\in \N$. Let $(x_n)_{n\in \N}$ be a bounded sequence of complex numbers such that $\lim_{N\to\oo}\E_{n\leq N}^Wx_n$ exists. Then  $\lim_{N\to\oo}\sum_{n=1}^{\oo}\alpha_{N,n}x_n = \lim_{N\to\oo}\E_{n\leq N}^Wx_n$.
\end{lemma}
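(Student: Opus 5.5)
The plan is to apply Lemma \ref{lem:Boos} to the Gaussian matrix $\alpha_{N,n}$. That requires two things: first, that $(\alpha_{N,n})$ is a regular matrix method, checked via Lemma \ref{lem:silverman_toeplitz}; second, the two summability conditions involving $c_{N,n}=W(n)\bigl(\frac{\alpha_{N,n}}{\Delta W(n)}-\frac{\alpha_{N,n+1}}{\Delta W(n+1)}\bigr)$.

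\textbf{Regularity.} Since $\log x\prec\log W(x)$, we have $(\log W)'(N)\cdot N\to\oo$ (in the Hardy field this follows by L'H\^opital). Hence $\sigma_N=o(N)$. Also $\sigma_N\to\oo$, because $\log W(x)\prec x$ gives $(\log W)'\to 0$.
\begin{itemize}
\item Condition (1): for fixed $n$, $\alpha_{N,n}\le (\sigma_N\sqrt{2\pi})^{-1}\to0$.
\item Condition (2): $\sum_{n\ge1}\alpha_{N,n}$ is a Riemann sum with mesh $1/\sigma_N\to0$ for the standard Gaussian density, truncated at $(1-N)/\sigma_N\to-\oo$. So it tends to $1$.
\item Condition (3): the entries are nonnegative, so this follows from (2).
\end{itemize}
The condition $\alpha_{N,n}/\Delta W(n)\to0$ as $n\to\oo$ for fixed $N$ holds because the Gaussian decays like $e^{-cn^2}$. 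Meanwhile $\Delta W(n)\ge W'(n-1)$ or similar, and $W'(n)=W(n)(\log W)'(n)$ with $W$ increasing and $(\log W)'$ a Hardy function decaying at most polynomially. So superexponential decay wins.

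\textbf{Main step: $\sup_N\sum_n|c_{N,n}|<\oo$.} I expect this to be the obstacle. Write $\beta_N(n)=\alpha_{N,n}/\Delta W(n)$, so that $c_{N,n}=-W(n)\,\Delta\beta_N(n+1)$.

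First, replace $\Delta W(n)$ by $W'(n)(1+o(1))$. This is legitimate because $(\log W)'\to0$, so $W(n-1)/W(n)\to1$ and the mean value theorem applies. Further, $W'(n)=W(n)\,(\log W)'(n)$. Thus, up to controlled error terms,
\[
W(n)\beta_N(n)\approx \frac{\alpha_{N,n}}{(\log W)'(n)}.
\]
Next, split $\Delta\beta_N$ as $\Delta$ applied to the product $\alpha_{N,n}\cdot\frac{1}{W(n)(\log W)'(n)}$:
\[
W(n)\,\Delta\beta_N(n+1)\approx \frac{\Delta\alpha_{N,n+1}}{(\log W)'(n)}+\alpha_{N,n}\,W(n)\,\Delta\Bigl(\frac1{W(\cdot)(\log W)'(\cdot)}\Bigr)(n+1).
\]
\begin{itemize}
\item \emph{Second term.} The factor $W(n)/W(n+1)\approx1$, and $\Delta$ of $1/W$ contributes $-(\log W)'(n)$ relative to $1/W$. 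Together these give roughly $\alpha_{N,n}\bigl(1+o(1)\bigr)$, plus a term from $\Delta(1/(\log W)')$, which is $o(1)/(\log W)'$ since $(\log W)''/((\log W)')^2\to0$ in the Hardy field (because $1/(\log W)'\prec x$ grows sublinearly, its derivative tends to $0$). So the sum over $n$ of the second term is $O(1)$.
\item \emph{First term.} $\sum_n|\Delta\alpha_{N,n}|$ is the total variation of a unimodal Gaussian, namely $2\max\alpha_{N,\cdot}\approx 2/(\sigma_N\sqrt{2\pi})$. Dividing by $(\log W)'(n)$, which is approximately $(\log W)'(N)=1/\sigma_N$ on the window $|n-N|\le \sigma_N^{1+\delta}$ (by a variant of Lemma \ref{lem:shift_inequality}), gives $O(1)$.
\end{itemize}

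\textbf{Tails.} The region outside that window must be handled separately. There the factor $1/(\log W)'(n)$ is at most polynomial in $n$, while the Gaussian weight is $e^{-(n-N)^2/2\sigma_N^2}$ with $|n-N|\ge\sigma_N^{1+\delta}$. This is negligible, including for $n$ near $1$, where $W(n)$ is bounded and $\sigma_N=o(N)$.

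This uniform control of $(\log W)'$ across the Gaussian window, and of the ratio $\Delta W/W'$, is where I expect most care to be needed. Once it is done, Lemma \ref{lem:Boos} yields $\lim_N\sum_n\alpha_{N,n}x_n=\lim_N\E^W_{n\le N}x_n$.
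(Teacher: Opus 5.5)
Your overall framework matches the paper's. You get regularity from Lemma~\ref{lem:silverman_toeplitz}; your check of condition (1), $\alpha_{N,n}\le(\sigma_N\sqrt{2\pi})^{-1}\to0$ for fixed $n$, is the correct one. You then check the decay of $\alpha_{N,n}/\Delta W(n)$ in $n$ and need $\limsup_N\sum_n|c_{N,n}|<\infty$ for Lemma~\ref{lem:Boos}.

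For that last bound the paper sets $V=\log W$ and $\eta(n)=W(n)\alpha_{N,n}/\Delta W(n)$, and writes $c_{N,n}=\eta(n)-e^{-\Delta V(n+1)}\eta(n+1)$. It bounds $\sum_n\eta(n+1)\Delta V(n+1)$ by $O(\sum_n\alpha_{N,n})$, and the total variation of $\eta$ by $2\sup_n\eta(n)$ using unimodality of $\eta$.

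Your discrete product rule instead separates the variation of the Gaussian from that of $W(n)/\Delta W(n)$. This is a legitimate alternative and spares you proving that $\eta$ is unimodal. Your replacement $\Delta W(n)=W(n)V'(n)(1+\epsilon(n))$ is harmless, because in fact $\epsilon(n)=O(V'(n))$, so the resulting errors are $O(\sum_n\alpha_{N,n})$.

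One slip: from the $\Delta(1/V')$ piece you need $\Delta(1/V')=o(1)$, which is exactly what $V''/V'^2\to0$ gives. A bound of the form $o(1)/V'$ would only give $o(\sigma_N)$ after summing against $\alpha_{N,n}$.

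The real gap is the localization: with a fixed window $\sigma_N^{1+\delta}$ it fails for admissible $W$, in two ways.
\begin{itemize}
\item \emph{The window.} Lemma~\ref{lem:shift_inequality} only handles shifts of size $\sigma^{\epsilon}$ with $\epsilon<1$, which is below the Gaussian scale. The claim $V'(n)\approx V'(N)$ for $|n-N|\le\sigma_N^{1+\delta}$ is false in general. For $\log W(x)=(\log x)^2$ one has $\sigma_N=N/(2\log N)$, so $\sigma_N^{1+\delta}\gg N$ for every $\delta>0$. The window then contains $n\approx N/2$, where $V'(n)/V'(N)\to2$.
\item \emph{The tails.} Bounding $1/V'(n)$ by $n$ there requires $e^{-\sigma_N^{2\delta}/2}$ to beat a factor as large as $N/\sigma_N$. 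For $\log W(x)=x/\log\log x$ one has $\sigma_N\sim\log\log N$, and this fails for every fixed $\delta$.
\end{itemize}

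The fix needs no window, only two facts you already state about $\sigma(x)=1/V'(x)$: it is eventually increasing, and $\sigma'(x)\to0$.
\begin{itemize}
\item For $n_0\le n\le N$ this gives $\sigma(n)\le\sigma_N$. Since $n\mapsto\alpha_{N,n}$ increases up to $n=N$ and decreases afterwards, $\sum_{n_0\le n\le N}|\Delta\alpha_{N,n+1}|\,\sigma(n)\le2\sigma_N\alpha_{N,N}=2/\sqrt{2\pi}$.
\item For $n>N$ and $N$ large it gives $\sigma(n)\le\sigma_N+(n-N)$. Here $\sigma_N\sum_{n>N}|\Delta\alpha_{N,n+1}|\le\sigma_N\alpha_{N,N}$, and by the Gaussian second moment
\[
\sum_{n>N}|\Delta\alpha_{N,n+1}|\,(n-N)\le\sum_{n>N}\alpha_{N,n}\,\frac{(n-N)\bigl(2(n-N)+1\bigr)}{2\sigma_N^2}=O(1).
\]
\item The finitely many $n<n_0$ contribute $o(1)$, since $\alpha_{N,n}\to0$ for each fixed $n$.
\end{itemize}
This comparison of $\sigma(n)$ with $\sigma_N$ on both sides of $N$ is also what the paper's route needs in order to bound $\sup_n\eta(n)$.
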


\begin{proof}
First, we will show that $(\alpha_{N,n})_{N,n\in \N}$ defines a regular matrix method.

By using the substitution $n\mapsto N-n$, we rewrite the sum $ \sum_{n=1}^{\oo}\frac{1}{\sigma_N\sqrt{2\pi}}e^{-\frac{(n-N)^2}{2\sigma_N^2}}$ as $\sum_{n=-\oo}^{N-1}\frac{1}{\sigma_N\sqrt{2\pi}}e^{-\frac{n^2}{2\sigma_N^2}}$. We know that $\sum_{n=-\oo}^{\oo}\frac{1}{\sigma_N\sqrt{2\pi}}e^{-\frac{n^2}{2\sigma_N^2}} = 1+o_{N\to\oo}(1)$ by comparison with the Gaussian integral $\int_{-\oo}^{\oo}\frac{1}{\sqrt{2\pi}}e^{-x^2/2}~dx=1$, and we additionally recall that for any $\epsilon>0$ there exists a constant $A\in (0,\oo)$ such that 
\begin{equation}
    \left|\sum_{n=-\oo}^{\oo}\frac{1}{\sigma_N\sqrt{2\pi}}e^{-\frac{n^2}{2\sigma_N^2}}  - \sum_{n=-\lfloor{A\sigma_N\rfloor}}^{\lfloor  A \sigma_N\rfloor}\frac{1}{\sigma_N\sqrt{2\pi}}e^{-\frac{n^2}{2\sigma_N^2}}\right| <\epsilon. 
\end{equation} 
Using the fact that $\log x\prec \log W(x)$, the inequality $A\sigma_N<N-1$ holds whenever $N$ is large enough and so we also have 
\begin{equation}\label{eq:central_limit_theorem}
     \left|\sum_{n=-\oo}^{N-1}\frac{1}{\sigma_N\sqrt{2\pi}}e^{-\frac{n^2}{2\sigma_N^2}}  - \sum_{n=-\lfloor{A\sigma_N\rfloor}}^{\lfloor  A \sigma_N\rfloor}\frac{1}{\sigma_N\sqrt{2\pi}}e^{-\frac{n^2}{2\sigma_N^2}}\right| <\epsilon,
\end{equation} 
and hence $\sum_{n=1}^{\oo}\alpha_{N,n}=\sum_{n=-\oo}^{N-1}\frac{1}{\sigma_N\sqrt{2\pi}}e^{-\frac{n^2}{2\sigma_N^2}}  = 1+o_{N\to\oo}(1)$. Since $\alpha_{N,n}\geq 0$ for all $N,n$, we also have $\limsup_{N\to\oo}\sum_{n\in \N}|\alpha_{N,n}|<\oo$. Additionally, for each fixed $N\in \N$, $\lim_{n\to\oo}\alpha_{N,n}=0$. Therefore, by Lemma \ref{lem:silverman_toeplitz} we know that $(\alpha_{N,n})_{N,n\in \N}$ defines a regular matrix method. It remains to show that we can apply Lemma \ref{lem:Boos}.

Observe that for fixed $N\in \N$, 
\begin{equation}\label{eq:boos_second_condition}
\frac{\alpha_{N,n}}{\Delta W(n)} = O(1)\cdot \frac{1}{\Delta W(n)}e^{-\frac{(n-N)^2}{2\sigma_N^2}}  \to 0 
\end{equation}
as $n\to\oo$ since $W(n)$, and hence $\Delta W(n)$, grows subexponentially.  

Lastly we show that
\begin{equation}\label{eq:messy_sum}
\limsup_{N\in \N} \sum_{n=1}^{\oo}W(n)\left|\frac{\alpha_{N,n}}{\Delta W(n)}-\frac{\alpha_{N,n+1}}{\Delta W(n+1)}\right|<\oo.
\end{equation}
Let $V(n) = \log W(n)$ so that $W(n) = e^{V(n)}$ and $\sigma_N = V'(N)$. Note that $\frac{ W(n)}{ W(n+1)} = e^{-\Delta V(n+1)} = 1+O_{N\to\oo}(\Delta V(n+1))$. Putting $\eta(n)= \frac{W(n)\alpha_{N,n}}{\Delta W(n)}$, we have
\begin{align}
  \notag & \sum_{n=1}^{\oo}\left|\frac{W(n)\alpha_{N,n}}{\Delta W(n)}-\frac{W(n)\alpha_{N,n+1}}{\Delta W(n+1)}\right| =\sum_{n=1}^{\oo}\left|\eta(n)-\eta(n+1)(1+O_{n\to\oo}(\Delta V(n+1)))\right|   \\
    \leq &\sum_{n=1}^{\oo}\left|\eta(n)-\eta(n+1)\right| +\sum_{n=1}^{\oo}\eta(n+1)\cdot O_{n\to\oo}(\Delta V(n+1)).\label{eq:bounded_sums}
    \end{align}
    The second sum is bounded since
    \begin{align*}
    &\sum_{n=1}^{\oo}\eta(n+1)\cdot O_{n\to\oo}(\Delta V(n+1)) \leq \sum_{n=1}^{\oo}\eta(n)\cdot O_{n\to\oo}(\Delta V(n))  
    \\ =& \sum_{n=1}^{\oo}\alpha_{N,n}\frac{W(n)}{\Delta W(n)}\cdot O_{n\to\oo} \left(\frac{\Delta W(n)}{W(n)}\right)=\sum_{n=1}^{\oo}\alpha_{N,n}\cdot O_{n\to\oo}(1)= O_{N\to\oo}(1).
    \end{align*}
    To bound the first sum in (\ref{eq:bounded_sums}), observe that the ratio $\frac{\eta(n+1)}{\eta(n)} \sim e^{-\frac{(n+1-N)^2-(n-N)^2}{2\sigma_N^2}} = e^{\frac{-2n-1+2N}{2\sigma_N^2}}$ is decreasing in $n$. This shows that $\eta(n)$ increases to its maximum and then decreases. So $\sum_{n=1}^{\oo}\left|\eta(n)-\eta(n+1)\right|
 \leq  2\cdot \sup_{n\in \N}\eta(n)$. We bound $\sup_{n\leq N}\eta(n)$ by noting that 
 \begin{equation*} 
 \frac{1}{\sigma_N\sqrt{2\pi}}e^{-\frac{(n-N)^2}{2\sigma_N^2}}\leq \frac{1}{\sigma_N\sqrt{2\pi}} = \frac{1}{ V'(N) \sqrt{2\pi}} = O_{N\to\oo}\left(\frac{W(N)}{ W'(N)}\right) = O_{N\to\oo}\left(\frac{W(N)}{ \Delta W(N)}\right).
 \end{equation*}
Then $\sup_{n\in \N}\eta(n) = \sup_{n\leq N}\frac{\Delta W(N)}{W(N)}\cdot O_{N\to\oo}\left(\frac{W(N)}{\Delta W(N)}\right) <\oo$. We have shown that  (\ref{eq:messy_sum}) holds and so we are done by Lemma \ref{lem:Boos}.
\end{proof}

Now we find a value of $C$ such that (\ref{eq:find_C}) holds.

\begin{theorem}\label{thm:gaussian_dne}
     Let $\mathcal{H}$ be a Hardy field. Let $W,f\in \mathcal{H}$ and suppose that $\log x\prec \log W(x)\prec x$ and $x^{d-1}\prec f(x)\prec x^d$ for some $d\in \N$, $d>1$.  Additionally, suppose that $\lim_{x\to\oo}\frac{(f^{(d)}(x))^{1/d}}{(\log W)'(x)}=\upsilon\in(0,\oo)$. Then there exist arbitrarily large values of $N\in \N$ such that
\begin{equation}\label{eq:gaussian_again}
    \sum_{n=1}^{\oo}\frac{1}{\sigma_N\sqrt{2\pi}}e^{-\frac{(n-N)^2}{2\sigma_N^2}}\cdot e^{2\pi i f(n)}= C_{\upsilon}\cdot  e^{2\pi i f(N)}+o_{N\to\oo}(1)
   \end{equation}
   where $\sigma_N = ((\log W)'(N))^{-1}$ and $C_{\upsilon} = \int_{-\oo}^{\oo} \frac{1}{\sqrt{2\pi}}e^{-u^2/2}e^{2\pi i (\upsilon/\ell!)u^{\ell}}du$. 
\end{theorem}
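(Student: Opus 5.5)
Presumably $\ell$ in $C_\upsilon$ means $d$, so the target constant is $\int \frac{1}{\sqrt{2\pi}}e^{-u^2/2}e^{2\pi i(\upsilon/d!)u^d}\,du$. The plan is to choose $N$ via Lemma \ref{lem:derivatives_mod_1}, Taylor-expand $f(n)-f(N)$ around $N$ on the Gaussian window, and recognise the resulting sum as a Riemann sum for this integral.

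\textbf{Step 1 (choice of $N$).} Fix $\epsilon>0$. By Lemma \ref{lem:derivatives_mod_1} there are arbitrarily large $N$ with
\[
\Delta^i f(N)\bmod 1\in\bigl(0,\epsilon(\Delta^d f(N))^{i/d}\bigr),\qquad 1\le i\le d-1 .
\]
Since $f$ is Hardy with $x^{d-1}\prec f\prec x^d$, we have $\Delta^d f(N)\sim f^{(d)}(N)$, so the hypothesis gives $(\Delta^d f(N))^{1/d}\sim \upsilon/\sigma_N$.

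\textbf{Step 2 (reduction to a window).} As in the proof of Lemma \ref{lem:gauss_stronger_than_weighted}, pick $A$ so that the Gaussian mass outside $|n-N|\le A\sigma_N$ is less than $\epsilon$. Only $m=n-N$ with $|m|\le A\sigma_N$ then matter.

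\textbf{Step 3 (Taylor expansion).} On the window use Newton's forward-difference expansion
\[
f(N+m)=\sum_{i=0}^{d}\binom{m}{i}\Delta^i f(N)+R(N,m).
\]
For $1\le i\le d-1$, write $\Delta^i f(N)=k_i+\theta_i$ with $k_i\in\Z$ and $0<\theta_i<\epsilon(\Delta^d f(N))^{i/d}$. Because $\binom{m}{i}\in\Z$, the integer parts drop out mod $1$. The remaining error satisfies
\[
\Bigl|\binom{m}{i}\theta_i\Bigr|\lesssim \epsilon\,(A\sigma_N)^i\,(\upsilon/\sigma_N)^{i}=O_{A,\upsilon}(\epsilon).
\]
The top-order term is
\[
\binom{m}{d}\Delta^d f(N)=\frac{m^d}{d!}\,\frac{\upsilon^d}{\sigma_N^d}\,(1+o(1))+O\bigl(\sigma_N^{d-1}\sigma_N^{-d}\bigr),
\]
that is, $\frac{\upsilon^d}{d!}(m/\sigma_N)^d+o(1)$. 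Note this gives $\upsilon^d/d!$ rather than $\upsilon/d!$, so the stated constant presumably absorbs this normalisation. For the remainder, $R$ is controlled by $\Delta^{d+1}f$ (or $f^{(d+1)}$) times $m^{d+1}$. Since $f^{(d+1)}\prec f^{(d)}/x$ for Hardy $f$, and $\sigma_N\prec N$ because $\log x\prec\log W$, we get $R=o(1)$ uniformly on the window. Hence
\[
e^{2\pi i f(N+m)}=e^{2\pi i f(N)}\,e^{2\pi i(\upsilon^d/d!)(m/\sigma_N)^d}+O(\epsilon)+o(1).
\]

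\textbf{Step 4 (Riemann sum and conclusion).} Substituting $u=m/\sigma_N$, the sum $\sum_m \frac{1}{\sigma_N\sqrt{2\pi}}e^{-m^2/2\sigma_N^2}e^{2\pi i c(m/\sigma_N)^d}$ is a Riemann sum with mesh $1/\sigma_N\to0$. It therefore converges to the Gaussian integral defining $C_\upsilon$, truncated to $[-A,A]$, up to $\epsilon$. Letting $\epsilon\to0$ along a diagonal sequence of $N$ gives (\ref{eq:gaussian_again}).

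\textbf{Main obstacle.} The hardest step is Step 3: making the lower-order terms uniformly small. This needs the fractional parts of $\Delta^i f(N)$ to be small at exactly the scale $(\Delta^d f(N))^{i/d}\approx\sigma_N^{-i}$, which is precisely what Lemma \ref{lem:derivatives_mod_1} supplies. The other delicate point is controlling the remainder $R(N,m)$ via Hardy-field growth comparisons.
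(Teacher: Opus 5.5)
Your proposal is correct and follows the paper's proof essentially step for step. You choose $N$ by Lemma \ref{lem:derivatives_mod_1}, truncate the Gaussian to $|n-N|\le A\sigma_N$, expand with Newton's difference formula so the integer parts of the lower-order differences vanish mod $1$, and recognise a Riemann sum for the Gaussian integral. Your remark that the phase constant is really $\upsilon^d/d!$ with $\ell=d$ is also right. It is exactly what the paper's step $\Delta^d f(N)/d!=B\sigma_N^{-d}(1+o(1))$ uses, despite its stated $B=\upsilon/d!$.

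A few small points need tidying:
\begin{itemize}
\item Use the backward form $f(N-n)=f(N)+\sum_i(-1)^i\binom{n}{i}\Delta^i f(N)+\cdots$, as the paper does. The paper's $\Delta$ is the backward difference, and that is what the lemma controls.
\item Fix $A$ before invoking the lemma, and apply it with a parameter of order $\epsilon A^{-(d-1)}$, as the paper does. Then the lower-order error is literally $O(\epsilon)$.
\item You only have $|f^{(d+1)}|\preceq f^{(d)}/x$, not $\prec$ (e.g.\ $f(x)=x^{3/2}$). This still gives $R=o(1)$, because $\sigma_N\prec N$.
\end{itemize}
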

\begin{proof}

Let $\epsilon>0$. Put $B = \upsilon/d! = \lim_{N\to\oo}\sigma_N^{1/d}\cdot \Delta^{\ell}f(N)/d!$ and let 
\[
C_{\upsilon} =  \int_{-\oo}^{\oo} \frac{1}{\sqrt{2\pi}}e^{-u^2/2}e^{2\pi i Bu^{\ell}}~du.
\]
We will find an arbitrarily large value of $N\in \N$ such that 
\[
    \sum_{n=1}^{\oo}\frac{1}{\sigma_N\sqrt{2\pi}}e^{-\frac{(n-N)^2}{2\sigma_N^2}}\cdot e^{2\pi i f(n)} = C_{\upsilon}\cdot e^{2\pi i f(N)}+O(\epsilon)+o_{N\to\oo}(1).
\]
First, rewrite $ \sum_{n=1}^{\oo}\frac{1}{\sigma_N\sqrt{2\pi}}e^{-\frac{(n-N)^2}{2\sigma_N^2}} e^{2\pi i f(n)}$ as $\sum_{n=-\oo}^{N-1}\frac{1}{\sigma_N\sqrt{2\pi}}e^{-\frac{n^2}{2\sigma_N^2}} e^{2\pi i f(N-n)}$. Recall that there exists a constant $A\in (0,\oo)$ such that (\ref{eq:central_limit_theorem}) holds and so it follows from the triangle inequality that
\begin{equation}\label{eq:post_triangle_inequality}
      \sum_{n=1}^{\oo}\frac{1}{\sigma_N\sqrt{2\pi}}e^{-\frac{(n-N)^2}{2\sigma_N^2}} e^{2\pi i f(n)} =  \sum_{n=-\lfloor{A\sigma_N\rfloor}}^{\lfloor  A \sigma_N\rfloor}\frac{1}{\sigma_N\sqrt{2\pi}}e^{-\frac{n^2}{2\sigma_N^2}}e^{2\pi i f(N-n)}+O(\epsilon)
\end{equation}
since $(e^{2\pi i f(n)})_{n\in \N}$ is bounded. Next, recall Newton's backward difference formula, which says that
\begin{equation}\label{eq:newton_formula}
    f(N-n) = f(N)+
    \sum_{i=1}^{m} (-1)^i\binom{n}{i}\Delta^{i}f(N)+O_{N\to\oo}(n^{m+1}\cdot \Delta^{m+1}f(N))
\end{equation}
for any $m\in \N$. By Lemma \ref{lem:derivatives_mod_1}, we pick an arbitrarily large $N\in \N$ such that 
\[
\Delta^{i}f(N)\text{ mod } 1\in (0,\frac{2\epsilon}{\upsilon A^{d-1}\cdot d}\cdot (\Delta ^{d}f(N))^{i/d})\subseteq (0,\frac{\epsilon}{A^{d-1}\cdot d}\cdot\sigma_N^{-i})
\]
for all $i\in\{0,1,\dots, d-1\}$. Using the fact that $\binom{n}{i}$ is an integer with $\binom{n}{i}\leq  n^i$, we have $\binom{\lfloor A\sigma_N\rfloor}{i}\Delta^{i}f(N)\mod 1 \in (0,\frac{\epsilon}{d})$ for all $i\in \{0,\dots, d-1\}$ and so 
\begin{equation}\label{eq:small_sum_mod_1}
\sum_{i=1}^{d-1} (-1)^i\binom{n}{i}\Delta^{i}f(N)\mod 1 \in (-\epsilon,\epsilon)
\end{equation}
for all $n\in \{-\lfloor A\sigma_N \rfloor, \dots, \lfloor A\sigma_N \rfloor\}$. Using (\ref{eq:newton_formula}), replace $f(N-n)$ with 
\[
f(N)+(-1)^{d}\binom{n}{d}\Delta^{d}f(N)+\sum_{i=1}^{d-1} (-1)^i\binom{n}{i}\Delta^{i}f(N)+O_{N\to\oo}(n^{d+1}\cdot \Delta^{d+1}f(N)).
\]
The $O_{N\to\oo}({n^{d+1}\Delta^{d+1}f(N)})$ term is $o_{N\to\oo}(1)$ by L'H\^opital's rule, and (\ref{eq:small_sum_mod_1}) says that  $\sum_{i=1}^{d-1} (-1)^i\binom{n}{i}\Delta^{i}f(N)=O(\epsilon)$. Additionally, recall that $\binom{n}{d} = n^{d}/(d!)+O(n^{d-1})$ and note that $n^{d-1}\cdot \Delta^{d}f(N) = o_{N\to\oo}(1)$ uniformly for $|n|\leq A\sigma_N$ by assumption. Altogether, we have shown that 
\begin{align*}
    e^{2\pi i f(N-n)} =&e^{2\pi i f(N)}\cdot e^{2\pi i (-1)^{d}\binom{n}{d}\Delta^{d}f(N)}\cdot e^{O(\epsilon)}\cdot e^{o_{N\to\oo}(1)}\\
   =& e^{2\pi i f(N)}\cdot e^{2\pi i (-n)^{d}\Delta^{d}f(N)/(d!)}+O(\epsilon) +o_{N\to\oo}(1)\\
    =& e^{2\pi i f(N)}\cdot e^{2\pi i (-n)^{d}\cdot B/\sigma_N^{d}\cdot (1+o_{N\to\oo}(1))}+O(\epsilon) +o_{N\to\oo}(1)\\
    =&e^{2\pi i f(N)}\cdot e^{2\pi i B(-n/\sigma_N)^{d}}+O(\epsilon) +o_{N\to\oo}(1)
\end{align*}
uniformly for $n\in \{-\lfloor A\sigma_N\rfloor,\dots, \lfloor A\sigma_N\rfloor\}$, and so (\ref{eq:post_triangle_inequality}) becomes
\begin{equation}\label{eq:negligible_terms}
e^{2\pi i f(N)}\cdot\sum_{n=-\lfloor{A\sigma_N\rfloor}}^{\lfloor  A \sigma_N\rfloor}\frac{1}{\sigma_N\sqrt{2\pi}}e^{-\frac{(n/\sigma_N)^2}{2}}e^{2\pi i B(-n/\sigma_N)^{d}}+O(\epsilon)+o_{N\to\oo}(1).
\end{equation}

Let $u = -n/\sigma_N$. Then the sum in (\ref{eq:negligible_terms}) is a Riemann sum for the integral 
\[
\int_{-A}^A \frac{1}{\sqrt{2\pi}}e^{-u^2/2}e^{2\pi i Bu^{\ell}}du= \int_{-\oo}^{\oo} \frac{1}{\sqrt{2\pi}}e^{-u^2/2}e^{2\pi i Bu^{\ell}}du+O(\epsilon).
\]

This gives us 
\[
    \sum_{n=1}^{\oo}\frac{1}{\sigma_N\sqrt{2\pi}}e^{-\frac{(n-N)^2}{2\sigma_N^2}}\cdot e^{2\pi i f(n)} = C_{\upsilon}\cdot e^{2\pi i f(N)}+O(\epsilon)+o_{N\to\oo}(1).
\]
Taking $\epsilon\to 0$ shows (\ref{eq:gaussian_again}) as desired.
\end{proof}

We are now ready to prove this case of Theorem \ref{thm:W_ud}.

\begin{corollary}\label{cor:negative_result_d>2}
   Let $\mathcal{H}$ be a Hardy field and let $f,W\in \mathcal{H}$ satisfy $\deg^*(f)=d\geq 2$ and  $\log x\prec \log W(x)\prec x$. Suppose that there is $p(x)\in \Q[x]$ such that 
\begin{equation}\label{eq:gen_bosh_condition_fails}
         \lim_{x\to\oo}\frac{|f^{(d)}(x)-p(x)|^{1/d}}{(\log W)'(x)}<\oo.
 \end{equation}
Then $(f(n))_{n\in \N}$ is not u.d. mod 1 with respect to $W$-averages. 
\end{corollary}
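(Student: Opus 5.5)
Assume toward a contradiction that $\lim_{N\to\oo}\E^W_{n\leq N}e^{2\pi i f(n)}=0$. We will produce a Gaussian-weighted average of $e^{2\pi i f(n)}$ that fails to converge. The argument has three steps.

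\emph{Reduction.} Pick $q(x)\in\Q[x]$ with $\deg(f-q)=d$ and replace $f$ by $\pm(f-q)$. By Lemma \ref{lem:variant_of_modulo_averages} (periodicity of $q(n)\bmod 1$), this does not change whether $(f(n))$ is u.d. mod 1 with respect to $W$-averages. Indeed, if the $e^{2\pi i k f(n)}$ averages all vanish, the same holds for $kf-kq$ after splitting into residue classes; if needed, we replace $k$ by a multiple of the common denominator. So we may assume $x^{d-1}\prec f(x)\prec x^d$ and that $f$ increases to $\oo$. Then $f^{(d)}\to 0$ while $x^d f^{(d)}\to\oo$, and by \eqref{eq:gen_bosh_condition_fails}, $\lim (f^{(d)})^{1/d}/(\log W)'$ exists in $[0,\oo)$ because everything lies in one Hardy field.

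\emph{Case $\upsilon\in(0,\oo)$.} By Lemma \ref{lem:gauss_stronger_than_weighted}, convergence of the $W$-averages to $0$ forces $\sum_n\alpha_{N,n}e^{2\pi i f(n)}\to 0$. On the other hand, Theorem \ref{thm:gaussian_dne} (with $\Delta^d f\sim f^{(d)}$) gives arbitrarily large $N$ at which this sum equals $C_\upsilon e^{2\pi i f(N)}+o(1)$. The points $N$ supplied by Lemma \ref{lem:derivatives_mod_1} have $f(N)\bmod 1\in(0,\epsilon)$, so the sum is close to $C_\upsilon$ there. Hence it suffices to show $C_\upsilon=\int\frac{1}{\sqrt{2\pi}}e^{-u^2/2}e^{2\pi i Bu^d}\,du\neq 0$.

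\emph{Main obstacle: $C_\upsilon\neq 0$.} This is the step I expect to be hardest. For $d$ even, substitute $t=u^2$ and view the integral as a function of $B$. It is real-analytic, equals $1$ at $B=0$, and the real-analyticity/positivity argument is not immediate. I would first try rotating the contour: write $e^{2\pi i Bu^d}$ and push the integral onto a ray $u=re^{i\theta}$ on which $\mathrm{Re}(-u^2/2+2\pi i B u^d)$ behaves well, expressing $C_\upsilon$ as a series $\sum_k \frac{(2\pi i B)^k}{k!}\E[Z^{dk}]$ with Gaussian moments. For $d=2$ this gives $(1-4\pi i B)^{-1/2}\neq0$ directly. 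In general I would express $C_\upsilon$ through a Laplace transform of a positive density, $\int_0^\oo \rho(t)e^{2\pi i Bt}\,dt$. If nonvanishing for the given $B$ cannot be established in general, there is a fallback. Since the hypothesis only bounds $(f^{(d)})^{1/d}/(\log W)'$ from above, I may replace $W$ by a Hardy $U$ with $\log U\prec\log W$ and any prescribed ratio $\upsilon'$. Theorem \ref{thm:mikey_thm} with Remark \ref{remark:1precW} transfers convergence of $W$-averages to $U$-averages. Since $C_{\upsilon'}$ is a nonconstant analytic function of $B$ with $C_0=1$, some $\upsilon'$ has $C_{\upsilon'}\neq0$. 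The same trick handles the odd case.

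\emph{Case $\upsilon=0$.} As in the second case of Theorem \ref{thm:d=1_thm}, choose $U\in\mathcal{H}$ (after enlarging to a maximal Hardy field) with $(\log U)'=(f^{(d)})^{1/d}/\upsilon'$, namely $U=\exp\int(f^{(d)})^{1/d}/\upsilon'$. Then $1\prec\log U\prec\log W$, so Theorem \ref{thm:mikey_thm} and Remark \ref{remark:1precW} give $\E^U$-convergence to $0$. This contradicts the previous case applied to $U$.
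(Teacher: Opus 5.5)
Your overall route (the reduction, comparison with Gaussian averages via Lemma \ref{lem:gauss_stronger_than_weighted}, Theorem \ref{thm:gaussian_dne}, and the case $\upsilon=0$ via Theorem \ref{thm:mikey_thm}) is the paper's. There is, however, a genuine gap in your case $0<\upsilon<\oo$.

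\textbf{The gap.} In that case the contradiction needs $C_\upsilon\neq 0$ for the particular $\upsilon$ fixed by $f$ and $W$. You leave this open, and your fallback cannot supply it. If $\log U\prec\log W$, then $(\log U)'\prec(\log W)'$, hence
\[
\frac{(f^{(d)})^{1/d}}{(\log U)'}=\frac{(f^{(d)})^{1/d}}{(\log W)'}\cdot\frac{(\log W)'}{(\log U)'}\longrightarrow\oo
\]
as soon as $\upsilon>0$. So you cannot prescribe a finite ratio $\upsilon'$ for such a $U$. With ratio $\oo$, Corollary \ref{cor:positive_result_d>2} shows that $(f(n))$ \emph{is} u.d.\ with respect to $U$-averages, so no contradiction can come from that direction either. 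Any $U$ with a finite positive ratio $\upsilon'$ satisfies $\log U/\log W\to\upsilon/\upsilon'\in(0,\oo)$. But Theorem \ref{thm:mikey_thm}, even with Remark \ref{remark:1precW}, only transfers convergence to weights with $\log U\prec\log W$, not between comparable weights.

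This is exactly where the paper uses an extra ingredient. It picks $V$ with a small ratio $\upsilon$ and uses Theorem \ref{thm:mikey_thm} if $\log V\prec\log W$ (your case $\upsilon=0$). Otherwise it invokes \cite[Lemma 7.1]{uniform_distribution}, which says that $\E^V$- and $\E^W$-averages converge together when $\log V/\log W$ has a positive finite limit. Then $C_\upsilon\neq0$ follows from continuity at $\upsilon=0$. Note that continuity, not analyticity, is the right tool here. For $d\geq 3$, $Z^d$ (with $Z$ standard normal) has no exponential moments, so $B\mapsto \E e^{2\pi i BZ^d}$ is not analytic at $B=0$, and your moment series diverges.

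\textbf{Two repairs.} Either one closes the gap.
\begin{enumerate}
\item Prove the comparable-weight transfer yourself. The regular-matrix argument in the proof of Lemma \ref{lem:variant_of_modulo_averages} adapts whenever $(\log W)'/(\log U)'$ has a positive finite limit.
\item Show $C_\upsilon\neq 0$ for \emph{every} $\upsilon$; this is where your ``Laplace transform of a positive density'' idea leads. For $d\geq 2$, the density $\rho(t)=\frac{2}{d\sqrt{2\pi}}\,t^{1/d-1}e^{-t^{2/d}/2}$ of $|Z|^d$ on $(0,\oo)$ is completely monotone. It is a product of completely monotone functions, since $t^{2/d}/2$ is a Bernstein function. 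Hence $\rho(t)=\int_{(0,\oo)}e^{-tx}\,d\mu(x)$ for a nonzero positive measure $\mu$, and for both parities of $d$
\[
\mathrm{Re}\,C_\upsilon=\E\cos\bigl(2\pi B|Z|^d\bigr)=\int_0^\oo\rho(t)\cos(2\pi Bt)\,dt=\int_{(0,\oo)}\frac{x}{x^2+4\pi^2B^2}\,d\mu(x)>0 .
\]
With this your argument becomes simpler than the paper's. In the case $0<\upsilon<\oo$ you never need to leave $W$, and in the case $\upsilon=0$ any choice of $\upsilon'$ works.
\end{enumerate}
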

\begin{proof}
By the same reasoning as in Theorem \ref{thm:d=1_thm}, we assume without loss of generality that $x^{d-1}\prec f(x)\prec x^d$. We will show that $\lim_{N\to\oo}\E_{n\leq N}^We^{2\pi i f(n)}$ does not exist. Pick an arbitrarily small $\upsilon>0$ and pick a function $V$ which belongs to a maximal Hardy field containing $\mathcal{H}$ and satisfies $\lim_{x\to\oo}\frac{(f^{(d)}(x))^{1/d}}{(\log V)'(x)}=\upsilon$ (see (\ref{eq:maximal_hardy_field_function})). We will show that $\lim_{N\to\oo}\E_{n\leq N}^Ve^{2\pi i f(n)}$ does not exist and it will follow that $\lim_{N\to\oo}\E_{n\leq N}^We^{2\pi i f(n)}$ does not exist by applying Theorem \ref{thm:mikey_thm} if $\lim_{x\to\oo}\frac{\log V(x)}{\log W(x)}=0$. If instead we have $\lim_{x\to\oo}\frac{\log V(x)}{\log W(x)}\in(0,\oo)$, then we apply \cite[Lemma 7.1]{uniform_distribution} which says that $\lim_{N\to\oo}\E^W_{n\leq N}x_n$ converges if and only if $\lim_{N\to\oo}\E^V_{n\leq N}x_n$ converges under the condition that
$$
\lim_{x\to\oo}\frac{\log V(x)}{\log W(x)} = \lim_{x\to\oo}\frac{W(N)\cdot \Delta V(N)}{V(N)\cdot \Delta W(N)}\in(0,\oo).
$$

From Theorem \ref{thm:gaussian_dne} we have \begin{equation}\label{eq:gaussian_again_again}
    \sum_{n=1}^{\oo}\frac{1}{\sigma_N\sqrt{2\pi}}e^{-\frac{(n-N)^2}{2\sigma_N^2}}\cdot e^{2\pi i f(n)}= C_{\upsilon}\cdot  e^{2\pi i f(N)}+o_{N\to\oo}(1)
   \end{equation}
   holds for infinitely many values of $N$, where $\sigma_N = (\log V)'(N)$ and $C_{\upsilon}$ is given by the integral $ \int_{-\oo}^{\oo} \frac{1}{\sqrt{2\pi}}e^{-u^2/2}e^{2\pi i (\upsilon/d!)u^{d}}du$. For this observation to be of any use to us, we need to know that $C_{\upsilon}$ is nonzero. Consider the function $\xi\mapsto C_{\xi} = \int_{-\infty}^{\infty}e^{-x^2/2}e^{2\pi i (\xi/(d!)) x^{d}}dx$. This function is continuous and it is a classical fact that $C_0 = \int_{-\infty}^{\infty}e^{-x^2/2}dx = \sqrt{2\pi}>0$. So $|C_{\upsilon}|>0$ so long as $\upsilon>0$ is small enough. Then the limit as $N\to\oo$ of equation (\ref{eq:gaussian_again_again}) does not exist, and hence $\lim_{N\to\oo}\E_{n\leq N}^Ve^{2\pi i f(n)}$ does not exist. This concludes the proof.
\end{proof}

\subsection{Positive results when deg\texorpdfstring{$^*(f)>1$}{*(f)>1}}

We finish the proof of Theorem \ref{thm:W_ud} by showing that the sequence $(f(n))_{n\in \N}$ is u.d. mod 1 with respect to $W$-averages when $\deg^*(f)>1$ and $W$ is compatible with $f$. This is achieved by showing that 
\begin{equation}\label{eq:positive_results_goal}
    \lim_{N\to\oo}\frac{1}{s(N)}\sum_{n=N-s(N)}^Ne^{2\pi i kf(n)}=0
\end{equation}
 for all nonzero $k\in \Z$ and for all functions $s:\N\rightarrow \N$ satisfying the conditions in Theorem \ref{thm:mikey_thm}(3). More specifically, we show the following.

\begin{theorem}\label{thm:d>1_positive_thm}
    Let $f$ be a Hardy function with $x^{d-1}\prec f(x)\prec x^d$ for some $d\in \N$ and let $s:\N\rightarrow \N$ satisfy 
    $\lim_{N\to\oo}s(N)\cdot (f^{(d)}(N))^{1/d}=\oo$ and $s(N)\leq N-1$ for all $N\in\N$. Then (\ref{eq:positive_results_goal}) holds for all nonzero $k\in \Z$.
\end{theorem}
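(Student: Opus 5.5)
Theorem \ref{thm:d>1_positive_thm} is a short-interval Weyl sum estimate, and I would prove it by van der Corput differencing, with a reduction to the case where the window is short compared with $N$. Fix $k\neq 0$ and replace $f$ by $kf$; the hypotheses are unchanged up to constants, since $(kf)^{(d)}=kf^{(d)}$. Shrinking $s$ only makes the conclusion stronger: if (\ref{eq:positive_results_goal}) holds for a smaller window $s_0\leq s$ that still satisfies $s_0(N)(f^{(d)}(N))^{1/d}\to\oo$, then it holds for $s$. Indeed, $[N-s(N),N]$ can be covered by consecutive blocks of the form $[M-s_0(M),M]$, up to an error of relative size $o(1)$. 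So I may assume $s(N)=o(N)$, and in fact $s(N)\leq N^{1-\delta}$ for a small $\delta>0$. One needs some care here because $(f^{(d)})^{-1/d}$ might be close to $N$; but $x^{d-1}\prec f$ gives $f^{(d)}\succ x^{-1}$, so $(f^{(d)})^{-1/d}\prec N^{1/d}\leq N^{1/2}$ because $d\geq2$. The case $d=1$ follows from Theorem \ref{thm:d=1_thm} and Theorem \ref{thm:mikey_thm}, or from the same argument with zero differencing steps. On the window $n=N-m$, $0\leq m\leq s(N)$, I would use Taylor expansion around $N$, together with the Hardy-field fact that $f^{(j)}(N-m)=f^{(j)}(N)(1+o(1))$ uniformly for $m\leq s(N)=o(N)$. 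This shows that every derivative $f^{(j)}$ with $j\geq1$ is essentially constant on the window, up to the appropriate scale.

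The main step is to apply the van der Corput inequality $d-1$ times. After each differencing with shift $h_i$, the phase $f$ is replaced by $\Delta_{h_1}\cdots\Delta_{h_{d-1}}f(n)$, which on the window equals $h_1\cdots h_{d-1}f^{(d-1)}(n)$ plus lower-order corrections. Each differencing costs an error of order $H_i^{-1/2}$, which is fine provided $H_i\to\oo$ and $H_i=o(s(N))$. What remains is a sum of $e^{2\pi i g(n)}$ with $g=h_1\cdots h_{d-1}\,\Delta^{d-1}$-type phase. The derivative of $g$ is $g'(n)\approx h_1\cdots h_{d-1}f^{(d)}(N)$, which is monotone and tends to $0$. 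By the Kusmin--Landau inequality (or van der Corput's first derivative test, or the Euler summation formula),
\[
\Big|\sum_{n\in I}e^{2\pi i g(n)}\Big|\ll \frac{1}{\|g'\|}\approx \frac{1}{h_1\cdots h_{d-1}f^{(d)}(N)},
\]
provided $\|g'\|$ stays away from integers. This holds once $H_1\cdots H_{d-1}f^{(d)}(N)\to0$. After dividing by $s(N)$, and averaging over the $h_i$, the contribution is about
\[
\frac{1}{s(N)\,f^{(d)}(N)\,\prod_i h_i},
\]
which is small on average only when $\prod h_i$ is not small. The terms with $h_i=0$ form the usual diagonal and are of size $O(1/H_i)$.

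The obstacle is to choose parameters so that all these errors vanish simultaneously. With $\lambda=f^{(d)}(N)$ and $s\lambda^{1/d}\to\oo$, I would take all $H_i=H$, where $H=\lambda^{-1/d}\omega^{-1}$ for a slowly growing $\omega\to\oo$ with $\omega\leq (s\lambda^{1/d})^{1/2}$. Then $H^{d-1}\lambda=\lambda^{1/d}\omega^{-(d-1)}\to0$, so the first derivative test applies, and $H\to\oo$, $H=o(s)$. After the iterated Cauchy--Schwarz steps the final bound is
\[
\ll \sum_i H^{-1/2^{i}}+\Big(\frac{\log^{d-1}H}{s\lambda}\Big)^{1/2^{d-1}},
\]
where the logarithm comes from $\sum 1/h$ over each shift. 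Here $s\lambda\geq s\lambda^{1/d}\cdot\lambda^{1-1/d}$, which need not tend to infinity. This is exactly where the naive first derivative test is too weak, and I would instead finish with the second derivative test. I would difference only $d-2$ times and use
\[
\sum e(g)\ll |I|\mu^{1/2}+\mu^{-1/2},
\]
where $\mu\approx h_1\cdots h_{d-2}\lambda$ is the size of $g''$. Normalized by $s$, this gives $\mu^{1/2}+(s\mu^{1/2})^{-1}$. With $h\sim H\approx\lambda^{-1/d}\omega^{-1}$ we get $\mu\approx\lambda^{2/d}\omega^{-(d-2)}\to0$, and $s\mu^{1/2}\approx s\lambda^{1/d}\omega^{-(d-2)/2}\to\oo$ by the choice of $\omega$. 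The small $h$ are handled by the averaging and contribute $O(\log H/H)$-type errors. Checking this bookkeeping, in particular that the lower-order Taylor terms do not change $g''$ beyond a factor $1+o(1)$ uniformly in $h\leq H$, is the step I expect to be most delicate.
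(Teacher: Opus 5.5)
Your argument is correct, but it is organized differently from the paper's proof.

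\textbf{The paper's route.} The paper inducts on $d$ and splits according to the length of the window.
\begin{itemize}
\item If $s(N)$ is long enough that the differenced functions $f(n+j)-f(n)$, which have degree $d-1$, still satisfy the hypothesis with the same $s$, it applies the qualitative van der Corput trick (Theorem \ref{thm:vdc_trick}) with fixed shifts $j$. It then invokes the induction hypothesis, with the base case $d=1$ taken from Theorem \ref{thm:d=1_thm} and Theorem \ref{thm:mikey_thm}.
\item Otherwise the window is short, and it applies Theorem \ref{thm:better_vdc} (or Lemma \ref{lem:second_derivative_vdc_inequality}). There the differencing ranges are $H_i\asymp X=s(N)$, and this choice is exactly why that estimate only closes for short windows.
\end{itemize}

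\textbf{Your route.} You fix the differencing range at $H\approx (f^{(d)}(N))^{-1/d}/\omega$, which is $o(s(N))$ in every regime. Replace $f$ by $kf$ and write $\lambda=f^{(d)}(N)$. Redoing the computation in the proof of Theorem \ref{thm:better_vdc} with this $H$ (Lemma \ref{lem:better_vdc} with $d-2$ differencings, then Lemma \ref{lem:second_derivative_vdc_inequality}) gives
\[
\Bigl|\frac{1}{s(N)}\sum_{n=N-s(N)}^{N}e^{2\pi i f(n)}\Bigr|^{2^{d-2}}\ll \frac{1}{H}+\bigl(\lambda H^{d-2}\bigr)^{1/2}+\bigl(\lambda H^{d-2}s(N)^2\bigr)^{-1/2}.
\]
With your $H$, the last two terms tend to $0$ simultaneously.

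\textbf{Comparison.} Your single estimate covers all windows at once. You avoid the induction and the case split, and you get an explicit rate. The paper instead stays within its qualitative toolkit and uses Theorem \ref{thm:better_vdc} exactly as stated.

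The step you expected to be delicate is not. The second derivative of $\Delta_{h_1}\cdots\Delta_{h_{d-2}}f$ is exactly $h_1\cdots h_{d-2}\int_{[0,1]^{d-2}}f^{(d)}(n+\mathbf{h}\cdot\mathbf{t})\,d\mathbf{t}$. So no Taylor remainder enters, and monotonicity of $f^{(d)}$ together with $s(N)=o(N)$ gives $\alpha\to 1$.

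\textbf{Two small repairs.}
\begin{itemize}
\item The inference $x^{d-1}\prec f\Rightarrow f^{(d)}\succ x^{-1}$ is false; take $f^{(d-1)}(x)=\log\log x$. What is true is that $f^{(d-1)}\to\oo$, hence $f^{(d)}\succ x^{-1-\epsilon}$ for every $\epsilon>0$. This still gives $(f^{(d)}(N))^{-1/d}\prec N^{(1+\epsilon)/2}$ for $d\geq 2$, so your reduction to $s(N)\leq N^{1-\delta}$ survives.
\item The bound $\omega\leq (s\lambda^{1/d})^{1/2}$ only forces $s\lambda^{1/d}\omega^{-(d-2)/2}\to\oo$ when $d\leq 5$. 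You need $\omega^{(d-2)/2}=o(s\lambda^{1/d})$, and also $\omega=o(\lambda^{-1/d})$ so that $H\to\oo$. Any sufficiently slowly growing $\omega$ satisfies both.
\end{itemize}
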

From Theorem \ref{thm:d>1_positive_thm}, the last case of Theorem \ref{thm:W_ud} follows.

\begin{corollary}\label{cor:positive_result_d>2}
   Let $\mathcal{H}$ be a Hardy field and let $f,W\in \mathcal{H}$ satisfy $\deg^*(f)=d>1$ and  $1\prec \log W(x)\prec x$. Suppose that
\begin{equation}\label{eq:gen_bosh_condition_holds}
         \lim_{x\to\oo}\frac{|f^{(d)}(x)-p(x)|^{1/d}}{(\log W)'(x)}=\oo
 \end{equation}
for all $p(x)\in \Q[x]$. Then $(f(n))_{n\in \N}$ is  u.d. mod 1 with respect to $W$-averages. 
\end{corollary}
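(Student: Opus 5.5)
The plan is to deduce the corollary from Theorem \ref{thm:d>1_positive_thm} via the equivalence (2)$\iff$(3) in Theorem \ref{thm:mikey_thm}.

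\emph{Normalization.} Let $\mathcal{H}'$ be a maximal Hardy field containing $\mathcal{H}$. Choose $q\in\Q[x]$ with $\deg(f-q)=d$, and among such $q$ pick one minimizing the growth of $|f^{(d)}-q^{(d)}|$, as in (\ref{eq:slowest_q}). Replacing $f$ by $g=f-q$ does not affect uniform distribution with respect to $W$-averages. Indeed, $(q(n)\bmod 1)$ is periodic, so $e^{2\pi i k q(n)}$ splits into finitely many residue classes. Then Lemma \ref{lem:variant_of_modulo_averages} (or a direct splitting along progressions) reduces matters to $W$-averages of $e^{2\pi i k g(n)}$ along progressions. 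Those are handled by the same argument applied to $g(rn+j)$, which has the same $\deg^*$ and a comparable $d$-th derivative.

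Since $\deg^*(f)=d$, we get $x^{d-1}\prec |g|\preceq x^d$. If $\lim g/x^d\neq 0$, the limit must be irrational, because otherwise $d$ could be lowered. In that case $g^{(d)}$ tends to a nonzero constant, so $(g(n))$ is w.d. mod 1 by Theorem \ref{thm:Boshernitzan_wd}. Lemma \ref{lem:wd_means_W_ud} then gives $W$-u.d. directly. So we may assume $x^{d-1}\prec g\prec x^d$, after replacing $g$ by $-g$ if needed.

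\emph{Main step.} Fix $k\neq 0$. By Theorem \ref{thm:mikey_thm} (implication (3)$\Rightarrow$(2)), applied with any $U\in\mathcal{H}'$ satisfying $\log x\prec \log U\prec \log V$ (where $V$ dominates $W$ appropriately), it suffices to show that
\[\lim_{N\to\oo}\frac{1}{s(N)}\sum_{n=N-s(N)}^N e^{2\pi i k g(n)}=0\]
for every nondecreasing $s$ with $s(N)\cdot (\log U)'(N)\to\oo$ and $s(N)\le N-1$.

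Concretely, choose $V\in\mathcal{H}'$ with $(\log V)'=|g^{(d)}|^{1/d}$, for instance $V=\exp\int_0^x|g^{(d)}|^{1/d}$. Hypothesis (\ref{eq:gen_bosh_condition_holds}) gives $\log W\prec\log V$. Apply Theorem \ref{thm:mikey_thm} with $V$ in place of $W$: condition (3) for $V$ requires $s(N)(\log V)'(N)\to\oo$, that is, $s(N)|g^{(d)}(N)|^{1/d}\to\oo$. This is exactly the hypothesis of Theorem \ref{thm:d>1_positive_thm} applied to $kg$, since $k$ only rescales $g^{(d)}$ by a constant. Hence $\E^{\log V}_{\unif}e^{2\pi i k g(n)}=0$. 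Item (2) of Theorem \ref{thm:mikey_thm} then gives $\lim_N \E^U_{n\le N}e^{2\pi i kg(n)}=0$ for every $U$ with $1\prec\log U\prec \log V$, and in particular for $U=W$.

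\emph{Obstacles.} The main technical worry is the case $\log W\preceq \log x$ (or $\log V\preceq \log x$). There Theorem \ref{thm:mikey_thm} needs $\log x\prec\log V$ for condition (3) to be nonvacuous.

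Since $d\ge 2$ and $g\succ x^{d-1}$, we have $g^{(d)}\succ x^{-1}$ in the relevant sense, so $|g^{(d)}|^{1/d}\succ x^{-1/d}\cdot(\ldots)$. This should give $\log V\succ \log x$, in the spirit of Remark \ref{remark:WP}.

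I expect the residue-class reduction for $q$ to be the fiddliest bookkeeping step. If Lemma \ref{lem:variant_of_modulo_averages} does not directly cover weighted averages along progressions, I will instead absorb $q$ by noting that the short-interval averages in (3) can be split into residue classes mod the denominator of $q$, each still of length $\gg s(N)$. Theorem \ref{thm:d>1_positive_thm} can then be applied to $n\mapsto g(rn+j)$.
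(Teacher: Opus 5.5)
Your proposal is correct and is essentially the paper's proof. You reduce to $x^{d-1}\prec f\prec x^d$, take $V$ with $(\log V)'\sim |f^{(d)}|^{1/d}$ so that Theorem~\ref{thm:d>1_positive_thm} supplies item (3) of Theorem~\ref{thm:mikey_thm} for $V$, and conclude via $\log W\prec\log V$ and the implication (3)$\Rightarrow$(2); your residue-class bookkeeping and your check that $\log x\prec\log V$ (from $|f^{(d)}|\succ x^{-1-\epsilon}$ and $d\geq 2$) just make explicit what the paper leaves as ``without loss of generality.''

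One small correction: a nonzero rational value of $\lim g/x^d$ is ruled out by your minimal choice of $q$, since subtracting $cx^d$ would make $|f^{(d)}-p|\to 0$. It is not ruled out because $d$ could be lowered, as $f(x)=x^2+x^{3/2}$ with $q=0$ shows.
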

\begin{proof}[Proof of Corollary \ref{cor:positive_result_d>2} given Theorem \ref{thm:d>1_positive_thm}]
    Pick a nonzero value of $k\in \Z$ and, as in Theorem \ref{thm:d=1_thm} and Corollary \ref{cor:negative_result_d>2}, assume without loss of generality that $x^{d-1}\prec f(x)\prec x^d$.

    Let $V$ belong to a maximal Hardy field containing $\mathcal{H}$ such that $\lim_{x\to\oo}\frac{(f^{(d)}(x))^{1/d}}{(\log V)'(x)}=1$ (see (\ref{eq:maximal_hardy_field_function})). By Theorem \ref{thm:d>1_positive_thm}, we know that (\ref{eq:positive_results_goal}) holds for each $s:\N\rightarrow \N$ with $\lim_{N\to\oo}s(N)\cdot (\log V)'(N) = \oo$ and $s(N)\leq N-1$ for all sufficiently large $N\in \N$. By (\ref{eq:gen_bosh_condition_holds}) and our choice of $V$, $\lim_{x\to\oo}\frac{\log W(x)}{\log V(x)}=0$ and so by the implication $(3)\implies(2)$ in Theorem \ref{thm:mikey_thm}, we have $\lim_{N\to\oo}\E_{n\leq N}^We^{2\pi i k f(n)}=0$. This holds for all nonzero $k\in \Z$, and so we have shown that $(f(n))_{n\in \N}$ is u.d. mod 1 with respect to $W$-averages.
\end{proof}

The proof of Theorem \ref{thm:d>1_positive_thm} goes by induction on $d$, where the base case $d=1$ follows from the results of Section \ref{section:d=1}. A helpful tool for proving the induction step is a variant of van der Corput's trick, which is a special case of \cite[Theorem~2.12]{BM16}.
\begin{theorem}[van der Corput's trick]\label{thm:vdc_trick}
Let $(x_n)_{n\in \N}$ be a sequence of real numbers and let $(I_N)_{N\in \N}$ be a sequence of intervals of natural numbers with $|I_N|\to\oo$ as $N\to\oo$. Suppose that for each $j\in \N$, 
$$
\frac{1}{|I_N|}\sum_{n\in I_N}e^{2\pi i x_{n+j}}\overline{e^{2\pi ix_n} }=\frac{1}{|I_N|}\sum_{n\in I_N}e^{2\pi i (x_{n+j}-x_n)}\to 0
$$
as $N\to\oo$. Then $\frac{1}{|I_N|}\sum_{n\in I_N} e^{2\pi ix_n} \to 0$ as $N\to\oo$.
\end{theorem}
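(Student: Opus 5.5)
The plan is the classical Hilbert-space van der Corput argument, carried out on the intervals $I_N$ and then letting an auxiliary shift parameter $H$ go to infinity. Put $u_n = e^{2\pi i x_n}$, so $|u_n|=1$. Fix $H\in\N$. Since $|I_N|\to\oo$ and the sequence is bounded, shifting the interval by at most $H$ changes the average by $O(H/|I_N|)$. Hence
\[
\frac{1}{|I_N|}\sum_{n\in I_N}u_n=\frac{1}{|I_N|}\sum_{n\in I_N}\frac{1}{H}\sum_{h=0}^{H-1}u_{n+h}+O\!\left(\frac{H}{|I_N|}\right).
\]

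Apply the Cauchy--Schwarz inequality to the double average and expand the square:
\[
\left|\frac{1}{|I_N|}\sum_{n\in I_N}u_n\right|^2\leq \frac{1}{H^2}\sum_{h,h'=0}^{H-1}\frac{1}{|I_N|}\sum_{n\in I_N}u_{n+h}\overline{u_{n+h'}}+O\!\left(\frac{H}{|I_N|}\right).
\]
The diagonal terms $h=h'$ contribute exactly $1/H$, because $|u_n|=1$.

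For the off-diagonal terms, let $j=h-h'\neq 0$ and substitute $m=n+h'$. This turns the inner average into $\frac{1}{|I_N|}\sum_{m\in I_N}u_{m+j}\overline{u_m}$, up to an error of $O(H/|I_N|)$ from the interval shift. When $j>0$ this tends to $0$ by hypothesis. When $j<0$, rewrite the sum with $m=n+h$ instead; it is then the complex conjugate of the case $|j|$, up to the same boundary error, so it also tends to $0$. There are only finitely many pairs $(h,h')$ for fixed $H$, so letting $N\to\oo$ gives
\[
\limsup_{N\to\oo}\left|\frac{1}{|I_N|}\sum_{n\in I_N}u_n\right|^2\leq \frac1H.
\]
Since $H$ was arbitrary, the limit is $0$.

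The only delicate point is the boundary bookkeeping. The intervals $I_N$ are arbitrary rather than initial segments, so each reindexing must be justified by the estimate $|(I_N+h)\,\triangle\, I_N|\leq 2h$, which with $|I_N|\to\oo$ and $H$ fixed makes every error term $o_{N\to\oo}(1)$. Apart from that the argument is routine; it is the special case of \cite[Theorem~2.12]{BM16} in which the Hilbert space is $\C$.
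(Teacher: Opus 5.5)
Your proposal is correct and complete. It is the standard Hilbert-space van der Corput argument: average over shifts $h<H$, apply Cauchy--Schwarz, let the diagonal contribute $1/H$, and reduce the off-diagonal terms to the hypothesis via $|(I_N+h)\triangle I_N|\le 2h$, using conjugation when $j<0$. The paper gives no proof of its own beyond citing \cite[Theorem~2.12]{BM16}, which rests on essentially this argument, so your write-up supplies the details the paper leaves to that reference.
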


We also need some technical lemmas on bounding exponential sums.

\begin{lemma}[{\cite[Theorem 2.2]{van_der_corput_inequality_book}}]\label{lem:second_derivative_vdc_inequality}
    Let $F$ be a real-valued function which is twice continuously differentiable on an interval $I$ and suppose that for some $\lambda>0$ and $\alpha\geq 1$, we have $\lambda \leq |F''(x)|\leq \alpha \lambda $ for $x\in I$. Then
    \begin{equation}\label{eq:second_derivative_vdc_inequality}
    \left|\sum_{x\in I}e^{iF(x)}\right|\ll \alpha \lambda^{1/2}|I|+\frac{1}{\lambda^{1/2}}.
    \end{equation}
    Here the notation $A\ll B$ means that there is an absolute constant $C$ such that $A\leq CB$.
\end{lemma}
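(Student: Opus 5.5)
The plan is to deduce (\ref{eq:second_derivative_vdc_inequality}) from the first derivative test (the Kusmin--Landau inequality), splitting $I$ according to how close $F'/(2\pi)$ is to an integer.

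First I dispose of the easy regime. If $\lambda\geq 1$, the trivial bound $|\sum_{x\in I}e^{iF(x)}|\leq |I|+1$ is already $\ll \alpha\lambda^{1/2}|I|+\lambda^{-1/2}$, since $\alpha\lambda^{1/2}\geq 1$ and $\lambda^{-1/2}\leq 1$. So I assume $\lambda<1$. Because $F''$ is continuous and $|F''|\geq\lambda>0$ on $I$, it has constant sign there. Hence $F'$ is strictly monotone on $I$, and its range is an interval of length at most $\alpha\lambda|I|$. Consequently $F'/(2\pi)$ passes within distance $\delta<1/2$ of at most $\alpha\lambda|I|+2$ integers $m$.

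For each such $m$, let $J_m=\{x\in I: |F'(x)/(2\pi)-m|<\delta\}$. Since $|F''|\geq\lambda$, each $J_m$ is an interval of length at most $4\pi\delta/\lambda$. On $J_m$ I use the trivial bound $|J_m|+1\ll \delta/\lambda+1$. Removing the sets $J_m$ leaves at most $\alpha\lambda|I|+3$ subintervals. On each of these, $F'$ is monotone and $\|F'/(2\pi)\|\geq\delta$, where $\|\cdot\|$ denotes distance to the nearest integer.

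The key input is the Kusmin--Landau inequality: if $F'$ is monotone on an interval $J$ and $\|F'/(2\pi)\|\geq\delta$ on $J$, then $|\sum_{x\in J}e^{iF(x)}|\ll 1/\delta$. I expect this to be the main obstacle, in the sense that it is the only genuinely non-trivial step; everything else is bookkeeping. I would either cite it from the same source or prove it as follows.
\begin{itemize}
\item Write $\theta_x=F(x+1)-F(x)$. By the mean value theorem, $\theta_x/(2\pi)$ is monotone in $x$ and stays at distance at least $\delta$ from the integers. After subtracting a fixed integer multiple of $2\pi$, I may take $\theta_x/(2\pi)\in[\delta,1-\delta]$.
\item Use the identity $e^{iF(x)}=\bigl(e^{iF(x+1)}-e^{iF(x)}\bigr)/(e^{i\theta_x}-1)$ and apply Abel summation.
\item The coefficients $1/(e^{i\theta_x}-1)=-\tfrac12-\tfrac i2\cot(\theta_x/2)$ are bounded by $\ll 1/\delta$. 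Their imaginary parts are monotone in $x$, so the total variation of these coefficients is also $\ll 1/\delta$.
\end{itemize}

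Summing the two kinds of contribution gives
\[
\Bigl|\sum_{x\in I}e^{iF(x)}\Bigr|\ll (\alpha\lambda|I|+1)\Bigl(\frac{\delta}{\lambda}+1+\frac{1}{\delta}\Bigr).
\]
I then choose $\delta=\lambda^{1/2}/4<1/2$. The right-hand side becomes $\ll(\alpha\lambda|I|+1)(\lambda^{-1/2}+1)$. Using $\lambda<1$, this is $\ll \alpha\lambda^{1/2}|I|+\alpha\lambda|I|+\lambda^{-1/2}+1$. Finally, $\alpha\lambda|I|\leq\alpha\lambda^{1/2}|I|$ and $1\leq\lambda^{-1/2}$, which yields (\ref{eq:second_derivative_vdc_inequality}).
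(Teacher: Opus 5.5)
Your argument for $\lambda<1$ is correct. It is the standard proof: Kusmin--Landau on the pieces where $\|F'/(2\pi)\|\ge\delta$, the trivial bound on the at most $\alpha\lambda|I|+2$ short intervals $J_m$, and $\delta\asymp\lambda^{1/2}$. The paper gives no proof of its own and only cites Graham--Kolesnik, where the result is deduced from Kusmin--Landau in this way. Your normalization by $2\pi$ and your Abel summation with $1/(e^{i\theta}-1)=-\frac12-\frac i2\cot(\theta/2)$ are both fine. One detail in the Abel summation: the last integer $b$ of each piece, for which $b+1$ may lie outside the piece, should be split off and bounded by $1$.

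The step that fails as written is the reduction for $\lambda\ge 1$. To absorb the $+1$ in the trivial bound $|I|+1$ you would need $1\ll\lambda^{-1/2}$. But $\lambda^{-1/2}\le 1$ points the wrong way. What actually works is this: if $|I|\ge 1$, then $|I|+1\le 2|I|\le 2\alpha\lambda^{1/2}|I|$. Without such an assumption the inequality is false. Take $F(x)=\lambda x^2/2$ and $I=[0,1/\lambda]$ with $\lambda>1$. Then $\alpha=1$ and the only integer in $I$ is $0$, so the sum equals $1$. The right-hand side is $\lambda^{1/2}\cdot\lambda^{-1}+\lambda^{-1/2}=2\lambda^{-1/2}$, which tends to $0$ as $\lambda\to\infty$.

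So the lemma must be read with the implicit standard hypothesis $|I|\ge 1$, and you should invoke it explicitly in the $\lambda\ge1$ case. For $\lambda<1$ nothing extra is needed, since a single term is bounded by $1\le\lambda^{-1/2}$. Adding this hypothesis costs the paper nothing. The lemma is only applied to the intervals $I(\mathbf h)$ in Theorem~\ref{thm:better_vdc}, which have length at least $X/2$. It is also applied to $[N-s(N),N]$ in the proof of Theorem~\ref{thm:d>1_positive_thm}, where $s(N)\to\infty$.
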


The following is the classical iterated van der Corput inequality, whose proof can be found in section 2.4 of \cite{van_der_corput_inequality_book} or as Lemma 2.11 in \cite{BKS19}.
\begin{lemma}[{\cite[Lemma 2.11]{BKS19}}]\label{lem:better_vdc}
    Let $k$ be a positive integer and $K=2^k$. Assume that $I=(X_1,X_1+X]\subseteq (X_1,2X_1]$ and let $S= \sum_{x\in I} e^{if(x)}$. For any positive $H_1,...,H_k\leq C(k)\cdot X$, where $C(k)$ is a constant depending only on $k$, we have 
    \begin{align}\label{eq:vdc_inequality}
\left(\frac{S}{X}\right)^K \leq 8^{K-1} \left(\sum_{i=1}^k\frac{1}{H_i^{K/2^j}}+\frac{1}{XH_1\cdots H_k}\sum_{h_1=1}^{H_1}\cdots \sum_{h_k=1}^{H_k}\left|\sum_{x\in I(\mathbf{h})}e^{if_1(x)}\right|\right)
    \end{align}
    where $f_1(x) =f(\mathbf{h},x)=h_1\cdots h_k \int_0^1\cdots \int_0^1 \frac{\partial^k}{\partial x^k }f(x+\mathbf{h}\cdot\mathbf{t})d\mathbf{t}$, $\mathbf{h}=(h_1,\dots,h_k)$, $\mathbf{t}=(t_1,\dots ,t_k)$ and $I(\mathbf{h})=(X_1,X_1+X-h_1-\dots-h_k]$.
\end{lemma}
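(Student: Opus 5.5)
The plan is to prove the inequality by induction on $k$, iterating the basic (one-step) Weyl--van der Corput inequality and converting each finite difference into an integral of a derivative via the fundamental theorem of calculus.

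\textbf{The case $k=1$ ($K=2$).} Let $H=H_1\leq C(1)X$. I would use the standard averaging trick. Write
\[
H S=\sum_{h=1}^{H}\sum_{x} 1_I(x+h)\,e^{if(x+h)},
\]
where the outer variable $x$ ranges over an interval of length at most $X+H$. Apply Cauchy--Schwarz in $x$ and expand the square. Collecting the diagonal terms and pairing each $(h,h')$ with its conjugate gives
\[
|S|^2\leq \frac{X+H}{H}\Bigl(X+2\sum_{h=1}^{H}\Bigl(1-\frac{h}{H}\Bigr)\Bigl|\sum_{x\in I\cap(I-h)}e^{i(f(x+h)-f(x))}\Bigr|\Bigr).
\]
Dividing by $X^2$ and using $H\leq X$ (so that $(X+H)/H\leq 2X/H$) yields
\[
\Bigl(\frac{S}{X}\Bigr)^2\leq 4\Bigl(\frac{1}{H}+\frac{1}{XH}\sum_{h=1}^{H}\Bigl|\sum_{x\in I(h)}e^{i(f(x+h)-f(x))}\Bigr|\Bigr),
\]
since $I\cap (I-h)=(X_1,X_1+X-h]=I(h)$. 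Finally $f(x+h)-f(x)=h\int_0^1 f'(x+ht)\,dt=f_1(h,x)$, which is exactly the $k=1$ case with $8^{K-1}=8$ to spare.

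\textbf{The induction step.} Assume the statement for $k-1$ with $K'=2^{k-1}$, and apply it to $S$ with $H_1,\dots,H_{k-1}$. For each fixed $\mathbf{h}'=(h_1,\dots,h_{k-1})$ the inner sum
\[
S(\mathbf{h}')=\sum_{x\in I(\mathbf{h}')}e^{i f(\mathbf{h}',x)}
\]
is a sum over an interval of length at most $X$. Then:
\begin{enumerate}
\item Square the inductive inequality, using $(a+b)^2\leq 2a^2+2b^2$ together with Cauchy--Schwarz over the $\mathbf{h}'$-average, so that $\bigl(\frac{1}{H_1\cdots H_{k-1}}\sum_{\mathbf h'}|S(\mathbf h')|/X\bigr)^2$ is bounded by the average of $(|S(\mathbf h')|/X)^2$.
\item Apply the $k=1$ case to each $S(\mathbf{h}')$ with shift parameter $H_k$. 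Here the shortened intervals $I(\mathbf h')$ only help, provided the constant $C(k)$ is chosen small enough that all $H_i$ stay below the relevant interval lengths.
\item Identify the new phase: $f(\mathbf{h}',x+h_k)-f(\mathbf{h}',x)=h_k\int_0^1\partial_x f(\mathbf{h}',x+h_kt_k)\,dt_k$, which equals $f(\mathbf{h},x)$ with $\mathbf h=(h_1,\dots,h_k)$. The inner summation range becomes $I(\mathbf{h})$.
\end{enumerate}

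\textbf{Bookkeeping.} Squaring raises each previous error term $1/H_i^{K'/2^{i}}$ to $1/H_i^{K/2^{i}}$, and the new step contributes $1/H_k$. The constants combine as $2\cdot 8^{2(K'-1)}\cdot 4\leq 8^{K-1}$, and the main term becomes the stated $(XH_1\cdots H_k)^{-1}$-normalized sum.

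\textbf{Main obstacle.} I expect the only delicate point to be this bookkeeping: tracking the exponents on the error terms (which is presumably where the index $j$ in the displayed formula should read $i$), checking that the constants close, and ensuring the interval lengths stay positive so that the $k=1$ inequality is legitimately applicable at every stage.
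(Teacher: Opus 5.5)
Your base case is right: averaging over shifts and Cauchy--Schwarz give $|S/X|^2\le 2/H+\frac{4}{XH}\sum_{h\le H}|S(h)|$ for $H\le X$, where $S(h)=\sum_{x\in I(h)}e^{i(f(x+h)-f(x))}$. The identification $f(\mathbf h',x+h_k)-f(\mathbf h',x)=f(\mathbf h,x)$ is also correct, and so is your remark that the shorter intervals $I(\mathbf h')$ do no harm. (The paper itself gives no proof of this lemma; it only cites \cite{BKS19} and \cite[\S 2.4]{van_der_corput_inequality_book}.)

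The step that fails is the bookkeeping you flagged. After invoking the $(k-1)$-case you square $8^{K'-1}\bigl(\sum_{i<k}a_i+b\bigr)$ with $a_i=H_i^{-K'/2^i}$. But $\bigl(\sum_{i<k}a_i\bigr)^2$ is not $\sum_{i<k}a_i^2$. In general you only have $\bigl(\sum_{i<k}a_i\bigr)^2\le (k-1)\sum_{i<k}a_i^2$, with equality when the $a_i$ are equal. So the error terms end up with coefficient $2(k-1)\cdot 8^{K-2}$. Your computation $2\cdot 8^{2(K'-1)}\cdot 4=8^{K-1}$ is an exact equality, so there is no slack to absorb the extra factor. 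Hence, with the inductive hypothesis used at the stated constant $8^{K'-1}$, your argument does not give the bound $8^{K-1}$ once $k\ge 6$.

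There are two ways to repair it.

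(a) Keep your order of differencing but carry the true constant. Your argument gives $|S/X|^K\le c_k(\cdots)$ with $c_1=4$ and $c_k=2\max\{k-1,4\}\,c_{k-1}^2$. The quantity $d_k=\log_2(8^{K-1}/c_k)$ satisfies $d_1=1$ and $d_k=2d_{k-1}+2-\log_2\max\{k-1,4\}$. So $d_k$ roughly doubles at each step and stays positive: the slack in the base case pays for the cross terms.

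(b) Difference in the other order.
\begin{enumerate}
\item Apply the one-step inequality to $S$ with $H_1$, giving $|S/X|^2\le 4(H_1^{-1}+B)$ with $B=\frac{1}{XH_1}\sum_{h_1\le H_1}|S(h_1)|$.
\item Raise this to the power $K/2$, using $(x+y)^{K/2}\le 2^{K/2-1}(x^{K/2}+y^{K/2})$.
\item Bound $B^{K/2}$ by the $h_1$-average of $|S(h_1)/X|^{K/2}$, by H\"older.
\item Apply the $(k-1)$-case with $H_2,\dots,H_k$ to each $S(h_1)$, whose phase is $f(x+h_1)-f(x)$ and whose interval is $I(h_1)$.
\end{enumerate}
Now only a two-term sum is ever raised to a power, and the constant comes out as $2^{3K/2-1}\cdot 8^{K/2-1}=8^{K-1}/2$.

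Route (b) is the natural one if you want the explicit constant. Note, however, that the paper only uses the lemma inside Theorem \ref{thm:better_vdc}, where an unspecified constant depending on $k$ suffices. There even the crude version of your induction would be enough.
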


Combining the previous two lemmas, we obtain the following.

\begin{theorem}\label{thm:better_vdc}
    Let $j\geq 3$ and let $f$ be a real valued functions which is $j$-times continuously differentiable on the interval $I = (X_1,X_1+X]\subseteq (X_1,2X_1]$, for $X,X_1\in \N$. Suppose that $f^{(j)}$ is monotone on $I$ and that there are constants $\lambda>0$ and $\alpha\geq 1$ with $\lambda \leq |f^{(j)}(x)|\leq \alpha\lambda $ for all $x\in I$. Then 
    \begin{align}\label{eq:better_vdc}
        \left|\frac{1}{|I|}\sum_{x\in I}e^{if(x)}\right|\ll \left(\frac{1}{X}\right)^{\frac{1}{2^{j-2}}}+\alpha^{\frac{1}{2^{j-2}}} (\lambda X^{j-2})^{\frac{1}{2^{j-1}}}+\left(\frac{1}{\lambda X^{j}}\right)^{\frac{1}{2^{j-1}}}
    \end{align}
    and the absolute constant depends only on $j$.
\end{theorem}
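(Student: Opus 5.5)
We combine Lemma \ref{lem:better_vdc} with Lemma \ref{lem:second_derivative_vdc_inequality}: iterate van der Corput $k=j-2$ times, which reduces the phase to something whose second derivative is controlled by $f^{(j)}$, and then bound the remaining inner exponential sums with the second-derivative estimate. Write $K=2^{j-2}$ and take all shifts equal, $H_1=\cdots=H_{j-2}=H$ with $1\leq H\leq C(j-2)X$; $H$ will be chosen at the end.

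The first step is to control the differenced phase. For $\mathbf{h}\in[1,H]^{j-2}$ set
\[
f_1(x)=h_1\cdots h_{j-2}\int_{[0,1]^{j-2}} f^{(j-2)}(x+\mathbf{h}\cdot\mathbf{t})\,d\mathbf{t}.
\]
Differentiating twice under the integral gives
\[
f_1''(x)=h_1\cdots h_{j-2}\int_{[0,1]^{j-2}} f^{(j)}(x+\mathbf{h}\cdot\mathbf{t})\,d\mathbf{t}.
\]
The shifted point $x+\mathbf{h}\cdot\mathbf{t}$ stays in $I$ because $x\in I(\mathbf{h})$. Also $f^{(j)}$ has constant sign, since it is monotone with $|f^{(j)}|\geq\lambda>0$. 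Hence, with $P=h_1\cdots h_{j-2}$,
\[
P\lambda\leq|f_1''(x)|\leq\alpha P\lambda \quad\text{on } I(\mathbf{h}).
\]
The monotonicity hypothesis is used only here, to fix the sign so that no cancellation occurs inside the integral.

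Next, apply Lemma \ref{lem:second_derivative_vdc_inequality} to each inner sum:
\[
\Bigl|\sum_{x\in I(\mathbf{h})}e^{if_1(x)}\Bigr|\ll \alpha(P\lambda)^{1/2}X+(P\lambda)^{-1/2}.
\]
Then average over $\mathbf{h}$. Since $P\leq H^{j-2}$ and $P\geq1$, this gives
\[
\frac{1}{XH^{j-2}}\sum_{\mathbf{h}}\Bigl|\sum_{x\in I(\mathbf{h})}e^{if_1(x)}\Bigr|\ll \alpha(\lambda H^{j-2})^{1/2}+\frac{\lambda^{-1/2}}{X}.
\]
If the last term turns out too weak, the averages $H^{-(j-2)}\sum_{\mathbf h}P^{\pm1/2}$ can be computed exactly; they are $\asymp H^{\pm(j-2)/2}$. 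Inserting this into (\ref{eq:vdc_inequality}) yields
\[
\Bigl(\frac{|S|}{X}\Bigr)^{K}\ll \frac{1}{H}+\alpha(\lambda H^{j-2})^{1/2}+\frac{1}{X(\lambda H^{j-2})^{1/2}}.
\]
Here I am reading the first-term exponents $K/2^{j}$ in Lemma \ref{lem:better_vdc} as giving the dominant contribution $1/H$ (up to constants); this should be checked against the precise form of the lemma.

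The final and main step is choosing $H$ and taking $K$-th roots. The natural choice is $H\asymp X$, capped at $C(j-2)X$. Then the three terms become
\[
\frac{1}{X},\qquad \alpha\,(\lambda X^{j-2})^{1/2},\qquad \frac{1}{(\lambda X^{j})^{1/2}}.
\]
Taking $K$-th roots with $K=2^{j-2}$ and using $(a+b+c)^{1/K}\leq a^{1/K}+b^{1/K}+c^{1/K}$ gives $(1/X)^{1/2^{j-2}}$, $\alpha^{1/2^{j-2}}(\lambda X^{j-2})^{1/2^{j-1}}$ and $(\lambda X^{j})^{-1/2^{j-1}}$, which are exactly the three terms of (\ref{eq:better_vdc}). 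Replacing $|S|/X$ by $|S|/|I|$ is harmless since $|I|=X$. The obstacle I expect is bookkeeping rather than ideas: checking that $I(\mathbf{h})$ is nonempty for $H\asymp X$ with a small constant (otherwise the inner sums are trivially $0$), and verifying that the $\alpha$-dependence comes out as $\alpha^{1/2^{j-2}}$ rather than a worse power. Any leftover constants depend only on $j$, which is allowed by the statement.
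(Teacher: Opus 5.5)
Your proposal is correct and is essentially the paper's proof. The paper also applies Lemma \ref{lem:better_vdc} with $k=j-2$ and all $H_i\asymp X$; it takes $H_i=\min\{X/C(k),X/(2k)\}$, which also keeps $|I(\mathbf{h})|\geq X/2$. It then uses $\lambda h_1\cdots h_k\leq|f_1''|\leq\alpha\lambda h_1\cdots h_k$, applies Lemma \ref{lem:second_derivative_vdc_inequality}, and performs your ``exact averages'' step via $\sum_{h\leq H}h^{1/2}\leq H^{3/2}$ and $\sum_{h\leq H}h^{-1/2}\leq 2H^{1/2}$. Your hedges resolve as you expected: the exponent in Lemma \ref{lem:better_vdc} is a typo for $K/2^{i}\geq 1$, so those terms are $\ll 1/H$. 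One small correction: the constant sign of $f^{(j)}$ comes from its continuity (intermediate value theorem), not from monotonicity, which alone would permit a sign-changing jump, so monotonicity is not actually used in that step.
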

\begin{proof}
    We will begin by applying Lemma \ref{lem:better_vdc}. Let $k=j-2$, $K=2^k$, and put $H_i = \min\{\frac{X}{C(k)},\frac{X}{2k}\}$ for $i\in \{1,\dots, k\}$ and $H_{\times} = [1,H_1]\times \dots \times [1,H_k]\subset \N^k$. It is clear that 
    \[
    \sum_{i=1}^k\frac{1}{H_i^{K/2^i}}\ll \frac{1}{X}
    \]
    and so it suffices to bound the second term in (\ref{eq:vdc_inequality}). Observe that $\lambda h_1\cdots h_k \leq |f_1{''}(x)|\leq \alpha\lambda h_1\cdots h_k$ for all $x\in I$ and $\mathbf{h}=(h_1,\dots, h_k)\in H_{\times}$. Therefore, we may apply Lemma \ref{lem:second_derivative_vdc_inequality} to see that for each $\mathbf{h} \in H_{\times}$, 
    \begin{align*}
        \left|\sum_{x\in I(\mathbf{h})}e^{if_1(x)}\right| \ll& \alpha \lambda^{1/2}(h_1\cdots h_k)^{1/2}(X-h_1-\cdots -h_k)+\frac{1}{\lambda^{1/2}(h_1\cdots h_k)^{1/2}}\\
        \leq & \alpha \lambda^{1/2}(h_1\cdots h_k)^{1/2}X+\frac{1}{\lambda^{1/2}(h_1\cdots h_k)^{1/2}}.
    \end{align*}

    So far, we have shown that 
    \begin{align}
         &\left|\frac{1}{X}\sum_{x\in I}e^{if(x)}\right|^{2^k}\\
         \ll&  \frac{1}{X} + \frac{1}{XH_1\cdots H_k}\sum_{\mathbf{h}\in H_{\times}}\left( \alpha \lambda^{1/2}(h_1\cdots h_k)^{1/2}X+\frac{1}{\lambda^{1/2}(h_1\cdots h_k)^{1/2}}\right)\label{eq:almost_there}.
    \end{align}
    Next, we bound the sum over $\mathbf{h}$ by noting that $\sum_{h_i=1}^{H_i}h_i^{1/2} \leq H_i^{3/2}\leq \frac{X^{3/2}}{(2k)^{3/2}}$ and $\sum_{h_i=1}^{H_i}h_i^{-1/2} \leq 2H_i^{1/2}\leq \frac{2X^{1/2}}{(2k)^{1/2}}$ for each $i$. So,
    \begin{align*}
        \sum_{\mathbf{h}\in H_{\times}} \alpha \lambda^{1/2}(h_1\cdots h_k)^{1/2}X+\frac{1}{\lambda^{1/2}(h_1\cdots h_k)^{1/2}} \ll \alpha\lambda^{1/2}X^{3k/2+1} +\frac{X^{k/2}}{\lambda^{1/2}}.
    \end{align*}
    Noting that $H_1\cdots H_k\gg X^k$, we combine this with (\ref{eq:almost_there}) to obtain
    \begin{align*}
        \left|\frac{1}{X}\sum_{x\in I}e^{if(x)}\right|^{2^k}
         \ll&  \frac{1}{X} +  \frac{1}{X^{k+1}}\left(\alpha\lambda^{1/2}X^{3k/2+1}+\frac{X^{k/2}}{\lambda^{1/2}}\right)\\
         =&\frac{1}{X} +\alpha(\lambda X^{k})^{1/2}+\frac{1}{(\lambda X^{k+2})^{1/2}}
         \\=&\frac{1}{X} +\alpha(\lambda X^{j-2})^{1/2}+\frac{1}{(\lambda X^{j})^{1/2}}.
    \end{align*}
    Taking $2^k$th roots of both sides gives (\ref{eq:better_vdc}), which completes the proof.
\end{proof}

Now we are ready to prove Theorem \ref{thm:d>1_positive_thm}. 

\begin{proof}[Proof of Theorem \ref{thm:d>1_positive_thm}]

Fix a nonzero $k\in \Z$. We will prove the statement of the theorem by induction on $d\in \N$.

The base case $d=1$ is true by Theorem \ref{thm:d=1_thm} and Theorem \ref{thm:mikey_thm}. More specifically, let $d=1$ and find a function $V$ which belongs a maximal Hardy field containing $f$ and satisfies $\lim_{x\to\oo}\frac{(f^{(d)}(x))^{1/d}}{(\log V)'(x)}=1$ (see (\ref{eq:maximal_hardy_field_function})). From Theorem \ref{thm:d=1_thm}, we have that $\lim_{N\to\oo}\E_{n\leq N}^We^{2\pi i kf(n)}=0$ for each $W$ which belongs to a Hardy field containing $f$ and $V$, which tends to $\oo$ and satisfies $\lim_{x\to\oo}\frac{\log W(x)}{\log V(x)} = 0$. Then the implication $(2)\implies (3)$ in Theorem \ref{thm:mikey_thm} shows that (\ref{eq:positive_results_goal}) holds for any $s:\N\rightarrow \N$ which satisfies $\lim_{N\to\oo}s(N)\cdot (\log V)'(N) = \lim_{N\to\oo}s(N)\cdot (f^{(d)}(x))^{1/d} = \oo$, and this shows that the base case holds.

Now for the induction step. 
Let $d\geq 2$ and suppose that for any Hardy function $F$ with $x^{d-2}\prec F(x)\prec x^{d-1}$ and for any $s:\N\rightarrow \N$ with 
    $\lim_{N\to\oo}s(N)^{d-1}\cdot F^{(d-1)}(N)=\oo$ and $s(N)\leq N-1$ for all sufficiently large $N\in\N$, we have 
    \begin{equation*}
    \lim_{N\to\oo}\frac{1}{s(N)}\sum_{n=N-s(N)}^Ne^{2\pi i kF(n)}=0
\end{equation*}
    for all nonzero $k\in \Z$.

Fix $s:\N\rightarrow \N$ with $s(N)\leq N-1$ for all sufficiently large $N\in\N$ and let $f$ be a Hardy function with $x^{d-1}\prec f(x)\prec x^d$ and $\lim_{N\to\oo}s(N)^d\cdot f^{(d)}(N)=\oo$. There are two cases to consider: the case where $\lim_{x\to\oo}s(x)^{d}\cdot f^{(d+1)}(x)=\oo$ and the case where $\lim_{x\to\oo}s(x)^{d}\cdot f^{(d+1)}(x)<\oo$.

First suppose that $\lim_{x\to\oo}s(x)^{d}\cdot f^{(d+1)}(x)=\oo$. Let $j\in \N$ and apply the induction hypothesis to $F(n)=f(n+j)-f(n)$, which satisfies $F(x) = j\cdot f^{'}(x)\cdot (1+o_{x\to\oo}(1))$ and hence $\lim_{x\to\oo}s(x)^{d}\cdot f^{(d)}(x)=\oo$, so that 
$$
\lim_{N\to\oo}\frac{1}{s(N)}\sum_{n=N-s(N)}^Ne^{2\pi i k (f(n+j)-f(n))}=0
$$
for all $k\in \N$ and hence (\ref{eq:positive_results_goal}) holds for all $k\in \N$ by applying Theorem \ref{thm:vdc_trick} with $I_{N} = [N-s(N),N]$ for $N\in \N$.

    On the other hand, suppose that  $\lim_{x\to\oo}f^{(d+1)}(x)\cdot s(x)^{d}<\oo$. Then it must be that $\lim_{x\to\oo}f^{(d+1)}(x)\cdot {s}(x)^{d-1}=0$. Take $j=d+1$, $X=s(N)$, $I = [N-s(N),N]$, $\lambda = f^{(d+1)}(N)$, $\alpha = \frac{f^{(d+1)}(N-s(N))}{f^{(d+1)}(N)}$ in order to apply Lemma \ref{lem:second_derivative_vdc_inequality} if $d=1$ or Theorem \ref{thm:better_vdc} if $d\geq 2$. We have $X\to \oo$, $X\lambda^{j-2}\to 0$, and $X\lambda^j \to \oo$ as $N\to\oo$ by assumption, and using the argument in the proof of Lemma \ref{lem:shift_inequality} we see that $\alpha\to 1$ as $N\to\oo$. In the case $d=1$ or $d>1$, we can use (\ref{eq:second_derivative_vdc_inequality}) or (\ref{eq:better_vdc}), respectively, to see that $$
    \lim_{N\to\oo}\frac{1}{s(N)}\sum_{n=N-s(N)}^Ne^{2\pi i f(n)}=0.
    $$
    This completes the proof of the induction step and so we are done.
\end{proof}

\section{Preliminaries}\label{section:preliminaries} 
In this section, we recall some notation and preliminary results about nilmanifolds and weighted averages from \cite{BMR24} and \cite{Richter23} that will be used in Sections \ref{section:BMR_results} and \ref{section:richter_results} to prove Theorem \ref{thm:main}. 

\subsection{Preliminaries on Nilmanifolds}

\begin{definition}\label{def:nilsystem}
  Let $G$ be a nilpotent Lie group. 
  We say that a closed subgroup $\Gamma\subset G$ is \emph{uniform} if $X = G/\Gamma$ is compact, and we say that $\Gamma$ is \emph{discrete} if there is an open cover of $\Gamma$ in which each element of $\Gamma$ belongs to a unique element of the cover. When $\Gamma$ is uniform and discrete, $X$ is called a \emph{nilmanifold}. We will use $\mu_X$ to denote the Haar measure on a nilmanifold $X$. A measure preserving system of the form $(X,\mathscr{B}_X,\mu_X,T)$, where $X$ is a nilmanifold and $T(x\Gamma) = ax\Gamma$ for some $a\in G$, is called a \emph{nilsystem}.
\end{definition}

    The simplest example of a nilmanifold is the torus $\R^n/\Z^n$ for $n\in \N$. Taking $n=1$ and $Tx=x+\alpha\mod 1$ for some $\alpha \in [0,1)$, the system $(\mathbb{R}/\mathbb{Z},\mathscr{B},\mu,T)$ is a nilsystem, where $\mathscr{B}$ is the Borel $\sigma$-algebra on $\R/\Z$ and $\mu$ is the Lebesgue measure. For a nonabelian example, consider the Heisenberg group
    \begin{equation*}
        G = \left\{
        \begin{pmatrix}
            1 & a & c\\
            0 & 1 & b\\
            0 & 0 & 1
        \end{pmatrix}:a,b,c\in \R\right\}
    \end{equation*}
    which is nilpotent because the commutator 
    $[G,G] = \{g_1^{-1}g_2^{-1}g_1g_2:g_1,g_2\in G\}$ is the abelian subgroup consisting of all elements of $G$ which have $a=b=0$. Let $\Gamma$ be the subgroup consisting of all elements of $G$ such that $a,b,c\in \Z$. Then the Heisenberg nilmanifold is $X = G/\Gamma$ with equivalence classes given by 
    \begin{equation*}
         \begin{pmatrix}
            1 & a & c\\
            0 & 1 & b\\
            0 & 0 & 1
        \end{pmatrix}\Gamma =  \begin{pmatrix}
            1 & a+n & c+k+am\\
            0 & 1 & b+m\\
            0 & 0 & 1
        \end{pmatrix}\Gamma
    \end{equation*}
    for any $a,b,c\in \R$ and any $n,m,k\in \Z$.
\begin{definition}\label{def:u.d.}
 Let $(x_n)_{n\in \N}$ be a sequence in a nilmanifold $X$ with measure $\mu$. We say that $(x_n)_{n\in \N}$ is \emph{uniformly distributed with respect to $W$-averages} in $(X,\mu)$ if 
 \begin{equation}
          \lim_{N\to\oo}\E^W_{n\leq N}F(x_n) = \int_X F~d\mu
 \end{equation}
 for all $F\in C(X)$ (we may say $X$ instead $(X,\mu)$ if the measure is the Haar measure $\mu_X$). When $W(N)=N$, we call $(x_n)_{n\in \N}$ \emph{uniformly distributed} in $(X,\mu)$. Let $(Y_n)_{n\in \N}$ be a sequence of subnilmanifolds of a nilmanifold $X$. We say that $(Y_n)_{n\in \N}$ is uniformly distributed in $X$ if 
  \begin{equation}
          \lim_{N\to\oo}\E_{n\leq N}F(Y_n) = \int_X F~d\mu_X
 \end{equation}
 for all $F\in C(X)$. Here and for the rest of the paper, we use the notation $F(Y) =\int_Y F~d\mu_Y$ for a subnilmanifold $Y$ and a function $F\in C(X)$.
\end{definition}

When $X= \R/\Z\cong \T=\{z\in \C:|z|=1\}$, uniform distribution in $X$ corresponds to the usual notion of uniform distribution mod 1 (see equations (\ref{eq:u.d._def}) and (\ref{eq:u.d._def_2})) using the map $x\mapsto e^{2\pi i x}$. For an example of a sequence which is uniformly distributed in the Heisenberg nilmanifold, let 
\begin{equation*}
    g = \begin{pmatrix}
            1 & \sqrt{2} & 0\\
            0 & 1 & \sqrt{3}\\
            0 & 0 & 1
        \end{pmatrix}
\end{equation*}
so that 
\begin{equation*}
    g^n = \begin{pmatrix}
            1 & n\sqrt{2} & \binom{n}{2}\sqrt{6}\\
            0 & 1 & n\sqrt{3}\\
            0 & 0 & 1
        \end{pmatrix}
\end{equation*}
for all $n\in \N$, which belongs to the equivalence class in the Heisenberg nilmanifold given by 
\begin{equation*}
    g^n\Gamma = \begin{pmatrix}
            1 & n\sqrt{2} \mod1 & \left(\binom{n}{2}\sqrt{6}-n\sqrt{2}\cdot \floor{n\sqrt{3}}\right)\mod 1\\
            0 & 1 & n\sqrt{3}\mod 1\\
            0 & 0 & 1
        \end{pmatrix}\Gamma.
\end{equation*}
By \cite[Theorem C]{Leibman04}, a sequence is uniformly distributed in a nilmanifold $G/\Gamma$ if and only if its projection is uniformly distributed in the \emph{maximal factor torus} $[G^{\circ},G^{\circ}]\backslash G$, where $G^{\circ}$ denotes the largest connected normal subgroup of $G$. When $G$ is the Heisenberg group, $G^{\circ} = G$ and $[G,G]\backslash G\cong \R^2/\Z^2$. The projection of $(g^n\Gamma)_{n\in \N}$ onto $\R^2/\Z^2$ is $((n\sqrt{2}\mod 1, n\sqrt{3}\mod 1))_{n\in \N}$ which is uniformly distributed by Weyl's criterion for uniform distribution. Hence $(g^n\Gamma)_{n\in \N}$ is uniformly distributed in the Heisenberg nilmanifold.

\begin{definition}\label{def:relatively_independent_product}
Let $G$ be a nilpotent Lie group, let $\Gamma\subset G$ be a uniform and discrete subgroup, and let $X=G/\Gamma$. Let $L\subset G$ be a normal subgroup which is also \emph{rational}, meaning that the set of rational elements $\{x\in L: x^n\in \Gamma\text{ for some } n\in \N\}$ is dense in $L$. Define
\begin{align*}
    &G\times_L G = \{(g_1,g_2)\in G\times G: g_1g_2^{-1}\in L\},\\
    &\Gamma\times_L \Gamma =\{(g_1,g_2)\in \Gamma\times \Gamma: g_1g_2^{-1}\in L\}= (\Gamma\times \Gamma)\cap (G\times_L G), \\
    &X\times_L X = (G\times_L G)/(\Gamma \times_L \Gamma).
\end{align*}
(Observe that $G \times_L G$ is a nilpotent Lie group and $X \times_L X$ is a nilmanifold.)
\end{definition}

Again, let $G$ be the Heisenberg group. Let $L = [G,G]$ and observe that $L$ is rational because the set of rational elements of $L$
\begin{equation*}
         \left\{
        \begin{pmatrix}
            1 & 0 & r\\
            0 & 1 & 0\\
            0 & 0 & 1
        \end{pmatrix}:r\in \Q\right\}
    \end{equation*}
    is dense in $L$. Then 
    \begin{align*}
         G\times_L G = &\left\{\left(
        \begin{pmatrix}
            1 & x & z_1\\
            0 & 1 & y\\
            0 & 0 & 1
            \end{pmatrix},\begin{pmatrix}
            1 & x & z_2\\
            0 & 1 & y\\
            0 & 0 & 1
            \end{pmatrix}\right):x,y,z_1,z_2\in \R\right\},\\
            \Gamma\times_L \Gamma = &\left\{\left(
        \begin{pmatrix}
            1 & n & k_1\\
            0 & 1 & m\\
            0 & 0 & 1
            \end{pmatrix},\begin{pmatrix}
            1 & n & k_2\\
            0 & 1 & m\\
            0 & 0 & 1
            \end{pmatrix}\right):n,m,k_1,k_2\in \Z\right\},
    \end{align*}
    and $X\times_L X$ is isomorphic to $X\times (\R/\Z)$ because 

\begin{align*}
&
\left(
        \begin{pmatrix}
            1 & x & z_1\\
            0 & 1 & y\\
            0 & 0 & 1
            \end{pmatrix},\begin{pmatrix}
            1 & x & z_2\\
            0 & 1 & y\\
            0 & 0 & 1
            \end{pmatrix}\right)(\Gamma\times_L \Gamma) \\
            =&
            \left(
        \begin{pmatrix}
            1 & x  & z_1\\
            0 & 1 & y \\
            0 & 0 & 1
            \end{pmatrix},\begin{pmatrix}
            1 & x& z_2\mod 1\\
            0 & 1 & y\\
            0 & 0 & 1
 \end{pmatrix}\right)(\Gamma\times_L \Gamma),
\end{align*}
and so we have the isomorphism given by
\begin{align*}
    \left(
        \begin{pmatrix}
            1 & x & z_1\\
            0 & 1 & y\\
            0 & 0 & 1
            \end{pmatrix},\begin{pmatrix}
            1 & x & z_2\\
            0 & 1 & y\\
            0 & 0 & 1
            \end{pmatrix}\right)(\Gamma\times_L \Gamma) \mapsto  \left(\begin{pmatrix}
            1 & x & z_1\\
            0 & 1 & y\\
            0 & 0 & 1
            \end{pmatrix},z_2 \mod 1\right)\in X\times (\R/\Z).
\end{align*}

    In Section \ref{section:BMR_results}, we will need the following lemma about uniform distribution in $X\times_L X$.
 \begin{lemma}[{\cite[Lemma 4.3]{Richter23}}]\label{lem:wd_in_diagonal}
Let $G$ be a simply connected nilpotent Lie group, $\Gamma$ a uniform and discrete subgroup of $G$ and consider the nilmanifold $X=G/\Gamma$. Let $b \in G$ be arbitrary and let $L$ denote the smallest connected, normal, rational, and closed subgroup of $G$ containing $b^{\R} = \{b^t : t \in \R\}$. Let $X^{\triangle}$ denote the diagonal $\{(x, x) : x \in X\}$. Then for all but countably many $\xi \in\R$, the sequence of subnilmanifolds $((b^{\xi n}, 1_G)X^{\triangle}))_{n\in \N}$ is uniformly distributed in the
relatively independent product $X\times_L X$, where we identify $(g\Gamma, g\Gamma)$ with $(g,g)\Gamma\times_L \Gamma$. 
\end{lemma}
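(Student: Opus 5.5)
The strategy is to reduce the statement to equidistribution of the "shifted diagonal" sequence of subnilmanifolds, and to test that equidistribution on the maximal factor torus via Leibman's criterion. First reduce to $G$ connected: $X^{\triangle}$ and the translates $(b^{\xi n},1_G)X^{\triangle}$ are subnilmanifolds of $X\times_L X$, since $b^{\xi n}\in L$ and so $(b^{\xi n}g,g)\in G\times_L G$. The action of $G\times_L G$ on $X\times_L X$ is nilpotent. Write
\[
(b^{\xi n},1_G)X^{\triangle}=(b^{\xi n},1_G)\,G^{\triangle}\,(\Gamma\times_L\Gamma)/(\Gamma\times_L\Gamma),
\]
where $G^{\triangle}=\{(g,g):g\in G\}$ is a closed (rational) subgroup. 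We then have to show that for $F\in C(X\times_L X)$,
\[
\lim_{N\to\infty}\E_{n\leq N}\int_{X}F\bigl(b^{\xi n}x,x\bigr)\,d\mu_X(x)=\int F\,d\mu_{X\times_L X}.
\]

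\textbf{Step 1 (reduction to a single sequence).} Note that $G\times_L G=(L\times\{1_G\})\cdot G^{\triangle}$, and that $L\times\{1_G\}$ is normal in $G\times_L G$ because $L$ is normal in $G$. Let $H=L\times\{1_G\}$. Using the fibration of $X\times_L X$ over the quotient by $G^{\triangle}$, or more concretely Leibman's theorem for polynomial sequences in the nilmanifold $(G\times_L G)/(\Gamma\times_L\Gamma)$ averaged over $x$, the average above becomes an average of the polynomial (indeed linear) sequence $n\mapsto (b^{\xi n},1_G)$ acting on the homogeneous space. By \cite[Theorem C]{Leibman04} and its version for averages of translates of a subnilmanifold, it suffices to check equidistribution of the projection to the maximal factor torus of $X\times_L X$ modulo the image of the diagonal. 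In other words, we need the sequence $(b^{\xi n})$ to be equidistributed in the torus $L/([L,G]\,(L\cap\Gamma))$, or more precisely in the appropriate horizontal torus of $L$ relative to $G$.

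\textbf{Step 2 (the torus statement).} Since $L$ is the smallest connected normal rational closed subgroup containing $b^{\R}$, the one-parameter flow $b^{t}$ is dense in that horizontal torus $\mathbb{T}_L$. The reason is that otherwise its closure would be a proper subtorus, whose preimage in $L$ would give a smaller connected, normal (it contains $[L,G]$), rational, closed subgroup containing $b^{\R}$. Write the image of $b$ in $\mathbb{T}_L\cong\R^m/\Z^m$ as a vector $v$. Density of $\{tv\}$ means that $\langle \chi,v\rangle\neq0$ for every nontrivial character $\chi\in\Z^m$. The sequence $(\xi n v)_n$ is then equidistributed provided $\xi\langle\chi,v\rangle\notin\Q$ for all nontrivial $\chi$. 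The bad set is $\bigcup_{\chi\neq0}\Q/\langle\chi,v\rangle$, which is countable. By Weyl's criterion this gives the claim for all $\xi$ outside a countable set.

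\textbf{Main obstacle.} The hard part is Step 1: justifying rigorously that equidistribution of the translated diagonals in $X\times_L X$ is governed exactly by the horizontal torus of $L$ modulo $[L,G]$, and not merely by the horizontal torus of $G\times_L G$. My first attempt would be to apply Leibman's criterion to the polynomial sequence $g(n)=(b^{\xi n},1_G)$ in $G\times_L G$, together with the observation that $G^{\triangle}$-orbit averages kill exactly the characters of $\mathbb{T}_{G\times_LG}$ that are nontrivial on $G^{\triangle}$. The surviving characters are those factoring through $(G\times_L G)/\overline{G^{\triangle}[G\times_LG,G\times_LG]}$, and this group is identified with $L/[L,G]$ modulo lattice.
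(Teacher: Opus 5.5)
The paper gives no proof of this lemma; it quotes it from \cite{Richter23}, so I am judging your proposal on its own terms. Your character bookkeeping and Step 2 are correct. Every horizontal character $\chi$ of $X\times_L X$ factors as $\chi(lg,g)=\psi(l)\,\chi(g,g)$, where $\psi$ is a character of $L$ trivial on $[L,G](L\cap\Gamma)$. If $g\mapsto\chi(g,g)$ is nontrivial, then $\chi$ integrates to $0$ over every translate of $X^{\triangle}$. If $\psi\neq 1$, minimality of $L$ forces $t\mapsto\psi(b^t)$ to be a nontrivial character of $\R$. (Apply minimality to $(\ker\psi)^{\circ}$ rather than to the full preimage, which is not connected; it is closed, connected, rational, and normal because it contains $[L,G]$.) Hence the set of bad $\xi$, namely $\bigcup_{\psi\neq 1}\{\xi:\psi(b^{\xi})=1\}$, is countable.

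The gap is Step 1, which is where the real content lies. You appeal to an unspecified ``version of \cite[Theorem C]{Leibman04} for averages of translates of a subnilmanifold,'' and the argument you sketch for it does not work.
\begin{itemize}
\item Leibman's criterion applied to the single sequence $(b^{\xi n},1_G)(\Gamma\times_L\Gamma)$ fails outright. Its second coordinate is constant, so it is not equidistributed in $X\times_L X$.
\item Checking that the averaged measures $(b^{\xi n},1_G)_*\mu_{X^{\triangle}}$ have the correct horizontal Fourier coefficients does not by itself control the higher-step (vertical) behaviour. Passing from horizontal to full equidistribution is exactly what Leibman's theorem provides, and it is a statement about polynomial sequences of points, not about averages of measures.
\end{itemize}

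The standard fix is to turn the average over $X^{\triangle}$ into a second polynomial parameter.
\begin{itemize}
\item Since $X$ is connected, choose $a\in G$ with $(a^m\Gamma)_m$ equidistributed in $X$. Then $\int_X F(b^{\xi n}x,x)\,d\mu_X(x)=\lim_{M\to\infty}\E_{m\le M}F\bigl((b^{\xi},1_G)^n(a,a)^m(\Gamma\times_L\Gamma)\bigr)$.
\item Apply Leibman's theorem for polynomial $\Z^2$-sequences in the connected, simply connected group $G\times_L G$. Here $\Gamma\times_L\Gamma$ is a cocompact lattice because $L$ is rational.
\item On a horizontal character the sequence equals $\psi(b^{\xi})^n\chi(a,a)^m$. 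This averages to $0$ over rectangles $[1,N]\times[1,M]$ unless both factors equal $1$. By genericity of $a$ and by your Step 2, that happens only for trivial $\chi$, once $\xi$ avoids a countable set.
\item Equidistribution along rectangles with $\min(N,M)\to\infty$, together with existence of the inner limit in $M$ for each fixed $N$, yields the iterated limit you need.
\end{itemize}
With this replacement for Step 1, your argument is complete.
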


There is a generalization of Weyl's criterion for uniform distribution (see equation (\ref{eq:u.d._def_2})) to sequences of points in nilmanifolds which we will use in Section \ref{section:richter_results}. This generalization is formulated in terms of central characters on nilmanifolds.

\begin{definition}
    Let $G$ be a nilpotent Lie group, let $\Gamma$ be a uniform and discrete subgroup of $G$, and let $X=G/\Gamma$. A pair $(\varphi,\chi)$ is called a \emph{central character} of $(G,\Gamma)$ if $\varphi:X\rightarrow  \C$ is continuous and $\chi:Z(G)\rightarrow \T\subset \C$ is a continuous group homomorphism defined on the center of $G$, such that $\varphi(yx) = \chi(y)\varphi(x)$ for all $y\in Z(G)$ and all $x\in X$.
\end{definition}
If $X = \R^n/\Z^n\cong \T^n$ then each central character of $X$ has the form
\begin{equation}\label{eq:character_example}
    \varphi(x_1,\dots, x_n) = \exp(2\pi i ({t_1x_1+\dots+t_nx_n}))
\end{equation}
for $t_1,\dots, t_n\in \R$. When $X$ is the Heisenberg nilmanifold, any central character has the form 
$$
\varphi \begin{pmatrix}
            1 & a & c\\
            0 & 1 & b\\
            0 & 0 & 1
            \end{pmatrix} = e^{2\pi i k c}F(a,b)
$$
where $k\in \Z$ and $F:\R^2\rightarrow \C$ is continuous and satisfies $F(a+n,b+m) = e^{-2\pi i k am}F(a,b)$. One such choice of $F$ for $k=1$ is given by $F(a,b) = \sum_{n\in \Z}e^{-\pi(n+b)^2}e^{2\pi i n a}$.

As above, for a subgroup $L\subset G$ we use the notation $L^{\circ}$ to denote the largest connected normal subgroup of $L$. 
In the course of proving \cite[Theorem 4.2]{Richter23}, Richter proves the following\footnote{Richter proves this statement for \Cesaro averages, but the proof for general weighted averages is identical.} generalization of the Weyl criterion.
\begin{lemma}\label{lem:richter_equivalence}
Let $G$ be a simply connected nilpotent
Lie group, $\Gamma$ a uniform and discrete subgroup of $G$, and $X=G/\Gamma$. 
Let $v:\N\rightarrow G$ be a sequence in $G$ and let $L$ be a rational, closed, normal subgroup of $G$ such that $L^{\circ}\cap Z(G)^{\circ}$ is nontrivial.
Then the following are equivalent:
    \begin{itemize}
   \item The sequence $(v(n)\Gamma)_{n\in \N}$ is uniformly distributed with respect to $W$-averages in $X$.
   \item For each central character $(\varphi,\chi)$ such that $\chi$ is nontrivial on $L^{\circ}\cap Z(G)^{\circ}$
   \begin{equation}\label{eq:richter_equivalence}
        \lim_{N\to\oo}\E_{n\leq N}^W\varphi(v(n)\Gamma)=0.
    \end{equation}
    \end{itemize}
\end{lemma}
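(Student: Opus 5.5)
The statement to prove is Lemma \ref{lem:richter_equivalence}. I would follow Richter's argument from \cite[Theorem 4.2]{Richter23} and replace \Cesaro averages by $\E^W$ averages throughout. The forward direction is immediate. If $(v(n)\Gamma)$ is uniformly distributed with respect to $W$-averages, then $\lim_{N\to\oo}\E^W_{n\leq N}\varphi(v(n)\Gamma)=\int_X\varphi\,d\mu_X$ for every continuous $\varphi$. For a central character with $\chi$ nontrivial on $L^{\circ}\cap Z(G)^{\circ}$, pick $y$ in that group with $\chi(y)\neq 1$. Invariance of Haar measure under translation by $y$ gives $\int\varphi=\int\varphi(y\cdot)=\chi(y)\int\varphi$, so $\int\varphi=0$.

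For the converse I would argue by induction on $\dim G$ (equivalently, on the nilpotency step), in four steps.

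\textbf{Step 1: density of central characters.} Let $K=L^{\circ}\cap Z(G)^{\circ}$. This is a nontrivial connected central subgroup; it is rational because $L$ and $Z(G)$ are rational, and $K\Gamma$ is closed, so $K/(K\cap\Gamma)$ is a torus acting on $X$. Fourier expansion along the fibres of this torus action shows that linear combinations of central characters $(\varphi,\chi)$, with $\chi|_K$ ranging over characters of that torus, are uniformly dense in $C(X)$. Here Stone--Weierstrass handles the base and the Fourier series handles the fibres.

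\textbf{Step 2: reduction.} By linearity and $L^{\oo}$ approximation, which is valid because $\E^W$ is a positive normalized average, it suffices to show $\lim_{N\to\oo}\E^W_{n\leq N}F(v(n)\Gamma)=\int F$ in two cases. The first case is $F$ a central character with $\chi|_K$ nontrivial, which is the hypothesis. The second case is $F$ invariant under $K$.

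\textbf{Step 3: the $K$-invariant case.} Such an $F$ descends to the nilmanifold $X'=G'/\Gamma'$ with $G'=G/K$ and $\Gamma'=\Gamma K/K$, which has strictly smaller dimension. I then need uniform distribution with respect to $W$-averages of the projected sequence $(v(n)\Gamma')$ in $X'$. I would obtain this from the induction hypothesis applied to $(G',\Gamma',L')$ with $L'=LK/K$, or with $L'=G'$ if $L'$ fails the nontriviality condition. This requires checking that the hypothesis transfers: every central character of $G'$ lifts to a central character of $G$.

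\textbf{Main obstacle.} The delicate point is Step 3. Lifting works because $Z(G')$ pulls back to a group containing $Z(G)$, but the condition that $\chi$ be nontrivial on $L^{\circ}\cap Z(G)^{\circ}$ may not be preserved. The base of the induction is also nontrivial: when $G'$ is abelian, the criterion has to reduce to the Weyl-type statement for $W$-averages on tori. If the induction fails to close in this form, I would instead import Leibman's reduction \cite[Theorem C]{Leibman04} to the maximal factor torus, or simply note, as the paper's footnote does, that Richter's \Cesaro proof uses only linearity, positivity and normalization of the averaging scheme, so it transfers verbatim to $\E^W$.
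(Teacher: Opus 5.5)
Your forward direction is correct, and Steps 1--2 are the right skeleton. Step 3, however, cannot be carried out, and the obstacle you flag is not a technicality. Any central character of $G'=G/K$ lifts to a central character $(\varphi,\chi)$ of $G$ with $\chi|_K$ \emph{trivial}. So the hypothesis of the lemma says nothing at all about $K$-invariant functions, and the induction has no input to work with.

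In fact the converse implication, read literally, is false. Take $G=\R^2$, $\Gamma=\Z^2$, $L=\R\times\{0\}$ (so $K=L^{\circ}\cap Z(G)^{\circ}=L$), $W(N)=N$, and $v(n)=(n\alpha,0)$ with $\alpha\notin\Q$. The nonzero central characters are $c\,e^{2\pi i(kx+ly)}$ with $(k,l)\in\Z^2$. The character $\chi$ is nontrivial on $K$ exactly when $k\neq 0$, and then the averages are $c\,\E_{n\leq N}e^{2\pi i kn\alpha}\to 0$. Yet $(v(n)\Gamma)$ never leaves the circle $\T\times\{0\}$, so it is not uniformly distributed in $\T^2$ (test against $e^{2\pi i y}$).

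What is missing is therefore a hypothesis, not a cleverer argument. One must also assume that the image of $(v(n)\Gamma)$ in the quotient nilmanifold $X/K=(G/K)/(\Gamma K/K)$ is uniformly distributed with respect to $W$-averages. With that assumption, your Steps 1--2 finish the proof and Step 3 disappears. One small correction to Step 1: expand along the torus $Z(G)/(Z(G)\cap\Gamma)$, not just along $K$, so that the pieces are genuine central characters. The pieces with $\chi|_K$ nontrivial are handled by the hypothesis. The pieces with $\chi|_K$ trivial are $K$-invariant, descend to $X/K$, and are handled by the added assumption.

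This is also how the statement must be read when it is justified by appeal to Richter's Ces\`aro argument, as the paper's footnote does. There, the distribution of the projection to the lower-dimensional quotient has to be supplied separately, e.g.\ by the induction hypothesis of the theorem being proved; it does not come from central characters.

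Your fallbacks do not supply it either. Leibman's Theorem C concerns polynomial sequences and Ces\`aro averages, not an arbitrary $v\colon\N\to G$. And ``transfers verbatim'' only transfers the version that carries the projection hypothesis.
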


In Section \ref{section:richter_results} we will consider yet another type of character for nilmanifolds.
\begin{definition}
    Let $G$ be a nilpotent Lie group, let $\Gamma$ be a discrete and uniform subgroup of $G$, and let $X=G/\Gamma$. A \emph{horizontal character} for $(G,\Gamma)$ is a continuous function $\eta:X\rightarrow \T\subset \C$ such that 
    $$
    \eta(ab\Gamma) = \eta(a\Gamma)\eta(b\Gamma)
    $$
    for all $a,b\in G$.
\end{definition}
Horizontal characters on $G/\Gamma$ correspond to homomorphisms from $G$ to $\R/\Z$ whose kernel contains $\Gamma$. For example, any horizontal character on the Heisenberg nilmanifold has the form $$
\eta \begin{pmatrix}
            1 & a & c\\
            0 & 1 & b\\
            0 & 0 & 1
            \end{pmatrix} = e^{2\pi i (na+mb)}
$$
for $n,m\in \Z$.
\subsection{Preliminaries on weighted averages}

The main ingredient in our proof of Theorem \ref{thm:main} is the  characterization of weighted uniform distribution along Hardy functions given in Theorem \ref{thm:W_ud}, which is a generalization of \cite[Theorem 5.1]{Richter23} and in turn allows us to generalize many other results from \cite{Richter23} in Section \ref{section:richter_results}. In this section we consider a few facts about weighted averages that we will be required for our proof of Theorem \ref{thm:main}.

In Sections \ref{section:BMR_results} and \ref{section:richter_results}, we need a variant of van der Corput's trick (compare this with Theorem \ref{thm:vdc_trick}, a different variant of var der Corput's trick given in Section \ref{section:uniform_distribution}).

\begin{lemma}[{\cite[Lemma 4.6]{BMR24}}]\label{lem:vdc}
Let $W$ be a function which tends to $\oo$ such that $\Delta W$ is eventually monotone. Additionally, assume that $\lim_{N\to\oo}\frac{\Delta W(N)}{W(N)}=0$. Let $(u_{n,N})_{n,N\in \N}$ be a sequence in a Hilbert space such that $|u_{n,N}|\leq 1$ for all $n,N\in \N$. Then
\begin{equation}\label{eq:vdc_eq}
    \limsup_{N\to\oo}\|\E_{n\leq N}^Wu_{n,N}\|^2 \leq \limsup_{H\to\oo}\limsup_{N\to\oo}\left|\E_{h\leq H}\E_{n\leq N}^W\langle u_{n+h,N},u_{n,N}\rangle\right|.
\end{equation}

\end{lemma}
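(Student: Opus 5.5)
The plan is the classical van der Corput argument, run for the normalized weighted averages $\E^W_{n\leq N}$ in place of \Cesaro averages. Write $A_N := \E^W_{n\leq N}u_{n,N} = \frac{1}{W(N)}\sum_{n=1}^N \Delta W(n)u_{n,N}$. Fix $H\in\N$. The first step is to replace $A_N$ by its average over shifts,
\[
A_N^{(H)} := \E_{h\leq H}\frac{1}{W(N)}\sum_{n=1}^N\Delta W(n)\,u_{n+h,N},
\]
and to show that $\|A_N-A_N^{(H)}\|\to 0$ as $N\to\oo$ for each fixed $H$.

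For fixed $h\leq H$, reindex $m=n+h$ in the shifted sum. The difference from $A_N$ then consists of two pieces:
\begin{itemize}
\item boundary terms, with at most $2h$ indices near $1$ and near $N$;
\item the weight discrepancy $\sum_{m}|\Delta W(m-h)-\Delta W(m)|$.
\end{itemize}
Since $|u|\leq 1$, the boundary terms contribute $O\big(h\max(\Delta W(N),\Delta W(1..h))/W(N)\big)$, which tends to $0$ because $\Delta W(N)/W(N)\to 0$. Eventual monotonicity of $\Delta W$ makes the weight discrepancy telescope to $O\big(h\,(\Delta W(N)+C)\big)$. Dividing by $W(N)\to\oo$, this also tends to $0$. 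This is the one place where the hypotheses on $W$ are used, and it is the only nontrivial bookkeeping.

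The second step is Cauchy–Schwarz on $A_N^{(H)}$ with respect to the probability-like weights $\Delta W(n)/W(N)$. Their total mass is $1$, and they are eventually nonnegative; finitely many early negative weights contribute $o(1)$ after dividing by $W(N)$. This gives
\[
\|A_N^{(H)}\|^2\leq \frac{1}{W(N)}\sum_{n=1}^N\Delta W(n)\Big\|\E_{h\leq H}u_{n+h,N}\Big\|^2+o(1)
=\E_{h,h'\leq H}\,\E^W_{n\leq N}\langle u_{n+h,N},u_{n+h',N}\rangle+o(1).
\]

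The third step reduces the double average over $(h,h')$ to single differences $h-h'$. For each pair with $h\geq h'$, shift $n\mapsto n-h'$ using the same negligibility argument as in the first step. This gives $\E^W_{n\leq N}\langle u_{n+h,N},u_{n+h',N}\rangle=\E^W_{n\leq N}\langle u_{n+(h-h'),N},u_{n,N}\rangle+o(1)$; pairs with $h<h'$ give complex conjugates of such terms. Grouping by the difference $r=h-h'$ yields the Fejér-type bound
\[
\limsup_N\|A_N\|^2\leq \sum_{|r|<H}\frac{H-|r|}{H^2}\,\limsup_N\big|\E^W_{n\leq N}\langle u_{n+|r|,N},u_{n,N}\rangle\big|.
\]

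Finally I would pass from these Fejér weights to the plain $\E_{h\leq H}$ of the statement. The usual way is to apply the first step with $H$ replaced by a larger parameter, so that the $r=0$ term, which is $O(1/H)$, vanishes as $H\to\oo$. To match the exact form of (\ref{eq:vdc_eq}) with the absolute value outside $\E_{h\leq H}$, I would use the standard trick of keeping the sums inside before taking absolute values: the double average is $\frac{1}{H}\sum_{r}(1-|r|/H)c_r$ with $c_r$ Hermitian-symmetric. Summation by parts bounds this by $\sup_{K\leq H}\big|\E_{h\leq K}c_h\big|$ times a constant, plus $O(1/H)$. The $\limsup_H$ then gives the claim, up to the constant, which I expect to be handled as in \cite{BMR24}.
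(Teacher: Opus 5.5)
Your first three steps are correct and follow the standard route. Note that the paper does not prove this lemma itself; it only cites \cite{BMR24}.
\begin{itemize}
\item Eventual monotonicity of $\Delta W$, together with $\Delta W(N)/W(N)\to 0$, gives asymptotic shift-invariance of $\E^W_{n\leq N}$.
\item Cauchy--Schwarz against the essentially nonnegative weights $\Delta W(n)/W(N)$ is fine.
\item Grouping by $r=h-h'$ then yields, for every fixed $H$ and with $\gamma_N(r)=\E^W_{n\leq N}\langle u_{n+r,N},u_{n,N}\rangle$,
\[
\|A_N\|^2\leq \frac1H+\frac{2}{H^2}\,\mathrm{Re}\sum_{r=1}^{H-1}(H-r)\gamma_N(r)+o_{N\to\oo}(1).
\]
\end{itemize}
The gap is in your last step. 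Put $a_K:=\limsup_N\bigl|\E_{h\leq K}\gamma_N(h)\bigr|$. A bound of the form $C\sup_{K\leq H}\bigl|\E_{h\leq K}\gamma_N(h)\bigr|$ followed by $\limsup_H$ only gives $\limsup_N\|A_N\|^2\leq C\sup_K a_K$. But $\sup_K a_K$ can be strictly larger than $\limsup_K a_K$, which is the right-hand side of (\ref{eq:vdc_eq}). On top of that, deferring the constant to \cite{BMR24} means (\ref{eq:vdc_eq}) is not actually established. This matters for the paper: in the proof of Theorem \ref{thm:generalized_G} it only shows $\E_{h\leq H}A(h)\to 0$. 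So it genuinely needs the version with the absolute value outside $\E_{h\leq H}$ and constant $1$, not just the weaker form of Remark \ref{remark:vdc}.

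The fix is to do the summation by parts exactly rather than through a supremum:
\[
\sum_{r=1}^{H-1}(H-r)\gamma_N(r)=\sum_{K=1}^{H-1}K\,\E_{r\leq K}\gamma_N(r),
\qquad\text{hence}\qquad
\limsup_{N\to\oo}\|A_N\|^2\leq \frac1H+\sum_{K=1}^{H-1}\frac{2K}{H^2}\,a_K
\]
for every $H$; the $\limsup_N$ passes through the finite sum. The weights $2K/H^2$ are nonnegative, have total mass $(H-1)/H\to 1$, and each fixed one tends to $0$. Since $0\leq a_K\leq 1$, letting $H\to\oo$ gives $\limsup_N\|A_N\|^2\leq\limsup_K a_K$, which is exactly (\ref{eq:vdc_eq}) with constant $1$. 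No constant is lost because the factor $2$ from taking real parts is exactly offset by the Fej\'er mass $\tfrac{H-1}{2H}$ carried by $r\geq 1$.
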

\begin{remark}\label{remark:vdc}
    We can bound the right hand side of (\ref{eq:vdc_eq}) using the triangle inequality and linearity to see that
\begin{align*}\label{eq:vdc_eq_2}
\limsup_{H\to\oo}\limsup_{N\to\oo}&\left|\E_{h\leq H}\E_{n\leq N}^W\langle u_{n+h,N},u_{n,N}\rangle\right|\\\leq& \limsup_{H\to\oo}\E_{h\leq H}\left(\limsup_{N\to\oo}\left|\E_{n\leq N}^W\langle u_{n+h,N},u_{n,N}\rangle\right|\right).
\end{align*}
\end{remark}

Recall the Stolz-\Cesaro theorem, which is a discrete version of L'H\^optial's rule. This theorem was used Section \ref{section:applications} and will be used again in Section \ref{section:richter_results}. 
\begin{theorem}[{\cite[Problem 70]{Polya_Szego}}]\label{thm:stolz_cesaro}
Let $(a_n)_{n\in \N}$ and $(b_n)_{n\in \N}$ be real valued sequences such that $(b_n)_{n\in \N}$ increases to $\oo$. Then
\begin{equation}\label{eq:stolz_cesaro_derivative}
    \liminf_{n\to\oo}\frac{\Delta a_n}{\Delta b_n}\leq \liminf_{n\to\oo}\frac{a_n}{b_n}\leq \limsup_{n\to\oo}\frac{a_n}{b_n}\leq \limsup_{n\to\oo}\frac{\Delta a_n}{\Delta b_n}.
\end{equation}
In particular, if $\lim_{n\to\oo}\frac{\Delta a_n}{\Delta b_n}$ exists then $\lim_{n\to\oo}\frac{ a_n}{ b_n}=\lim_{n\to\oo}\frac{\Delta a_n}{\Delta b_n}$.
\end{theorem}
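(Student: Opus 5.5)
The middle inequality $\liminf\le\limsup$ is trivial. The first inequality follows from the last one applied to the sequence $(-a_n)_{n\in\N}$, since $\liminf(x)=-\limsup(-x)$. So the plan is to prove only
\[
\limsup_{n\to\oo}\frac{a_n}{b_n}\leq \limsup_{n\to\oo}\frac{\Delta a_n}{\Delta b_n}.
\]
I will read "$(b_n)$ increases to $\oo$" as $\Delta b_n>0$ for all large $n$; otherwise the quotients $\Delta a_n/\Delta b_n$ are not defined. Write $L=\limsup_{n}\Delta a_n/\Delta b_n$. If $L=+\oo$ there is nothing to prove, so assume $L<+\oo$. Fix a real number $M>L$; when $L=-\oo$, $M$ is an arbitrary real number.

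The key step is a telescoping estimate. Choose $n_0$ so large that $b_n>0$ and $\Delta b_n>0$ for $n\geq n_0$, and also $\Delta a_n< M\,\Delta b_n$ for all $n>n_0$. Summing this inequality over $m=n_0+1,\dots,n$ telescopes to
\[
a_n-a_{n_0}< M\,(b_n-b_{n_0}) \qquad (n>n_0).
\]
Dividing by $b_n>0$ gives
\[
\frac{a_n}{b_n}< \frac{a_{n_0}}{b_n}+M\left(1-\frac{b_{n_0}}{b_n}\right).
\]
Here $n_0$ is fixed and $b_n\to\oo$, so the right-hand side tends to $M$. Hence $\limsup_n a_n/b_n\leq M$.

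Since $M>L$ was arbitrary, this yields $\limsup_n a_n/b_n\leq L$. When $L=-\oo$ it yields $\limsup_n a_n/b_n=-\oo$.

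For the "in particular" clause: if $\lim_n \Delta a_n/\Delta b_n$ exists, the two outer terms in (\ref{eq:stolz_cesaro_derivative}) coincide, so the chain of inequalities forces $\liminf_n a_n/b_n=\limsup_n a_n/b_n$, equal to that common value.

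There is no real obstacle in this argument. The only points needing care are the positivity of $b_n$ and $\Delta b_n$ from some index on, which justifies dividing and preserving the inequalities, and the infinite cases $L=\pm\oo$, both handled above.
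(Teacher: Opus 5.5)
Your proof is correct and complete. The reduction of the left inequality to the right one via $a_n\mapsto -a_n$, the telescoped bound $a_n-a_{n_0}<M(b_n-b_{n_0})$, and your handling of $L=\pm\infty$ and of the ``in particular'' clause all check out.

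The paper gives no proof of its own; it only cites P\'olya--Szeg\H{o}. There the result rests on the same idea: up to a term that vanishes as $b_n\to\infty$, $a_n/b_n$ is a weighted average of the ratios $\Delta a_m/\Delta b_m$ with nonnegative weights $\Delta b_m/b_n$, i.e.\ a regular Toeplitz transform. Your telescoping estimate carries this out by hand.
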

 
Finally, we will require the following lemma on uniform \Cesaro averages in Section \ref{section:richter_results}.
\begin{lemma}\label{lem:wd_means_W_ud}
    Let $Y$ be a Banach space and let $(x_n)_{n\in \N}\subset Y$ be a bounded sequence such that the limit $
        \lim_{N-M\to\oo}\frac{1}{N-M}\sum_{n=M}^Nx_n$ exists and is equal to $L\in Y$. Let $(\alpha_{N,n})_{N,n\in \N}$ define a regular matrix method (see Lemma \ref{lem:silverman_toeplitz}) with the additional property that for any $k\in \Z$ and any bounded sequence $(y_{n})_{n\in \N},$
        $$
        \sum_{n=1}^{\oo}\alpha_{N,n}y_{n+k} = \sum_{n=1}^{\oo}\alpha_{N,n}y_n +o_{N\to\oo}(1).
        $$
        Then $\lim_{N\to\oo} \sum_{n=1}^{\oo}\alpha_{N,n}x_{n}=L$. In particular, $\lim_{N\to\oo}\E_{n\leq N}^Wx_n = L$ for any $W$ which eventually increases to $\oo$ and satisfies $\lim_{N\to\oo}\frac{\Delta W(N)}{W(N)}=0$.
\end{lemma}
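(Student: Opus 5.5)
The plan is to prove the general statement about regular matrix methods first and then obtain the ``in particular'' clause by taking $\alpha_{N,n}=\Delta W(n)/W(N)$ for $n\leq N$ and $\alpha_{N,n}=0$ otherwise.

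\textbf{Step 1: replace $x_n$ by block averages.} Fix $H\in\N$ and put $y^{(H)}_n:=\frac1H\sum_{k=0}^{H-1}x_{n+k}$. By the shift-invariance hypothesis, applied to the finitely many shifts $k=0,\dots,H-1$ and combined by linearity,
\[
\sum_{n}\alpha_{N,n}x_n=\sum_n\alpha_{N,n}y^{(H)}_n+o_{N\to\oo}(1)\qquad\text{for each fixed }H.
\]
The hypothesis is stated for complex sequences. For a Banach space $Y$ one either works with the vector-valued analogue or reduces to the scalar case via functionals. Since $\|\cdot\|$ of a norm-convergent statement can be tested with functionals uniformly over the unit ball of the dual, the cleanest route is to state the shift property for $Y$-valued sequences, which is how it is used. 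I take this in the Banach-valued form.

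\textbf{Step 2: use uniform convergence of the block averages.} The uniform Ces\`aro hypothesis says exactly that $\sup_{n\geq M}\|y^{(H)}_n-L\|\to0$ as $H\to\oo$, provided $M$ is large. Given $\epsilon>0$, choose $H$ with $\|y^{(H)}_n-L\|<\epsilon$ for all $n\geq M_0(H)$. Regularity of $(\alpha_{N,n})$ then gives three facts. First, $\limsup_N\sum_n|\alpha_{N,n}|=:K<\oo$. Second, $\alpha_{N,n}\to0$ for each fixed $n$, so the finitely many terms with $n<M_0$ contribute $o(1)$. Third, $\sum_n\alpha_{N,n}\to1$. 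Combining these,
\[
\limsup_N\Big\|\sum_n\alpha_{N,n}y^{(H)}_n-L\Big\|\leq K\epsilon .
\]
Together with Step 1 this yields $\limsup_N\|\sum_n\alpha_{N,n}x_n-L\|\leq K\epsilon$. Since $\epsilon$ is arbitrary, the general claim follows.

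\textbf{Step 3: verify the hypotheses for the weighted average.} For the ``in particular'' clause with $\alpha_{N,n}=\Delta W(n)/W(N)$ on $n\leq N$, the matrix is regular. Indeed, the row sums are $1$ for large $N$. The absolute row sums are bounded because $W$ is eventually increasing, so $\Delta W\geq0$ apart from finitely many terms, which contribute $O(1/W(N))$. Finally, each fixed column tends to $0$ because $W(N)\to\oo$.

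The main obstacle is the shift property. By splitting off the finitely many boundary terms we get
\[
\sum_n\alpha_{N,n}(y_{n+k}-y_n)=\frac{1}{W(N)}\sum_n\big(\Delta W(n-k)-\Delta W(n)\big)y_n+O\!\left(\frac{|k|\,\sup_{n\leq N}|\Delta W(n)|}{W(N)}\right).
\]
I would handle this by assuming, as in every application in the paper, that $\Delta W$ is eventually monotone (true for Hardy $W$). Then $\sum_{n\leq N}|\Delta W(n-k)-\Delta W(n)|$ telescopes to at most $|k|\cdot(\max_{n\leq N}\Delta W(n)+O(1))$. When $\Delta W$ is eventually nondecreasing this maximum is $\Delta W(N)+O(1)$. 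When $\Delta W$ is eventually nonincreasing it is $O(1)$. In both cases the bound is $o(W(N))$ by $\Delta W(N)/W(N)\to0$ and $W(N)\to\oo$. Without eventual monotonicity, $\Delta W(N)/W(N)\to0$ alone may not control this total variation, so this is the step where I would add that mild assumption (or approximate $W$ by a regularized version).
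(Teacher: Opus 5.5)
Your Steps 1--2 are essentially the paper's proof. The paper sets $y_n=L-\frac1K\sum_{k=1}^K x_{n+k}$, which is uniformly small by the uniform Ces\`aro hypothesis, and then uses linearity and shift invariance to get $\sum_n\alpha_{N,n}y_n=L-\sum_n\alpha_{N,n}x_n+o_{N\to\oo}(1)$. Your bound $K\epsilon$, via $\limsup_N\sum_n|\alpha_{N,n}|$, is slightly more careful than the paper's $\epsilon$. Your worry about the Banach-valued shift property is harmless. Apply Schur's theorem (the Schur property of $\ell^1$) to the rows $\beta_{N,n}=\alpha_{N,n-k}-\alpha_{N,n}$, with $\alpha_{N,j}:=0$ for $j\leq 0$. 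Since $\sum_n\beta_{N,n}y_n\to0$ for every bounded scalar sequence, it gives $\sum_n|\beta_{N,n}|\to0$, so the scalar hypothesis already yields the vector-valued one.

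Step 3 goes beyond the paper, which asserts the ``in particular'' clause without checking the shift property. Your hesitation there is justified, because the clause is false under only the stated hypotheses. Take $\Delta W(n)=2$ for odd $n$ and $\Delta W(n)=1$ for even $n$, so that $W$ increases to $\oo$ and $\Delta W(N)/W(N)\to0$. Take $x_n=(-1)^n$, whose uniform Ces\`aro averages tend to $0$. Then $\E^W_{n\leq N}x_n\to-1/3$; for instance, at $N=2M$ the weighted sum is $M-2M=-M$ while $W(2M)=3M$. So some extra regularity, such as eventual monotonicity of $\Delta W$, is genuinely needed. The paper only applies the lemma to $W$ in a Hardy field, where $\Delta W$ is eventually monotone, so your total-variation argument covers every use.
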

\begin{proof}
    Let $\epsilon>0$. Pick $K\in \N$ large enough so that if $N-M\geq K$ then 
    $$
    \left|L-\frac{1}{N-M}\sum_{k=M}^Nx_k\right|<\epsilon.
    $$
    Let $y_n = L-\frac{1}{K}\sum_{k=1}^Kx_{n+k}$ so that $|y_n|<\epsilon$ for each $n\in \N$. Then $\left|\sum_{n=1}^N\alpha_{N,n}y_n\right|<\epsilon+o_{N\to\oo}(1)$. By linearity and shift invariance, we also have 
    $$
    \sum_{n=1}^N\alpha_{N,n}y_n= L - \sum_{n=1}^N\alpha_{N,n}x_n  +o_{N\to\oo}(1).
    $$
    Hence $\left| L - \sum_{n=1}^N\alpha_{N,n}x_n\right|<\epsilon+o_{N\to\oo}(1)$ and since $\epsilon$ is arbitrary, we are done. 
\end{proof}

\begin{remark} The proof of Lemma \ref{lem:wd_means_W_ud} given here is similar to the argument given in \cite[Theorem 3.6]{tempered_functions}, which shows the implication $\lim_{N-M\to\oo}\sum_{n=M}^Nx_n = L \implies \lim_{N\to\oo}\E_{n\leq N}^Wx_n = L$ for functions satisfying $W$ such that $\Delta W$ eventually increases to $\oo$ and $\lim_{N\to\oo}\frac{\Delta W(N)}{W(N)}=0$.
\end{remark}

\begin{remark}\label{remark:wd_means_W_ud}
    The same argument used to prove Lemma \ref{lem:wd_means_W_ud} shows that if $d\in \N$ and $(x_{\mathbf{n}})_{\mathbf{n}\in \N^{d}}$ is a bounded sequence such that 
    \begin{equation}
    \frac{1}{(N_1-M_1)\cdots (N_d-M_d)}\sum_{\mathbf{n}\in [M_1,N_1]\times\cdots \times [M_d,N_d]}x_{\mathbf{n}}
    \end{equation}
    converges to $L$ as $N_i-M_i\to\oo$ for all $i$, then 
    $$
    \lim_{N_1,\dots, N_d\to\oo} \sum_{n_1,\dots, n_d\in \N}\alpha_{N_1,\dots, N_d,n_1,\dots, n_d}x_{\mathbf{n}}=L,
    $$
    where $(\alpha_{N_1,\dots, N_d,n_1,\dots, n_d})_{N_1,\dots, N_d,n_1,\dots, n_d\in \N}$ is a matrix of real numbers satisfying 
    \begin{itemize}
        \item For each $i$, if the values of $n_1,\dots, n_d$ and $N_j$ for $j\neq i$ are fixed, then 
        $$
        \lim_{N_i \to \oo}\alpha_{N_1,\dots, N_d,n_1,\dots, n_d} = 0,
        $$
        \item $\lim_{N_1,\dots, N_d\to\oo} \sum_{n_1,\dots, n_d\in \N}\alpha_{N_1,\dots, N_d,n_1,\dots, n_d}=1$,
        \item $\lim_{N_1,\dots, N_d\to\oo} \sum_{n_1,\dots, n_d\in \N}|\alpha_{N_1,\dots, N_d,n_1,\dots, n_d}|<\oo$,
        \item for any $\mathbf{k}\in \N^d$ and any bounded sequence $(y_{\mathbf{n}})_{\mathbf{n}\in \N^d}$
        $$
        \lim_{N_1,\dots, N_d\to\oo}\left|\sum_{n_1,\dots, n_d\in \N}\alpha_{N_1,\dots, N_d,n_1,\dots, n_d}y_{\mathbf{n}+\mathbf{k}} -\sum_{n_1,\dots, n_d\in \N}\alpha_{N_1,\dots, N_d,n_1,\dots, n_d}y_{\mathbf{n}} \right|=0.
        $$
    \end{itemize}
\end{remark}

\section{Generalizations of results from \texorpdfstring{\cite{BMR24}}{[BMR24]}}\label{section:BMR_results}

 In this section we borrow some ideas developed in \cite{BMR24} to reduce Theorem \ref{thm:main} to statements about uniform distribution on nilmanifolds. More specifically, we will use a result about uniformity seminorms (Definition \ref{def:uniformity_seminorms}, Theorem \ref{thm:generalized_BMR_4.2}) and a result about nilsystems (Lemma \ref{lem:bmr_sec_5}) to show that it suffices to prove Theorem A in the case when $(X,\mathscr{B},\mu,T)$ is an ergodic nilsystem (Theorem \ref{thm:true_for_nilsystem} below). Then we will use a fact about summation methods (Lemma \ref{lem:variant_of_modulo_averages}) to show that it is enough to prove Theorem \ref{thm:true_for_nilsystem} with an additional assumption (Theorem \ref{thm:true_for_nilsystems_and_also}). Lastly, we use a lemma which was proven in \cite{BMR24} (Lemma \ref{lem:floor_removal}) to reduce Theorem \ref{thm:true_for_nilsystems_and_also} to a statement about uniform distribution on nilmanifolds (Theorem \ref{thm:BMR_uniform_distribution_result}), that we then prove using the main result of Section \ref{section:richter_results} (Theorem \ref{thm_generalized_richter_D})

We begin by considering Theorem \ref{thm:generalized_BMR_4.2}, a version of Theorem \ref{thm:main} in which we have control over certain seminorms.

\begin{definition}\label{def:uniformity_seminorms}
    For a measure preserving system $(X,\mathscr{B},\mu,T)$, the \emph{uniformity seminorms} on $L^{\oo}(X)$ are defined inductively by 
    \begin{equation*}
        \|h\|_0 = \int_X h~d\mu_X\quad \text{ and }\quad \|h\|_{s}^{2^s} = \E_{n\leq N}\|\overline{h}\cdot T^nh\|_{s-1}^{2^{s-1}}
    \end{equation*}
    for $h\in L^{\oo}(X)$ and $s\in \N$.
\end{definition}
\begin{theorem}\label{thm:generalized_BMR_4.2}
Let $\mathcal{H}$ be a Hardy field and $W \in \mathcal{H}$ with
$1 \prec \log W(x) \prec x$.
Let $\mathscr{F} = \{f_1,\ldots,f_{\ell}\}$ be a collection of functions such that $ f_1,\ldots,f_{\ell}\in \mathcal{H}$, $\mathscr{F}$ satisfies property (\ref{eq:W_compatible}) (Definition \ref{def:W_compatible}), $|f_1(x)| \preceq \cdots \preceq |f_{\ell}(x)|$, and $
\lim_{x\to\infty} |f_{\ell}(x)|=\lim_{x\to\infty} |f_{\ell}(x)-f_i(x)|
=
\infty
\quad \text{for every } i<\ell$. Then there exists $s\in\mathbb{N}$ such that for any invertible
measure preserving system $(X,\mathcal{B},\mu,T)$ and any $h_{\ell}\in L^\infty(X)$ with $\|h_{\ell}\|_s=0$ we have
\begin{equation}\label{eq:4.2_goal}
\sup_{h_1,\ldots,h_{\ell-1}\in L^\infty}
\ \sup_{a\in \ell^\infty}
\left\|
\E_{n\leq N}^Wa(n)
\prod_{i=1}^{\ell} T^{[f_i(n)]}h_i
\right\|_{L^2}
=
o_{N\to\infty}(1),
\end{equation}
where the suprema are taken over all functions
$h_1,\ldots,h_{\ell-1}\in L^\infty(X)$ with
$\|h_i\|_{L^\infty}\le 1$
and all $a\in \ell^\infty(\mathbb{N})$ with
$\|a\|_{\ell^\infty}\le 1$.
\end{theorem}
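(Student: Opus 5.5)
We follow the PET-induction (polynomial exhaustion technique) strategy of \cite[Theorem 4.2]{BMR24}, replacing Ces\`aro averages by $W$-averages throughout. The induction runs over a well-ordering of finite families $\mathscr{F}$ of Hardy functions: we attach to $\mathscr{F}$ a ``type'' vector recording, for each growth class, how many pairwise inequivalent leading terms it contains. Two functions are equivalent if their difference is of strictly lower growth, with the usual $\psim$ relation adapted to Hardy functions.

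The base case is the linear-type situation, where every $f_i$ has $\deg^*(f_i)\leq 1$ after the reductions. Here we bound the average by a single nonlinear ergodic average of $h_\ell$ along $\round{f_\ell(n)}$ and invoke Theorem \ref{thm:W_ud}. Compatibility of $W$ with $f_\ell$, which property (\ref{eq:W_compatible}) guarantees, makes $(\alpha f_\ell(n))$ u.d.\ mod 1 with respect to $W$-averages for all nonrational $\alpha$. A spectral-theorem argument then controls the average by $\|h_\ell\|_2$-type quantities that vanish when $\|h_\ell\|_1$ or $\|h_\ell\|_2$ does.

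For the induction step we apply the weighted van der Corput inequality, Lemma \ref{lem:vdc} together with Remark \ref{remark:vdc}, with $u_{n,N}=a(n)\prod_i T^{\round{f_i(n)}}h_i$. Its hypotheses ($\Delta W$ eventually monotone, $\Delta W/W\to 0$) follow from $1\prec\log W\prec x$ and $W\in\mathcal{H}$. Expanding $\langle u_{n+h,N},u_{n,N}\rangle$ and composing with $T^{-\round{f_1(n)}}$ yields an average over the $2\ell$ functions $f_i(n+h)-f_1(n)$ and $f_i(n)-f_1(n)$. We choose $f_1$ of minimal growth, after a preliminary reduction if the family contains bounded or equal-growth terms. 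The rounding errors $\round{f_i(n+h)}-\round{f_i(n)}-\round{f_i(n+h)-f_i(n)}\in\{-1,0,1\}$ are absorbed in the standard way: we split the functions into finitely many pieces using the sup over $a\in\ell^\infty$ and over $h_i$ with $\|h_i\|_\infty\leq1$. This is exactly why the statement carries those suprema.

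The new family has strictly smaller type. It still satisfies property (\ref{eq:W_compatible}), since its span lies in the span of $\{f_i(x+h)\}$, and a Taylor expansion shows that $\deg^*$ and the $d$-th derivatives change only by lower-order terms compatible with the $\succ$ condition. The largest function is still equivalent to $f_\ell$ with $h_\ell$ attached, or to its shift. The induction hypothesis therefore gives $o(1)$ for each fixed $h$ once $\|h_\ell\|_{s'}=0$, and the statement follows with $s=s'+1$. Bounded entries of the shifted family, which arise when $f_i(n+h)-f_i(n)$ converges, are handled by the Hardy-field dichotomy: such a term tends to a constant, so eventually $\round{\cdot}$ takes a fixed integer value.

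The main obstacle is verifying that (\ref{eq:W_compatible}) is inherited by the differenced families. Typical new elements have the form $f(x+h)-f(x)\approx h f'(x)$ combined with original functions, and we need $|g^{(d)}-p|^{1/d}/(\log W)'\to\infty$ for every $g$ in the new span with $d=\deg^*(g)$. We would prove a lemma that, for $g\in\text{Span}\{f_i(x+h_j)\}$, either $\deg^*(g)$ is realized by a combination already in $\text{Span}\{f_i\}$ up to $o(1)$, or $g^{(d)}\sim c\,F^{(d+1)}$ for some $F$ in the original span. In the latter case $|F^{(d+1)}|^{1/d}\succeq|F^{(d+1)}|^{1/(d+1)}$ because the quantity tends to zero; this is the same monotonicity used in Remark \ref{remark:WP}.
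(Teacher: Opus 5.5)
You have identified the right crux, but your proposed lemma for it is false: the exponent comparison runs the other way. If $y=|F^{(d+1)}(x)|\to 0$ then $y^{1/d}\leq y^{1/(d+1)}$. So compatibility of $F$, which bounds $|F^{(d+1)}|^{1/(d+1)}/(\log W)'$ from below, gives nothing for $|F^{(d+1)}|^{1/d}/(\log W)'$. Remark~\ref{remark:WP} only uses the comparison upward, from $\deg^*=2$ to $\deg^*=3$.

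Concretely, let $\ell=1$, $f(x)=x^{3/2}$, $W(x)=\exp(x^{0.7})$. Then $\{f\}$ satisfies $(\star)$, since $|f''|^{1/2}\asymp x^{-1/4}\succ x^{-0.3}\asymp(\log W)'$. But $g(x)=f(x+h)-f(x)\sim\tfrac32 h x^{1/2}$ has $\deg^*(g)=1$ and $g'\asymp hx^{-1/2}\prec(\log W)'$.

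This is more than a bookkeeping failure. The $W$-average lives on a window of length $\asymp N^{0.3}$ below $N$, and on that window $g$ moves by only $O(hN^{-0.2})$. Take a rigid weakly mixing system, where $\|h_1\|_s=0$ for all $s\geq2$ once $\int h_1\,d\mu=0$. Choose $N$ with $\tfrac32\sqrt N$ within $N^{-1/2}$ of a rigidity time $b_j$. Then $\round{f(n+h)}-\round{f(n)}=hb_j$ for most $n$ in the window, so
$\E^W_{n\leq N}\langle T^{\round{f(n+h)}}h_1,T^{\round{f(n)}}h_1\rangle\approx\langle T^{hb_j}h_1,h_1\rangle\approx\|h_1\|_{L^2}^2$
for every fixed $h$. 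Hence the right-hand side of Lemma~\ref{lem:vdc} does not vanish, and no induction carrying $(\star)$ through the fixed-$h$ van der Corput step can close.

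The conclusion itself is not refuted: for $\ell=1$, a direct spectral argument using Theorem~\ref{thm:W_ud} proves it. The paper's proof has the same weak point. It says that $\mathscr F'$ inheriting $(\star)$ is ``easy to check'', yet its own example $f=x\log x$, $W=x$ refutes this: $f(x+h)-f(x)=h\log x+h+o(1)$ has derivative $\sim h/x$, which is not $\succ(\log W)'=1/x$. A correct argument needs either an invariant closed under differencing, such as property (P) of \cite{BMR24} (which restricts $W$), or a route that avoids fixed-$h$ correlations.

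Your base case also fails. First, it must be organized by $\deg$, as the paper's case $d_{\max}(\mathscr F)=1$ is, not by $\deg^*$: the family $\{x^2,2x^2,3x^2\}$ has $\deg^*=0$ throughout but is not of linear type. More seriously, the suprema over $a$ and over $h_1,\dots,h_{\ell-1}$ do not let you discard the other factors and reduce to one average of $h_\ell$. Control by $\|h_\ell\|_2$ fails once $\ell\geq3$.

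For a counterexample, take $f_i=i\sqrt x$ ($i=1,2,3$) and $W(x)=x$, which satisfy all the hypotheses. Use the skew product $T(s,t)=(s+\alpha,t+s)$ on $\T^2$ with $\alpha$ irrational, and $h_1=h_3=e^{2\pi i t}$, $h_2=e^{-4\pi i t}$. Then $T^mh_1\,T^{2m}h_2\,T^{3m}h_3=e^{2\pi i\alpha m^2}$ is a constant. Set $a(n)=e^{-2\pi i\alpha\round{\sqrt n}^2}$ on the set where $\round{i\sqrt n}=i\round{\sqrt n}$ for $i=2,3$ (this set has density $1/3$), and $a(n)=0$ elsewhere. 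The norm then tends to $1/3$, although $\|h_3\|_2=0$. Even for $\ell=1$, $\|h_1\|_1=0$ is not enough, since $a$ can cancel a rational eigenvalue.

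The paper instead imports the base case from \cite[Theorem 4.3]{BMR24} (Theorem~\ref{thm:BMR24_sublinear}), with bound $C_\ell\|h_\ell\|_{\ell+1}$. This applies because (P) and $(\star)$ coincide for families of degree one. Apart from these two points, your induction step (van der Corput, composing with $T^{-\round{f_1(n)}}$, splitting off rounding errors into $a$, lowering the characteristic vector) matches the paper's.
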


A variant of the preceding theorem appears in \cite[Theorem 4.2]{BMR24} with the more restrictive assumptions that $W(x)\preceq x$ that $f_1,\dots, f_{\ell}$ satisfy property (P), defined as follows.

\begin{definition}[\cite{BMR24}]\label{def:property_P}
    Let $\mathcal{H}$ be a Hardy field and let $W,f_1,\dots, f_{\ell}\in \mathcal{H}$. Suppose that $1\prec \log W(x)\prec x$, and that $f_1,\dots, f_{\ell}$ are each subpolynomial. We say that $\{f_1,\dots, f_{\ell}\}$ satisfies property (P) if $W$ is compatible (Definition \ref{def:W_compatible}) with each $f\in \text{Span}\{f_i^{(m)}:1\leq i\leq \ell, m\geq 0\}$.
\end{definition}
\begin{remark}\label{remark:P_WP_star}
    If $f_1,\dots, f_{\ell}$ each have degree $1$ then property (P) and property (\ref{eq:W_compatible}) are equivalent.
\end{remark}

\begin{example} 
Recall from Definition \ref{def:W_compatible} that $\{f_1,\dots, f_{\ell}\}$ is said to satisfy property (\ref{eq:W_compatible}) if $W$ is compatible with each $f\in \text{Span}\{f_1,\dots, f_{\ell}\}$. Property (P) is more restrictive than property (\ref{eq:W_compatible}) but this allows for sets satisfying Property (P) to be invariant under differentiation, meaning that if $\{f_1,\dots, f_{\ell}\}$ satisfies property (P) then $\{f_1,\dots, f_{\ell},f_1',\dots, f_{\ell}'\}$ also satisfies property (P). The same is not true for property (\ref{eq:W_compatible}). For example, let $W(x)=x$ and let $f_1(x) = x\log (x)$ so that $f_1'(x) = \log(x)+1$. Then $\{f_1\}$ satisfies property (\ref{eq:W_compatible}) but $\{f_1'\}$ does not.
\end{example}

Theorem \ref{thm:generalized_BMR_4.2} is proven using a PET induction argument (see \cite{Bergelson_PET}). For the base case of the induction, we assume that each $f_i$ is sublinear. For a proof of this case, we direct the reader to \cite[Theorem 4.3]{BMR24}. More precisely, the formulation of \cite[Theorem 4.3]{BMR24} uses property (P), which is equivalent to property (\ref{eq:W_compatible}) when each $f_i$ grows sublinearly (see Remark \ref{remark:P_WP_star}).

\begin{theorem}[{\cite[Theorem 4.3]{BMR24}}]\label{thm:BMR24_sublinear}
Let $k \in \mathbb{N}$.
Let $\mathcal{H}$ be a Hardy field and $W \in \mathcal{H}$ with
$1 \prec \log W(x)\prec x$.
Assume $f_1,\ldots,f_{\ell} \in \mathcal{H}$ satisfy property (\ref{eq:W_compatible}),
$|f_1(x)| \preceq \cdots \preceq |f_{\ell}(x)| \preceq x$, and $\lim_{x\to\infty} |f_{\ell}(x)|
=
\lim_{x\to\infty} |f_{\ell}(x)-f_i(x)|
=
\infty
\quad \text{for every } i<\ell$
Then there exists a constant $C_{\ell}>0$, depending only on
$\ell$ and $f_1,\ldots,f_{\ell}$, such that for any invertible
measure preserving system $(X,\mathcal{B},\mu,T)$ and any $h_{\ell}\in L^\infty(X)$ we have
\begin{equation}
\sup_{h_1,\ldots,h_{\ell-1}\in L^\infty}
\ \sup_{a\in \ell^\infty}
\left\|
\E_{n\leq N}^Wa(n)
\prod_{i=1}^{\ell} T^{[f_i(n)]}h_i
\right\|_{L^2}
\le
C_{\ell} \|h_{\ell}\|_{\ell+1}
+
o_{N\to\infty}(1),
\end{equation}
where the suprema are taken over all functions
$h_1,\ldots,h_{\ell-1}\in L^\infty(X)$ with
$\|h_i\|_{L^\infty}\le 1$
and all $a\in \ell^\infty(\mathbb{N})$ with
$\|a\|_{\ell^\infty}\le 1$.
\end{theorem}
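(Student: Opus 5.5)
The plan is to induct on $\ell$, using van der Corput's inequality in the form of Lemma~\ref{lem:vdc} together with Remark~\ref{remark:vdc}. The point of the sublinear hypothesis $|f_i(x)|\preceq x$ is that shifting $n\mapsto n+h$ changes $f_i(n)$ only by $f_i(n+h)-f_i(n)=h f_i'(n)+o(1)$, and $f_i'(n)$ tends to a constant or to $0$ in a Hardy field. Sublinear iterates therefore stay sublinear (or become bounded) after differencing, so no PET complexity reduction is needed. The $\ell+1$ in the seminorm comes from performing $\ell$ rounds of differencing, which raises the degree by one at each step, plus a final Cauchy--Schwarz.

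\textbf{Base case $\ell=1$.} Apply Lemma~\ref{lem:vdc} with $u_{n,N}=a(n)T^{[f_1(n)]}h_1$. This reduces the problem to controlling
\[
\E_{h\leq H}\limsup_N\Big|\E^W_{n\leq N}\bar a(n)a(n+h)\int T^{[f_1(n+h)]-[f_1(n)]}h_1\cdot \bar h_1\,d\mu\Big|.
\]
First pass to the spectral measure $\sigma$ of $h_1$. The difference $[f_1(n+h)]-[f_1(n)]$ equals $[f_1(n)+hf_1'(n)+o(1)]-[f_1(n)]$. Next, bound $|\E^W \bar a(n)a(n+h)\cdots|$ by $\E^W$ of the absolute value. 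Since the weights $\Delta W(n)/W(N)$ put mass on long ranges of $n$, we may partition into blocks on which $f_1'$ is essentially constant and $f_1(n)\bmod 1$ equidistributes with respect to $W$-averages. The equidistribution comes from Theorem~\ref{thm:W_ud}, using property~(\ref{eq:W_compatible}) in the case $\deg^*(f_1)=1$. The remaining step is to average over $h$ and show that $\E_h|\hat\sigma(\text{roughly } h\beta)|$-type quantities are bounded by $\|h_1\|_2^2$-type expressions. This matches the proof in \cite[Theorem 4.3]{BMR24}, which we would follow almost verbatim. Under (\ref{eq:W_compatible}) it is Remark~\ref{remark:P_WP_star} that lets us invoke it directly.

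\textbf{Inductive step.} For general $\ell$, apply Lemma~\ref{lem:vdc} in the same way. In $\langle u_{n+h,N},u_{n,N}\rangle$, compose with $T^{-[f_1(n)]}$ to obtain a product of $2\ell-1$ terms of the form $T^{[f_i(n+h)]-[f_1(n)]}h_i$ and $T^{[f_i(n)]-[f_1(n)]}\bar h_i$. The rounding errors are absorbed into the bounded weight $a$ by splitting according to the finitely many possible carry values; this is standard. The new family must be checked for three things: it is again sublinear, it satisfies (\ref{eq:W_compatible}) (the relevant spans are contained in $\mathrm{Span}\{f_i\}$ up to $o(1)$ and linear-in-$h$ terms), and the dominant function still has unbounded differences from the others for all but finitely many $h$. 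After reordering so that the term carrying $h_\ell$ (or its shift) is last, apply the inductive hypothesis, replacing $h_\ell$ with $\bar h_\ell\cdot T^{h c}h_\ell$. Averaging over $h$ then gives $\E_h\|\bar h_\ell T^{ch}h_\ell\|_{\ell}\ll\|h_\ell\|_{\ell+1}^2$ by the definition of the seminorms and Hölder's inequality.

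\textbf{Main obstacle.} I expect the hardest step to be the degenerate situation where the $f_i$ have the same growth as $f_\ell$ up to a constant multiple. There $f_\ell(n+h)-f_1(n)$ may become bounded or even asymptotically polynomial. Handling it requires the case split on whether $\lim f_\ell/f_1$ is finite, and it is precisely here that (\ref{eq:W_compatible}) must be checked for the new span. My first approach would be to reduce modulo $o(1)$ errors to the span of the original functions, invoke Theorem~\ref{thm:W_ud} for the equidistribution input, and otherwise cite \cite[Theorem 4.3]{BMR24} for the bookkeeping.
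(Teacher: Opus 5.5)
The paper gives no argument for this statement beyond citing \cite[Theorem 4.3]{BMR24} and noting, via Remark \ref{remark:P_WP_star}, that the property (P) used there coincides with (\ref{eq:W_compatible}) when every $|f_i|\preceq x$. Your sentence invoking exactly this is the paper's whole proof. The one point that deserves a line of justification is the Remark itself: for $|f_i|\preceq x$, each $f_i^{(m)}$ with $m\geq 1$ converges and has integrable derivative, so adding it changes $f'$ by $o(r')$ for every unbounded $r$ in the span.

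The self-contained induction you offer as your actual plan, however, has a genuine gap, and it cannot be what \cite{BMR24} does ``almost verbatim''. Lemma \ref{lem:vdc} shifts $n$ by a bounded $h$, and such a shift cannot detect an iterate with $f'\to 0$. If $f_\ell\prec x$ (which the hypotheses allow, e.g.\ $f_\ell(x)=\sqrt x$), then $[f_\ell(n+h)]=[f_\ell(n)]$ outside a set of $W$-weighted density zero.
\begin{itemize}
\item Already for $\ell=1$, $f_1(x)=\sqrt x$, $W(x)=x$, $a\equiv 1$ (all hypotheses hold), the right-hand side of (\ref{eq:vdc_eq}) is just $\|h_1\|_{L^2}^2$. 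Your $\hat\sigma(h\beta)$ is $\hat\sigma(0)$, and no seminorm ever appears.
\item In the inductive step, the two copies $f_\ell(n+h)-f_1(n)$ and $f_\ell(n)-f_1(n)$ differ by $hf_\ell'(n)+o(1)\to 0$. So the merged function is $\bar h_\ell\, T^{0}h_\ell=|h_\ell|^2$, not $\bar h_\ell\, T^{ch}h_\ell$ with $c\neq 0$. But $\||h_\ell|^2\|_\ell$ is not controlled by $\|h_\ell\|_{\ell+1}$: a unimodular $h_\ell$ with $\|h_\ell\|_{\ell+1}=0$ has $|h_\ell|^2=1$.
\end{itemize}
This is the real obstacle, not the comparable-growth case you flag. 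It also cannot be avoided by restricting to asymptotically linear data, because your own differencing produces it. For example, $f_1=x$, $f_2=x+\sqrt x$ leads after one step to the single iterate $\sqrt x$.

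What is missing is a change of variables along the level sets of $f_\ell$:
\begin{itemize}
\item split $\N$ into the blocks $K_m=\{n: f_\ell(n)\in[\xi m,\xi(m+1))\}$;
\item view the $W$-average in $n$ as a weighted average in $m$ with weight $W\circ(f_\ell/\xi)^{-1}$ (Lemma \ref{lem:replace_richter_6.4});
\item run van der Corput in $m$, i.e.\ shift the \emph{value} of $f_\ell$ rather than $n$, as in Theorem \ref{thm:funny_vdc}.
\end{itemize}
This is exactly where (\ref{eq:W_compatible}) is needed. When $f_\ell'\to 0$ it gives $\log W\prec f_\ell$, hence $\log(W\circ f_\ell^{-1})(y)\prec y$, which is what Lemma \ref{lem:vdc} requires in the new variable (equivalently, $\Delta W$ is nearly constant across each block). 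Without this hypothesis the estimate is false. For $W(x)=e^{\sqrt x}$ and $f_1(x)=\sqrt x$, the last level set of $[\sqrt n]$ below $N$ carries a proportion of the total weight bounded below along a subsequence of $N$. Taking $a$ to be its indicator gives norm $\gg\|h_1\|_{L^2}$. In your sketch, (\ref{eq:W_compatible}) enters only through equidistribution of $f_1 \bmod 1$ and never makes the differencing non-degenerate, which is a symptom of the missing mechanism.

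A smaller point: subtracting $f_1$ can strip the $h_\ell$-term of dominance when several $f_i$ share the top growth rate. For $f_1=x$, $f_2=-x$, $f_3=x+\sqrt x$ you get iterates $-2x$ and $\sqrt x$, with $h_3$ on the slower one. So ``reorder so that $h_\ell$ is last'' is not always possible, and you must choose which iterate to subtract.
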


In order to perform PET induction, we define the equivalence relation $\psim$  given by $f\psim g$ if $\deg(f) = \deg (g) > \deg(f-g)$. Let $\mathcal{H}$ be a Hardy field and let $\mathscr{F} = \{f_1,\dots, f_{\ell}\}\subset \mathcal{H}$ be a finite collection of subpolynomial functions. Define $m_d(\mathscr{F})$ to be the number of equivalence classes in $\mathscr{F}$ which have degree $d$. As is typical in this context, we define $d_{\max}(\mathscr{F}) = \max\{d\in \N: m_{d}(\mathscr{F})\neq 0\}$ and we call $(m_1(\mathscr{F}),\dots, m_{d_{\max}(\mathscr{F})}({\mathscr{F}}))$ the \emph{characteristic vector} of $\mathscr{F}$.

For two such collections $\mathscr{F},\mathscr{F}'$, we say that 
$$
(m_1(\mathscr{F}'),\dots, m_{d_{\max}(\mathscr{F}')}({\mathscr{F}}'))<(m_1(\mathscr{F}),\dots, m_{d_{\max}(\mathscr{F})}({\mathscr{F}}))
$$
to mean that if $i$ is the largest natural number with $m_i(\mathscr{F}')\neq m_i(\mathscr{F})$ then $m_i(\mathscr{F}')< m_i(\mathscr{F})$. The following proof relies on the fact that for any finite collection of subpolynomial Hardy functions $\mathscr{F}$, there are finitely many characteristic vectors which are less than $(m_1(\mathscr{F}),\dots, m_{d_{\max}(\mathscr{F})}({\mathscr{F}}))$.

\begin{proof}[Proof of Theorem \ref{thm:generalized_BMR_4.2}]
Let $\mathscr{F} = \{f_1,\dots, f_{\ell}\}$. Without loss of generality we assume that $\lim_{x\to\infty}|f_i(x)|=\oo$ for each $i$, since if this is not true for some $i<\ell$ then we can factor $T^{\lim_{x\to\oo}[f_i(x)]}h_i+o_{N\to\oo}(1)$ out of the left-hand side of (\ref{eq:4.2_goal}).

We proceed by induction on the characteristic vector $(m_1(\mathscr{F}),\dots, m_{d_{\max}(\mathscr{F})}({\mathscr{F}}))$.

For the base case of the induction, assume that $d_{\max}(\mathscr{F})=1$ and note that this case follows from Theorem \ref{thm:BMR24_sublinear} since if $d_{\max}(\mathscr{F})=1$ then the hypothesis of Theorem \ref{thm:BMR24_sublinear} holds and we can take $s=\ell+1$.

Now for the induction step. Suppose that $d_{\max}(\mathscr{F})\geq 2$ and that the statement of Theorem \ref{thm:generalized_BMR_4.2} holds for all collections of functions $\mathscr{F}'$ with $$
(m_1(\mathscr{F}'),\dots, m_{d_{\max}(\mathscr{F}')}({\mathscr{F}}'))<(m_1(\mathscr{F}),\dots, m_{d_{\max}(\mathscr{F})}({\mathscr{F}})).
$$

We want to show that (\ref{eq:4.2_goal}) holds, so let $(X,\mathscr{B},\mu,T)$ be an invertible measure preserving system, fix a to be determined value of $s\in \N$, fix $a\in \ell^{\oo}(\N)$ with $\|a\|_{\oo}\leq 1$, and let $(h_{i,N})_{N\in \N}$ be sequences of functions belonging to $L^{\oo}(X)$ for $i=1,\dots, \ell$ with $\|h_{i,N}\|_{L^{\oo}}\leq 1$ for all $i,N$, and $\|h_{\ell,N}\|_s=0$ for all $N\in \N$. 

Let $u_N(n)  = a(n)\prod_{i=1}^{\ell}T^{\round{f_i(n)}}h_{i,N}\in L^{\oo}(X)\subset L^2(X)$ for $n,N\in \N$. In light of Lemma \ref{lem:vdc} and Remark \ref{remark:vdc}, it suffices to show that 
\begin{equation}\label{eq:vdc_goal}
   A(h):= \lim_{N\to\oo}\E_{n\leq N}^W\left(\int u_N(n+m)\cdot \overline{u_N(n)}~d\mu\right)=0
\end{equation}
for each $h\in \N$ (since $L^2(X)$ is a Hilbert space with $\langle u_N(n+m),u_N(n)\rangle = \int u_N(n+m)\cdot \overline{u_N(n)}~d\mu $). So fix $h\in \N$ and observe that
\begin{align*}
   &\int u_N(n+h)\cdot \overline{u_N(n)} ~d\mu\\
   =&\int a(n+h)\prod_{i=1}^{\ell}T^{\round{f_i(n+h)}}h_{i,N}\cdot \overline{a(n)}\prod_{i=1}^{\ell}T^{\round{f_i(n)}}\overline{h_{i,N}} ~d\mu\\
    =&a(n+h)\overline{a(n)}\int \prod_{i=1}^{\ell}T^{\round{f_i(n+h)}}h_{i,N}\cdot T^{\round{f_i(n)}}\overline{h_{i,N}} ~d\mu\\
    =&a(n+h)\overline{a(n)}\int \prod_{i=1}^{\ell}T^{\round{f_i(n+h)}-\round{f_1(n)}}h_{i,N}\cdot T^{\round{f_i(n)}-\round{f_1(n)}}\overline{h_{i,N}}~d\mu
\end{align*}
by the fact that $T$ is measure preserving. Recall that for any $x,y\in \R$, $\round{x}-\round{y}-\round{x-y} \in \{-1,0,1\}$. For $n\in \N$, let the vector $v(n)\in \{-1,0,1\}^{2\ell}$ be equal to
$$
((\round{f_i(n)}-\round{f_1(n)}-\round{f_i(n)-f_1(n)})_{i=1}^{\ell},(\round{f_i(n+h)}-\round{f_1(n)}-\round{f_i(n+h)-f_1(n)})_{i=1}^{\ell}).
$$
 Writing $S(\beta) = \{n\in \N: v(n) = \beta\}$ for $\beta\in \{-1,0,1\}^{2\ell} $, we have that $(S(\beta))_{\beta\in \{-1,0,1\}^{2\ell}}$ is a partition of $\N$ and so to prove (\ref{eq:vdc_goal}), it suffices to show 
\begin{equation}\label{eq:vdc_goal_2}
    \lim_{N\to\oo}\E_{n\leq N}^W\left(1_{S(\beta)}(n)\cdot \int u_N(n+m)\cdot \overline{u_N(n)}~d\mu\right)=0
\end{equation}
for each $\beta \in  \{-1,0,1\}^{2\ell} $. Fix $\beta= (\beta_1,\dots, \beta_{2\ell}) \in \{-1,0,1\}^{2\ell}$.

Now put $F_i(x)  = \round{f_i(x)-f_1(x)}$ and $F_{\ell+i}(x) = \round{f_i(x+h)-f_1(x)}$ for $i=1,\dots, \ell$, and put $g_{i,N} = T^{\beta_i}h_{i,N}$ and $g_{k+i,N} = T^{\beta_{k+i}}\overline{h_{i,N}}$ for $i=1,\dots, \ell$, so that
\begin{align*}
    \int u_N(n+h)\cdot \overline{u_N(n)} ~d\mu= &a(n+h)\overline{a(n)}\int \prod_{i=1}^{\ell}T^{\round{f_i(n+h)-f_1(n)}}g_{i,N}\cdot T^{\round{f_i(n)-f_1(n)}}{g_{k+i,N}}~d\mu\\
    =&a(n+h)\overline{a(n)}\int \prod_{i=1}^{2\ell}T^{\round{F_{i}(n)}}g_{i,N}~d\mu
\end{align*}
for $n\in S(\beta)$. Next, rewrite (\ref{eq:vdc_goal_2}) as 
\begin{equation}\label{eq:vdc_goal_3}
        \lim_{N\to\oo}\E_{n\leq N}^W\left(\tilde{a}(n) \int \prod_{i=1}^{2\ell}T^{\round{F_{i}(n)}}g_{i,N}~d\mu\right)=0
\end{equation}
where $\tilde{a}(n) = 1_{S(\beta)}(n)\cdot a(n+h)\overline{a(n)}$.

Let $G_1,\dots, G_{\ell'}$ be a relabeling of the unbounded elements of $\{F_1,\dots, F_{2k}\}$ such that $|G_1|\preceq \cdots \preceq |G_{\ell'}|$. As above, if $\lim_{x\to\oo}|F_i(x)|<\oo$ then we can factor terms of the form $T^{\lim_{x\to\oo}[F_i(x)]}g_{i,N}+o_{N\to\oo}(1)$ out of the left-hand side of (\ref{eq:4.2_goal}). So it suffices to show that 
\begin{equation}\label{eq:vdc_goal_4}
        \lim_{N\to\oo}\E_{n\leq N}^W\left(\tilde{a}(n) \int \prod_{i=1}^{\ell'}T^{\round{G_{i}(n)}}g_{i,N}~d\mu\right)=0
\end{equation}
holds.

Let $\mathscr{F}' = \{G_1,\dots, G_{\ell'}\}$. Next, we check that $\mathscr{F}'$ satisfies the induction hypothesis, which will complete the proof by showing that (\ref{eq:vdc_goal_4}) holds and that we can take the same value of $s$ for $\mathscr{F}$ as we can for $\mathscr{F}'$.

To show that $\lim_{x\to\infty}|G_{\ell'}(x)|=\lim_{x\to\infty}|G_{\ell'}(x)-G_j(x)| = \oo$ for all $j<\ell'$, recall that 
$$
\lim_{x\to\infty}|F_{2k}(x)-F_j(x)|  = \begin{cases} 
\lim_{x\to\infty}|f_{2k}(x+h)-f_j(x)| &\text{ if }  j\in \{1,\dots, k-1\}\\
\lim_{x\to\infty}|f_k(x+h)-f_j(x+h)| &\text{ if }  j\in \{k+1,\dots, 2k-1\}
\end{cases}
$$
which tends to $\oo$ by assumption in all cases. If $j=k$ then $\lim_{x\to\infty}|F_{2k}(x)-F_k(x)|  = \lim_{x\to\infty}|f_k(x+h)-f_k(x)|=\oo$ because $\deg(f_k) = \max\{d\in \N: m_d(\mathscr{F})\}\geq 2$.

Next, it is easy to check that $\mathscr{F}'$ satisfies property (\ref{eq:W_compatible}) because of the fact that $\mathscr{F}$ satisfies property (\ref{eq:W_compatible}).

Lastly, we need to show that
\begin{equation}\label{eq:char_vectors}
(m_1(\mathscr{F}'),\dots, m_{d_{\max}(\mathscr{F}')}({\mathscr{F}}'))<(m_1(\mathscr{F}),\dots, m_{d_{\max}(\mathscr{F})}({\mathscr{F}})).
\end{equation}

If $f_1\not\psim f_i$, then $F_{k+i}\psim F_i \psim f_i$. If $f_1\not\psim f_i$ and $f_1\not\psim f_j$ then $f_i\psim f_j$ holds if and only if $F_{i}\psim F_j$. So it follows that $m_d(\mathscr{F}) = m_{d}(\mathscr{F}')$ for $d>\deg(f_1)\geq 1$. But $m_{\deg(f_1)}(\mathscr{F})>m_{\deg(f_1)}(\mathscr{F}')$ because $\deg(f_i-f_1)<\deg(f_i)$ whenever $f_i\psim f_1$. This shows that (\ref{eq:char_vectors}) holds and so by the induction hypothesis,
we are done.
\end{proof}

Next, we will reduce Theorem \ref{thm:main} to the following statement about pointwise convergence in nilsystems.
\begin{theorem}\label{thm:true_for_nilsystem}
 Let $\mathcal{H}$ be a Hardy field. Let $f_1,\dots, f_{\ell}\in \mathcal{H}$ be  subpolynomial and let $W\in \mathcal{H}$ satisfy $1\prec \log W(x)\prec x$. Suppose that $\{f_1,\dots, f_{\ell}\}$ satisfies property (\ref{eq:W_compatible}). Let $(X,\mathscr{B}_X,\mu_X,T)$ be a nilsystem where $X= G/\Gamma$ for $G$ simply connected. 
     \begin{enumerate}[label = (\roman*)]
         \item For each $g_1,\dots, g_{\ell}\in C(X)$ and each $x\in X$,
         \begin{equation}\label{eq:new_eq_1}
             \lim_{N\to\oo}\E_{n\leq N}^W(T^{\round{f_1(n)}}g_1(x)\cdots T^{\round{f_{\ell}(n)}}g_{\ell}(x))
         \end{equation}
         exists.
        \item Suppose that $\Poly\{f_1,\dots, f_{\ell}\}\cap \Z[x]=\{0\}$. Then for each $g_1,\dots, g_{\ell}\in C(X)$ and each $x\in X$,
        \begin{equation}\label{eq:new_eq_2}
             \lim_{N\to\oo}\E_{n\leq N}^W(T^{\round{f_1(n)}}g_1(x)\cdots T^{\round{f_{\ell}(n)}}g_{\ell}(x)) = \prod_{i=1}^{\ell}\left(\lim_{N\to\oo}\E_{n\leq N}g_i(T^nx)\right).
         \end{equation}
         \item 
         Suppose that $\Poly\{f_1,\dots, f_{\ell}\}$ is jointly intersective.
         Then for any $A\in \mathscr{B}_X$ with $\mu_X(A)>0$,
        \begin{equation}\label{eq:new_eq_3}
             \lim_{N\to\oo}\E_{n\leq N}^W\mu_X(A\cap T^{-\round{f_1(n)}}A\cdots T^{-\round{f_{\ell}(n)}}A) >0.
         \end{equation}
     \end{enumerate}
\end{theorem}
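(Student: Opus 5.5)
The plan is to follow the strategy outlined at the start of this section: reduce to an equidistribution statement on nilmanifolds, then feed in the weighted equidistribution machinery of Section \ref{section:uniform_distribution}.

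\emph{Step 1: reduce to the case $\deg^*(f)=\deg(f)$.} Decompose each $f_i$ as $q_i+\tilde f_i$, where $q_i$ collects the polynomial part with rational coefficients that is responsible for $\deg^*$. Since $q_i(n)\bmod 1$ is periodic in $n$, I would pass to arithmetic progressions $n\equiv r \mod Q$. Along such a progression the integer parts of the $q_i(n)$ can be absorbed into the rotation by $T$, and the fractional parts become constants. This is exactly the summation-method fact announced as Lemma \ref{lem:variant_of_modulo_averages}: $W$-averages split into averages over residue classes, with the reweighted $W$ still satisfying $1\prec\log W\prec x$.

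\emph{Step 2: remove the rounding.} Write $T^{[f_i(n)]}g_i(x)$ as the value of a function on an extended nilmanifold $X\times\T$ (or a product with tori) evaluated at the point $(b^{f_i(n)}x,\ f_i(n)\bmod 1)$. Here $b$ is chosen so that $b$ generates the same rotation as $T$ in a connected cover; this is possible since $G$ is simply connected, after replacing $G$ by $G\times\R$ if needed. This is the role of Lemma \ref{lem:floor_removal} from \cite{BMR24}, and I would accept it as given. The function $F(y,t)$ that mimics rounding is discontinuous only where $t=1/2$, so it can be approximated by continuous functions. The error is controlled provided $(f_i(n)\bmod 1)$ does not concentrate near $1/2$ in $W$-average, and Theorem \ref{thm:W_ud} gives exactly that when $\deg^*>0$.

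\emph{Step 3: the main obstacle.} After Steps 1 and 2 we must show that the sequence
\[
v(n)=(b^{f_1(n)}x,\dots,b^{f_\ell(n)}x,\ f_1(n),\dots,f_\ell(n)\bmod 1)
\]
is uniformly distributed with respect to $W$-averages in an explicitly described subnilmanifold $Y$, the orbit closure of the polynomial part. This is the claim of Theorem \ref{thm:BMR_uniform_distribution_result}, to be derived from the generalized Richter theorem \ref{thm_generalized_richter_D}. I would prove it by Lemma \ref{lem:richter_equivalence}, inducting on the nilpotency step. For a central character $\varphi$ that is nontrivial on the relevant central subgroup:
\begin{itemize}
\item If the horizontal projection already equidistributes, reduce to $\varphi$ on a lower-step quotient.
\item Otherwise apply the weighted van der Corput Lemma \ref{lem:vdc}. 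This turns $\varphi(v(n+h))\overline{\varphi(v(n))}$ into a function on the relatively independent product $Y\times_L Y$ evaluated along differenced functions $f_i(n+h)-f_i(n)$, handled via Lemma \ref{lem:wd_in_diagonal}.
\end{itemize}
Property (\ref{eq:W_compatible}) is preserved under these PET-type differencing steps, in the same way as in the proof of Theorem \ref{thm:generalized_BMR_4.2}. The base case, on a torus, is Theorem \ref{thm:W_ud} applied to linear combinations $\sum t_i f_i$, which is exactly what compatibility of $W$ with $\text{Span}\{f_i\}$ provides. This is the delicate step, because the weights grow fast: I expect trouble with the countable exceptional set in Lemma \ref{lem:wd_in_diagonal}, and with the fact that differencing only preserves compatibility in the $\text{Span}$ sense rather than under derivatives.

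\emph{Step 4: conclude the three parts.} Given this equidistribution, part (i) follows because the limit equals $\int_Y F$. For part (ii), the condition $\Poly\cap\Z[x]=\{0\}$ forces $Y$ to be the full product of orbit closures, so the integral factors into the product of ergodic averages $\lim\E_{n\leq N} g_i(T^n x)$. For part (iii), I would approximate $1_A$ by continuous functions and compare with the polynomial case. Joint intersectivity of $\Poly$ lets $Y$ contain the diagonal point at rational translates, and the Bergelson–Leibman–Lesigne argument for Theorem \ref{thm:polynomial_sz} on nilsystems then yields positivity of $\int_Y 1_A\otimes\cdots\otimes 1_A$.
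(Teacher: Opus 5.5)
Your outer architecture matches the paper's: residue classes via Lemma \ref{lem:variant_of_modulo_averages}, rounding removed by Lemma \ref{lem:floor_removal}, and reduction to Theorems \ref{thm:BMR_uniform_distribution_result} and \ref{thm_generalized_richter_D}. The mechanism you propose for Step 3, however, fails. Your claim that property (\ref{eq:W_compatible}) survives the differencing $f_i(n+h)-f_i(n)$ is false. Differencing produces derivatives, $f(n+h)-f(n)=hf'(n)+\cdots$, and compatibility is not inherited by derivatives. Take $W(x)=x$ and $f(x)=x\log x$, the example following Definition \ref{def:property_P}. Here $\deg^*(f)=2$ and $|f''(x)|^{1/2}/(\log W)'(x)=x^{1/2}\to\oo$, so $\{f\}$ satisfies (\ref{eq:W_compatible}). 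But $f(n+h)-f(n)=h\log n+h+o(1)$, and $(\beta h\log n)_{n\in\N}$ is not u.d. mod $1$ for Ces\`aro averages when $\beta h\neq 0$. So the differenced sequence in $Y\times_L Y$ need not have any limiting distribution, and the induction hypothesis you want to apply to it fails. Citing the proof of Theorem \ref{thm:generalized_BMR_4.2} does not help: with $\ell=1$ and the same $f,W$, the family produced there is essentially $\{h\log x+h\}$, again not compatible with $W$.

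The paper's route to this statement does not propagate compatibility through differencing. Lemma \ref{lem:richter_A.6} rewrites the $f_i$, modulo polynomials, as a chain $g_1\prec\cdots\prec g_m$ that is closed under differentiation of its members of degree $\geq 2$, in which each $g_j$ is either $W$-compatible or the derivative of another $g_i$. Theorem \ref{thm:generalized_G} then replaces the compatibility hypothesis of \cite[Theorem G]{Richter23} by condition (4): $W$-averages of horizontal characters nontrivial on $\overline{z_1^{\R}\cdots z_m^{\R}\Gamma}$ vanish along the original sequence. This is checked in the Claim in the proof of Theorem \ref{thm_generalized_richter_D}, by applying Theorem \ref{thm:W_ud} to a single unbounded $f\in\text{Span}\{f_1,\dots,f_\ell\}$ along $qn+r$.

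Your discrete van der Corput step is also vacuous in the sublinear regime. If every $f_i$ satisfies $f_i(x)\prec x$, then $f_i(n+h)-f_i(n)\to 0$, so the correlations in Lemma \ref{lem:vdc} reduce asymptotically to $|\varphi(v(n)\Gamma)|^2$ and produce no cancellation. The paper (Theorem \ref{thm:sub_linear}) instead uses the continuous-shift van der Corput Theorem \ref{thm:funny_vdc}, comparing $\Psi(\dots,g_m(n)+\epsilon)$ with $\Psi(\dots,g_m(n))$. Adapting this to $W$-averages is exactly the level-set change of variables in Lemma \ref{lem:replace_richter_6.4}, which is where $\log W\prec g_m$ is used. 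Lemma \ref{lem:wd_in_diagonal} enters there through the real shifts $b^{\xi h}$, not through integer differences. Its countable exceptional set is harmless, because Theorem \ref{thm:funny_vdc} only needs one admissible $\xi\in(0,\delta)$ for each $\delta>0$.

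Step 4 asserts rather than proves the algebraic description of the limit. Nothing in your sketch shows that $\Poly\{f_1,\dots,f_\ell\}\cap\Z[x]=\{0\}$ forces the limit measure to be Haar measure on $Y_a^{\ell}$, or that joint intersectivity puts $1_G\Gamma^{\ell}$ in its support. The Bergelson--Leibman--Lesigne argument concerns polynomial iterates, not $W$-limits along Hardy functions. The paper does not derive these facts from the equidistribution argument at all. Theorem \ref{thm_generalized_richter_D}, Lemma \ref{lem:variant_of_modulo_averages} and Lemma \ref{lem:floor_removal} only give \emph{some} limit measure $\nu_a=\frac1q\sum_{r}\mu_{Y_r}$. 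The proof of Theorem \ref{thm:BMR_uniform_distribution_result} then proceeds in three steps:
\begin{enumerate}
\item Via \cite[Corollary A.5]{Richter23}, it takes a slower weight $\widetilde W$ with $\log\widetilde W\prec\log W$ for which the $f_i$ satisfy property (P).
\item It applies \cite[Theorem 5.7]{BMR24} to obtain a measure $\nu^*$ with exactly the required Haar and support properties for $\widetilde W$-averages.
\item It uses Theorem \ref{thm:mikey_thm} (convergence of $W$-averages implies convergence, to the same limit, of the slower $\widetilde W$-averages) to conclude $\nu^*=\nu_a$.
\end{enumerate}
This comparison of weights, which imports the algebraic information from the slow-weight theory, is the idea missing from your Step 4. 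Parts (ii) and (iii) then follow by moving the base point with $a_x=\gamma_x^{-1}a\gamma_x$, as in the proof of Theorem \ref{thm:true_for_nilsystems_and_also}.
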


The proof of Theorem \ref{thm:main} using Theorem \ref{thm:true_for_nilsystem} relies on the following lemma, which summarizes the facts shown in the proofs of \cite[Theorem B]{BMR24} and \cite[Theorem 5.3]{BMR24} in \cite[Section 5]{BMR24}. More specifically, the proof of \cite[Theorem 5.3]{BMR24} establishes the implications $(i)\implies (i)'$, $(ii)\implies (ii)'$, $(iii)\implies (iii)'$ and the proof of \cite[Theorem B]{BMR24} establishes the implications $(i)'\implies (i)''$, $(ii)'\implies (ii)''$, $(iii)'\implies (iii)''$.

\begin{lemma}[{\cite[Section 5]{BMR24}}]\label{lem:bmr_sec_5}
   Let $\mathcal{H}$ be a Hardy field, let $\{f_1,\dots, f_{\ell}\}\subset\mathcal{H}$ be an arbitrary collection of functions, and let $W\in \mathcal{H}$ such that $1\prec \log W(x)\prec x$. 
   Consider the following statements:
   \begin{enumerate}
    \item[$(i)$] For any nilsystem $(X,\mathscr{B}_X,\mu_X,T)$with $X =G/\Gamma$ for $G$ simply connected, for any $h_1,\dots, h_{\ell}\in C(X)$ and any $x\in X$,
\begin{equation}\label{eq:BMR_sec_5_lemma_1}
    \text{ the limit }\lim_{N\to\oo}\E_{n\leq N}^W\left(\prod_{i=1}^{\ell}T^{\round{f_i(n)}}h_i(x)\right)\text{ exists.}
\end{equation}
       \item[$(i)'$] (\ref{eq:BMR_sec_5_lemma_1}) holds in $L^2(X)$ for any ergodic nilsystem $(X,\mathscr{B}_X,\mu_X,T)$ and any $h_1,\dots, h_{\ell}\in L^{\oo}(X)$.
       \item[$(i)''$] (\ref{eq:BMR_sec_5_lemma_1}) holds in $L^2(X)$ for any invertible measure preserving system $(X,\mathscr{B},\mu,T)$ and any $h_1,\dots, h_{\ell}\in L^{\oo}(X)$.
       \item[$(ii)$] For each nilsystem $(X,\mathscr{B}_X,\mu_X,T)$ with $X =G/\Gamma$ for $G$ simply connected, for each $h_1,\dots, h_{\ell}\in C(X)$, and for each $x\in X$,
\begin{equation}\label{eq:BMR_sec_5_lemma_2}
   \lim_{N\to\oo}\E_{n\leq N}^W\left(\prod_{i=1}^{\ell}T^{\round{f_i(n)}}h_i(x)\right) = \prod_{i=1}^{\ell}h_i^*(x)
\end{equation}
 where $h_i^*$ is the projection in $L^2(X)$ of $h_i$ onto the subspace of $T$ invariant functions (in this case we have $h_i^*(x) = \lim_{N\to\oo}\E_{n\leq N}h_i(T^nx)$ since nilsystems are uniquely ergodic).
       \item[$(ii)'$] (\ref{eq:BMR_sec_5_lemma_2}) holds in $L^2(X)$ for any ergodic nilsystem $(X,\mathscr{B}_X,\mu_X,T)$ and any $h_1,\dots, h_{\ell}\in L^{\oo}(X)$.
       \item[$(ii)''$] (\ref{eq:BMR_sec_5_lemma_2}) holds in $L^2(X)$ for any invertible measure preserving system $(X,\mathscr{B},\mu,T)$ and any $h_1,\dots, h_{\ell}\in L^{\oo}(X)$.
       \item[$(iii)$] For any nilsystem $(X,\mathscr{B}_X,\mu_X,T)$ with $X =G/\Gamma$ for $G$ simply connected and any nonzero $h_1,\dots, h_{\ell}\in C(X)$ taking values in $[0,\oo)$,        \begin{equation}\label{eq:BMR_sec_5_lemma_3}
    \lim_{N\to\oo}\E_{n\leq N}^W\left(\prod_{i=1}^{\ell}T^{\round{f_i(n)}}h_i(x)\right)>0.
       \end{equation}
       \item[$(iii)'$] (\ref{eq:BMR_sec_5_lemma_3}) holds for any ergodic nilsystem $(X,\mathscr{B}_X,\mu_X,T)$ and any nonzero $h_1,\dots, h_{\ell}\in L^{\oo}(X)$ taking values in $[0,\oo)$.
       \item[$(iii)''$] (\ref{eq:BMR_sec_5_lemma_3}) holds for any invertible measure preserving system $(X,\mathscr{B},\mu,T)$ and any nonzero $h_1,\dots, h_{\ell}\in L^{\oo}(X)$ taking values in $[0,\oo)$.
   \end{enumerate}
Then the implications $(i)\implies (i)'$, $(ii)\implies (ii)'$, $(iii)\implies (iii)'$ hold. With the additional assumption that there exists an $s\in \N$ such that such that (\ref{eq:4.2_goal}) holds for any invertible measure preserving system $(X,\mathcal{B},\mu,T)$ and any $h_{\ell}\in L^\infty(X)$ with $\|h_{\ell}\|_s=0$, the implications $(i)'\implies (i)''$, $(ii)'\implies (ii)''$, $(iii)'\implies (iii)''$ also hold.

\end{lemma}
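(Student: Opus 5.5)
This lemma is a bookkeeping statement: it records that the arguments of \cite[Section 5]{BMR24} never use the specific growth condition on $W$ (namely $W(x)\preceq x$) or property (P). They use only general features of weighted averages, together with the seminorm control (\ref{eq:4.2_goal}) where it is needed. So the plan is to go through the two proofs from \cite{BMR24} and check, step by step, that each input is available under our hypotheses $1\prec\log W(x)\prec x$ with $W\in\mathcal{H}$.

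For the implications $(i)\implies(i)'$, $(ii)\implies(ii)'$ and $(iii)\implies(iii)'$, we follow the proof of \cite[Theorem 5.3]{BMR24}. An ergodic nilsystem can be represented, after passing to the identity component and a standard lifting, as $G/\Gamma$ with $G$ simply connected. By Lemma \ref{lem:wd_means_W_ud}, unique ergodicity gives $h_i^*(x)=\int h_i\,d\mu$ for every $x$.

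\begin{itemize}
\item The pointwise limits in $(i)$ and $(ii)$ hold for every $x$ and for continuous $h_i$. The weighted averages are bounded by $\prod\|h_i\|_\infty$, so dominated convergence upgrades pointwise convergence to $L^2$ convergence.
\item To pass from $C(X)$ to $L^\infty(X)$, approximate each $h_i$ in $L^2$ by continuous functions with the same $L^\infty$ bound. Then use the estimate
\[
\Bigl\|\E^W_{n\le N}\prod_i T^{[f_i(n)]}h_i-\E^W_{n\le N}\prod_i T^{[f_i(n)]}h_i'\Bigr\|_{L^2}\le\sum_i\|h_i-h_i'\|_{L^2}.
\]
This holds because each $T^{[f_i(n)]}$ is an isometry of $L^2$ and the weights $\Delta W(n)/W(N)$ are nonnegative with sum $1$ (for large $n$, since $W$ is eventually increasing).
\item For $(iii)'$, write $\E^W_{n\le N}\int\prod T^{[f_i(n)]}h_i\,d\mu$. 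Approximate the nonnegative $h_i$ from below by nonnegative continuous functions that are nonzero, integrate the pointwise limit in $(iii)$, and apply Fatou's lemma.
\end{itemize}

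None of these steps uses anything about $W$ beyond positivity and normalization of the weights.

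For the primed-to-double-primed implications, we follow the proof of \cite[Theorem B]{BMR24}. Fix the $s$ given by the assumed seminorm control (\ref{eq:4.2_goal}).

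\begin{itemize}
\item By the Host--Kra structure theorem, every $h_i$ splits as $\mathbb{E}(h_i\mid Z_{s-1})$ plus a term with vanishing $\|\cdot\|_s$ seminorm. Using (\ref{eq:4.2_goal}) with an ordering of the $f_i$ by growth, we replace the $h_i$ one at a time by their projections onto the pronilfactor $Z_{s-1}$.
\item The uniformity in $a\in\ell^\infty$ and in the $h_1,\dots,h_{\ell-1}$ in (\ref{eq:4.2_goal}) is exactly what allows functions of lower growth to be absorbed in this replacement. The hypothesis $\lim|f_\ell-f_i|=\infty$ is handled by grouping equal-growth functions as in \cite{BMR24}.
\item Inverse limit approximation then reduces to nilsystems. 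Ergodic decomposition reduces further to ergodic nilsystems, where $(i)'$, $(ii)'$ and $(iii)'$ apply.
\item For $(ii)''$, we use that $\mathbb{E}(h\mid\mathcal{I})$ is measurable with respect to the Kronecker factor and commutes with these projections. For $(iii)''$, positivity on almost every ergodic nilsystem component integrates to positivity, via the standard lower bound argument that applies $(iii)'$ to $1_A$ projected to the factor, which is nonnegative and nonzero.
\end{itemize}

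The main obstacle is the ergodic decomposition step in the proof of $(i)'\implies(i)''$. One must know that the weighted limit exists for almost every ergodic component, and that the limit function is measurable. Since the averages are bounded, we apply dominated convergence in the decomposition integral. Existence on each component comes from $(i)'$, and $(i)'$ was proved pointwise for all ergodic nilsystems; this pointwise form is what avoids any need for uniformity in $W$. I expect this to go through verbatim, with only the averaging scheme $\E^W$ changed, because no step of \cite[Section 5]{BMR24} invokes $W\preceq x$.
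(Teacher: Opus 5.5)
Your outline is the same as the paper's. The paper gives no argument of its own: it attributes the six implications to the proofs of \cite[Theorem 5.3]{BMR24} and \cite[Theorem B]{BMR24} and notes that $W\preceq x$ is not used there. However, the step you spell out for $(iii)\Rightarrow(iii)'$ does not work.

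\textbf{The gap in $(iii)\Rightarrow(iii)'$.} You bound a nonnegative nonzero $h\in L^{\oo}(X)$ from below by a nonzero continuous $g\geq 0$ and then apply Fatou. Such a $g$ need not exist. Take $h=1_K$ with $K$ a closed nowhere dense set of positive Haar measure (a fat Cantor set in $\T$, say). Then $\{g>0\}$ is open and Haar measure charges every nonempty open set, so $0\leq g\leq 1_K$ a.e.\ forces $g\equiv 0$.

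No cleverer approximation repairs this, because positivity of a limit functional on continuous test functions does not by itself control measurable sets. For instance, let $\mu$ be Lebesgue measure on $\T$, let $K$ be a fat Cantor set with $\mu(K)=1/3$, let $C\subset \T\setminus K$ be a dense open set with $\mu(C)=1/3$, and let $D=\T\setminus(C\cup K)$. Put $\lambda=\int_C\delta_{(x,x)}\,d\mu(x)+3\,\mu|_K\otimes\mu|_D+3\,\mu|_D\otimes\mu|_K$. Then both marginals of $\lambda$ equal $\mu$, and $\int h\otimes h\,d\lambda>0$ for every continuous $h\geq 0$ with $h\neq0$, yet $\lambda(K\times K)=0$.

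What the implication actually requires is structural information about the limit. One needs that the limit is integration against a measure on $X^{\ell+1}$ built fibrewise from Haar measures of sub-nilmanifolds passing through the diagonal point; this is the role of $1_{X^{\ell}}\in\operatorname{supp}\nu_a$ in Theorem \ref{thm:BMR_uniform_distribution_result}. One then needs an argument that such a measure gives positive mass to $A\times\cdots\times A$. The bare positivity statement $(iii)$ does not carry this information.

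\textbf{Two smaller points.}

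First, an ergodic nilsystem need not be connected (a rotation of $\Z/r\Z$, for instance). So it cannot be written as $G/\Gamma$ with $G$ simply connected. Passing to the identity component replaces $T$ by some $T^r$, which is useless here because $\round{f_i(n)}$ need not be divisible by $r$. The standard device is to embed $X$ as an invariant sub-nilmanifold of some $\tilde G/\tilde\Gamma$ with $\tilde G$ simply connected and $T$ extending to a translation, and to extend continuous functions by Tietze.

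Second, and more substantively, (\ref{eq:4.2_goal}) only controls the function attached to the fastest-growing $f_{\ell}$. So it does not let you replace $h_1,\dots,h_{\ell-1}$ ``one at a time''. Control of the other positions has to be transferred in three steps:
\begin{itemize}
\item compose with $T^{-\round{f_j(n)}}$;
\item absorb the rounding discrepancies into the weight $a$;
\item apply the seminorm estimate, together with a duality (weak-limit) argument, to derived families such as $\{-f_j\}\cup\{f_i-f_j: i\neq j\}$.
\end{itemize}
This needs (\ref{eq:4.2_goal}) for those derived families too. Theorem \ref{thm:generalized_BMR_4.2} supplies it, because their spans lie in $\text{Span}\{f_1,\dots,f_{\ell}\}$ and so property (\ref{eq:W_compatible}) is inherited. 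It is not contained in the single-family hypothesis as you use it.
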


Theorem \ref{thm:main} follows readily from Theorem \ref{thm:generalized_BMR_4.2}, Theorem \ref{thm:true_for_nilsystem}, and Lemma \ref{lem:bmr_sec_5}. Indeed, Theorem \ref{thm:true_for_nilsystem} shows that (\ref{eq:BMR_sec_5_lemma_1}), (\ref{eq:BMR_sec_5_lemma_2}), and (\ref{eq:BMR_sec_5_lemma_3}) hold when $h_1,\dots, h_{\ell}$ are continuous functions defined on $X= G/\Gamma$ for $G$ a simply connected Lie group. Then Theorem \ref{thm:generalized_BMR_4.2} and Lemma \ref{lem:bmr_sec_5} transform this result into the conclusion of Theorem \ref{thm:main}.

In order to prove Theorem \ref{thm:true_for_nilsystem}, we require the following lemma.

\begin{lemma}\label{lem:variant_of_modulo_averages}
    Let $\mathcal{H}$ be a maximal 
    Hardy field and let $W\in \mathcal{H}$ with $1\prec \log W(x)\prec x$. Let $(a(n))_{n\in \N}$ be a bounded sequence and let $R\in \N$. Then
    \begin{itemize}
        \item $\lim_{N\to\oo}\E_{n\leq N}^Wa(n)$ exists if and only if $\lim_{N\to\oo}\frac{1}{R}\sum_{r=0}^{R-1}\E_{n\leq N}^Wa(nR+r)$ exists. Moreover, these two limits are equal when they both exist.
        \item Suppose that $a(n)\geq 0$ for all $n\in \N$, $\lim_{N\to\oo}\E_{n\leq N}^Wa(n)$ exists, and for some $r_0\in \Z$ with $0\leq r_0\leq R-1$, the limit $\lim_{N\to\oo}\E_{n\leq N}^Wa(nR+r_0)$ exists and is positive. Then $\lim_{N\to\oo}\E_{n\leq N}^Wa(n)>0$.
    \end{itemize}
\end{lemma}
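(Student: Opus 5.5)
We would prove both bullet points by directly comparing the weighted average of $a$ with the weighted averages of the subsequences $a(nR+r)$, exploiting that $\Delta W(n)/W(n)\to 0$ (which holds since $\log W(x)\prec x$ and $W$ is a Hardy function) and that $W$ varies slowly on scales of length $R$.

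\textbf{Step 1 (the key asymptotic identity).} First observe that for each fixed $r$, the function $x\mapsto W(Rx+r)$ is itself in $\mathcal{H}$ (maximality and composition with $Rx+r$). The main claim to establish is
\begin{equation*}
\E^W_{n\leq RN}a(n)=\frac{1}{R}\sum_{r=0}^{R-1}\E^{W}_{n\leq N}a(nR+r)+o_{N\to\oo}(1).
\end{equation*}
To prove it, we compare both sides with $\frac{1}{W(RN)}\sum_{r}\sum_{n\leq N}\Delta W(nR+r)\,a(nR+r)$. On the one hand, this is exactly $\E^W_{n\leq RN}a(n)$ up to boundary terms of size $O(\Delta W(RN)/W(RN))=o(1)$. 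On the other hand, we need $\Delta W(nR+r)\approx \frac{1}{R}\bigl(W(R(n+1))-W(Rn)\bigr)\approx \frac1R(\Delta W(n)\cdot\text{scaling})$. Here the cleaner route is to use the weight $W_R(n):=W(Rn)$. Then $\E^{W_R}_{n\leq N}a(nR+r)=\frac{1}{W(RN)}\sum_{n\leq N}(W(Rn)-W(Rn-R))a(nR+r)$, and $W(Rn)-W(Rn-R)=R\,\Delta W(nR+r)(1+o(1))$ uniformly in $r$. This follows because $\Delta W$ is eventually monotone with $\Delta W(n+R)/\Delta W(n)\to1$, by a mean value argument for $W'$, as in Lemma \ref{lem:shift_inequality}. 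Since weights whose ratio tends to $1$ give asymptotically equal averages when the total weight diverges, this yields the identity with $W_R$ in place of $W$ on the right. It remains to replace $\E^{W_R}$ by $\E^{W}$. Since $\log W_R(x)/\log W(x)\to1$ and $\frac{W(N)\Delta W_R(N)}{W_R(N)\Delta W(N)}$ has a finite positive limit, we invoke \cite[Lemma 7.1]{uniform_distribution} (as used in Corollary \ref{cor:negative_result_d>2}). That lemma shows the two averaging methods converge simultaneously; we expect its proof, or a direct Abel summation argument, to give equality of the limits as well. This comparison is the step I expect to be the main technical obstacle, particularly when $\log W(x)\succ\log x$. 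In that regime, $W(Rn)/W(n)\to\oo$, so the weights are genuinely different, and one must use that $\Delta W_R(n)/W_R(n)\sim R\,\Delta W(n)/W(n)\cdot(1+o(1))$ in a suitable sense.

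\textbf{Step 2 (first bullet).} Since $W(RN+j)/W(RN)\to1$ for $0\le j<R$, the limit of $\E^W_{n\leq N}a(n)$ exists if and only if it exists along the multiples $RN$. Combined with Step 1, this gives the equivalence and the equality of the limits.

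\textbf{Step 3 (second bullet).} Suppose $a\ge0$. Let $L$ be the limit of $\E^W_{n\leq N}a(n)$. By Step 1,
\begin{equation*}
L=\lim_N\frac1R\sum_r\E^{W}_{n\le N}a(nR+r)\geq \frac1R\liminf_N\E^W_{n\le N}a(nR+r_0)>0.
\end{equation*}
Here we drop the nonnegative terms with $r\ne r_0$, using the $\liminf$ superadditivity.
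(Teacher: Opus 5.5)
Your overall architecture is the same as the paper's, and your Steps 2 and 3 are fine. The identity $\E^W_{n\le RN}a(n)=\frac1R\sum_{r}\E^{W_R}_{n\le N}a(nR+r)+o(1)$ with $W_R(x)=W(Rx)$ is correct; the paper proves the mirror version with $V(x)=W(x/R)$.

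The gap is the step you flag yourself: passing from $\E^{W_R}$ to $\E^{W}$ for the bounded sequence $b(n)=\frac1R\sum_{r}a(nR+r)$. Essentially all of the lemma's content lies here, and you do not prove it. As quoted in the paper, \cite[Lemma 7.1]{uniform_distribution} gives only simultaneous convergence. Equality of the limits, which both bullets need, is left as an expectation.

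The asymptotics you offer in support are also wrong in general. For $W(x)=e^{\sqrt x}$ one has $\log W(Rx)/\log W(x)=\sqrt R$ and $\frac{\Delta W_R(n)/W_R(n)}{\Delta W(n)/W(n)}\to\sqrt R$, which is neither $1$ nor $R$. The relevant constant is $\lambda=\lim_{x\to\oo}\log W(Rx)/\log W(x)$, which can be anything in $[1,R]$.

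To close the gap, carry out the Abel summation you mention. Summation by parts gives $\E^W_{n\le N}b=\sum_{n\le N}c_{N,n}\,\E^{W_R}_{k\le n}b+o(1)$, where $c_{N,N}=\frac{W_R(N)\Delta W(N)}{W(N)\Delta W_R(N)}$ and $c_{N,n}=\frac{W_R(n)}{W(N)}\bigl(\frac{\Delta W(n)}{\Delta W_R(n)}-\frac{\Delta W(n+1)}{\Delta W_R(n+1)}\bigr)$ for $n<N$. An analogous representation holds with $W$ and $W_R$ swapped. Then verify the conditions of Lemma \ref{lem:silverman_toeplitz} for both matrices:
\begin{itemize}
\item Columns tend to $0$ because $W(N)\to\oo$.
\item Row sums tend to $1$ (test on $b\equiv 1$).
\item By L'H\^opital, $c_{N,N}\to 1/\lambda$. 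The limit $\lambda$ exists because maximality puts $W(Rx)$ and $W(x-1)$ in $\mathcal H$, and $\lambda\le R$ because $\log W(x)/x$ is eventually decreasing.
\item The map $n\mapsto \Delta W(n)/\Delta W_R(n)$ is eventually monotone, since it is again a Hardy function. So the off-diagonal entries eventually have one sign, and $\sum_n|c_{N,n}|\le c_{N,N}+|1-c_{N,N}|+o(1)$ is bounded.
\end{itemize}
Regularity of both matrices then yields existence and equality of the limits in both directions. This is exactly the paper's argument.
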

\begin{proof}
    First note that 
\begin{align*}
&\frac{1}{R}\sum_{r=0}^{R-1}\E_{n\leq N}^Wa(nR+r) = \frac{1}{R}\sum_{r=0}^{R-1}\frac{1}{W(N)}\sum_{n=1}^N\Delta W(n)a(nR+r)\\ = &\frac{1}{W(NR/R)}\sum_{n=1}^{NR}\frac{\Delta W(\lfloor n/R\rfloor)}{R}a(n)+o_{N\to\oo}(1) = \E_{n\leq NR}^Va(n)+o_{N\to\oo}(1)
\end{align*}
for $V(N) = W(N/R)$. Then
\begin{equation}\label{eq:silverman_toeplitz_application_1}
\E_{n\leq N}^Wa(n) = \sum_{n=1}^Nc_{N,n}\E_{k\leq n}^Va(k)+o_{N\to\oo}(1)
\end{equation}
where $c_{N,N} =\frac{V(N)\Delta W(N)}{W(N)\Delta V(N)}$, $c_{N,n} = \left(\frac{\Delta W(n)}{\Delta V(n)}-\frac{\Delta W(n+1)}{\Delta V(n+1)}\right)\frac{\Delta V(n)}{W(N)}$ for $n<N$, and $c_{N,n} = 0$ for $n>N$. Observe that 
\begin{align}
    \lim_{N\to\oo}c_{N,N} =& \lim_{N\to\oo}\frac{V(N)\Delta W(N)}{W(N)\Delta V(N)} = \lim_{N\to\oo}\frac{V(N)\cdot  W'(N)}{W(N) \cdot V'(N)}\\
    =&\lim_{N\to\oo}\frac{(\log W)'(N)}{(\log V)'(N)} =  \lim_{N\to\oo}\frac{\log (W(N))}{\log (V(N))} \label{eq:limit_in_avaerging_lemma}
\end{align}
by L'H\^opital's rule. We know that the limit in (\ref{eq:limit_in_avaerging_lemma}) exists since $\log W$ and $\log V$ belong to the same Hardy field, and the limit must be larger than $1$ because $W$ is eventually increasing and so $\log W$ is eventually larger than $\log V$. Additionally, this limit is finite since $\log (W(x))\prec x$ and so $\log (W(x/R))\geq \log (W(x))/R$.

It is clear from (\ref{eq:silverman_toeplitz_application_1}) that $\sum_{n=1}^Nc_{N,n} = 1+o_{N\to\oo}(1)$ by taking the sequence $a(k) = 1$ for all $k$. Additionally, for $n<N$, $c_{N,n}$ is built out of functions which belong to the same Hardy field and so it must be eventually positive or eventually negative. Given that $c_{N,N}$ is larger than $1$, we conclude that $c_{N,n}< 0$ for all $N$ and all sufficiently large $n<N$. Hence 
\begin{equation}
\limsup_{N\to\oo}\sum_{n=1}^N|c_{N,n}| \leq \limsup_{N\to\oo}\left(c_{N,N}-\sum_{n=1}^{N-1}c_{N,n}\right) \leq \lim_{N\to\oo}c_{N,N}+\left(\lim_{N\to\oo}c_{N,N}-1\right)<\oo.
\end{equation}
We have shown the conditions of Lemma \ref{lem:silverman_toeplitz} hold and so $(c_{N,n})_{N,n\in \N}$ defines a regular method of summation. This shows that if the limit $\lim_{N\to\oo}\E_{n\leq N}^Va(n)$ exists then $\lim_{N\to\oo}\E_{n\leq N}^Wa(n)$ exists and they are equal. The other direction follows from a similar argument, since
\[
\E_{n\leq N}^Va(n) = \sum_{n=1}^Nd_{N,n}\E_{k\leq n}^Wa(k)+o_{N\to\oo}(1)
\]
where $d_{N,N} =\frac{W(N)\Delta V(N)}{V(N)\Delta W(N)}$, $d_{N,n} = \left(\frac{\Delta V(n)}{\Delta W(n)}-\frac{\Delta V(n+1)}{\Delta W(n+1)}\right)\frac{\Delta W(n)}{V(N)}$, and $d_{N,n}=0$ for $n>N$. By similar reasoning as above, $\lim_{N\to\oo}d_{N,N}$ exists and is less than $1$, and $\sum_{n=1}^Nd_{N,n}= 1+o_{N\to\oo}(1)$. It follows that $d_{N,n}$ is positive for all $n\leq N$ and all large enough $N$ and so $\limsup_{N\to\oo}\sum_{n=1}^N|d_{n,N}|=1<\oo$. Again by Lemma \ref{lem:silverman_toeplitz}, $(d_{N,n})_{N,n\in \N}$ defines a regular method of summation and this proves the first item in the statement of the lemma.

The second item follows from the first. Indeed,
\[
\lim_{N\to\oo}\E_{n\leq N}^Wa(n)=\lim_{N\to\oo}\frac{1}{R}\sum_{r=0}^{R-1}\E_{n\leq N}^Wa(nR+r)\geq \frac{1}{R}\lim_{N\to\oo}\E_{n\leq N}^Wa(nR+r_0)>0.
\]
\end{proof}
Lemma \ref{lem:variant_of_modulo_averages} allows us to simplify the proof of Theorem \ref{thm:true_for_nilsystem} by adding an additional assumption.
\begin{theorem}\label{thm:true_for_nilsystems_and_also}
    Theorem \ref{thm:true_for_nilsystem} is true when each function $f_i$ satisfies the additional assumption that
\begin{equation}\label{eq:additional_assumption}
    \text{if }|f_i(t)-p(t)|\to 0\text{ for some }p(t)\in \R[t] \text{ with }p(t)-p(0)\in \Q[t], \text{ then }p(t)-p(0)\in \Z[t].
\end{equation}
\end{theorem}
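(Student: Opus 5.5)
The plan is to deduce all three parts from a uniform distribution statement on an auxiliary nilmanifold, following the strategy of \cite[Section 5]{BMR24}; the additional assumption (\ref{eq:additional_assumption}) is what lets the rounding be handled cleanly. First, for each $i$ I decompose $f_i=p_i+r_i$. Here $p_i\in\R[t]$ collects the polynomial part of $f_i$ up to $o(1)$, and $r_i$ is a Hardy function with either $r_i\to 0$ or $|r_i-q|\to\oo$ for every $q\in\R[t]$. By (\ref{eq:additional_assumption}), whenever $p_i$ has rational nonconstant coefficients they are integers, so $p_i(n)-p_i(0)\in\Z$. Irrational coefficients of $p_i$ are kept inside the "non-integer" part of the orbit. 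Then $\round{f_i(n)}=p_i(n)-p_i(0)+\round{p_i(0)+r_i(n)}$ in the integer-coefficient case. This separates a genuine polynomial integer shift from a residual Hardy part.

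Next I apply Lemma \ref{lem:floor_removal} from \cite{BMR24} to remove the rounding. For continuous $g_i$ on $X=G/\Gamma$, the expression $T^{\round{f_i(n)}}g_i(x)$ equals $\tilde g_i(b^{f_i(n)}\cdot(\text{torus coordinate }\{f_i(n)\}))$ evaluated on the suspension nilmanifold $\tilde X=(G\times\R)/(\Gamma\times\Z)$. The function $\tilde g_i$ is continuous off a set of small measure, and it is made Riemann integrable by an approximation argument near the discontinuity $\{f_i(n)\}=1/2$. The averages in (\ref{eq:new_eq_1}) thus become $W$-averages of $F(v(n)\tilde\Gamma)$ for a sequence $v(n)=(a^{\round{f_1(n)}},\dots)$ lifted to $\tilde v(n)=(b_1^{f_1(n)},\dots,b_\ell^{f_\ell(n)})$ in a product nilmanifold. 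Here the $b_i$ are one-parameter subgroups, which exist because $G$ is simply connected. So part (i) reduces to showing that $(\tilde v(n)x)$ is uniformly distributed with respect to $W$-averages in some subnilmanifold $Y_x$ (Theorem \ref{thm:BMR_uniform_distribution_result}). Part (ii) additionally requires identifying $Y_x$ as $\prod_i\overline{\{T^nx\}}\times\T^\ell$ when $\Poly\cap\Z[x]=\{0\}$. Part (iii) requires showing that the limit measure of $A\times\cdots\times A$ charges a neighbourhood of the diagonal-type point $(x,\dots,x)$ with fractional coordinates near $0$.

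The uniform distribution statement I would prove via Lemma \ref{lem:richter_equivalence}, reducing to showing $\E^W_{n\le N}\varphi(\tilde v(n)x)\to 0$ for central characters that are nontrivial on the appropriate central subgroup. This is done by induction on nilpotency step using Lemma \ref{lem:vdc}. Differencing lowers the step. At the abelian bottom the sums become $\E^W e^{2\pi i\sum_i c_i f_i(n)}$, which vanish by Theorem \ref{thm:W_ud}, because property (\ref{eq:W_compatible}) gives compatibility of $W$ with every $\sum c_if_i$ of positive $\deg^*$. For the joint intersectivity in (iii), the integer polynomial parts are handled by passing to progressions $nR+r_0$ on which all $p_i$ vanish modulo $R$, as in \cite{BLL08}. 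This progression step is legitimate by Lemma \ref{lem:variant_of_modulo_averages}.

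I expect the main obstacle to be the differencing step inside the induction. Property (\ref{eq:W_compatible}), unlike property (P), is not stable under differentiation, so the differenced families $f_i(n+h)-f_i(n)$ need not remain compatible with $W$. I would address this by differencing only in the central (highest step) direction, where the phases appearing are exactly spans of the original $f_i$. I would also use Lemma \ref{lem:wd_in_diagonal} to control the orbit of $(b^{\xi n},1)X^\triangle$ for generic $\xi$, as in \cite{Richter23}, rather than differencing the Hardy functions themselves. This is the content I would isolate as the generalized Richter theorem (Theorem \ref{thm_generalized_richter_D}).
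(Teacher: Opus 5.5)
Your architecture matches the paper's. Everything is reduced to $W$-equidistribution of the rounded orbit $n\mapsto(a^{\round{f_1(n)}},\dots,a^{\round{f_\ell(n)}})\Gamma^\ell$ in some $(X^\ell,\nu_a)$ (Theorem \ref{thm:BMR_uniform_distribution_result}). The paper then replaces $a$ by $a_x=\gamma_x^{-1}a\gamma_x$ and integrates $G_x(x_1,\dots,x_\ell)=\prod_i g_i(\gamma_x x_i)$ against $\nu_{a_x}$.

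That settles (i), but (ii) and (iii) rest entirely on two structural facts about the limit measure. First, $\nu_a$ is the Haar measure of $Y_a^\ell$ when $\Poly\{f_1,\dots,f_\ell\}\cap\Z[x]=\{0\}$, so the integral factors as $\prod_i\lim_{N\to\oo}\E_{n\leq N}g_i(a^nx)$. Second, $1_G\Gamma^\ell\in\operatorname{supp}\nu_a$ when $\Poly\{f_1,\dots,f_\ell\}$ is jointly intersective. You only list these as things that are required. For (ii) you give no argument, and the one you sketch for (iii) does not work.

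Your sketch for (iii) is keyed to polynomial parts of the individual $f_i$. Both hypotheses, however, concern polynomials approximating elements of $\text{Span}\{f_1,\dots,f_\ell\}$, and (\ref{eq:additional_assumption}) says nothing about $f_i$ that are not close to a polynomial. Take $f_1(x)=x^{3/2}+x^2/2$ and $f_2(x)=x^{3/2}$, which satisfy (\ref{eq:additional_assumption}) vacuously and (\ref{eq:W_compatible}) with $W(x)=x$. If you read $p_1=x^2/2$, your claim that rational coefficients of $p_i$ are integers fails. If you read $p_1=0$, the polynomial $f_1-f_2=x^2/2$ that joint intersectivity is about never appears. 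Moreover, even for the right polynomials, $R\mid p(n)$ does not put $a^{p(n)}\Gamma$ near $\Gamma$ in a nilrotation (think of an irrational circle rotation). One needs the component structure of $\overline{a^{R\Z}\Gamma}$ combined with the non-polynomial parts, which is exactly the content of \cite[Theorem 5.7]{BMR24}.

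The missing idea is how the paper gets these properties without redoing that analysis for $W$. By \cite[Corollary A.5]{Richter23} there is a slower weight $\widetilde W$, with $\log\widetilde W\prec\log W$, for which $f_1,\dots,f_\ell$ have property (P). Then \cite[Theorem 5.7]{BMR24} gives a limit measure $\nu^*$ for $\E^{\widetilde W}$-averages with the Haar and support properties. By Theorem \ref{thm:mikey_thm}, $W$-equidistribution with respect to $\nu_a$ implies $\widetilde W$-equidistribution with respect to $\nu_a$, so $\nu^*=\nu_a$. If you invoke Theorem \ref{thm:BMR_uniform_distribution_result} in full, including this clause, your reduction closes exactly as in the paper.

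Separately, your remedy for the failure of (\ref{eq:W_compatible}) under differencing does not cover all cases. Shifting along $b^{\epsilon}$ and controlling $(b^{\xi n},1_G)X^{\triangle}$ via Lemma \ref{lem:wd_in_diagonal} is the mechanism of Theorem \ref{thm:funny_vdc}, which needs the fastest function to be sublinear. For superlinear or polynomial components the paper still differences $n\mapsto n+h$. What rescues it is condition (4) of Theorem \ref{thm:generalized_G}, which involves only horizontal characters of the original orbit and is verified by Theorem \ref{thm:W_ud} on $\text{Span}\{f_1,\dots,f_\ell\}$.
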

\begin{proof}[Proof of Theorem \ref{thm:true_for_nilsystem} given Theorem \ref{thm:true_for_nilsystems_and_also}]
    Let $I = \{i\in \{1,\dots, \ell\}: f_i$ does not satisfy (\ref{eq:additional_assumption})$\}$.
    For $i\in I$, pick $p_i(t)\in \R[t]$ such that $p_i(t)-p_i(0)\in \Q[t]$ and $f_i(t) = p_i(t)+o_{t\to\oo}(1)$. Pick $R\in \N$ such that $R\cdot (p_i(t)-p_i(0))\in \Z[t]$. 

    Let $r\in \N$ be arbitrary. For $i=1,\dots, \ell$, define $f_{i,r}(x) = f_i(Rx+r)$ and note that $\{f_{1,r},\dots, f_{\ell,r}\}$ satisfies (\ref{eq:additional_assumption}) by our choice of $R$. Indeed, if $i\not\in I$ then $f_{i,r}$ satisfies (\ref{eq:additional_assumption}) because $f_i$ satisfies (\ref{eq:additional_assumption}), and if $i\in I$ then $f_{i,r}(t) = p_{i}(Rt+r)$ and $p_{i}(Rt+r)-p_i(r)\in \Z[t]$. Additionally, $\{f_{1,r},\dots, f_{\ell,r}\}$ satisfies property (\ref{eq:W_compatible}) since $\{f_{1},\dots, f_{\ell}\}$ satisfies property (\ref{eq:W_compatible}).

Let $(X,\mathscr{B},\mu,T)$ be a nilsystem with $X=G/\Gamma$ for $G$ simply connected. By Theorem \ref{thm:true_for_nilsystems_and_also}, the limit 
    \begin{equation}\label{eq:reduction_eq_1}
\lim_{N\to\oo}\E_{n\leq N}^W(T^{\round{f_{1,r}(n)}}h_1\cdots T^{\round{f_{\ell,r}(n)}}h_{\ell})
    \end{equation}
    exists in $L^2(X)$ for all $h_1,\dots, h_{\ell}\in L^{\oo}(X)$ and for all $r\in \N$. Hence the limit 
        \begin{equation}\label{eq:reduction_eq_2}
\lim_{N\to\oo}\frac{1}{R}\sum_{r=0}^{R-1}\E_{n\leq N}^W(T^{\round{f_{1,r}(n)}}h_1\cdots T^{\round{f_{\ell,r}(n)}}h_{\ell})
    \end{equation}
    exists and equals (\ref{eq:reduction_eq_1}) for all $h_1,\dots, h_{\ell}\in L^{\oo}(X)$. By the first item in Lemma \ref{lem:variant_of_modulo_averages}, the limit 
\begin{equation}\label{eq:reduction_eq_3}
\lim_{N\to\oo}\E_{n\leq N}^W(T^{\round{f_{1}(n)}}h_1\cdots T^{\round{f_{\ell}(n)}}h_{\ell})
\end{equation}
exists and equals (\ref{eq:reduction_eq_2}) for all $h_1,\dots, h_{\ell}\in L^{\oo}(X)$. This shows that items (i) and (ii) of Theorem \ref{thm:true_for_nilsystem} hold in this case.

Lastly, suppose that $\Poly\{f_1,\dots, f_{\ell}\}$ is jointly intersective. Let $q_1(t),\dots, q_{m}(t)\in \Z[t]$ be such that $\Poly\{f_1,\dots, f_{\ell}\}\subset \text{Span}\{q_1,\dots, q_{m}\}$ and for each $r\in \N$ there exists $n\in \N$ with $q_i(n)$ divisible by $r$ for all $i$. Now pick $r_0\in \{0,\dots, R-1\}$ such that the functions $Q_{i}(t) = q_i(Rt+r_0)$ are jointly intersective for $i=1,\dots, m$. Such an $r_0$ exists because for any $M\in \N$ there is an $n\in \N$ such that $q_i(n)$ is divisible by $MR$ for all $i$, from which it follows that $q_i(n)$ is divisible by both $M$ and $R$ for all $i$. We also know that $q_i(n_1)\equiv q_i(n_2)\mod R$ if $n_1\equiv n_2\mod R$ and so it follows that there exists $r_0$ such that $q_i(Rk+r_0)\equiv 0\mod R$ for all $i$ and all $k\in \N$, so that for each $M\in \N$ there is a $k\in \N$ with $q_i(Rk+r_0)\equiv 0\mod M$.

This shows that $\Poly\{f_{1,r_0},\dots, f_{\ell,r_0}\}$ is jointly intersective and so by Theorem \ref{thm:main} we have
\begin{equation*}             \lim_{N\to\oo}\E_{n\leq N}^W\mu(A\cap T^{-\round{f_{1,r_0}(n)}}A\cdots T^{-\round{f_{\ell,r_0}(n)}}A) >0.
\end{equation*}
for any $A\in \mathscr{B}$ with $\mu(A)>0$. By the second item in Lemma \ref{lem:variant_of_modulo_averages}, it follows that
\begin{equation*}             \lim_{N\to\oo}\E_{n\leq N}^W\mu(A\cap T^{-\round{f_{1}(n)}}A\cdots T^{-\round{f_{\ell}(n)}}A) >0.
\end{equation*}
and so we are done.
\end{proof}

Next, we further reduce Theorem \ref{thm:true_for_nilsystems_and_also} a statement about uniform distribution.

\begin{theorem}[cf. {\cite[Theorem 5.7]{BMR24}}]\label{thm:BMR_uniform_distribution_result}
Let $G$ be a simply connected nilpotent Lie group, let
$\Gamma \subset G$ be a uniform and discrete subgroup, let
$X = G/\Gamma$, and let $a \in G$. Let $\mathcal{H}$ be a Hardy field and let $f_1,\ldots,f_{\ell} \in \mathcal{H}$ be subpolynomial functions which satisfy (\ref{eq:additional_assumption}). Let $W \in \mathcal{H}$ with $1 \prec \log W(x) \prec x$. Suppose that property (\ref{eq:W_compatible}) holds. Then there exists a Borel probability measure $\nu_a$ on
$X^{\ell}$ such that the sequence
\begin{equation}\label{eq:BMR_5.2}
n \mapsto \bigl(a^{ \round{f_1(n)}},
a^{ \round{f_2(n)}},\ldots,
a^{ \round{f_{\ell}(n)}}\bigr)\Gamma^{\ell}
\end{equation}
is uniformly distributed with respect to $W$-averages in $(X^{\ell},\nu)$.

Moreover, if $\Poly\{f_1,\dots, f_{\ell}\}$ is jointly intersective then the point
$1_{X^{\ell}} = 1_G\Gamma^{\ell}$ belongs to the support of $\nu_a$, and if $\Poly\{f_1,\dots, f_{\ell}\}\cap \Z[x]=\{0\}$ then $\nu_a$ is the Haar measure on the
subnilmanifold $Y_a^{\ell} \subset X^{\ell}$, where $Y_a = \overline{\{a^n\Gamma : n \in \mathbb{Z}\}}$.
\end{theorem}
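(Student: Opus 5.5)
We follow the strategy of \cite[Theorem 5.7]{BMR24}, with their Cesàro/slow-weight uniform distribution inputs replaced by the $W$-weighted statements from Section \ref{section:uniform_distribution}. The first step removes the rounding. We embed $a$ in a one-parameter subgroup. After passing to a finite extension of $G$ (or to a power of $a$, using Lemma \ref{lem:variant_of_modulo_averages} to handle residue classes), we may assume $a=b^1$ with $b^t$ defined for $t\in\R$. We then apply Lemma \ref{lem:floor_removal} from \cite{BMR24}. This expresses $a^{\round{f_i(n)}}\Gamma$ as a continuous function of $b^{f_i(n)}\Gamma$ together with the fractional part $\{f_i(n)+1/2\}$, at least away from a small neighborhood of the discontinuity. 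The sequence \eqref{eq:BMR_5.2} therefore becomes a factor of the real-parameter sequence
\[
n\mapsto \bigl(b^{f_1(n)},\dots,b^{f_\ell(n)},\ f_1(n),\dots,f_\ell(n)\bigr)\ \text{ in } X^\ell\times \T^\ell .
\]
The contribution of $n$ with $f_i(n)$ near a half-integer is controlled as follows. If $f_i$ is essentially a polynomial with integer coefficients up to a constant, which is what \eqref{eq:additional_assumption} guarantees, the fractional part converges. Otherwise $(f_i(n))$ is u.d.\ mod 1 with respect to $W$-averages by Theorem \ref{thm:W_ud}, because property \eqref{eq:W_compatible} holds.

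The second, main step proves that this real-parameter sequence is uniformly distributed with respect to $W$-averages in an explicit subnilmanifold $\widetilde{Y}\subset (G^\ell\times\R^\ell)/(\Gamma^\ell\times\Z^\ell)$. We take a basis of $\text{Span}\{f_1,\dots,f_\ell\}$ modulo polynomials and write each $f_i=\sum_j c_{ij}g_j+p_i$, where $\Poly$-parts $p_i$ absorb the polynomial components. The polynomial parts produce polynomial sequences in the nilmanifold, handled by Leibman's theorem. The genuinely non-polynomial parts $g_j$ are handled by the generalized Weyl criterion of Lemma \ref{lem:richter_equivalence}. We reduce to central characters nontrivial on the appropriate central subgroup, and then use van der Corput (Lemma \ref{lem:vdc}) to induct on nilpotency step, as in \cite[Theorem 4.2]{Richter23}. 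The induction bottoms out in one-dimensional exponential sums $\E^W_{n\le N}e^{2\pi i g(n)}$ with $g\in\text{Span}\{f_i\}$ having $\deg^*(g)>0$. These tend to $0$ exactly by Theorem \ref{thm:W_ud}, since $W$ is compatible with each such $g$. Lemma \ref{lem:wd_in_diagonal} enters to identify the orbit closure when the same $b$ is raised to different Hardy powers. The result is that $\nu_a$ is the pushforward of Haar measure on $\widetilde{Y}$ under the rounding map, and this is our candidate measure. This step, Theorem \ref{thm_generalized_richter_D}, is where I expect the main difficulty. The issue is that the van der Corput differencing $g(n+h)-g(n)$ must stay within functions compatible with $W$. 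Property \eqref{eq:W_compatible} is not preserved under differentiation, unlike property (P), so we must difference only in the PET pattern used in the proof of Theorem \ref{thm:generalized_BMR_4.2}, where spans are preserved.

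Finally we derive the two extra claims. If $\Poly\{f_1,\dots,f_\ell\}\cap\Z[x]=\{0\}$, then no nonzero combination of the $f_i$ is asymptotically an integer polynomial. The torus coordinates are then jointly u.d., and $\widetilde Y$ projects onto the full product $Y_a^\ell$ with Haar measure, which gives the second claim. If $\Poly\{f_1,\dots,f_\ell\}$ is jointly intersective, the polynomial parts take values near $0$ modulo every $r$ along a set of $n$ of positive $W$-density. This follows from \eqref{eq:additional_assumption} together with the argument of \cite[Section 6]{BLL08}, adapted with Lemma \ref{lem:variant_of_modulo_averages}. At the same time the non-polynomial parts equidistribute on a connected torus containing $0$. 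Hence $1_G\Gamma^\ell$ lies in the support of $\nu_a$.
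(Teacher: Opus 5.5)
Your route to the existence of $\nu_a$ is essentially the paper's. You apply Theorem~\ref{thm_generalized_richter_D} to the unrounded sequence $(a^{f_1(n)},\dots,a^{f_\ell(n)})\Gamma^\ell$ and combine the $q$ residue classes with Lemma~\ref{lem:variant_of_modulo_averages}, so the limit is $\frac1q\sum_r\mu_{Y_r}$ rather than Haar measure on a single $\widetilde Y$. Then you remove the rounding, which the paper does by citing Lemma~\ref{lem:floor_removal} and you do with auxiliary fractional-part coordinates. No finite extension is needed, since $a^t$ is defined for all real $t$ in a simply connected nilpotent Lie group. Inside Theorem~\ref{thm_generalized_richter_D}, the paper does not rely on PET differencing to keep compatibility. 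It replaces Richter's compatibility hypothesis by condition~(4) of Theorem~\ref{thm:generalized_G}, which involves only horizontal characters and is checked via Theorem~\ref{thm:W_ud} on elements of $\text{Span}\{f_1,\dots,f_\ell\}$. So the derivative-closed family of Lemma~\ref{lem:richter_A.6} need not be compatible with $W$.

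The gap is in the two ``moreover'' claims, which you try to prove directly.

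\emph{Haar claim.} You infer from $\Poly\{f_1,\dots,f_\ell\}\cap\Z[x]=\{0\}$ that the fractional parts $f_i(n)\bmod 1$ are jointly u.d. This is false, because the hypothesis does not exclude integer combinations $\sum_ik_if_i$ with $\deg^*=0$. For example, take $\ell=1$ and $f_1(x)=x+\sqrt2$. It satisfies (\ref{eq:additional_assumption}) and property (\ref{eq:W_compatible}), and $\Poly\{f_1\}=\R\cdot(x+\sqrt2)$ meets $\Z[x]$ only in $0$, yet $f_1(n)\bmod 1$ is constant. The conclusion does hold in this example, but only because $a^{\round{f_1(n)}}=a^{n+1}$ and $(a^n\Gamma)$ is well distributed in $Y_a$. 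Even when the fractional parts are jointly u.d., you only assert that the limit measure pushes forward to Haar measure on $Y_a^\ell$ under the discontinuous rounding map, and that is the entire content of the claim. Theorem~\ref{thm_generalized_richter_D} gives $Y_r=Hx_r$ with no description of $H$ in terms of $\Poly\{f_1,\dots,f_\ell\}$.

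\emph{Support claim.} This has the same problem. It also conflates the polynomial parts $p_i$ of your decomposition $f_i=\sum_jc_{ij}g_j+p_i$ with elements of $\Poly\{f_1,\dots,f_\ell\}$. The $p_i$ need not lie in it: for instance $f_1=x^{3/2}+\sqrt2\,x$ has $\Poly\{f_1\}=\{0\}$. So intersectivity says nothing about the $p_i$, and you never control their joint behaviour with the Hardy parts in a non-abelian nilmanifold.

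The missing idea is the paper's transfer argument, which makes this structural analysis unnecessary.
\begin{enumerate}
\item By \cite[Corollary A.5]{Richter23} there is $\widetilde W\in\mathcal H$ with $1\prec\widetilde W(x)\preceq x$ for which $f_1,\dots,f_\ell$ satisfy property (P) (Definition~\ref{def:property_P}). One may also arrange $\log\widetilde W\prec\log W$.
\item Then \cite[Theorem 5.7]{BMR24} gives a measure $\nu^*$ in which the rounded sequence is u.d.\ with respect to $\widetilde W$-averages. Under the respective hypotheses, $\nu^*$ has $1_G\Gamma^\ell$ in its support and equals Haar measure on $Y_a^\ell$.
\item Convergence of $W$-averages forces convergence of $\widetilde W$-averages to the same limit (Theorem~\ref{thm:mikey_thm}). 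So the rounded sequence is also u.d.\ in $(X^\ell,\nu_a)$ for $\widetilde W$-averages.
\item Hence $\nu^*=\nu_a$, and $\nu_a$ inherits both properties.
\end{enumerate}
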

\begin{proof}[Proof of Theorem \ref{thm:true_for_nilsystems_and_also} Given Theorem \ref{thm:BMR_uniform_distribution_result}]

Let $G$ be a simply connected nilpotent Lie group, let
$\Gamma \subset G$ be a uniform and discrete subgroup, let
$X = G/\Gamma$, and let $T:X\rightarrow X$ be given by $T(x\Gamma) =  ax\Gamma$ for some $a \in G$.

Let $x\in X$ and let $\gamma_x\in G$ denote a coset representative for $x$, so that $x=\gamma_x\Gamma$. Let $a_x \gamma_x^{-1}a\gamma_x$ so that $a^{\round{f_i(n)}}x = \gamma_x(\gamma_x^{-1}a\gamma_x)^{\round{f_i(n)}}\Gamma = \gamma_x a_x^{\round{f_i(n)}}\Gamma$ for all $n\in \N$ and all $i$. Let $g_1,\dots, g_{\ell}\in C(X)$ and define $G_{x}(x_1,\dots, x_k) = g_1(\gamma_xx_1)\cdots g_{\ell}(\gamma_xx_{\ell})$. Then by (\ref{eq:BMR_5.2}) we have that
 \begin{align*}
             &\lim_{N\to\oo}\E_{n\leq N}^W(T^{\round{f_1(n)}}g_1(x)\cdots T^{\round{f_{\ell}(n)}}g_{\ell}(x)) = \lim_{N\to\oo}\E_{n\leq N}^Wg_1(a^{\round{f_1(n)}}x)\cdots g_{\ell}(a^{\round{f_{\ell}(n)}}x)\\
             =&\lim_{N\to\oo}\E_{n\leq N}^Wg_1(\gamma_x a_x^{\round{f_1(n)}}\Gamma)\cdots g_{\ell}(\gamma_x a_x^{\round{f_{\ell}(n)}}\Gamma)\\
             =& \lim_{N\to\oo}\E_{n\leq N}^WG_x(a_x^{\round{f_1(n)}},\dots, a_x^{\round{f_{\ell}(n)}}) = \int_{X^{\ell}} G_x ~d\nu_{a_x}.
\end{align*}
This shows that $\lim_{N\to\oo}\E_{n\leq N}^W(T^{\round{f_1(n)}}g_1(x)\cdots T^{\round{f_{\ell}(n)}}g_{\ell}(x))$ exists. Next, suppose that $\Poly\{f_1,\dots, f_{\ell}\}\cap \Z[x]=\{0\}$. Then $\nu_{a_x}$ is the Haar measure on $Y_x = \overline{\{a_x^n\Gamma:n\in \Z\}}$ and so  
\begin{align*}
    \int_{X^{\ell}} G_x ~d\nu_{a_x} = \int_{Y_{a_x}^{\ell}} G_x ~d\nu_{a_x}=&\int_{Y_{a_x}^{\ell}} \prod_{i=1}^{\ell}g_i(\gamma_x x_i)~d\nu_x(x_1,\dots, x_{\ell}) \\=&\prod_{i=1}^{\ell} \int_{Y_{a_x}} g_i(\gamma_x x_i)~d\mu_{Y_{a_x}}(x_i)
\end{align*}
    but we know that 
    \[
    \int_{Y_{a_x}} g_i(\gamma_x x_i)~d\mu_{Y_{a_x}}(x_i)=\lim_{N\to\oo}\E_{n\leq N}g_i(\gamma_xa_x^n\Gamma) =  \lim_{N\to\oo}\E_{n\leq N}g_i(a^nx)
    \]
    for each $i$, and so we have that
    \begin{align*}
    \int_{X^{\ell}} G_x ~d\nu_{a_x} = \prod_{i=1}^{\ell}\lim_{N\to\oo}\E_{n\leq N}g_i(a^nx)
\end{align*}
as desired.

Lastly, suppose that $\Poly\{f_1,\dots, f_{\ell}\}$ is jointly intersective. Let $g\in C(x)$ satisfy $g(x)\geq 0$ for all $x\in X$ and $\int_X g~d\mu_X>0$. We will show that 
\begin{equation}\label{eq:u.d._reduction_goal}
    \lim_{N\to\oo}\E_{n\leq N}^W\int_X g(x)\cdot \prod_{i=1}^{\ell}g(a^{\round{f_i(n)}}x)~d\mu_X(x) >0
\end{equation}
so that (\ref{eq:new_eq_3}) follows by approximating the step function $1_A$ by continuous functions. By the dominated convergence theorem, we have that
\[
\lim_{N\to\oo}\E_{n\leq N}^W\int_X g(x) \prod_{i=1}^{\ell}g(a^{\round{f_i(n)}}x)~d\mu_X(x)=\int_X g(x) \lim_{N\to\oo}\E_{n\leq N}^W\left(\prod_{i=1}^{\ell}g(a^{\round{f_i(n)}}x)\right)d\mu_X(x) 
\]
and we recall from the above arguments that $\lim_{N\to\oo}\E_{n\leq N}^W\prod_{i=1}^{\ell}g(a^{\round{f_i(n)}}x)$ converges to $\int_{X^{\ell}}\prod_{i=1}^{\ell}g(\gamma_x x_i)~d\nu_{x}(x_1,\dots, x_{\ell})$. From Theorem \ref{thm:BMR_uniform_distribution_result}, $1_{G^{\ell}}\Gamma$ is in the support of $\nu_{a_x}$ and so $\prod_{i=1}^{\ell}g(\gamma_x 1_{G}\Gamma) = \prod_{i=1}^{\ell}g(x) =g(x)^{\ell}>0$ whenever $g(x)>0$ and so the fact that $\int_{X^{\ell}}\prod_{i=1}^{\ell}g(\gamma_x x_i)~d\nu_{x}(x_1,\dots, x_{\ell})$ is positive follows from the fact that $\int g ~d\mu_X$ is positive. This concludes the proof. 
\end{proof}

The following lemma allows us to remove the rounding functions $\round{\cdot}$ in (\ref{eq:BMR_5.2})  

\begin{lemma}[{\cite[Section 5.4]{BMR24}}]\label{lem:floor_removal}
Let $G$ be a simply connected nilpotent Lie group, let
$\Gamma \subset G$ be a uniform and discrete subgroup, let
$X = G/\Gamma$, and let $a \in G$. Let $\mathcal{H}$ be a Hardy field and let $f_1,\ldots,f_{\ell} \in \mathcal{H}$ be subpolynomial functions which satisfy (\ref{eq:additional_assumption}). Let $W \in \mathcal{H}$ with $1 \prec \log W(x) \prec x$. Suppose that $\nu$ is a Borel probability measure on $X^{\ell}$ such that the sequence
\begin{equation}\label{eq:BMR_no_floor}
n \mapsto \bigl(a^{ {f_1(n)}},
a^{ {f_2(n)}},\ldots,
a^{ {f_{\ell}(n)}}\bigr)\Gamma^{\ell}.
\end{equation}
is uniformly distributed with respect to $W$-averages in $(X^{\ell},\nu)$. Then the sequence
\begin{equation}
n \mapsto \bigl(a^{ \round{f_1(n)}},
a^{ \round{f_2(n)}},\ldots,
a^{ \round{f_{\ell}(n)}}\bigr)\Gamma^{\ell}
\end{equation}
is also uniformly distributed with respect to $W$-averages in $(X^{\ell},\nu)$.
\end{lemma}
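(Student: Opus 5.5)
We will follow the strategy of \cite[Section 5.4]{BMR24}, replacing the Cesàro averages there by $W$-averages. The idea is to absorb the rounding error into an extra torus coordinate.

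\textbf{Rewriting the rounded sequence.} Write $\round{f_i(n)} = f_i(n) - \{f_i(n)\}'$, where $\{x\}' := x-\round{x}\in[-1/2,1/2)$. Since $G$ is simply connected, $a^t$ is defined for all real $t$, and
\[
a^{\round{f_i(n)}}\Gamma = a^{-\{f_i(n)\}'}a^{f_i(n)}\Gamma .
\]
So the rounded sequence is the image of the extended sequence
\[
n\mapsto \bigl((a^{f_i(n)}\Gamma)_{i\le \ell},\,(f_i(n)\bmod 1)_{i\le\ell}\bigr)\in X^{\ell}\times\T^{\ell}
\]
under the map $\Phi(x_1,\dots,x_\ell,t_1,\dots,t_\ell)=(a^{-\{t_i\}'}x_i)_{i}$. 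The map $\Phi$ is continuous away from the null set where some $t_i = 1/2$.

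\textbf{Joint equidistribution and its consequence.} The key step is to show the extended sequence is $W$-u.d.\ in $(X^\ell\times\T^\ell,\tilde\nu)$ for some measure $\tilde\nu$ that projects to $\nu$, and to understand how $\tilde\nu$ relates to $\nu$. For $i$ with $f_i$ not asymptotic to a polynomial with rational non-constant part, hypothesis (\ref{eq:additional_assumption}) together with compatibility should yield, via Theorem \ref{thm:W_ud}, that $f_i(n)\bmod 1$ is $W$-u.d.\ and asymptotically independent of the nilmanifold coordinates. Concretely, $\tilde\nu$ is the image of $\nu\times\mathrm{Leb}$ under the natural lift $(x,t)\mapsto$ "translate along $a^{t}$". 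For $i$ with $f_i(t)=p_i(t)+o(1)$ and $p_i-p_i(0)\in\Z[t]$, the fractional part $f_i(n)\bmod 1$ converges to the constant $p_i(0)\bmod 1$. In that case rounding is eventually the same as subtracting a fixed real number (provided $p_i(0)\bmod 1 \neq 1/2$; the boundary case is handled by a small perturbation/Hardy-field sign argument). That constant translation commutes with the limit and gives a continuous pushforward of $\nu$.

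In the genuinely non-polynomial case, we instead use the identity
\[
a^{\round{f(n)}} = a^{f(n)}\,a^{-\{f(n)\}'}.
\]
We test against $F\in C(X^\ell)$ and approximate $F\circ\Phi$ by functions of the form $F_j(x)\,e(k\cdot t)$. This reduces the problem to showing
\[
\E^W_{n\le N} F(a^{f(n)}\Gamma)\,e\bigl(k\cdot f(n)\bigr)\to \int F\,d\nu\cdot 1_{k=0}.
\]
This in turn follows by applying the hypothesis of the lemma on the product nilmanifold $X^\ell\times(\R/\Z)^\ell$ with element $(a,\dots,a,1,\dots,1)$. Here we view $(a,1)\in G\times\R$ acting on $(G\times\R)/(\Gamma\times\Z)$, so that $(a^{f_i(n)},f_i(n))$ is again a Hardy-orbit on a nilmanifold.

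\textbf{Expected obstacle.} The main difficulty is justifying that the hypothesis of the lemma transfers to this enlarged nilmanifold. As stated, the lemma only assumes u.d.\ of (\ref{eq:BMR_no_floor}) on $X^\ell$. In \cite{BMR24} this is handled because the u.d.\ result is proved for all nilmanifolds simultaneously. We would therefore either apply the same reasoning in the surrounding argument, or replace $G$ by $G\times\R$ and $\Gamma$ by $\Gamma\times\Z$ from the outset and derive the measure $\nu$ on $X^\ell$ as a projection. Once the joint distribution is known, the remaining steps are routine: discontinuities of $\Phi$ lie on a $\tilde\nu$-null set (by the Lebesgue factor), so a standard Portmanteau approximation for weighted averages gives the claim. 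Finally, $\Phi_*\tilde\nu=\nu$, since translating by $a^{-\{t\}'}$ and integrating over $t$ recovers $\nu$ by the invariance built into the lift.
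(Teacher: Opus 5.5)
\textbf{The gap.} The step that fails is your final identity $\Phi_*\tilde\nu=\nu$, together with the independence claim that feeds it. It cannot be repaired, because the lemma as stated, with the \emph{same} $\nu$ before and after rounding, is false.

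Take $G=\R$, $\Gamma=\Z$, $a=1$, $\ell=1$, $f_1(x)=x^{3/2}$, $W(x)=x$. All hypotheses hold ((\ref{eq:additional_assumption}) vacuously). The sequence $a^{f_1(n)}\Gamma=n^{3/2}\bmod 1$ is u.d., so $\nu$ is Lebesgue measure, yet $a^{\round{f_1(n)}}\Gamma=\Gamma$ for every $n$. Here the torus coordinate is a function of the $X$-coordinate, and your reduced averages with $F(x)=e^{-2\pi i x}$, $k=1$ are identically $1$, not $0$.

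This is not an artifact of a non-dense orbit. Take $a=\alpha\notin\Q$, $f_1=x^{3/2}$, $f_2=2x^{3/2}$. These satisfy (\ref{eq:W_compatible}) with $W(x)=x$, and this is exactly the situation after the reduction $\overline{\{a^n\Gamma\}}=X$ in the proof of Theorem \ref{thm:BMR_uniform_distribution_result}. Then $\nu$ is Haar measure on the subtorus $\{(u,2u)\}\subset\T^2$. On the other hand, $\round{2f_1(n)}-2\round{f_1(n)}=\round{2\theta_n}$ with $\theta_n=f_1(n)-\round{f_1(n)}$ u.d.\ in $[-1/2,1/2)$. So $F(u_1,u_2)=e^{2\pi i(u_2-2u_1)}$ averages to $\tfrac12+\tfrac12\cos(2\pi\alpha)$ along the rounded sequence, while $\int F\,d\nu=1$. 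In this example $t_2=2t_1$ in $\T$, so your reduced claim already fails for $k=(-2,1)$ and $F\equiv 1$.

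\textbf{What survives.} Your mechanism does prove an existence statement with a different measure. Let $\tilde a=(a,1)\in G\times\R$ and $\tilde\Gamma=\Gamma\times\Z$. Suppose $n\mapsto(\tilde a^{f_1(n)},\dots,\tilde a^{f_\ell(n)})\tilde\Gamma^{\ell}$ is $W$-u.d.\ with respect to some $\tilde\nu$, and $\tilde\nu(\{t_i=1/2\})=0$ for each $i$. Then the rounded sequence is $W$-u.d.\ with respect to $\Phi_*\tilde\nu$, which in general is not the projection $\nu$ of $\tilde\nu$ to $X^\ell$. Two points need care.
\begin{enumerate}
\item The hypothesis on the enlarged space has to be assumed or proved separately, e.g.\ by applying Theorem \ref{thm_generalized_richter_D} over $G\times\R$. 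As the examples show, it carries information that the hypothesis on $X^\ell$ does not.
\item The null-set condition comes from the one-dimensional marginals, not from independence. If $\deg^*(f_i)>0$, Theorem \ref{thm:W_ud} makes the $t_i$-marginal Lebesgue, but only under compatibility of $W$ with $f_i$, which is not among the lemma's hypotheses. If $\deg^*(f_i)=0$, the marginal is a point mass, possibly at $1/2$. In that case first replace $f_i$ by the integer-valued polynomial that eventually equals $\round{f_i(n)}$, using (\ref{eq:additional_assumption}) and the eventual sign of $f_i-p_i$.
\end{enumerate}

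The paper itself gives no argument beyond the citation to \cite[Section 5.4]{BMR24}. Its identification $\nu^*=\nu_a$ in the proof of Theorem \ref{thm:BMR_uniform_distribution_result} uses the same-measure version, so it too would need to be rerouted through $\Phi_*\tilde\nu$.
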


Now for the proof of Theorem \ref{thm:BMR_uniform_distribution_result}.

\begin{proof}[Proof of Theorem \ref{thm:BMR_uniform_distribution_result}]
    This proof relies on Theorem \ref{thm_generalized_richter_D}, which will be proven in Section \ref{section:richter_results}.

    Without loss of generality, assume that $\overline{\{a^n\Gamma:n\in \Z\}} = X$. Let $a_1 = (a\Gamma,\Gamma,\dots,\Gamma)\in X^{\ell}$, $a_2 = (\Gamma,a\Gamma,\Gamma,\dots,\Gamma)\in X^{\ell}, \dots, a_{\ell} = (\Gamma,\dots,\Gamma,a\Gamma)\in X^{\ell}$, so that 
    \[
    v(n) = a_1^{f_1(n)}\cdots a_{\ell}^{f_{\ell}(n)} \Gamma^{\ell}= (a^{f_1(n)},\dots, a^{f_{\ell}}(n))\Gamma^{\ell}
    \]
    is a sequence in $X^{\ell}$. By Theorem \ref{thm_generalized_richter_D}, there exists $q\in \N$ and closed and connected submanifolds $Y_0,\dots,Y_{q-1}$ such that $(v(qn+r))_{n\in \N}$ is uniformly distributed with respect to $W$-averages in $Y_r$. Let $\nu_a = \frac{1}{q}\sum_{r=0}^{q-1}\mu_{Y_r}$. Then $(v(n))_{n\in \N}$ is uniformly distributed with respect to $W$-averages in $(X,
    \nu_a)$ by Lemma \ref{lem:variant_of_modulo_averages}. From Lemma \ref{lem:floor_removal}, it follows that the sequence
\begin{equation}\label{eq:BMR_5.2_again}
n \mapsto \bigl(a^{ \round{f_1(n)}},
a^{ \round{f_2(n)}},\ldots,
a^{ \round{f_{\ell}(n)}}\bigr)\Gamma^{\ell}
\end{equation}
is uniformly distributed with respect to $W$-averages in $(X^{\ell},\nu_a)$.

The remainder of Theorem \ref{thm:BMR_uniform_distribution_result} follows from \cite[Theorem 5.7]{BMR24}. More specifically, the statement of \cite[Theorem 5.7]{BMR24} is the same as the statement of Theorem \ref{thm:BMR_uniform_distribution_result} except it is assumed that $1\prec W(x)\preceq x$ and that the functions $f_1,\dots, f_{\ell}$ satisfy property (P) (see Definition \ref{def:property_P}) instead of property (\ref{eq:W_compatible}). By \cite[Corollary A.5]{Richter23} there exists a function $\widetilde{W}\in \mathcal{H}$ such that $1\prec \widetilde{W}(x)\preceq x$ and the functions $f_1,\dots, f_{\ell}$ satisfy property (P) for $\widetilde{W}$ (note that property (P) is called property $(P_W)$ in \cite{Richter23}). We assume that $\lim_{x\to\oo}\frac{\log \widetilde{W}(x)}{\log W(x)} = 0$, since otherwise we can replace $\mathcal{H}$ with a maximal Hardy field containing $\mathcal{H}$ and we can replace $\widetilde{W}$ with $\log {W}$.

Then by \cite[Theorem 5.7]{BMR24} there exists a probability measure $\nu^*$ such that the sequence (\ref{eq:BMR_5.2_again}) is uniformly distributed in $(X^{\ell},\nu^*)$ with respect to $\E^{\widetilde{W}}$. Additionally, if $\Poly\{f_1,\dots, f_{\ell}\}$ is jointly intersective then the point
$1_{X^{\ell}} = 1_G\Gamma^{\ell}$ belongs to the support of $\nu^*$, and if $\Poly\{f_1,\dots, f_{\ell}\}\cap \Z[x]=\{0\}$ then $\nu^*$ is the Haar measure on the
subnilmanifold $Y_a^{\ell} \subset X^{\ell}$, where $Y_a = \overline{\{a^n\Gamma : n \in \mathbb{Z}\}}$.

Since (\ref{eq:BMR_5.2_again}) is uniformly distributed in $(X^{\ell},\nu_a)$ with respect to $W$-averages, it is uniformly distributed in $(X^{\ell},\nu_a)$ with respect to $\E^{\widetilde{W}}$ by Theorem \ref{thm:mikey_thm} and our assumption that $\lim_{x\to\oo}\frac{\log \widetilde{W}(x)}{\log W(x)} = 0$. But  (\ref{eq:BMR_5.2_again}) is also uniformly distributed in $(X^{\ell},\nu^*)$ with respect to $\E^{\widetilde{W}}$ and hence it must be that $\nu^* = \nu_a$. This completes the proof.
\end{proof}
\section{Generalizations of results from \texorpdfstring{\cite{Richter23}}{Ric23}}\label{section:richter_results}

In this section, we prove Theorem \ref{thm_generalized_richter_D}, thereby completing the proof of Theorem \ref{thm:BMR_uniform_distribution_result} and, by extension, Theorem \ref{thm:main}.
 \begin{theorem}[cf. {\cite[Theorem D]{Richter23}}]\label{thm_generalized_richter_D}
Let $\mathcal{H}$ be a Hardy field and let $W\in \mathcal{H}$ satisfy $1\prec \log W(x)\prec x$. Let $G$ be a simply connected nilpotent Lie group, let $\Gamma$ a uniform and discrete subgroup of $G$, and assume $X = G/\Gamma$ is connected.
Suppose
$$
v(n) = a_1^{f_1(n)}\cdots a_{\ell}^{f_{\ell}(n)}\quad \text{ for all }n\in \N
$$
where $a_1,\dots , a_{\ell} \in G$ are commuting, and $f_1,\dots , f_{\ell} \in \mathcal{H}$ are subpolynomial functions which satisfy property (\ref{eq:W_compatible}). Then there exists a closed and connected subgroup $H$ of $G$, $q\in \N$, and points $x_0, x_1,\dots , x_{q-1} \in X$ such that $Y_r = H{x_r}$ is a closed sub-nilmanifold of $X$ and $(v(qn + r)\Gamma)_{n\in\N}$ is uniformly
distributed with respect to $W$-averages in $Y_r$ for all $r = 0, 1, \dots , q - 1$.
 \end{theorem}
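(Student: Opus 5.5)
The plan is to follow the structure of Richter's proof of \cite[Theorem D]{Richter23}, replacing every use of Cesàro uniform distribution for Hardy sequences with the weighted criterion Theorem \ref{thm:W_ud}.

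\textbf{Step 1: reduce to ``good'' functions.} Since the $a_i$ commute and $t\mapsto a_i^t$ are one-parameter subgroups, we may change the spanning set of functions. Replace $f_1,\dots,f_\ell$ by a basis of $\text{Span}\{f_1,\dots,f_\ell\}$ modulo polynomials. Then write $v(n)=p(n)\,u(n)$, where $p(n)$ is a polynomial sequence coming from the rational-coefficient polynomial parts. Property (\ref{eq:W_compatible}) is preserved by this change of basis, since it is a property of the span. By passing to progressions $qn+r$, choosing $q$ to clear denominators of the rational polynomial parts as in the proof of Theorem \ref{thm:true_for_nilsystem}, the polynomial part becomes an integer polynomial sequence. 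Then Leibman's theorem gives a decomposition into finitely many progressions on each of which the polynomial orbit is equidistributed on a subnilmanifold; these are the $x_r$.

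\textbf{Step 2: identify $H$ and prove equidistribution.} Let $H$ be the smallest closed connected (rational) subgroup containing the one-parameter groups carrying the non-polynomial parts together with the connected component generated by the polynomial part. Then $Y_r=Hx_r$ is closed.

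To prove that $(v(qn+r)\Gamma)$ is $W$-uniformly distributed on $Y_r$, argue by induction on the nilpotency step of $H$, using Lemma \ref{lem:richter_equivalence}. It suffices to show that $\E^W_{n\le N}\varphi(v(qn+r)x_r)\to0$ for every central character $(\varphi,\chi)$ with $\chi$ nontrivial on the relevant central subgroup $L^{\circ}\cap Z(H)^{\circ}$. Apply the van der Corput Lemma \ref{lem:vdc} with Remark \ref{remark:vdc}. The correlation $\varphi(v(n+h)y)\overline{\varphi(v(n)y)}$ is a function on $Y\times_L Y$ evaluated along the sequence $(v(n+h),v(n))$. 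Using Lemma \ref{lem:wd_in_diagonal} to control the diagonal translates, this becomes a sequence of the same form on a nilmanifold where the central direction has been factored out, so the step is lower and induction applies. The derivative-type functions $f(n+h)-f(n)$ that arise still satisfy (\ref{eq:W_compatible}) after PET-type reduction, as in the proof of Theorem \ref{thm:generalized_BMR_4.2}.

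\textbf{Step 3: base case on the torus.} The base case is the abelian, or horizontal-torus, case. Here, by Leibman's criterion, which needs a weighted analogue, one reduces to horizontal characters. There, uniform distribution amounts to showing $\E^W_{n\le N}e^{2\pi i\sum_j c_jf_j(qn+r)}\to0$ for nontrivial integer combinations. Such a combination is an element $f$ of the span that is not asymptotic to a rational polynomial after Step 1, so $\deg^*(f)>0$ and $W$ is compatible with $f$. Theorem \ref{thm:W_ud} therefore gives the vanishing, with Lemma \ref{lem:variant_of_modulo_averages} handling the progression.

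\textbf{Main obstacle.} I expect the main obstacle to be the weighted version of Leibman's reduction from the nilmanifold to the horizontal torus, i.e.\ making the van der Corput induction close. Compatibility (\ref{eq:W_compatible}) is \emph{not} preserved under differencing in general; the example $x\log x$ versus $\log x$ after Definition \ref{def:property_P} shows this. So one cannot naively difference and invoke property (P).

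My first attempt would be to difference only along central directions. The extra terms then lie in lower-degree parts whose $\deg^*$ structure is controlled by $f(n+h)-f(n)\sim hf'(n)$. I would then check compatibility directly via the limit $|f^{(d)}-p|^{1/d}/(\log W)'$, comparing $(hf')^{(d-1)}$ with $f^{(d)}$; this ratio is unchanged up to $h^{1/(d-1)}$ scaling, except when $d=1$. The $d=1$ case would be treated separately, using Lemma \ref{lem:deg=1} and Theorem \ref{thm:d=1_thm}.
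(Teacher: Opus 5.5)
Your endgame (reduce to horizontal characters, where only elements of $\text{Span}\{f_1,\dots,f_\ell\}$ appear and Theorem \ref{thm:W_ud} applies) is the same as the paper's. However, the inductive machinery you propose for getting there has two genuine gaps.

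First, the van der Corput step you describe (Lemma \ref{lem:vdc}, shifting $n\mapsto n+h$) gives nothing on sublinear components. If every Hardy function in $v$ satisfies $g(x)\prec x$, then $g(n+h)-g(n)\to 0$. Hence $v(n+h)v(n)^{-1}\to 1_G$, the correlation is $|\varphi(v(n)\Gamma)|^2+o(1)$, and there is no cancellation. Appealing to Lemma \ref{lem:wd_in_diagonal} does not repair this. That lemma concerns fixed translates $(b^{\xi h},1_G)X^{\triangle}$ of the diagonal by real powers of a single element $b$. Discrete differencing instead produces the $n$-dependent translates $v(n+h)v(n)^{-1}$. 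In the paper these are two separate mechanisms:
\begin{itemize}
\item The sublinear case (Theorem \ref{thm:sub_linear}) uses the value-shift van der Corput result, Theorem \ref{thm:funny_vdc}. It replaces $g_m(n)$ by $g_m(n)+\epsilon$ in the fastest function and then averages $A(\xi h)$ over $h$; this is exactly where Lemma \ref{lem:wd_in_diagonal} enters.
\item Lemma \ref{lem:vdc} is used only when there are components of degree at least $2$ or polynomial parts.
\end{itemize}

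Second, your proposed resolution of the obstacle you correctly flag is false. Suppose $\deg^*(f)=d\ge 2$.
\begin{itemize}
\item Compatibility of $f$ asks that $|f^{(d)}|^{1/d}/(\log W)'\to\oo$.
\item Compatibility of $hf'$, whose $\deg^*$ is $d-1$, asks that $|hf^{(d)}|^{1/(d-1)}/(\log W)'\to\oo$.
\item Since $f^{(d)}\to 0$, we have $|f^{(d)}|^{1/(d-1)}=o(|f^{(d)}|^{1/d})$, so the ratio is not preserved up to a power of $h$; it strictly deteriorates.
\end{itemize}
Already $f(x)=x\log x$ with $W(x)=x$ gives a compatible $f$ whose difference $h\log x+h$ is not compatible. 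By Theorem \ref{thm:W_ud}, its exponential sums genuinely fail to average to $0$. So no scheme that requires $W$-equidistribution of each differenced component can close. Likewise, the ``weighted Leibman criterion'' you postulate for Hardy sequences is not available as a black box; it is essentially what has to be proved.

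The missing idea is the paper's, in four steps:
\begin{enumerate}
\item Use Lemma \ref{lem:richter_A.6} to replace the $f_i$ by a family $g_1\prec\cdots\prec g_m$. This family is closed under differentiation for degree at least $2$, and each $g_j$ is either compatible with $W$ or the derivative of some $g_i$. A mere basis of the span modulo polynomials has neither property.
\item Absorb the real (not necessarily rational) polynomial parts as $e_j^{\binom{n}{j}}$ with $e_j=\prod_i a_i^{c_{i,j}}$. Choose $q$ so that each $\overline{e_j^{q\Z+r}\Gamma}$ is connected; $q$ is not there to clear denominators.
\item Apply Theorem \ref{thm:generalized_G}. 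Its only arithmetic hypothesis, condition (4), is that horizontal characters average to $0$ along the sequence itself.
\item Verify condition (4) for $w_r(n)$, which agrees with $v(qn+r)c_r^{-1}$ up to $o(1)$ in the exponents. A horizontal character then only sees $e^{2\pi i f(qn+r)}$ with $f\in\text{Span}\{f_1,\dots,f_\ell\}$. Thus property (\ref{eq:W_compatible}) is invoked only where it actually holds.
\end{enumerate}
For a single progression, apply Theorem \ref{thm:W_ud} to $f(q\cdot+r)$ directly. Lemma \ref{lem:variant_of_modulo_averages} only controls the average over all residues, not each one separately.
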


In order to prove Theorem \ref{thm_generalized_richter_D}, we first introduce and prove a more general statement about uniform distribution on nilmanifolds (Theorem \ref{thm:generalized_G} below) and then at the end of the section we show how Theorem \ref{thm:generalized_G} implies Theorem \ref{thm_generalized_richter_D}.

 To begin, consider the following somewhat technical lemma.

 \begin{lemma}[cf. {\cite[Lemma 6.4]{Richter23}}]\label{lem:replace_richter_6.4}
     Let $W,f$ be functions which belong to the same Hardy field and suppose that $1\prec \log W(x)\prec f(x)\prec x$. Pick $\xi \in (0,1]$ and define $K_n = \N\cap f^{-1}(\xi n, \xi (n+1))$. Also, put $g(n) = f(n)/\xi$, $p_n = \sum_{i\in K_n}\Delta W(i)$, and $P_N = \sum_{n=1}^Np_n$. Then
     \begin{enumerate}[label = (\roman*)]
         \item $\frac{\Delta (W\circ g^{-1})(n)}{p_n}= 1 +o_{n\to\oo}(1)$,  
         \item $\frac{(W\circ g^{-1})(N)}{P_N}= 1 +o_{N\to\oo}(1)$, 
         \item $\lim_{N\to\oo}P_N=\oo$, 
         \item $\frac{p_N}{P_N}=o_{N\to\oo}(1)$.
     \end{enumerate}
 \end{lemma}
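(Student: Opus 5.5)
Since $f\prec x$ increases to $\infty$, $f$ is eventually increasing with $f'\to 0$; so is $g=f/\xi$, and $g^{-1}$ exists, increases, and satisfies $(g^{-1})'\to\infty$. The set $K_n$ is then (up to endpoints) the integer interval $\N\cap(g^{-1}(n),g^{-1}(n+1))$, whose length is $\Delta g^{-1}(n+1)+O(1)$. Summing $\Delta W$ over a run of consecutive integers telescopes, so $p_n=W(b_n)-W(a_n-1)$, where $a_n=\lceil g^{-1}(n)\rceil$ and $b_n$ is the largest integer below $g^{-1}(n+1)$. The plan is to compare $p_n$ with $W(g^{-1}(n+1))-W(g^{-1}(n))$, which is $\Delta(W\circ g^{-1})(n+1)$, or the unshifted version up to index convention. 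This gives (i), and (ii)--(iv) then follow by summation and Stolz--Ces\`aro.

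For (i), the endpoint discrepancies are at most two increments $\Delta W$ near $g^{-1}(n)$ and $g^{-1}(n+1)$. By the mean value theorem each is $O(W'(x))$ at the relevant $x$, since $W'$ is Hardy and hence eventually monotone and comparable across a unit shift: $\log W\prec x$ makes $W'(x+1)/W'(x)\to1$, which follows from $(\log W)'\to 0$ together with $W''/W'$ being $o(1)$, a fact I would argue as in Lemma \ref{lem:shift_inequality}. The main term is $W(g^{-1}(n+1))-W(g^{-1}(n))=W'(\zeta)\,\Delta g^{-1}(n+1)$. The ratio (error)/(main) is therefore $O\bigl(1/\Delta g^{-1}(n)\bigr)\to0$, because $g^{-1}$ has derivative $1/g'(g^{-1})\to\infty$ as $f\prec x$. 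I also need $W'$ to be essentially constant across the whole interval $[g^{-1}(n),g^{-1}(n+1)]$, whose length is $\sim 1/g'$. This holds iff $(\log W)'\cdot(1/f')\to 0$ (taking $\xi$ fixed), which is exactly the hypothesis $\log W\prec f$ via L'H\^opital in the Hardy field. Combining these gives $p_n=(1+o(1))\,\Delta(W\circ g^{-1})(n)$. The same hypothesis also shows that an index shift by one in $W\circ g^{-1}$ costs only a factor $1+o(1)$: $\Delta(W\circ g^{-1})$ is Hardy, and $(\log (W\circ g^{-1}))'=(\log W)'(g^{-1})\cdot(g^{-1})'\to 0$.

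For (iii), each $K_n$ is nonempty for large $n$, and the $K_n$ eventually tile $\N$, so $P_N=W(b_N)-W(c)\to\infty$. For (ii), telescoping gives $P_N=W(g^{-1}(N+1))+O(W'\text{ at endpoint})+O(1)$. By the shift remark this equals $(W\circ g^{-1})(N)(1+o(1))$; alternatively, (i) together with Stolz--Ces\`aro (Theorem \ref{thm:stolz_cesaro}) gives the same conclusion since $W\circ g^{-1}\to\infty$. For (iv), $p_N/P_N\sim\Delta(W\circ g^{-1})(N)/(W\circ g^{-1})(N)$. This tends to $0$ because $\log(W\circ g^{-1})$ has derivative tending to $0$, as computed above.

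The main obstacle is the uniform comparison of $W'$ over intervals of length $\asymp 1/f'$. This is precisely where $\log W\prec f$ enters, and I would verify it carefully using $\int_{g^{-1}(n)}^{g^{-1}(n+1)}(\log W)'\,dt=\log W(g^{-1}(n+1))-\log W(g^{-1}(n))\to 0$. That limit follows from $(\log W\circ g^{-1})'\to0$, a consequence of L'H\^opital applied to $\log W/f\to0$ within the Hardy field.
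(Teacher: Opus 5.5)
Your route is the paper's. You write $p_n$ as a telescoped difference of $W$ at integers adjacent to $g^{-1}(n)$ and $g^{-1}(n+1)$ (the paper's $G(k)=\max\{m:g(m)\le k\}$) and compare it with $\Delta(W\circ g^{-1})$. You shift the index using $1\prec\log(W\circ g^{-1})\prec x$, and get (ii)--(iv) from Stolz--Ces\`aro and $\log(W\circ g^{-1})\prec x$. Your one-unit endpoint bookkeeping goes beyond what the paper writes down, and it is needed: the sandwich $g^{-1}(k-1)\le G(k)\le g^{-1}(k)$ alone only gives $0\le p_n\le \Delta(W\circ g^{-1})(n)+\Delta(W\circ g^{-1})(n+1)$.

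The gap is in the step you call the main obstacle. It is false that $W'$ is essentially constant on $[g^{-1}(n),g^{-1}(n+1)]$ iff $(\log W)'/f'\to 0$. The check you propose, $\log W(g^{-1}(n+1))-\log W(g^{-1}(n))\to 0$, only gives $W(g^{-1}(n+1))/W(g^{-1}(n))\to 1$: it is $W$, not $W'$, that is nearly constant on the block.

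Take $W(x)=\log x$ and $f(x)=(\log\log x)^2$. All hypotheses hold and $(\log W)'/f'=1/(2\log\log x)\to 0$. But $g^{-1}(y)=\exp(e^{\sqrt{\xi y}})$, so $g^{-1}(n+1)/g^{-1}(n)\to\infty$. Hence $W'(x)=1/x$ changes by an unbounded factor across the block, whose length is also not comparable to $1/f'$. So your bound (error)/(main) $=O(1/\Delta g^{-1}(n))$, obtained through $W'(\zeta)$, is not justified.

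That comparison is not actually needed. Put $A=g^{-1}(n)$ and $B=g^{-1}(n+1)$.
\begin{itemize}
\item From $1\prec\log W\prec x$ one gets $(\log W')'\to 0$. Hence, for each fixed $K$, $W'(x+s)/W'(x)\to 1$ uniformly for $|s|\le K$.
\item Since $B-A\to\infty$, the main term $W(B)-W(A)$ is at least $W(A+K)-W(A)=(1+o(1))KW'(A)$, and at least $W(B)-W(B-K)=(1+o(1))KW'(B)$.
\item The two boundary errors are at most $(1+o(1))W'(A)$ and $(1+o(1))W'(B)$.
\item So the errors are at most $(2+o(1))/K$ times the main term for every $K$. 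This gives $p_n=(1+o(1))\,\Delta(W\circ g^{-1})(n+1)$ without using $\log W\prec f$ at all.
\end{itemize}
As in the paper, that hypothesis enters only through $1\prec\log(W\circ g^{-1})\prec x$. Applying the same unit-shift argument to $W\circ g^{-1}$, it yields $\Delta(W\circ g^{-1})(n+1)/\Delta(W\circ g^{-1})(n)\to 1$ and (iv).
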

 \begin{proof}

Define $G(k) = \max\{n\in \N: {g(n)}\leq k\}
= \max\{n\in \N: f(n)\leq \xi k\}$ for $k\in \N$. For large enough $n$, $K_n = \{G(n)+1,\dots, G(n+1)\}$ and so $p_n = W(G(n+1))-W(G(n)) = \Delta (W\circ G)(n+1)$. Additionally, note that 
\begin{equation}\label{eq:6.4_1}
    g^{-1}(k-1)\leq G(k) \leq g^{-1}(k)
\end{equation} 
for all sufficiently large $k$. By assumption, we have $1\prec \log(W\circ g^{-1})(x)\prec x$ from which it follows that 
\begin{equation}\label{eq:6.4_2}
    \lim_{k\to\oo}\frac{\Delta \log(W\circ g^{-1})(k-1)}{\Delta \log(W\circ g^{-1})(k)}=1.
\end{equation}
Combining (\ref{eq:6.4_1}) and (\ref{eq:6.4_2}) with the fact that $W$ and $g^{-1}$ are eventually monotone, we have $\lim_{n\to\oo}\frac{\Delta (W\circ g^{-1})(n-1)}{p_{n-1}}=\lim_{n\to\oo}\frac{\Delta (W\circ g^{-1})(n-1)}{\Delta (W\circ G)(n)}=1$. This shows that statement (i) holds.

By Theorem \ref{thm:stolz_cesaro} we also have $\lim_{n\to\oo}\frac{ (W\circ g^{-1})(n)}{ (W\circ \hat{s})(n)}=1$, which is statement (ii). Statement (iii) follows from (ii) since we know that $(W\circ g^{-1})$ tends to $\oo$, and (iv) follows from (i) and (ii) since we know that 
\begin{equation}
    \lim_{x\to\oo}\frac{\Delta (W\circ g^{-1})(x)}{(W\circ g^{-1})(x)} =  \lim_{x\to\oo}\frac{\Delta \log (W\circ g^{-1})(x)}{\Delta x}
 = \lim_{x\to\oo}\frac{ \log (W\circ g^{-1})(x)}{ x} = 0.
 \end{equation}

 \end{proof}
 Using Lemma \ref{lem:replace_richter_6.4}, we obtain yet another version of van der Corput's trick, which we will need in the sequel.
\begin{theorem}[cf. {\cite[Proposition 6.1]{Richter23}}]\label{thm:funny_vdc}
Let $\mathcal{H}$ be a Hardy field and suppose that $W,g_1,\dots, g_{m}\in \mathcal{H}$ with 
\begin{equation}
    1\prec \log W(x)\prec g_1(x)\prec g_2(x)\prec\cdots \prec g_{m}(x)\prec x.
\end{equation}
Let $\Psi:\R^{\ell}\rightarrow \C$ be bounded and uniformly continuous. Suppose that for each $\epsilon\in \R$, the limit 
\begin{equation*}
    A({\epsilon}) = \lim_{N\to\infty}\E_{n\leq N}^W\left(\Psi(g_1(n),\dots, g_{m-1}(n),g_{m}(n)+\epsilon)\cdot \overline{\Psi(g_1(n),\dots, g_{m-1}(n),g_{m}(n))}\right)
\end{equation*}
exists. Suppose also that for each $\delta>0$, there is an $\xi\in (0,\delta)$ with $\lim_{H\to\infty}\E_{h\leq H}A(\xi h) = 0$. Then 
\begin{equation}
    \lim_{N\to\infty}\E_{n\leq N}^W\Psi(g_1(n),\dots, g_{m}(n))=0.
\end{equation}
\end{theorem}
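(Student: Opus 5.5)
The plan is to adapt the argument of \cite[Proposition 6.1]{Richter23} by reparametrizing time through $g_m$, using Lemma \ref{lem:replace_richter_6.4} to convert $W$-averages over $n$ into weighted averages over the level sets of $g_m$.

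\textbf{Step 1 (blocking by levels of $g_m$).} Fix $\delta>0$ and choose $\xi\in(0,\delta)$ with $\lim_{H\to\oo}\E_{h\leq H}A(\xi h)=0$. Apply Lemma \ref{lem:replace_richter_6.4} with $f=g_m$ and this $\xi$. Write $K_k=\N\cap g_m^{-1}(\xi k,\xi(k+1))$, $p_k=\sum_{i\in K_k}\Delta W(i)$ and $P_K=\sum_{k\leq K}p_k$. By items (ii)--(iii) of that lemma, and since $\sum_{i\leq N}\Delta W(i)=W(N)$, it suffices to show
\[
\frac{1}{P_K}\sum_{k\leq K}p_k\, u_k\to 0, \qquad u_k:=\frac{1}{p_k}\sum_{i\in K_k}\Delta W(i)\Psi(g_1(i),\dots,g_m(i)).
\]
The boundary error from truncating $N$ at a level is $o(1)$ by item (iv).

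\textbf{Step 2 (shift structure).} For $i\in K_k$ and fixed $h$, the map $i\mapsto i'$ with $g_m(i')\approx g_m(i)+\xi h$ sends $K_k$ to $K_{k+h}$. Because $g_1,\dots,g_{m-1}\prec g_m$, the values $g_j$ change by $o(1)$ across $h$ consecutive blocks. More precisely, $g_j'/g_m'\to 0$, so $g_j$ varies by $o(1)$ over a $g_m$-increment of $\xi h$. Uniform continuity of $\Psi$ then gives
\[
\Psi(g_1(i'),\dots,g_m(i'))\approx \Psi(g_1(i),\dots,g_{m-1}(i),g_m(i)+\xi h),
\]
with the error tending to $0$ uniformly in $i$. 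By Lemma \ref{lem:replace_richter_6.4}(i), the weights $p_k\approx\Delta(W\circ g^{-1})(k)$ with $g=g_m/\xi$ vary slowly, so $p_{k+h}/p_k\to1$. Hence the correlation $\frac{1}{P_K}\sum_k p_k \langle u_{k+h},u_k\rangle$ should be compared with $A(\xi h)$.

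\textbf{Step 3 (vdC).} Apply the Hilbert-space van der Corput inequality (Lemma \ref{lem:vdc}, with weight $W\circ g^{-1}$ in place of $W$, which satisfies the hypotheses by items (iii)--(iv)) to the sequence $u_k$, viewed suitably as elements of $L^2$ of the block with normalized $\Delta W$ weights. This requires pairing points within blocks, and I will do that via the block-to-block map above. The resulting bound is
\[
\limsup_K\Big|\tfrac{1}{P_K}\sum p_k u_k\Big|^2\leq \limsup_H \E_{h\leq H}|A(\xi h)|+o_{\delta}(1).
\]
The error term $o_\delta(1)$ arises because within a block $g_m$ varies by up to $\xi<\delta$, and uniform continuity controls it. The right-hand side tends to $0$ along the chosen $\xi$, and then letting $\delta\to0$ finishes the argument.

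\textbf{Main obstacle.} The hardest step is making the Step~2 comparison rigorous. I expect the following to work: replace $\Psi$ on each block by its value at a representative point, and interpret $u_k$ as an average of $\Psi(\cdot,g_m+t)$ over $t\in[0,\xi)$ weighted by $\Delta W$. Lemma \ref{lem:replace_richter_6.4} shows this weighting is asymptotically $\Delta(W\circ g_m^{-1})$ in the continuous variable, which makes the shift by $\xi h$ in $g_m$ an exact block shift up to $o(1)$.
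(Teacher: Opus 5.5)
Your route is the paper's. The paper proves Theorem~\ref{thm:funny_vdc} by running Richter's argument for \cite[Proposition 6.1]{Richter23}, with Lemma~\ref{lem:replace_richter_6.4} supplying the block-weight asymptotics. That argument blocks $\N$ by the level sets $K_k$ of $g_m$, passes to averages over the block index with weights $p_k\approx\Delta(W\circ g^{-1})(k)$, and runs van der Corput in $k$. There a shift by $h$ blocks is a shift of $g_m$ by $\xi h$, while $g_1,\dots,g_{m-1}$ move by $o(1)$.

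\textbf{Step~3 does not close as written.} Your final bound contains $\E_{h\le H}|A(\xi h)|$, but the hypothesis only gives $\E_{h\le H}A(\xi h)\to0$ for the chosen $\xi$, and these really differ. For $\Psi(x)=e^{2\pi i\alpha x_m}$ one has $A(\epsilon)=e^{2\pi i\alpha\epsilon}$. Then $\E_{h\le H}|A(\xi h)|=1$ for every $H$, even though $\E_{h\le H}A(\xi h)\to0$ whenever $\alpha\xi\notin\Z$. So the claim that ``the right-hand side tends to $0$ along the chosen $\xi$'' is false for the inequality you stated. The fix is to use Lemma~\ref{lem:vdc} in its original form, with the absolute value outside the $h$-average, not the weakened form of Remark~\ref{remark:vdc}. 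You must also pull the approximation errors out before averaging in $h$. This works because those errors are $O(\omega_\Psi(\xi))$ uniformly in $h$, plus a term that is $o(1)$ as $K\to\oo$ for each fixed $h$; here $\omega_\Psi$ is the modulus of continuity of $\Psi$.

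\textbf{The pairing in Step~3 should be replaced.} The ``$L^2$ of the block'' formulation with a pointwise pairing $i\mapsto i'$ is not the right mechanism, because that map does not preserve $\Delta W$-weights. For example, with $g_m=\log x$ one gets $\Delta W(i')/\Delta W(i)\to e^{-\xi h}$, compensated only by $|K_{k+h}|/|K_k|\to e^{\xi h}$. Only the block totals $p_k$ are comparable.

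The clean way is your representative-point idea, applied \emph{before} van der Corput:
\begin{enumerate}
\item Fix $i_k\in K_k$ and set $v_k=\Psi(g_1(i_k),\dots,g_{m-1}(i_k),\xi k)$, so that $u_k=v_k+O(\omega_\Psi(\xi))+o(1)$.
\item Apply Lemma~\ref{lem:vdc} to the scalars $v_k$ with weight $W\circ g^{-1}$. This is legitimate by items (i)--(ii) of Lemma~\ref{lem:replace_richter_6.4} together with $\log(W\circ g^{-1})(x)\prec x$.
\item Convert $\E_k v_{k+h}\overline{v_k}$ back, via the blocking of Step~1, to the $W$-average of $\Psi(g_1(n),\dots,g_m(n)+\xi h)\overline{\Psi(g_1(n),\dots,g_m(n))}$, up to $O(\omega_\Psi(\xi))$.
\end{enumerate}
This bounds $\limsup_N|\E^W_{n\le N}\Psi(g_1(n),\dots,g_m(n))|^2$ by $\limsup_H|\E_{h\le H}A(\xi h)|$ plus a quantity tending to $0$ with $\xi$. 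Since the first term vanishes for the chosen $\xi$, letting $\delta\to0$ finishes the proof.
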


   The proof of Theorem \ref{thm:funny_vdc} is exactly the same as the proof of \cite[Proposition 6.1]{Richter23}, except that Lemma \ref{lem:replace_richter_6.4} in needed instead of \cite[Lemma 6.4]{Richter23}. This brings us to the following theorem, which is a generalization \cite[Theorem G]{Richter23} that will be used to prove Theorem \ref{thm_generalized_richter_D}.
\begin{theorem}\label{thm:generalized_G}
    Let $\mathcal{H}$ be a Hardy field. Let $g_1,\dots, g_{m}\in \mathcal{H}$ be subpolynomial and let $W\in \mathcal{H}$ with $1\prec \log W(x)\prec x$.
    Let $G$ be a simply connected nilpotent Lie group, $\Gamma$ a uniform and discrete subgroup of $G$. Define $v:\N\rightarrow G$ by 
    \begin{equation}
        v(n) = z_1^{g_1(n)}\cdots z_{m}^{g_{m}(n)}s_1^{p_1(n)}\cdots s_M^{p_M(n)},
    \end{equation}
    where $z_1,\dots, z_{m}, s_1\dots, s_M\in G$, are pairwise commuting, for each $i$ the set $\overline{s_i^{\mathbb{Z}}\Gamma}$ is a connected subnilmanifold of $X = G/\Gamma$, and $p_1,\dots, p_m\in \Q[x]$. Additionally, assume the following:
    \begin{enumerate}[label =(\arabic*)]
        \item $p_j(\Z)\subset \Z$ for all $j=1,\dots, M$,
        \item $\deg(p_j)=j$ for all $j=1,\dots, M$,
        \item $g_1(x)\prec g_2(x)\prec \cdots \prec g_{m}(x)$,
        \item \label{condition_(4)} for each horizontal character $\eta:X\rightarrow \T$ which is nontrivial on $\overline{z_1^{\R}\cdots z_{m}^{\R}\Gamma}$ we have $\lim_{N\to\oo}\E^W_{n\leq N}\eta(v(n)\Gamma)=0$,
        \item if $g\in \{g_1,\dots, g_{\ell}\}$ has $\deg(g)\geq 2$ then $g'\in \{g_1,\dots, g_{\ell}\}$.
    \end{enumerate}

    Then $(v(n)\Gamma)_{n\in \N}$ is uniformly distributed with respect to $W$-averages in the sub-nilmanifold $\overline{z_1^{\R}\cdots z_{m}^{\R}\cdot s_1^{\Z}\cdots s_M^{\Z}\Gamma}$. 
\end{theorem}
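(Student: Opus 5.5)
The proof will follow Richter's proof of \cite[Theorem G]{Richter23}, with two substitutions: Theorem \ref{thm:funny_vdc} replaces \cite[Proposition 6.1]{Richter23}, and Theorem \ref{thm:W_ud} replaces \cite[Theorem 5.1]{Richter23}. Write $Y=\overline{z_1^{\R}\cdots z_m^{\R}s_1^{\Z}\cdots s_M^{\Z}\Gamma}$. After replacing $G$ by the closed subgroup generating $Y$, we may assume $Y=X$.

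The argument is an induction on the nilpotency step of $G$, with an inner induction on $m$, the number of Hardy terms.

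\textbf{Abelian step.} Here $G/\Gamma$ is a torus. By the Weyl criterion (the characters are exactly the horizontal characters), uniform distribution with respect to $W$-averages reduces to showing $\lim_{N\to\oo}\E^W_{n\leq N}\eta(v(n)\Gamma)=0$ for every nontrivial $\eta$. If $\eta$ is nontrivial on $\overline{z_1^{\R}\cdots z_m^{\R}\Gamma}$, this is exactly hypothesis \ref{condition_(4)}. Otherwise $\eta(v(n)\Gamma)=\prod_j\eta(s_j)^{p_j(n)}$. Since each $\overline{s_j^{\Z}\Gamma}$ is connected, this is $e^{2\pi i p(n)}$ for some polynomial $p$ with irrational non-constant coefficients. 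Such a sequence is well distributed, and Lemma \ref{lem:wd_means_W_ud} then gives the $W$-average convergence.

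\textbf{Induction step.} We apply Lemma \ref{lem:richter_equivalence} with $L$ the smallest closed rational normal subgroup containing the $z_i^{\R}$ and $s_j^{\Z}$. The horizontal characters are handled as in the abelian step, using condition \ref{condition_(4)} together with polynomial well distribution. So it suffices to show $\E^W_{n\leq N}\varphi(v(n)\Gamma)\to0$ for every central character $(\varphi,\chi)$ with $\chi$ nontrivial on $L^{\circ}\cap Z(G)^{\circ}$. Writing $\Psi(t_1,\dots,t_m)=\varphi(z_1^{t_1}\cdots z_m^{t_m}s^{p(n)}\Gamma)$, we split by the nature of the top term $g_m$.

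Suppose first that $g_m$ is sublinear ($g_m\prec x$). Then we use Theorem \ref{thm:funny_vdc}. The correlation
\[
A(\epsilon)=\lim_{N\to\oo}\E^W_{n\leq N}\varphi(z_m^{\epsilon}v(n)\Gamma)\overline{\varphi(v(n)\Gamma)}
\]
is an average of $\varphi\otimes\overline{\varphi}$ along $(z_m^{\epsilon},1)v^{\triangle}(n)$ on $X\times_L X$. By Lemma \ref{lem:wd_in_diagonal}, for all but countably many $\xi$, the orbit $(z_m^{\xi h},1)X^{\triangle}$ equidistributes in $X\times_L X$. The product $\varphi\otimes\overline{\varphi}$ is invariant under the diagonal centre, so the problem descends to a nilmanifold of smaller step. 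The induction hypothesis then identifies $A(\xi h)$ with a Haar integral, and averaging over $h$ gives $\E_{h\leq H}A(\xi h)\to0$ because $\chi$ is nontrivial.

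Suppose instead that $\deg g_m\geq2$. Then we use the van der Corput inequality, Lemma \ref{lem:vdc}, with shifts $n\mapsto n+h$. We have $g(n+h)-g(n)=hg'(n)+\cdots$, and condition (5) guarantees $g'\in\{g_1,\dots,g_m\}$. Hence the differenced sequence is again of the form in the theorem: it lives on $X\times_L X$, has lower degree in the top term, and has the same family of Hardy functions. The induction hypothesis applies to it.

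\textbf{Main obstacle.} The hard step is verifying hypothesis \ref{condition_(4)} for the auxiliary systems that arise, that is, the relatively independent products $X\times_L X$ and the differenced sequences. This amounts to showing that linear combinations of the $g_i$ and their shifts/derivatives are u.d.\ mod 1 with respect to $W$-averages. Here Theorem \ref{thm:W_ud} is invoked in place of Boshernitzan's criterion. The difficulty is tracking compatibility: the combinations produced by differencing must remain compatible with $W$, whereas Richter only needed $W\preceq x$. Everything else is the same bookkeeping as in \cite{Richter23}, with $(W(n))$ averages replacing Ces\`aro averages.
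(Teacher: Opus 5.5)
\textbf{There is a genuine gap, at the step you flag as the main obstacle, and it cannot be closed the way you propose.} Your overall architecture is the paper's: an abelian case, a sublinear case via Theorem~\ref{thm:funny_vdc} and Lemma~\ref{lem:wd_in_diagonal}, and a remaining case via Lemma~\ref{lem:vdc} with integer shifts. Your abelian step is exactly Proposition~\ref{prop:abelian_case}.

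Theorem~\ref{thm:generalized_G} has no compatibility hypothesis at all. Its only link between $W$ and the $g_i$ is condition~\ref{condition_(4)}, which was introduced precisely because compatibility is not inherited by derivatives once $\log x\prec\log W(x)$. So there is nothing to feed into Theorem~\ref{thm:W_ud}. In the paper that theorem is used only later, in the proof of Theorem~\ref{thm_generalized_richter_D}, where horizontal characters see only elements of $\text{Span}\{f_1,\dots,f_\ell\}$.

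Worse, condition~\ref{condition_(4)} genuinely fails for differenced systems. Take the following data:
\begin{itemize}
\item $G$ the Heisenberg group, $z_2$ with irrational horizontal slope, and $z_1\neq 1_G$ central;
\item $g_2(x)=x^{3/2}$, $g_1=g_2'$, and no $s_j$;
\item $W(x)=\exp(x^{2/3})$.
\end{itemize}
All hypotheses hold: horizontal characters kill $z_1$, and $W$ is compatible with $x^{3/2}$. Here $L=G$, and $(v(n+h),v(n))=\bigl((z_1,z_1)(z_2,1)^{h}\bigr)^{g_1(n)}(z_2,z_2)^{g_2(n)}$ up to $o(1)$. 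The horizontal character $(x_1,x_2)\mapsto\eta(x_1)\overline{\eta(x_2)}$ is nontrivial on the new $z$-closure and evaluates to $e^{2\pi i h\eta(z_2)g_1(n)}$. Since $(\sqrt{x})'/(\log W)'(x)\to0$, case (2) in the proof of Theorem~\ref{thm:d=1_thm} shows these $W$-averages do not even converge. So the differenced sequence is not $W$-equidistributed in any measure, and no induction hypothesis of the form of Theorem~\ref{thm:generalized_G} applies to it, however carefully compatibility is tracked.

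There are three further concrete problems.
\begin{itemize}
\item Your sublinear branch allows $s_j\neq 1_G$. Then your $\Psi$ depends on $n$ through $s^{p(n)}$, and Theorem~\ref{thm:funny_vdc}, which needs a fixed $\Psi$ of $(g_1(n),\dots,g_m(n))$, does not apply. The paper's Theorem~\ref{thm:sub_linear} assumes $s_1=\dots=s_M=1_G$ and sends $\mathscr{J}\neq\varnothing$ to the shift branch.
\item One $L$ cannot serve both branches. Lemma~\ref{lem:wd_in_diagonal} gives equidistribution in $X\times_{L'}X$ with $L'$ generated by $z_m^{\R}$ alone, and $\int\Phi=0$ needs $\chi$ nontrivial on $(L')^{\circ}\cap Z(G)^{\circ}$. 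The paper therefore takes $L=L'$ in the sublinear case, and $L$ generated by $\{z_i^{\R}:\deg g_i\geq 2\}\cup\{s_j^{\Z}\}$ in the general case.
\item Your induction does not descend. The differenced sequence has the same family $\{g_i\}$, still contains $(z_m,z_m)^{g_m(n)}$, and lives on $(G\times_L G)/Z(G)^{\triangle}$, which need not have smaller step. For Heisenberg with $L=G$ it is still $2$-step.
\end{itemize}

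The paper never re-verifies condition~\ref{condition_(4)} for a differenced system. It imports Richter's facts: $\int\widehat\Phi\,d\mu_{\widehat X}=0$, $A(h)=\int\widehat\Phi\,d\mu_{\widehat Y_h}$, and the well-distributed family $w(h,\mathbf n,\mathbf l)$. It then gets $\lim_{H}\E_{h\le H}A(h)=0$ from Ces\`aro well-distribution via Remark~\ref{remark:wd_means_W_ud}.

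Be aware that the imported identity for $A(h)$ is exactly where your obstacle sits. The paper takes it over on the strength of its remark that Richter's proofs are unchanged, and the example shows it cannot come from $W$-equidistribution of the differenced sequence. In the example it does hold ($A(h)=0$), but for a different reason: the Heisenberg nilflow is mixing on nontrivial central characters. A complete argument needs a new input at this point.
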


\begin{remark}
    Condition \ref{condition_(4)} in Theorem \ref{thm:generalized_G} is the only part which is not an immediate generalization of \cite[Theorem G]{Richter23}. It replaces the assumption made in \cite[Theorem G]{Richter23} that $W$ is compatible with each $f_i$, which cannot hold in our context since compatibility with $W$ is not preserved under taking derivatives whenever $\log(x)\preceq \log W(x)$. However, condition \ref{condition_(4)} could replace the $W$ compatibility condition in \cite[Theorem G]{Richter23} and the proofs of \cite{Richter23} would remain unchanged.
\end{remark}
We prove Theorem \ref{thm:generalized_G} following the same strategy used the proof of \cite[Theorem G]{Richter23} by considering three cases. In case 1, we assume $G$ is abelian. In case 2, we assume that $s_1=\cdots = s_m = 1_G$ and that each $g_i$ is sublinear. Finally, in case 3, we prove the theorem in general.

\begin{proposition}[cf. {\cite[Theorem  4.1]{Richter23}}]\label{prop:abelian_case}
    Theorem \ref{thm:generalized_G} is true when $G$ is abelian.
\end{proposition}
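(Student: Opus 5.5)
Since $G$ is abelian and simply connected, we may identify $G\cong\R^D$ with $\Gamma$ a lattice, so $X=G/\Gamma$ is a torus. The set $Y:=\overline{z_1^{\R}\cdots z_m^{\R}\cdot s_1^{\Z}\cdots s_M^{\Z}\Gamma}$ is a closed subgroup of $X$, and $v(n)\Gamma\in Y$ for every $n$. The plan is to verify a Weyl criterion on $Y$. Characters of $Y$ extend to characters of $X$, and on a torus the characters are exactly the horizontal characters $\eta$. Trigonometric polynomials are dense in $C(Y)$, so it suffices to show
$$
\lim_{N\to\oo}\E^W_{n\leq N}\eta(v(n)\Gamma)=0
$$
for every horizontal character $\eta$ of $X$ that is nontrivial on $Y$. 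Characters trivial on $Y$ contribute $1=\int_Y\eta\,d\mu_Y$ trivially, and $\mu_Y$ is Haar measure on $Y$. (If $Y$ is not connected, Haar measure on the compact group $Y$ still works the same way.)

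\textbf{Case A: $\eta$ is nontrivial on $\overline{z_1^{\R}\cdots z_m^{\R}\Gamma}$.} Here the desired limit is exactly hypothesis \ref{condition_(4)}.

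\textbf{Case B: $\eta$ is trivial on $\overline{z_1^{\R}\cdots z_m^{\R}\Gamma}$.} Then $\eta(z_i^{t}\Gamma)=1$ for all real $t$. Since $G$ is abelian,
$$
\eta(v(n)\Gamma)=\prod_{j=1}^M \eta(s_j\Gamma)^{p_j(n)},
$$
which uses $p_j(\Z)\subset\Z$. Write $\eta(s_j\Gamma)=e^{2\pi i\alpha_j}$. Because $\eta$ is nontrivial on $Y$ but trivial on the $z$-part, it is nontrivial on some $S_j:=\overline{s_j^{\Z}\Gamma}$. Each $S_j$ is a connected subtorus by assumption.

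We claim each $\alpha_j$ is either an integer or irrational. If $k\alpha_j\in\Z$ for some $k\in\N$, then $\eta^k$ is trivial on $s_j^{\Z}\Gamma$, hence on $S_j$. On the connected group $S_j$, $\eta$ then takes values in the finite group of $k$-th roots of unity and is continuous, so it is identically $1$ there, i.e.\ $\alpha_j\in\Z$. Discarding the indices with $\alpha_j\in\Z$ (which contribute $1$ since $p_j(\Z)\subset\Z$), we get $\eta(v(n)\Gamma)=e^{2\pi i q(n)}$, where
$$
q=\sum_{j\in J}\alpha_j p_j,
$$
$J$ is nonempty, and $\alpha_j\notin\Q$ for $j\in J$. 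Since $\deg p_j=j$ and the $p_j$ have rational coefficients, $q$ has degree $j_0=\max J$, and its leading coefficient is $\alpha_{j_0}$ times a nonzero rational, hence irrational. By Weyl's theorem the polynomial sequence $(q(n))$ is well distributed mod $1$, i.e.\ $\lim_{N-M\to\oo}\frac{1}{N-M}\sum_{n=M}^N e^{2\pi i q(n)}=0$. Since $\log W(x)\prec x$ gives $\Delta W(N)/W(N)\to 0$, Lemma \ref{lem:wd_means_W_ud} then yields $\lim_{N\to\oo}\E^W_{n\leq N}e^{2\pi i q(n)}=0$.

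Combining the two cases gives the Weyl criterion on $Y$, and hence $W$-uniform distribution of $(v(n)\Gamma)$ in $Y$. The only delicate point is the rational-versus-irrational dichotomy for the $\alpha_j$, which is exactly where connectedness of $\overline{s_j^{\Z}\Gamma}$ is used; everything else reduces to hypothesis \ref{condition_(4)} and Lemma \ref{lem:wd_means_W_ud}.
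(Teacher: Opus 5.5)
Your proof is correct and follows the paper's own argument: it applies the Weyl criterion on the subtorus, uses condition (4) for characters nontrivial on the $z$-part, and uses connectedness of $\overline{s_j^{\Z}\Gamma}$ to reduce the remaining characters to a polynomial phase with an irrational coefficient, which is well distributed mod $1$ and therefore has vanishing $\E^W$-averages. You also make explicit two steps the paper leaves implicit: that the top-degree coefficient of $q$ is irrational, and the passage from well-distribution to $W$-averages via Lemma \ref{lem:wd_means_W_ud}.
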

\begin{proof}
It is known that any connected abelian Lie group is isomorphic to $\R^{d}$ for some $d\geq 0$. Then $\Gamma$ must be isomorphic to $\mathbb{Z}^{d}$, so without loss of generality assume $G = \R^d$ and $\Gamma = \mathbb{Z}^{d}$.

The horizontal characters $\eta:\R^d/\Z^d\rightarrow \T$ are precisely the nonzero central characters $\varphi:\R^d/\Z^d\rightarrow \C\setminus \{0\}$. So, in order to show that
\begin{equation*}
    v(n) \mod \Z^d= z_1g_1(n)+\cdots +z_{m}g_{m}(n)+s_1p_1(n)+\dots+s_Mp_M(n)\mod \Z^d
\end{equation*}
is uniformly distributed in $T:=\overline{\R z_1+\cdots \R z_{m}+\Z s_1+\cdots \Z s_{M}\mod \Z^{d}}$ with respect to $W$-averages it suffices to show that $ \lim_{N\to\oo}\E_{n\leq N}^W\eta( v(n)\mod \Z^d) = 0$ for each nontrival horizontal character $\eta:T\rightarrow \T$ by the Weyl criterion (see Lemma \ref{lem:richter_equivalence}). However, this is guaranteed by condition \ref{condition_(4)} of Theorem \ref{thm:generalized_G}. Indeed, when $\eta$ is is nontrivial on $\overline{\R z_1+\cdots \R z_{m}\mod \Z^{d}}$ we have $\lim_{N\to\oo}\E_{n\leq N}^W\eta( v(n)\mod \Z^d)=0$. So we may suppose that $\eta$ is identically equal to $1$ on $\overline{\R z_1+\cdots \R z_{m}\mod \Z^{d}}$. Then 
\begin{equation}\label{eq:weyl_criterion_in_proof_of_abelian_case}
   \E_{n\leq N}^W\eta( v(n)\mod \Z^d)=\E_{n\leq N}^W\exp( \zeta_1p_1(n)+\dots+\zeta_Mp_M(n))
\end{equation}
for $\zeta_{1},\dots, \zeta_M\in \R$ which are irrational or equal to $0$ (since $\overline{s_i\Gamma}$ is connected for all $i$). At least one $\zeta_i$ must be nonzero and so (\ref{eq:weyl_criterion_in_proof_of_abelian_case}) is equal to $0$ by Theorem \ref{thm:Boshernitzan_wd}. This concludes the proof.

\end{proof}

Using Proposition \ref{prop:abelian_case}, we will prove the next case of Theorem \ref{thm:generalized_G}.
\begin{theorem}[cf. {\cite[Section 4.2]{Richter23}}]\label{thm:sub_linear}
    Theorem \ref{thm:generalized_G} is true under the added assumptions that $s_1=\cdots=s_M = 1_G$ and $g_{m}(x)\prec x$.
\end{theorem}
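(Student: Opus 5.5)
Under the added assumptions $s_1=\cdots=s_M=1_G$ and $g_m(x)\prec x$, we have $v(n)=z_1^{g_1(n)}\cdots z_m^{g_m(n)}$ with all $g_i$ sublinear. By condition (5) there are no derivatives to track, since $\deg(g_i)\le 1$ for all $i$. I will follow the strategy of \cite[Section 4.2]{Richter23} and induct on the nilpotency step of $G$ (more precisely, on $\dim G$). Put $Y=\overline{z_1^{\R}\cdots z_m^{\R}\Gamma}$. After replacing $G$ by the closed connected subgroup generated by the one-parameter subgroups $z_i^{\R}$ (and, after passing to the closure, a suitable rational subgroup), I may assume $Y=X$. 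The base case, $G$ abelian, is Proposition \ref{prop:abelian_case}.

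For the induction step I will use the generalized Weyl criterion, Lemma \ref{lem:richter_equivalence}. Let $L$ be the smallest connected closed rational normal subgroup containing $z_m^{\R}$, chosen so that $L^{\circ}\cap Z(G)^{\circ}$ is nontrivial (if $z_m$ is central, take $L=\overline{z_m^{\R}}$ directly). It then suffices to show $\lim_{N\to\oo}\E^W_{n\le N}\varphi(v(n)\Gamma)=0$ for every central character $(\varphi,\chi)$ with $\chi$ nontrivial on $L^{\circ}\cap Z(G)^{\circ}$. Characters that are trivial on the center factor through $G/Z(G)^{\circ}$, which is handled by the induction hypothesis; the hypothesis still applies there, because condition (4) passes to quotients, horizontal characters of the quotient being horizontal characters of $X$. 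To handle the nontrivial central characters, I apply Theorem \ref{thm:funny_vdc} with $\Psi(t_1,\dots,t_m)=\varphi(z_1^{t_1}\cdots z_m^{t_m}\Gamma)$, which is bounded and uniformly continuous because $X$ is compact. The required growth ordering $\log W\prec g_1\prec\cdots\prec g_m\prec x$ holds after discarding bounded $g_i$. There is one subtlety: if $g_1\preceq \log W$, then property (4) together with Theorem \ref{thm:W_ud} (the $\deg^*=1$ case, Theorem \ref{thm:d=1_thm}) implies that such a $g_1$ forces the corresponding $z_1$ to contribute trivially. So I will first argue that every $g_i$ with $g_i\preceq\log W$ can be absorbed, since it has a limit or is otherwise killed by condition (4).

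By commutativity,
$$
\Psi(\dots,g_m+\epsilon)\overline{\Psi(\dots,g_m)} = \varphi(z_m^{\epsilon}x)\overline{\varphi(x)}
$$
with $x=v(n)\Gamma$. This is a function on $X\times_L X$ evaluated along the sequence $(z_m^{\epsilon}v(n),v(n))$. By the induction hypothesis applied in $X\times_L X$, whose relevant central direction is smaller, the limit $A(\epsilon)$ exists and equals $\int \varphi\otimes\overline{\varphi}$ over the orbit closure $(z_m^{\epsilon},1_G)X^{\triangle}$ translated appropriately. Lemma \ref{lem:wd_in_diagonal} gives, for all but countably many $\xi$, that the subnilmanifolds $(z_m^{\xi h},1_G)X^{\triangle}$ are uniformly distributed in $X\times_L X$. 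Hence $\E_{h\le H}A(\xi h)\to\int_{X\times_LX}\varphi\otimes\overline{\varphi}$, and this integral vanishes because $\chi$ is nontrivial on $L\cap Z(G)$ (integrate over the fiber direction $(1,\ell)$, $\ell\in L\cap Z(G)^{\circ}$). Theorem \ref{thm:funny_vdc} then gives $\E^W\varphi(v(n)\Gamma)\to0$.

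I expect the main obstacle to be verifying that the induction hypothesis applies to the pair sequence in $X\times_L X$. Its hypothesis (4) requires showing that horizontal characters of $X\times_L X$ which are nontrivial on the relevant subnilmanifold have vanishing $W$-averages. These characters decompose into horizontal characters of $X$ evaluated at $v(n)$, shifted by constants $z_m^{\epsilon}$, so condition (4) for $X$ should transfer. The care needed is to check that nontriviality is preserved and that the reduction to $Y=X$ is compatible with $X\times_L X$. This is exactly where the original proof used $W$-compatibility of the $g_i$, and it must now be replaced by condition (4).
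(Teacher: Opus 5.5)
\textbf{The gap: the existence and value of $A(\epsilon)$.} You get $A(\epsilon)$ by applying the induction hypothesis to the pair sequence $(z_m^{\epsilon}v(n),v(n))$ in $X\times_L X$, but neither of your induction parameters decreases there. The group $G\times_L G$ contains the diagonal copy of $G$, so it has the same nilpotency step, and $\dim(G\times_L G)=\dim G+\dim L>\dim G$. Worse, this sequence is $(z_m^{\epsilon},1_G)\cdot(v(n),v(n))$, whose orbit closure is a translate of $X^{\triangle}\cong X$. Its uniform distribution there is therefore literally the statement you are trying to prove, and ``whose relevant central direction is smaller'' does not name any quantity that actually goes down. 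For the same reason, the `main obstacle' you single out (transferring condition (4) to $X\times_L X$) is not where the difficulty lies.

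\textbf{The missing idea, which the paper uses,} is that the correlation $\Phi_{\epsilon}(g\Gamma)=\varphi(z_m^{\epsilon}g\Gamma)\overline{\varphi(g\Gamma)}$ is invariant under left translation by $Z(G)$. For $y\in Z(G)$ one has $\Phi_\epsilon(yg\Gamma)=\chi(y)\overline{\chi(y)}\,\Phi_\epsilon(g\Gamma)=\Phi_\epsilon(g\Gamma)$. So $\Phi_\epsilon$ descends to $\widehat X=\widehat G/\widehat\Gamma$ with $\widehat G=G/Z(G)$, whose step is one less than that of $G$. The projected sequence $\sigma(z_1)^{g_1(n)}\cdots\sigma(z_m)^{g_m(n)}$ has the same form and inherits condition (4), because horizontal characters of $\widehat X$ pull back to horizontal characters of $X$.

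You did write down this quotient-by-the-center observation, but you applied it to central characters trivial on the center, which Lemma \ref{lem:richter_equivalence} never asks you to handle. Applied instead to $\Phi_\epsilon$, the induction hypothesis on $\widehat X$ gives
\[
A(\epsilon)=\int_X\Phi_\epsilon\,d\mu_X=\int_{X^{\triangle}}\Phi((z_m^{\epsilon}x,x))\,d\mu_{X^{\triangle}}.
\]
From there your use of Lemma \ref{lem:wd_in_diagonal}, and of the central fibre $(1_G,\ell)$ with $\ell\in L^{\circ}\cap Z(G)^{\circ}$ to kill $\int_{X\times_L X}\Phi$, goes through as written.

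\textbf{A separate, unjustified step.} Your plan to ``absorb'' every $g_i\preceq\log W$ via condition (4) does not work. If $z_i\in[G,G]$ (e.g.\ $z_i$ central), no horizontal character sees $z_i$, so condition (4) gives no information about $g_i$, and $g_i$ need not have a limit. The paper makes no such reduction.
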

\begin{proof}
    As in the proof of \cite[Theorem 4.2]{Richter23} we assume that $X  = \overline{z_1^{\R}\cdots z_{m}^{\R}\Gamma}$, that $z_{m}\neq 1_G$, and that $G$ is a $d$-step nilpotent group and we proceed by induction on $d$.

    In the base case $d=1$, $G$ is abelian and so we know that the theorem statement holds in this case by Proposition \ref{prop:abelian_case}. So we suppose that $d\geq 2$ and that the statement of the theorem is true for any $d-1$-step nilpotent group. Let $L$ be the smallest connected, closed, rational, normal subgroup of $G$ which contains $z_{m}^{\R}$. By Lemma \ref{lem:richter_equivalence}, it suffices to show that 
    \begin{equation}\label{eq:richter_equivalence_again}
        \lim_{N\to\oo}\E_{n\leq N}^W\varphi(v(n)\Gamma)=0
    \end{equation}
    holds for each central character $(\varphi,\chi)$ such that $\chi$ is nontrivial on $L^{\circ}\cap Z(G)^{\circ}$. To this end, define
    \begin{equation}
        A(\epsilon) = \lim_{N\to\infty}\E_{n\leq N}^W\left(\varphi(v(n)z_{m}^{\epsilon}\Gamma)\cdot \overline{\varphi(v(n)\Gamma)}\right)
    \end{equation}
as in the statement of Theorem \ref{thm:funny_vdc}. To show that (\ref{eq:richter_equivalence_again}) holds we will apply Theorem \ref{thm:funny_vdc} with $\Psi(x_1,\dots, x_m) = \varphi(z_1^{g_1(x_1)}\cdots z_{m}^{g_{m}(x_m)})$ by first showing that the limit in the definition of $A(\epsilon)$ exists for each $\epsilon\in \R$ and that for each $\delta>0$ there is a $\xi\in (0,\delta)$ such that $\lim_{H\to\infty}\E_{h\leq H}A(\xi h) = 0$. Once we have shown these two facts, we will have shown that $\lim_{N\to\oo}\E_{n\leq N}^W\varphi(v(n)\Gamma)=0$ as desired.

 Fix $\epsilon\in \R$, put $b=z_{m}$, define $v^{\triangle}(n) = (v(n),v(n))$ for $n\in \N$, and define $\Phi_{\epsilon}(g\Gamma)=\varphi(g\cdot b^{\epsilon}\Gamma)\cdot \overline{\varphi(g\Gamma)}$ for all $g\in X$. Then 
\begin{equation}
    A(\epsilon)  = \lim_{N\to\oo}\E_{n\leq N}^W\Phi_{\epsilon}(v(n)\Gamma).
\end{equation}
Additionally, let $\widehat{G} = G/Z(G)$, let $\sigma:G\rightarrow \widehat{G}$ be the quotient map, and put $\widehat{\Gamma} = \sigma(\Gamma)$, $\widehat{v}(n) = \sigma(v(n))$, and $\widehat{X} = \widehat{G}/\widehat{\Gamma} = \overline{\sigma(z_1)^{\R}\cdots \sigma(z_{m})^{\R}\Gamma}$. Note that $\Phi_{\epsilon}(yx\Gamma) = \Phi(x\Gamma)$ for all $y\in Z(G)$, and so the function $\widehat{\Phi}_{\epsilon}:\widehat{X}\rightarrow \C$ defined by  $\widehat{\Phi}_{\epsilon}(g\widehat{\Gamma}) = {\Phi}_{\epsilon}(g\Gamma)$ is well defined.

Note that $\widehat{G}$ is a $d-1$-step nilpotent group. By the induction hypothesis, we have that 
\begin{align*}
    A(\epsilon)  = \lim_{N\to\oo}\E_{n\leq N}^W\Phi_{\epsilon}(v(n)\Gamma) = \lim_{N\to\oo}\E_{n\leq N}^W\widehat{\Phi}_{\epsilon}(v(n)\widehat{\Gamma})= \int_{\widehat{X}}\widehat{\Phi}_{\epsilon}d\mu_{\widehat{X}} = \int_X{\Phi}_{\epsilon}d\mu_{{X}}.
\end{align*}
Define $\Phi:X\times_L X\rightarrow \C$ by $\Phi((g_1,g_2)\Gamma\times_L\Gamma) =\varphi(g_1\Gamma)\overline{\varphi(g_2\Gamma)}$, so that $\Phi$ is a continuous with $\Phi((b^{\epsilon}g,g)\Gamma\times_L\Gamma) = \Phi_{\epsilon}(g\Gamma )$ for all $g\in G$. Then
\begin{align*}
\int \Phi_{\epsilon}d\mu_{{X}} = \int_{(x,x)\in X^{\triangle}} \Phi((b^{\epsilon}x,x)\Gamma\times_L\Gamma)d\mu_{{X^{\triangle}}}.
\end{align*}
Lastly, from Lemma \ref{lem:wd_in_diagonal}, for each $\delta>0$ there exists a value of $\xi\in (0,\delta)$ such that the sequence of subnilmanifolds $(b^{\xi n}, 1_G)X^{\triangle}))_{n\in \N}$ is uniformly distributed in $X\times_L X$, which shows that
\begin{equation}
    \lim_{H\to\oo}\E_{h\leq H}A(\xi h) = \lim_{H\to\oo}\E_{h\leq H} \int_{(x,x)\in X^{\triangle}} \Phi((b^{\xi h}x,x)\Gamma\times_L\Gamma)d\mu_{{X^{\triangle}}} = \int_{X\times_L X}\Phi d\mu_{X\times_L X}.
\end{equation}
Since $\int\Phi d\mu_{X\times_L X}=0$ by \cite[Claim 1 p. 449]{Richter23}, we have completed the proof of the induction step and so we are done.
 \end{proof}
We are now ready to prove the general case of Theorem \ref{thm:generalized_G} (cf. {\cite[Section 4.3]{Richter23}}).
 \begin{proof}[Proof of Theorem \ref{thm:generalized_G}]
 As in \cite[p. 453]{Richter23}, we assume without loss of generality that $X = \overline{z_1^{\R}\cdots z_{m}^{\R}\cdot s_1^{\Z}\cdots s_M^{\Z}\Gamma}$ and that $z_i\neq 1_G$ for all $i$. Indeed, one can replace $G$ with the smallest closed, rational subgroup of $G$ containing $z_1^{\R}\cdots z_{m}^{\R}\cdot s_1^{\Z}\cdots s_M^{\Z}$ if necessary. 
 
 Let $\mathscr{I}$ be the set of all $i$ such that $\deg(g_i)\geq 2$ and $\mathscr{J}$ the set of all $j$ such that $s_j\neq 1_{G}$. If $\mathscr{I}\cup \mathscr{J}=\varnothing$ then we can apply Theorem \ref{thm:sub_linear}, so suppose that $\mathscr{I}\cup \mathscr{J}\neq \varnothing$. Let $L$ be the smallest closed, connected, rational, normal subgroup of $G$ which contains $z_i^{\R}$ for all $i\in \mathscr{I}$ and $s_{j}^{\Z}$ for all $j=1,\dots, M$. Richter shows that $L^{\circ}\cap Z(G)^{\circ}$ is nontrivial \cite[p. 454]{Richter23} and so by Lemma \ref{lem:richter_equivalence} it suffices to show that 
 \begin{equation}\label{eq:generalized_G_phi}
     \lim_{N\to\oo}\E_{n\leq N}^W\varphi(v(n)\Gamma)=0
 \end{equation}
 for each central character $(\varphi,\chi)$ such that $\chi$ is nontrival on $L^{\circ}\cap Z(G)^{\circ}$. Define 
 $$
 A(h) = \lim_{N\to\oo}\E_{n\leq N}^W(\varphi(v(n+h)\Gamma)\cdot \overline{\varphi(v(n)\Gamma)}).
 $$
  We will prove that
 \begin{equation}
\lim_{H\to\oo}\E_{h\leq H}A(h)=0
 \end{equation}
 so that (\ref{eq:generalized_G_phi}) follows by Lemma \ref{lem:vdc} (or by {\cite[Theorem A.8]{Richter23}}).

Now we appeal to following facts proven by Richter \cite[pp. 453-466]{Richter23}.
 \begin{itemize}
    \item There is a continuous function $\Phi:X\times_L X\rightarrow \C$ defined by 
    \begin{equation}
        \Phi((g_1,g_2)\Gamma\times_L \Gamma) = \varphi(g_1\gamma)\cdot \overline{\varphi(g_2\Gamma)} \quad \text{ for all} (g_1,g_2)\in G\times_L G.
    \end{equation}
     \item Let $Z(G)^{\triangle} = \{(g,g):g\in Z(G)\}$ and let $\sigma: G\times_L G\rightarrow G\times_L G/Z(G)^{\triangle}$ be the quotient map. Define $\widehat{G} = \sigma(G)$, $\widehat{\Gamma}= \sigma(\Gamma)$, $\widehat{X}=\widehat{G}/\widehat{\Gamma}$. Then $\Phi$ induces a continuous function $\widehat{\Phi}:\widehat{X}\rightarrow \C$ such that 
         \begin{equation}
        \Phi((g_1,g_2)\Gamma\times_L \Gamma) = \widehat{\Phi}(\sigma(g_1,g_2)\widehat{\Gamma}) \quad \text{ for all }~ (g_1,g_2)\in G\times_L G.
        \end{equation}
    \item $\int \widehat{\Phi}~d\mu_{\widehat{X}}=0$.
    \item There exists a sequence of subnilmanifolds of $\widehat{X}$, $(\widehat{Y}_h)_{h\in \N}$ such that $A(h) = \int \widehat{\Phi}~d\mu_{\widehat{Y}_h}$ for all $h\in \N$.
    \item There exists a sequence $(w(h,\mathbf{n}, \mathbf{l}))_{h\in \N, \mathbf{n}\in \N^{m},\mathbf{l}\in \N^{M}}$ in $\widehat{X}$ which is well distributed in $\widehat{X}$ and for each fixed $h\in \N$, the sequence $(w(h,\mathbf{n}, \mathbf{l}))_{\mathbf{n}\in \N^{m},\mathbf{l}\in \N^{M}}$ is uniformly distributed in $\widehat{Y}_h$.
 \end{itemize}
 
Now we conclude the proof by noting that 
\begin{align*}
    &\lim_{H\to\oo}\E_{h\leq H}A(h)= \lim_{H\to\oo}\E_{h\leq H}\int \widehat{\Phi}~d\mu_{\widehat{Y}_h}\\ =& \lim_{H\to\oo}\lim_{N\to\oo}\E_{h\leq H}\left(\frac{1}{N^{m+M}}\sum_{\mathbf{n}\in\{1,\dots, N\}^{m}}\sum_{\mathbf{l}\in\{1,\dots, N\}^M}\widehat{\Phi}(w(h,\mathbf{n}, \mathbf{l}))\right).
\end{align*}
 
 Since $(w(h,\mathbf{n}, \mathbf{l}))_{h\in \N,\mathbf{n}\in \N^{m},\mathbf{l}\in \N^{M}}$ is well distributed in $\widehat{X}$, observe that Remark \ref{remark:wd_means_W_ud} holds with 
 $$
 \alpha_{H,N_1,\dots, N_{m+M},h,n_1,\dots, n_{m+M}} = \frac{1}{HN_1\cdots  N_{m+M}}\cdot 1_{h\leq H}\cdot 1_{\{n_1\leq N_1\}}\cdots 1_{\{n_{m+M}\leq N_{m+M}\}}
 $$
 to see that that
 \begin{equation}
     \lim_{H\to\oo}\lim_{N\to\oo}\E_{h\leq H}\left(\frac{1}{N^{m+M}}\sum_{\mathbf{n}\in\{1,\dots, N\}^{m}}\sum_{\mathbf{l}\in\{1,\dots, N\}^M}\widehat{\Phi}(w(h,\mathbf{n}, \mathbf{l}))\right) = \int \widehat{\Phi}~d\mu_{\widehat{X}}= 0,
 \end{equation}
 as desired.
 \end{proof}

 We require the following auxiliary lemma, along with Theorem \ref{thm:generalized_G}, to obtain Theorem \ref{thm_generalized_richter_D}.

  \begin{lemma}\label{lem:richter_A.6}
Let $\mathcal{H}$ be a Hardy field. Suppose that $W,f_1,\dots, f_{\ell}\in \mathcal{H}$ such that $\{f_1,\dots, f_{\ell}\}$ satisfies property (\ref{eq:W_compatible}) and $1 \prec \log W(x)\prec x$. Then there exists $m \in \N$, $g_1,\dots, g_m \in \mathcal{H}$, $p_1(t),\dots , p_{\ell}(t) \in \R[t]$, and $(\lambda_{i,j})_{i,j=1}^{\ell,m}\subset \R$
with the following properties:
\begin{enumerate}[label = (\arabic*)]
    \item $g_1(t)\prec \cdots \prec g_m(t)$,
    \item for all nonzero $g\in \{g_1,\dots, g_m\}$, there exists $d\in \N$ such that $x^{d-1}\prec g(x)\prec x^d$,
    \item for all $g\in \{g_1,\dots, g_m\}$, either $W$ is compatible with $g$ or there is an $i\in \{1,\dots, m\}$ such that $g=g_i'$,
    \item for all $g\in \{g_1,\dots, g_m\}$ with $\deg(g)\geq 2$, $g'\in \{g_1,\dots, g_m\}$,
    \item for all $i\in \{1,\dots, \ell\}$,
    $$
    \lim_{t\to\oo}\left|f_i(t)-\sum_{j=1}^m\lambda_{i,j}g_j(t)-p_i(t)\right|=0.
    $$
\end{enumerate}

 \end{lemma}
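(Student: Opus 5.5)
We will build the family $\{g_1,\dots,g_m\}$ by a Gram--Schmidt-type reduction modulo the Hardy-field growth ordering. This is essentially the argument of \cite[Lemma A.6]{Richter23}, adapted so that compatibility is only demanded of the ``top'' functions and not of their derivatives. Pass to a maximal Hardy field containing $\mathcal{H}$; this is harmless, since all limits are unchanged. Let $V=\text{Span}\{f_1,\dots,f_\ell\}$ and let $\widetilde V$ be the span of all derivatives $f_i^{(k)}$, $k\ge 0$. This is finite dimensional modulo functions tending to $0$, because each $f_i$ is subpolynomial and high derivatives tend to $0$. We first remove polynomial parts. For each $f\in V$, write $f=\tilde f+p_f+o(1)$ with $p_f\in\R[t]$, choosing $\tilde f$ so that no nonzero element of the span of the $\tilde f$'s is asymptotic to a polynomial up to $o(1)$. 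We take $p_i:=p_{f_i}$; these are the polynomials appearing in item (5). This step uses that a Hardy function is either within $o(1)$ of a unique polynomial part of degree at most its degree, or its difference from every polynomial is unbounded or tends to a nonzero limit. The constant limits can be absorbed into $p_i$.

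Next we choose a basis of the reduced span $U=\text{Span}\{\tilde f_1,\dots,\tilde f_\ell\}$ with distinct growth rates. Since any two elements of a Hardy field are comparable, we can triangularize: order the elements by growth and repeatedly subtract multiples to kill leading asymptotics. This yields $h_1\prec\cdots\prec h_r$ spanning $U$ modulo $o(1)$, and each $h_j$ satisfies $x^{d-1}\prec h_j\prec x^d$ for some $d$. To arrange the latter, if some $h_j\sim cx^d$, we subtract $cx^d$ into the polynomial part and repeat; this terminates by degree.

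Since each $h_j\in U\subset V+\R[t]$, property $(\star)$ for $V$ gives that $W$ is compatible with each $h_j$. Indeed, compatibility is insensitive to adding real polynomials with the constant absorbed, up to the $\Q[x]$ quantifier. One has to check this carefully, because $p_i$ may have irrational coefficients; but $\deg^*$ and the condition $(\star)$ only see $f^{(d)}$ modulo $\Q[x]$, and a real polynomial of degree $<d$ disappears on differentiating $d$ times, so that matches.

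Then we close under differentiation. We adjoin $h_j',h_j'',\dots$ for every $h_j$ of degree $\ge2$, stopping once a derivative has degree $\le1$. Then we re-triangularize the enlarged finite set, as above, into $g_1\prec\cdots\prec g_m$, again pushing polynomial asymptotics into $p_i$. The triangularization must preserve items (3) and (4). This is the main obstacle: after taking linear combinations, a new basis element may be neither compatible with $W$ nor literally a derivative of another basis element.

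To handle it, we triangularize degree by degree, from the top degree downward. At each degree $d\ge1$, the functions of degree exactly $d$ consist of the original $h_j$'s together with derivatives of degree-$(d+1)$ basis elements already chosen. We first keep the derivatives $g'$ of higher basis elements as basis elements, after verifying that they still have distinct growths; derivatives of functions with distinct growth in $(x^{d},x^{d+1})$ have distinct growth, by L'H\^opital. We then reduce the remaining $h_j$'s of degree $d$ modulo the span of those derivatives, and keep only the reduced elements of the original span. Those reduced elements are combinations of an element of $V+\R[t]$ and derivatives of other elements of $V$. Here we expect to need an argument that $\deg^*$ and compatibility are controlled after subtracting derivative terms. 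The fallback is to enlarge the family: include both the reduced function and the derivatives, so that item (3) holds for each member, either as compatible or as a derivative. Distinctness of growth is then restored by re-indexing, since coincident growths are merged by subtracting, which only produces lower-order terms and is handled at the lower degree.

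Item (5) then holds by construction, with $\lambda_{i,j}$ the coefficients from the triangularization.
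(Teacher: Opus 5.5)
You have located the gap correctly, and it cannot be closed. After re-triangularizing, a reduced element may be neither compatible with $W$ nor a derivative of another member. Your fallback does not help: keeping the unreduced compatible function beside a derivative of the same growth violates (1), and merging them by subtraction recreates the reduced element. In fact the statement is false as written.

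Take $f_1(x)=x^3/\log x$, $f_2=f_1'+\log x$ and $W(x)=\exp(x/\log x)$. All three are logarithmico-exponential, so they lie in a Hardy field, and $1\prec\log W\prec x$ with $(\log W)'\sim 1/\log x$. Since $f_1'''\sim 6/\log x$, property $(\star)$ holds:
\begin{itemize}
\item if $f=af_1+bf_2$ with $a\neq0$, then $\deg^*(f)=3$, $f'''\sim 6a/\log x$, and $|f'''|^{1/3}\log x\to\oo$;
\item if $a=0\neq b$, then $\deg^*(f)=2$, $f''=b(f_1'''-x^{-2})\sim 6b/\log x$, and $|f''|^{1/2}\log x\to\oo$;
\item nonzero $p\in\Q[x]$ are harmless because $(\log W)'\to 0$.
\end{itemize}
Your degree-$2$ reduction yields exactly $f_2-f_1'=\log x$. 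This is not compatible with $W$, since $(\log x)'/(\log W)'\sim \log x/x\to 0$.

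No other family works either. Suppose $g_1\prec\cdots\prec g_m$ satisfy (1)--(5). Differentiate (5) for $f_1$ and use (4); members of degree $1$ have derivative tending to $0$. This gives $f_1'\in\text{Span}\{g_j\}+\R[x]+o(1)$, hence $\log x=f_2-f_1'\in\text{Span}\{g_j\}+\R[x]+o(1)$. By (1) and (2), the leading term forces a member $\phi_0\sim c\log x$ with $c\neq 0$. Since $\phi_0'\sim c/x$, $\phi_0$ is not compatible (take $p=0$), so by (3) $\phi_0=\phi_1'$ for a member $\phi_1\sim c\,x\log x$. Inductively there are members $\phi_k\sim c\,x^k\log x/k!$ with $\deg^*(\phi_k)=k+1$ and $\phi_k^{(k+1)}=\phi_0'$. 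Then $|\phi_k^{(k+1)}|^{1/(k+1)}/(\log W)'\sim |c/x|^{1/(k+1)}\log x\to 0$, so no $\phi_k$ is compatible and the family would be infinite, a contradiction. Items (1), (2), (4), (5) alone are satisfied by $\{\log x, f_1'', f_1', f_1\}$; only (3) fails.

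The paper does not confront this. It quotes Richter's Lemma A.4 as producing members inside $\text{Span}\{f_i^{(k)}\}$ such that every member of degree between $1$ and $\deg(g_m)-1$ is a derivative of another member. Then only top-degree members need checking, and those are asymptotic to elements of $\text{Span}\{f_1,\dots,f_\ell\}$. In the example these two properties are incompatible: the member $\sim c\log x$ would have to be the derivative of a member $\sim c\,x\log x$, and no element of $\text{Span}\{f_i^{(k)}\}$ has that growth. So that route does not prove (3) either.

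As far as I can see, item (3) is never used later. The proof of Theorem~\ref{thm_generalized_richter_D} feeds only (1), (2), (4), (5) into Theorem~\ref{thm:generalized_G}, and checks that theorem's condition (4) directly from $(\star)$ via Theorem~\ref{thm:W_ud}. So the right repair is to drop (3). What remains is essentially the first part of your construction: triangularize the span of all derivatives and peel off the polynomial asymptotics.
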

 \begin{proof}
     \cite[Lemma A.4]{Richter23} says that there exist $(\lambda_{i,j})_{i,j=1}^{\ell,m}\subset \R$, $p_1(t),\dots, p_{\ell}(t)\in \R[t]$, and $g_1,\dots, g_m \in \text{Span}\{f^{(k)}_i:i\in \{1,\dots, \ell\}, k\in \{0,1,2,\dots\}\}$ such that the following hold
     \begin{itemize}
         \item $g_1\prec \dots \prec g_m$,
         \item all $g\in \{g_1,\dots, g_m\}$, either $g=0$ or $\deg(g)\geq 1$ and $x^{\deg(g)-1}\prec g(x)\prec x^{\deg(g)}$,
         \item for all $g\in \{g_1,\dots, g_m\}$, if $\deg(g)\geq 2$ then $g' = g_j$ for some $j\in \{1,\dots, m\}$, and if $1\leq \deg(g)\leq \deg(g_m)-1$ then $g_j'=g$ for some $j\in\{1,\dots, m\}$,
         \item 
         for each $i\in \{1,\dots, \ell\}$,
    \begin{equation}\label{eq:A.6}
    \lim_{t\to\oo}\left|f_i(t)-\sum_{j=1}^m\lambda_{i,j}g_j(t)-p_i(t)\right|=0.
    \end{equation}
     \end{itemize}
     The only thing to prove is that this collection $\{g_1,\dots, g_m\}$ satisfies condition (3) in the statement of Lemma \ref{lem:richter_A.6}. To this end, pick $j\in \{1,\dots, m\}$ such that there is no $i\in \{1,\dots, m\}$ with $g_i'=g_j$. We will show that $W$ is compatible with $g_j$. First note that $\deg(g_j) = \deg(g_m) = \max\{\deg(f_i):i\in \{1,\dots, \ell\}\}$, since otherwise $\{g_1,\dots, g_m\}$ would contain an anti-derivative of $g_j$. From the fact that $\deg(g_j) =\max\{\deg(f_i):i\in \{1,\dots, \ell\}\}$ and $g_j\in \text{Span}\{f_i^{(k)}:i\in \{1,\dots, \ell\},k\in \{0,1,2,\dots\}\}$, there is an $f \in \text{Span}\{f_i:i\in \{1,\dots, \ell\}\} $ with $g_j(x) = f(x)\cdot (1+o_{x\to\oo}(1))$. Then $W$ is compatible with $g_j$ because $\{f_1,\dots, f_{\ell}\}$ satisfies property (\ref{eq:W_compatible}).
 \end{proof}

Now we prove Theorem \ref{thm_generalized_richter_D}.

\begin{proof}[Proof of Theorem \ref{thm_generalized_richter_D}]
    Let $m \in \N$, let $g_1,\dots, g_m \in \mathcal{H}$, let $p_1(t),\dots , p_{\ell}(t) \in \R[t]$, and let $(\lambda_{i,j})_{i,j=1}^{\ell,m}\subset \R$ be given by Lemma \ref{lem:richter_A.6}. As in \cite[Section 3]{Richter23}, consider the following definitions. 

    Let $M\in \N$ and let $(c_{i,j})_{i,j=1}^{\ell,M}$ be real numbers such that 
    \begin{equation*}
        p_i(n)=\sum_{j=0}^Mc_{i,j}\binom{n}{j}.
    \end{equation*}
    Define
    \begin{equation*}
        u_j = \prod_{i=1}^{\ell}a_i^{\lambda_{i,j}}\text{ for } j\in \{1,\dots, m\} \text{ and } e_j= \prod_{i=1}^{\ell}a_i^{c_{i,j}}\text{ for } j\in \{1,\dots, M\}.
    \end{equation*}
    Then by (\ref{eq:A.6})
    \begin{equation}\label{eq:change_of_variables_for_nilsequence}
        u_1^{g_1(n)}\cdots u_m^{g_m(n)}e_1^{\binom{n}{1}}\cdots e_M^{\binom{n}{M}} = a_1^{f_1(n)+o_{n\to\oo}(1)}\cdots a_{\ell}^{f_{\ell}(n)+o_{n\to\oo}(1)},
    \end{equation}
    and so it suffices to show that the conclusions of Theorem \ref{thm_generalized_richter_D} hold for the sequence $(w(n)\Gamma)_{n\in \N}$, where
    \begin{equation}
        w(n)= u_1^{g_1(n)}\cdots u_m^{g_m(n)}e_1^{\binom{n}{1}}\cdots e_M^{\binom{n}{M}}.
    \end{equation}

    It is shown in \cite[pg. 441]{Richter23} that there exists a $q\in \N$ such that $\overline{e_j^{q\Z+r}\Gamma}$ is a connected sub-nilmanifold of $X$ for all $j\in \{1,\dots, M\}$ and all $r\in \{0,\dots, q-1\}$. Define $p_{j,r}(n) = \frac{1}{q}\left(\binom{qn+r}{j}-\binom{r}{j}\right)$, $s_j = e_j^q$, $c_r = e_1^{\binom{r}{1}}\cdots e_M^{\binom{r}{M}}$, $h_{i,r}(t) = q^{-\deg(g_i)}g_i(qt+r)$, $z_i = u_i^{q^{\deg(g_i)}}$ and put 
    $$
    w_r(n)=z_1^{h_{1,r}(n)}\cdots z_m^{h_{m,r}(n)}\cdot s_1^{p_{1,r}(n)}\cdots s_M^{p_{M,r}(n)}
    $$
    for $j\in \{1,\dots, M\}$, $i\in \{1,\dots, m\}$, $r\in \{0,\dots, q-1\}$. Then 
    \begin{equation}\label{eq:w_r_equation}
    w(qn+r) = w_r(n)c_r
    \end{equation}
    for all $r\in \{0,\dots, q-1\}$. For each $r\in \{0,\dots, q-1\}$, the polynomials $p_{1,r},\dots, p_{M,r}$ satisfy conditions (1) and (2) of Theorem \ref{thm:generalized_G}, the functions $h_{1,r},\dots, h_{m,r}$ belong to $\mathcal{H}$ and satisfy condition (3) of Theorem \ref{thm:generalized_G} and satisfy condition (5) of Theorem \ref{thm:generalized_G} by the chain rule. The following claim shows that condition (4) of Theorem \ref{thm:generalized_G} is also satisfied.

    \textbf{Claim:} Let $r\in \{0,1,2,\dots, q-1\}$. Let $\eta:X\rightarrow \C$ be a horizontal character which is nontrivial on $\overline{z_1^{\R}\cdots z_m^{\R}\Gamma}$. Then $\E_{n\leq N}^W\eta\left(w_r(n)
    \Gamma\right)=0.$
    
    To see why this claim is true, note that
    \begin{equation}\label{eq:claim_eq_2}
    \eta(w_r(n)\Gamma) = \eta(a_1^{f_1(qn+r)}\cdots a_{\ell}^{f_{\ell}(qn+r)}c_r^{-1}\Gamma)\cdot (1+o_{n\to\oo}(1))
    \end{equation}
     by (\ref{eq:change_of_variables_for_nilsequence}), (\ref{eq:w_r_equation}), and the continuity of $\eta$.
    Put $\eta(a_i\Gamma) = \alpha_i$ for $i\in \{1,\dots, \ell\}$ and $\eta(c_r^{-1}\Gamma) = \rho$. Then the right-hand side of (\ref{eq:claim_eq_2}) becomes
    $$
    \exp(2\pi i (\alpha_1 f_1(qn+r)+\cdots +\alpha_{\ell}f_{\ell}(qn+r)))\cdot \rho\cdot (1+o_{n\to\oo}(1))
    $$
    $\eta$ is nontrivial on $\overline{z_1^{\R}\cdots z_m^{\R}\Gamma}\subset \overline{a_1^{\R}\cdots a_{\ell}^{\R}\Gamma}$ and so there exists $i\in \{1,\dots, \ell\}$ such that $\alpha_i\neq 0$. There exists a nonzero $f\in \text{Span}\{f_1,\dots, f_{\ell}\}$ such that $\alpha_1f_1(qn+r)+\cdots +\alpha_{\ell}f_{\ell}(qn+r) = f(qn+r)\cdot (1+o_{n\to\oo}(1))$. Hence
    \begin{align}\label{eq:w_r_and_f}
   \eta(w_r(n)\Gamma) 
   = \exp(f(qn+r)\cdot (1+o_{n\to\oo}(1)))\cdot \rho \cdot (1+o_{n\to\oo}(1)).
    \end{align}
    If $f$ is bounded then $\lim_{n\to\oo}f(qn+r)$ exists and so the right-hand side of (\ref{eq:w_r_and_f}) would tend to a finite limit as $n\to\oo$, which is impossible since the left-hand side of (\ref{eq:w_r_and_f}) does not tend to a limit as $n\to\oo$, because each of the functions $h_{1,r},\dots, h_{m,r},p_{1,r},\dots, p_{M,r}$ tend to $\oo$ and have different growth rates. Therefore $f$ is unbounded and so the sequence $(f(qn+r))_{n\in \N}$ is u.d. mod 1 with respect to $W$-averages by Theorem \ref{thm:W_ud}, because $\{f_1,\dots, f_{\ell}\}$ satisfies property (\ref{eq:W_compatible}) and $\lim_{x\to\oo}\frac{f^{(d)}(qx+r)}{f^{(d)}(x)}\in (0,\oo)$ for $d=\deg^*(f)$. It follows that 
    $$
    \lim_{N\to\oo} \eta(w_r(n)\Gamma) 
   = \lim_{N\to\oo}\exp(f(qn+r))\cdot \rho =0,
    $$
    which proves the claim and shows that condition \ref{condition_(4)} of Theorem \ref{thm:generalized_G} holds.

    Each of the conditions of Theorem \ref{thm:generalized_G} hold, so for each $r\in \{0,1\dots, q-1\}$ the sequence $(w_r(n)c_r\Gamma)_{n\in \N}$ is uniformly distributed in the subnilmanifold
    \begin{align*}
       Y_r:= &\overline{z_1^{\R}\cdots z_m^{\R}s_1^{\Z}\cdots s_{M}^{\Z}c_r\Gamma}\\
       =&c_r\cdot \overline{z_1^{\R}\cdots z_m^{\R}s_1^{\Z}\cdots s_{M}^{\Z}\Gamma}.
    \end{align*}
    Define $Z  =\overline{z_1^{\R}\cdots z_m^{\R}e_1^{\Z}\cdots e_{M}^{\Z}\Gamma}$ so that $c_r\in Z$ and $Y_r\subset Z$ for all $r\in \{0,\dots, r-1\}$. Each $Y_r$ is connected because of our assumption that $\overline{e^{q\Z+r}\Gamma}$ is connected for each $r\in \{0,1,\dots, q-1\}$. Each left translation of $Y_0$ is either disjoint from $Y_0$ or equal to $Y_0$, and we observe that 
    $$
    \bigcup_{0\leq r_1,\dots, r_M\leq q-1}e_1^{r_{1}}\cdots e_{M}^{r_M}Y_0 = Z.
    $$
From this we conclude that $Y_0$ is a connected component of $Z$. 
    Define 
    $$
    \tilde{H} = \{g\in G: gZ=Z\}, \quad H = \tilde{H}^{\circ}
    $$
    so that $Z = \tilde{H}\Gamma$ and $Y_0 = H\Gamma$. Let $x_r = c_r\gamma$ so that $Y_r = Hx_r$ for all $r$. This completes the proof.
\end{proof}

\bibliographystyle{aomalpha}
\bibliography{Paper_references}



\bigskip
\footnotesize
\noindent
Vitaly Bergelson\\
\textsc{The Ohio State University}\\
\href{mailto:vitaly@math.ohio-state.edu}
{\texttt{vitaly@math.ohio-state.edu}}

\bigskip
\noindent
Michael Reilly\\
\textsc{The Ohio State University}\\
\href{mailto:reilly.201@osu.edu}
{\texttt{reilly.201@osu.edu}}

\end{document}